\documentclass{amsart}
\usepackage[english]{babel}
\usepackage{amssymb}
\usepackage{amsmath}
\usepackage{amsfonts}
\usepackage{amsthm}
\usepackage{enumerate}
\usepackage{color}

\newtheorem{theo}{Theorem}[section]
\newtheorem{lemm}[theo]{Lemma}
\newtheorem{defi}[theo]{Definition}

\newtheorem{prop}[theo]{Proposition}
\newtheorem{rema}{Remark}[section]
\numberwithin{equation}{section}

\newtheorem{assumption}{Hypothesis}[section]

\newcommand{\bal}{\begin{align}}
\newcommand{\bbal}{\begin{align*}}
\newcommand{\beq}{\begin{equation}}
\newcommand{\eeq}{\end{equation}}
\newcommand{\bca}{\begin{cases}}
\newcommand{\eca}{\end{cases}}

\newcommand{\pa}{\partial}
\newcommand{\fr}{\frac}

\newcommand{\ep}{\varepsilon}

\newcommand{\R}{\mathbb{R}}
\newcommand{\N}{\mathbb{N}}
\def\om{\omega}
\newcommand{\Nn}{\langle N\rangle}
\newcommand{\Nt}{\langle \tilde{N}\rangle}

\newcommand{\EQ}[1]{\begin{equation}\begin{split} #1 \end{split}\end{equation}}

\newcommand{\de}{\delta}

\newcommand{\T}{\mathbb{T}}
\newcommand{\Z}{\mathbb{Z}}

\newcommand{\al}{\alpha}

\title[Unconditional LWP for the fKdV equation  in $L^2(\R) $]{$L^2(\R) $-Unconditional well-posedness  for  low dispersion fractional KdV equations}
\author[L. Molinet and W. Zhu]{Luc Molinet and Weipeng Zhu}
\address[L. Molinet]{Institut Denis Poisson, Universit\'e de Tours, Universit\'e d'Orl\'eans, CNRS, Parc Grandmont, 37200 Tours, France}
\email[L. Molinet]{Luc.Molinet@univ-tours.fr}
\address[W. Zhu]{ School of Mathematics, Foshan University, Foshan, Guangdong 528000, China}
\email[W. Zhu]{mathzwp2010@163.com}

\keywords{low dispersion  fractional KdV equation;  unconditional uniqueness in $L^2(\R) $}
\subjclass[2010]{35Q53; 35A02}
\begin{document}
\maketitle
\setcounter{page}{001}

\begin{abstract}
 We show that the $ L^2(\R) $-unconditional well-posedness, that is well-known for the KdV equation, is shared by  KdV type equations with weaker dispersion. This is despite the difference in the nature of these equations, which are quasilinear while KdV is semilinear. More precisely we prove that the low dispersion fractional KdV equation 
$$
\partial_t u -D_x^\alpha \partial_x u +\pa_x(u^2)=0
$$
is unconditionally globally  well-posed in $L^2(\R) $ for $\alpha \in ]\frac{55}{38},2] $. Our method of proof combined refined bilinear estimates with the energy method  enhanced with Bourgain's type estimates developed in \cite{MV15}. 
\end{abstract}

\section{Introduction}
In this paper we  consider the unconditional uniqueness in $L^2(\R) $ for the  following family of fractional KdV type equations
\begin{equation}\label{gKdV}
\begin{cases}
\pa_t u+L_{\al+1} u+\fr12 \pa_x(u^2)=0,\quad (t,x)\in \R\times\R, \\
u(0,x)=u_0(x),\quad x\in\R,
\end{cases}
\end{equation}
where $\al>0$ is a real number and the linear operator $L_{\al+1}$ satisfies the following hypothesis:

\begin{assumption}\label{Hy1}
$L_{\al+1}$ is the Fourier multiplier operator by $-ip_{\al+1}$ where $p_{\al+1} \in C^1(\R)\cap C^2(\R\setminus\{0\})$ is a real-valued odd function satisfying, for some $\xi_0>0$, $p'_{\al+1}(\xi)\sim \xi^{\al}$ and $p''_{\al+1}(\xi)\sim \xi^{\al-1}$ for all $\xi\geq \xi_0$.
\end{assumption}
This family contains as special case the famous Korteweg-de Vries (KdV) equation ($\alpha=2$ and $ L_{3} =\partial_x^3 $)  and Benjamin-Ono (BO)  equation($\alpha=1$ and 
$ L_{2} =\partial_x D_x  $ where $ D_x $ is the Fourier multiplier by $|\xi| $) that read respectively 
$$
u_t + u_{xxx} + u u_x =0 \quad \text{and} \quad u_t + D_x u_x + u u_x =0 \; .
$$
The question we focus on in this paper is that of unconditional uniqueness in $ L^2 $ for the  quadratic dispersive equation \eqref{gKdV}. This means that  the uniqueness statement in the $L^2$-well-posedness theory does hold without restricting the solution to a resolution subspace specific to some particular method. It is worth noticing that, in the case of an equation with a quadratic non linear term, unconditional uniqueness in $ L^2 $  is optimal in the scale of classical Sobolev  spaces  since the nonlinear term $ u^2 $ does not make sense for $u\in H^\theta(\R) $ with $ \theta<0 $. Unconditional uniqueness is, in  particular, useful to prove the convergence of solutions to dissipative perturbations of the equation to the dissipative limit equation (see for instance \cite{M}). 

For nonlinear dispersive PDEs, the study of unconditional well-posedness goes back to
the work of Kato \cite{Kato95} who was the first to address this question for the nonlinear Schr\"odinger equation (NLS). Since then the unconditional well-posedness for NLS was further improved, see \cite{FPT03}, \cite{Herr19}, \cite{K21}, \cite{GKO13}. We also mention  \cite{MY20} for the derivative NLS equation,  \cite{KO12}, \cite{KO20}, \cite{MPV18}, \cite{MPV19} for the modified KdV equation and \cite{K19p} for the periodic modified Benjamin-Ono equation.

For the KdV equation global unconditionally well-posedness  in $L^2(\R) $ and $ L^2(\T) $ has been proven in (\cite{Zhou}, \cite{BIT11}, see also \cite{MV15}). As mentioned above, this result is optimal in the scale of classical Sobolev  spaces  since the nonlinear term $ u^2 $ does not make sense for $u\in H^\theta(\R) $ with $ \theta<0 $. It is thus a natural question to ask if this optimal result is shared by fractional KdV type equations with a weaker dispersion, i.e. $ \alpha<2 $.  Recall that, in sharp contrast to the KdV equation,  it is shown in \cite{MST} that for $ \alpha<2 $ these equations cannot be solved in $ H^s(\R) $ by a fixed point argument in any  resolution space. On the other hand, in \cite{HIKK10} it is proven that for $1<\alpha<2 $ and $ L_{\alpha+1}= \partial_x D_x^\alpha$, \eqref{gKdV} is globally well-posed in $ L^2(\R) $. This global well-posedness in $L^2(\R)$, and even below, is also well-known  for the  Benjamin-Ono equation   (see \cite{IK07}, \cite{MoPi12}, \cite{GKT23}). Note also that  this $L^2$-well-posedness has been recently  extended to  the Intermediate Long Wave equation (see \cite{CLOP24}, \cite{IfSa25}) viewed as a perturbation of the BO equation. Other classical  well-posedness results  but also unconditional well-posedness results on \eqref{gKdV} with $ \alpha\in ]0,2[$ can be found in \cite{Guo12}, \cite{MV15}, \cite{MPV25}. 
The aim of this paper is to prove that the unconditional global well-posedness  in $L^2(\R) $ is shared by  \eqref{gKdV} as soon as  $ \frac{55}{38} <\alpha\le 2 $ (We do not treat the case $ \alpha>2 $ that  is simpler and could clearly be handled by the same approach as for instance in \cite{MV15}). Up to our knowledge, this is the first result of this type in the literature for $ \alpha<2 $. Indeed, even in the special case of the BO equation where the equation is integrable, the unconditional uniqueness is only known to hold in $ H^s(\R) $ for $ s> 3-\sqrt{\frac{33}{4}}\sim 0.128 $ in  \cite{MP23} (in a forthcoming paper  \cite{MZ26} we get a slight improvement to $s>\frac{1}{10}$). 
Our strategy to prove the unconditional LWP is to enhance the improved energy method introduced in \cite{MV15} with refined bilinear estimates as in \cite{MT3}. However, since we have to evaluate the difference of two solutions  in Sobolev space with negative index, we will have to use also version of the bilinear estimates in short time Fourier restriction norms. Once the unconditional LWP in $L^2(\R) $  is established, we make use of the conservation of the  $L^2$-norm for smooth solutions to \eqref{gKdV} to obtain the unconditional GWP of  \eqref{gKdV}. Our main result reads as follows : 

\begin{theo}\label{th1} 
Assume that Hypotheses \ref{Hy1} is satisfied with $\fr{55}{38} < \al \leq 2$ then \eqref{gKdV} is unconditionally globally well-posed in $L^2(\R) $ in the following sense : \eqref{gKdV} is globally well-posed in $ L^2(\R) $  with a solution $u\in C(\R;L^2(\R)) $ which  is the unique maximal solution to  \eqref{gKdV} that belongs to $L^{\infty}_{loc} L^2(\R)$.
\end{theo}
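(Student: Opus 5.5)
The plan follows the strategy of \cite{MV15}: an improved energy method in Bourgain-type spaces localized in time, combined with refined bilinear estimates as in \cite{MT3}. First observe that any $u\in L^\infty_T L^2$ solving \eqref{gKdV} in the distributional sense satisfies $\pa_t u\in L^\infty_T H^{-\al-1}$ (since $u^2\in L^\infty_T L^1\subset L^\infty_T H^{-1/2-}$). Hence $u\in C([0,T];H^{-\al-1})$ and $u$ is weakly continuous in $L^2$. The Duhamel formula then holds, and for any Littlewood--Paley projection $P_N u$ the equation holds strongly. This allows energy estimates on dyadic pieces without any a priori regularity beyond $L^\infty_T L^2$.

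The first step is an a priori estimate in $X^{-1,1}_T\cap L^\infty_T L^2$, where the $X^{s,b}$ norm is associated with $p_{\al+1}$. Since $\pa_x(u^2)\in L^\infty_T H^{-3/2-}$, the Duhamel formula places $u$ in $X^{-\al-1+\ldots,1}_T$. I would then use the frequency-dependent time localization: on time intervals of length $N^{-\theta}$ at frequency $N$ (with a suitable $\theta\in(0,\al)$), one gets $\|P_N u\|_{X^{0,1/2}_{N^{-\theta}}}\lesssim \|P_N u\|_{L^\infty L^2}+N^{?}\|P_N(u^2)\|$ controlled by $\|u\|_{L^\infty L^2}^2$. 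Next comes the dyadic energy estimate for $\|P_N u\|_{L^\infty_TL^2}^2$. After symmetrization the cubic term is $\int\int P_N u\,P_N\pa_x(u_{\ll N}u)$. Integrating by parts, and using the resonance relation $|\Omega|\sim |\xi_{\min}|N^{\al}$ for high-low interactions, I would cut the time interval into pieces of length $N^{-\theta}$, gain from the resonance in $X^{0,1/2}$, and combine this with the refined bilinear Strichartz estimate $\|P_N u P_M v\|_{L^2}\lesssim N^{-\al/2}\ldots$ on short intervals. The outcome is $\sup_N\|P_Nu\|^2_{L^\infty_TL^2}$ together with a tail bound $\sum_N \|P_N u\|^2\ldots$ that expresses equicontinuity in frequency, namely smallness of the high-frequency $L^2$ mass uniformly in time. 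This gives $u\in C([0,T];L^2)$.

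The central step is the difference estimate. For two solutions $u,v\in L^\infty_TL^2$ with the same data, set $w=u-v$ and estimate $\|w\|_{L^\infty_T H^{-s}}$ for some $s\in(0,1/2)$. The equation for $w$ contains $\pa_x(w(u+v))$. The dangerous term is the high-low interaction in which $w$ sits at high frequency and $u+v$ at low frequency. This term has to be handled with a modified energy, that is, a symbol correction that cancels the resonant part, with the quartic remainder then estimated by the short-time bilinear estimates in $X^{-s,1/2}$-type norms at time scale $N^{-\theta}$. The term with $w$ at low frequency and $u+v$ at high frequency loses because $w$ lives only in a negative Sobolev space. Balancing the losses, namely the $N^{s}$ from the weight, the $N^{\theta}$ from the number of time intervals, and the $N^{-\al/2}$ and $N^{-(\al-1)}$ gains from the bilinear estimate and resonance, yields an admissible window for $(s,\theta)$. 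I expect the constraint to be exactly $\al>55/38$, and this optimization is the main obstacle. This gives $\|w\|_{L^\infty_TH^{-s}}\lesssim T^{\kappa}C(\|u\|,\|v\|)\|w\|_{L^\infty_TH^{-s}}$, and hence $w=0$ on short intervals. Iterating uses the uniform $L^\infty L^2$ bound.

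Existence and continuity with respect to the data follow by regularizing the data, applying the a priori estimates to smooth solutions, and passing to the limit in $L^\infty_TL^2$ weak-$*$. The frequency-tail control upgrades this to strong convergence in $C([0,T];L^2)$, in the Bona--Smith manner. By the uniqueness above, the limit is the unique solution in $L^\infty_{loc}L^2$. Finally, the $L^2$ norm is conserved for smooth solutions and hence, by passing to the limit, for $L^2$ solutions. Since the local time depends only on $\|u_0\|_{L^2}$, the solution extends globally and is the unique maximal solution in $L^\infty_{loc}L^2$.
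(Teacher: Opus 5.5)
Your outline has the right overall shape: an a priori bound for arbitrary $L^\infty_TL^2$ solutions, a bound for $w=u-v$ in a negative Sobolev norm, approximation with frequency envelopes, and $L^2$ conservation. However, the step that carries the whole content of the theorem is not proved. You only say that balancing the losses ``yields an admissible window'' and that you ``expect'' it to be $\alpha>\frac{55}{38}$. In the paper this threshold comes out of an explicit system of constraints on $(\theta,\beta,r)$.

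First, an arbitrary $L^\infty_TL^2$ solution is placed in $X^{-\theta,1}_T+X^{-\beta,2/3}_T$ with $\theta>\frac{11-4\alpha}{6}$ and $\beta>\frac{4-2\alpha}{3}$ (Proposition \ref{y0}). This uses the refined bilinear estimate of Proposition \ref{bl0}, proved on time intervals of length $N^{-(5-\alpha)/3}T$ with $u^2\in L^\infty_TL^1_x$. The $X^{-\beta,2/3}$ component is needed for the high$\times$high$\to$low interactions. Your ``$X^{-1,1}_T$'' with an unspecified ``$N^{?}$'' does not replace this; in the paper's bookkeeping, $\theta=1$ would only give $\alpha>\frac{20}{13}$.

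Second, the energy estimate for $w$ requires $\theta-2r<\alpha$ and $\theta+r<\frac{10\alpha-11}{6}$. Eliminating $r$ gives $\theta<\frac{13\alpha-11}{9}$, which is compatible with $\theta>\frac{11-4\alpha}{6}$ iff $\alpha>\frac{55}{38}$. None of your modified-energy scheme is computed, so nothing supports this range or any other.

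Two structural ingredients of the difference estimate are also missing. Without them, the inequality $\|w\|_{L^\infty_TH^{-s}}\lesssim T^{\kappa}C\|w\|_{L^\infty_TH^{-s}}$ you aim for is not something that can be closed.

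(i) You locate the danger in the wrong term.
\begin{itemize}
\item When $z=u+v$ is at low frequency and $w$ at high frequency, the commutator $[\partial_xP_N,P_{\tilde N}z]$ moves the derivative onto $z_{\tilde N}$ (term $J_1$). This is handled as for $u$ in Proposition \ref{h0}, with no modified energy.
\item The critical term is the one where $w$ itself sits at low frequency $\tilde N$ while $z_{\sim N}$ and $w_N$ are at high frequency ($J_2$). No commutator is available since $z\neq w$, so the full factor $N^{1+r}$ is lost.
\item In $J_2$ the resonance $|\Omega|\sim\tilde N N^{\alpha}$ degenerates as $\tilde N\to0$, so every gain you list deteriorates. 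A normal-form correction would even have the unbounded symbol $N/(\tilde N N^{\alpha})$.
\item For $\tilde N\lesssim1$ this has nothing to do with $w$ being in a negative Sobolev space, since $H^{-s}$ and $L^2$ coincide at low frequencies.
\end{itemize}
The paper's remedy is to measure $w$ in $\bar H^r$, i.e.\ with the low-frequency weight $\langle|\xi|^{-1}\rangle$. Because the nonlinearity is a derivative,
\[
\|P_Nw(t)\|_{L^2}\le\|P_Nw_0\|_{L^2}+CTN^{3/2}\|z\|_{L^\infty_TL^2}\|w\|_{L^\infty_TL^2}.
\]
Hence $w\in L^\infty_T\bar H^0$ whenever $w_0\in\bar H^0$, which holds for equal data and for $P_{\le n_1}u_0-P_{\le n_2}u_0$. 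The resulting factor $\tilde N$ in \eqref{Est11}--\eqref{Est22} compensates the weak resonance, and this is exactly where the constraint $\theta+r<\frac{10\alpha-11}{6}$ comes from.

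(ii) For $r<0$ you cannot bound $zw$ in $L^\infty_TL^1_x$ by $\|w\|_{L^\infty_TH^r}$. So the refined Strichartz and bilinear estimates for $w$ (Propositions \ref{Cw2}, \ref{bl1}, \ref{bl2}) need $w$ in a Bourgain-type space. The iteration must therefore run in
\[
Z^r_T=L^\infty_T\bar H^r\cap\bigl(X^{r-\theta,1}_T+X^{r-\beta,2/3}_T\bigr),
\]
with the Bourgain part of $w$ bounded separately. This is Proposition \ref{yr}, under $\theta-r<\alpha$, $\beta-r<\frac23\alpha$, $\theta-\beta<\frac{22\alpha-11}{18}$ and $r>\frac{2-2\alpha}{3}$. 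You mention short-time $X^{-s,1/2}$ norms, but you never show that $w$ lies in them or how they are controlled by the right-hand side.
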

\begin{rema}
The approach we used in this work is also applicable in the periodic setting. However, the refined linear and bilinear estimates in the periodic setting will be weaker and we expect to obtain  the unconditional well-posedness for the fKdV equation in $ L^2(\T) $ for a thiner range of the dispersive parameter $ \alpha$. This will be discussed in a future work. 
\end{rema}

Let us give now some details on our approach. For $u\in L^\infty_T L^2_x $ we  start by deriving refined bilinear estimates  by making use of bilinear estimates and Strichartz estimates on small intervals of time. Let us recall that refined bilinear estimates consist, after having projected on space frequencies of order $ N\gg 1$, in substituting the solution $u $ by its integral form on small time intervals $ I_{j,N} $ of length related with $ N $ and then resuming on the $j$. Note that the nonlinear term $ u^2$  that does appear in the Duhamel part can be evaluated in $ L^\infty_T L^1_{I_{j,N}} $ for $u\in L^\infty_T L^2_x $. With this refined bilinear estimates in hands we are able to prove that $ u$ belongs to a Bourgain's space of the form $ X_T^{-\theta,1}+X_T^{-\beta,2/3} $. Following \cite{MV15}  this enables us to bound the $ L^\infty_T L^2_x$-norm of $ u $ by  an energy estimate on the solution $ u$. Then we can handle the difference $ w$ of two solutions $ u$ and $ v$ belonging to $L^\infty_T L^2_x $. We assume that the initial difference $ w_0 $ belongs to the $L^2$-space $ \bar{H}^0(\R) $   that contains a  weight on the low frequencies. According to the properties of  the solutions to \eqref{gKdV} proven in the first part, it follows that $ w\in L^\infty_T \bar{H}^0(\R) \cap (X_T^{-\theta,1}+X_T^{-\beta,2/3}) $. We can thus try to evaluate $ w$ recursively in  $L^\infty_T \bar{H}^r(\R) \cap (X_T^{-\theta+r,1}+X_T^{-\beta+r,2/3}) $ for some $r<0 $. As for the solutions we first establish refined bilinear estimates on $ w$. Here it is worth noticing that since $r<0 $, we can not evaluate the nonlinear term $z w $, with $ z=u+v$, in $L^\infty_T L^1_{I_{j,N} }$ as we did for $ u^2$. Instead we evaluate $ zw $ in some short time Bourgain's space by making  use of the fact that  $ w$ belongs to $X_T^{-\theta+r,1}+X_T^{-\beta+r,2/3}$. With these refined bilinear estimates in hands we proceed as for $ u$ by bounded the $(X_T^{-\theta+r,1}+X_T^{-\beta+r,2/3})$-norm  and the $L^\infty_T \bar{H}^r$-norm of $ w$ in terms of these same norms. Each of these estimates require some   restrictions on $\alpha $ and/or on the parameters $\theta, \beta $ and $ r$. We conclude by checking that for $\alpha\in ]\frac{55}{38},2] $, we can choose 
$\theta, \beta $ and $ r$ such that all these restrictions are satisfied.\vspace*{4mm}

\noindent
{\bf Plan of the paper:} In the next section we introduce the notation and the function spaces and recall some basic estimates. Section \ref{Sect3} is devoted to the proof of  the refined linear and bilinear Strichartz estimates that will be essential tools. In Section \ref{Sect4} we first use the estimates proven in the preceding section to  prove that $ L^2(\R) $- solutions to \eqref{gKdV} do belong to Bourgain's space of the form $ X_T^{-\theta,1}+X_T^{-\beta,2/3} $, for $(\theta,\beta) $ satisfying some restrictions. Then, following \cite{MV15}, we use this to establish an $L^2(\R) $-energy estimate on these solutions. In Section \ref{Sect5} we tackle the difference of two solutions. This part is more delicate since this time we have to work in a negative regularity of index $r $.  In Section \ref{Sect6}  we first prove that for $\alpha\in ]\frac{55}{38},2] $, the set of $ (\alpha, \beta,r) $ satisfying all the restrictions, we established in the preceding sections, is not empty. Then we prove Theorem \ref{th1}. Finally in the Appendix, we prove the refined linear and bilinear Strichartz estimates we used in Section \ref{Sect5} for the difference of two solutions. As explained above these estimates are more involved that the ones on solutions since we have to work at a  negative regularity.

\section{Notation, Function spaces and Basic estimates}\label{Sect2}

\subsection{Notation}
For two nonnegative numbers $A,B$, the notation  $A\lesssim B$ denotes that there exists a positive constant $c>0$ independent of $A$ and $B$, such that $A\leq c B$. The symbol $A\simeq B$ represents $A\lesssim B$ and $B\lesssim A$. We also  denote $A\vee B:=\max\{A,B\}$ and $A\wedge B:=\min\{A,B\}$. 
For  $x\in\R$, $x+$, respectively $x-$ will denote a number slightly greater, respectively lesser, than $x$ .We also write $\langle x\rangle$ for $ (1+x^2)^{\fr12}$.

For $u=u(x,t)\in \mathcal{S}(\R^2),\ \mathcal{F}u=\hat{u}$ will denote its space-time Fourier transform, whereas $\mathcal{F}_x u=(u)^{\wedge_x}$ and $\mathcal{F}_t u=(u)^{\wedge_t}$ will denote its Fourier transform in space and in time, respectively.

Throughout the paper, we fix a smooth cutoff function $\eta$ such that
\begin{equation}\label{defeta}
\eta\in C_0^\infty(\R),\ 0\leq\eta\leq 1,\ \eta|_{[-1,1]}=1,\ \text{and}\ \text{supp}(\eta)\subset [-2,2].
\end{equation}
We set $\phi(\xi):=\eta(\xi)-\eta(2\xi)$. For $\ell\in\Z$, we define
$$\phi_{2^\ell}(\xi):=\phi(2^{-\ell}\xi),$$
and, for $\ell\in \N^*$,
$$\psi_{2^\ell}(\xi,\tau):=\phi_{2^\ell}(\tau-p_{\al+1}(\xi)),$$
where $ip_{\al+1}$ is the Fourier symbol of $L_{\al+1}$. By convention, we also denote
$$\psi_{1}(\xi,\tau):=\eta(2(\tau-p_{\al+1}(\xi))).$$
Any summations over capitalized variables such as $N,L,K$ or $M$ are presumed to be dyadic. Unless
stated otherwise, we work with homogeneous dyadic decomposition for the space-frequency variables and nonhomogeneous decompositions for modulation variables, i.e., these variables range over numbers of the form $2^k:k\in\Z$ and $2^k:k\in\N$, respectively. Then we have that
$$\sum_{N>0}\phi_N(\xi)=1\ \text{for all}\ \xi\in\R^{*}\quad \text{and}\quad \text{supp}(\phi_N)\subset\{\fr12N\leq|\xi|\leq 2N\}\ \text{for} \ N\in\{2^k:k\in\Z\},$$
and
$$\sum_{L\geq 1}\psi_L(\xi,\tau)=1\ \text{for all}\ (\xi,\tau)\in\R^2.$$
Let us define the Littlewood–Paley multipliers by
$$P_N u=\mathcal{F}_x^{-1}(\phi_N \mathcal{F}_x u),\quad Q_L u=\mathcal{F}_x^{-1}(\psi_N \mathcal{F}_x u),$$
$P_{\geq N}:=\sum_{K\geq N}P_K,\ P_{\leq N}:=\sum_{K\leq N}P_K,\ P_{\sim N}:=\sum_{K\sim  N}P_K , \ Q_{\geq L}:=\sum_{K\geq L}Q_K$ and $Q_{\leq L}:=\sum_{K\leq L}Q_K$. For brevity we also write $u_N=P_N u,\ u_{\leq N}=P_{\leq N}u$, etc.

Let $a$ be a (possibly complex-valued) bounded measurable function on $\R^2$
and define the pseudoproduct
operator $\Lambda_{a}$ on $\mathcal{S}(\R)^2$ by
$$\mathcal{F}(\Lambda_{a}(f,g))(\xi)=\int_{\R}\hat{f}(\xi_1)\hat{g}(\xi-\xi_1)a(\xi,\xi_1)d\xi_1.$$
This bilinear operator behaves like a product in the sense that it satisfies the following properties:
\begin{equation}\label{eq2-1}
\begin{cases}
\Lambda_{a}(f,g)=fg \quad \text{if}\ a\equiv 1, \\
\int_{\R}\Lambda_{a}(f,g)h \ dx =\int_{\R}f\Lambda_{a_1}(g,h)\ dx=\int_{\R}\Lambda_{a_2}(f,h)g\ dx,
\end{cases}
\end{equation}
with $a_1(\theta,\theta_1)=\bar{a}(\theta_1,\theta)$ and $a_2(\theta,\theta_1)=\bar{a}(\theta-\theta_1,\theta)$ for any real-valued functions $f,g,h\in \mathcal{S}(\R)$.We easily check from the Bernstein inequality that, if $f_i\in L^2(\R)$ has a Fourier transform
localized in an annulus $\{|\xi|\sim N_i\}, i=1,2,3$, then
\begin{equation}\label{eq2-2}
\Big| \int_{\R} \Lambda_{a}(f_1,f_2)f_3 \ dx\Big|\lesssim N_{\min}^{\fr12}\prod_{i=1}^3\|f_i\|_{L^2},
\end{equation}
where the implicit constant only depends on $\|a\|_{L^\infty(\R^2)}$
and $N_{\min}=\min\{N_1,N_2,N_3\}$. Moreover, in this work we  will require $ a$ to satisfy
\begin{equation}\label{estcom}
\|\Lambda_a(g,h)\|_{L^2_{x}} \lesssim \|g\|_{L^\infty_x} \|h\|_{L^2_x}\quad  , \forall (g,h)\in ( L^\infty(\R) \cap L^2(\R))^2 
\end{equation}
Note that when $a\equiv1$,  $\Lambda_a(u,v)=uv$  obviously satisfies  \eqref{estcom}.
With this notation
in hand, we will be able to systematically estimate terms of the form
$$\int_{\R} P_N(u^2)\pa_x P_N u \ dx \; .$$

\subsection{Function spaces}
For $1\le p\le\infty$, $L^p(\R)$ denotes the usual Lebesgue space and for $s\in\R$, $H^s(\R)$ denotes the usual $L^2$-based Sobolev space.
If $B$ is a space of functions on $\R$, $T>0$ and $1\le p\le\infty$, we define the spaces $L^p_TB_x$ and $L^p_tB_x$ by the norms
$$
\|f\|_{L^p_TB_x} = \left\| \|f\|_{B}\right\|_{L^p([0,T])} \ \textrm{ and } \|f\|_{L^p_tB_x} = \left\| \|f\|_B \right\|_{L^p(\R)}.
$$
For $s,b\in\R$, we introduce the Bourgain spaces $X^{s,b}$ related to the linear part of \eqref{gKdV} as the completion
of the Schwartz space $\mathcal{S}(\R^2)$ under the norm
\begin{equation}\label{eq2-3}
\|v\|_{X^{s,b}}:=\Big( \int_{\R^2}\langle \tau-p_{\al+1}(\xi)\rangle^{2b} \langle\xi\rangle^{2s}|\hat{v}(\xi,\tau)|^2 d\xi d\tau \Big)^{\fr12},
\end{equation}
where $\langle x\rangle:=1+|x|$ and $i p_{\al+1}$ is the Fourier symbol of $L_{\al+1}$. Recall that
$$\|v\|_{X^{s,b}}=\|U_\al(-t)v\|_{H^{s,b}_{x,t}},$$
where $U_\al(t)=\exp(t L_{\al+1})$ is the generator of the free evolution associated with \eqref{gKdV}.
We also use the Besov type $X^{s,b}$ spaces: for $b\in\R$,
$$\|v\|_{X^{0,b,1}}=\sum_{L\geq 1}L^b\|Q_L v\|_{L^2_{t,x}}.$$

In this paper, we use the frequency envelope method (see for instance [33] and [16]) in
order to show the continuity result with respect to initial data. To this aim, we first
introduce the following:
\begin{defi}\label{def1}
Let $1<\delta\le 2 $. An acceptable frequency weight $\{\omega_N^{(\delta)}\}_{N\in 2^{\Z}}$ is defined as a
dyadic sequence satisfying $\omega_N=1$ for $N<1$ and $\omega_N \leq \omega_{2N} \leq \delta\omega_N$ for $N\geq 1$. We simply write $\{\omega_N\}$ when there is no confusion.
\end{defi}
With an acceptable frequency weight $\{\omega_N\}$, we slightly modulate the classical Sobolev
spaces in the following way: for $s\geq 0$, we define $H^s_\omega(\R)$ with the norm
$$\|v\|_{H^s_\omega}:=\Big( \sum_{N\in 2^{\Z}}\omega_N^2 \langle N\rangle^{2s} \|P_N v\|_{L^2_{x}}^2 \Big)^{\fr12}.$$
Note that $H^s_\omega(\R)=H^s(\R)$ when we choose $\omega_N \equiv 1$, $L^2_\omega(\R)=H^0_\omega(\R)$.

Finally, we will use restriction-in-time versions of these spaces. Let $T>0$ be a positive time and
let $Y$ be a normed space of space-time functions. The restriction space $Y_T$ will be the space of functions
$v:\R\times]0,T[\rightarrow\R$ satisfying
$$\|v\|_{Y_T}:=\inf\{\|\tilde{v}\|_Y \ |\ \tilde{v}:\R\times\R\rightarrow\R,\ \tilde{v}|_{\R\times]0,T[}=v\}<\infty.$$
\subsection{Preliminary lemmas}
\begin{lemm}{\rm (see \cite{MV15})}\label{le2-1}
Let $L\geq 1,\ 1\leq p\leq \infty$ and $s\in\R$. The operator $Q_{\leq L}$ is bounded in $L^{p}_t H^s$ uniformly in $L\geq 1$.
\end{lemm}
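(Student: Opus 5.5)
The plan is to reduce everything to a one–dimensional statement in time. By definition, $Q_{\leq L}$ is the space–time Fourier multiplier with symbol
$$m_L(\xi,\tau):=\sum_{K\le L}\psi_K(\xi,\tau),$$
a smooth cutoff of $\tau-p_{\al+1}(\xi)$ at scale $L$. By telescoping, $m_L(\xi,\tau)=\eta\big((\tau-p_{\al+1}(\xi))/L'\big)$ with $L'\simeq L$, up to the fixed normalisation coming from the convention defining $\psi_1$.

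First we conjugate by the free group. Writing $v=U_\al(-t)u$, we have
$$\mathcal{F}(U_\al(-t)Q_{\leq L}u)(\xi,\tau)=\eta(\tau/L')\,\hat v(\xi,\tau).$$
Hence
$$Q_{\leq L}u=U_\al(t)\,R_L\,U_\al(-t)u,$$
where $R_L$ is the purely temporal multiplier $\mathcal{F}_t^{-1}\eta(\cdot/L')\mathcal{F}_t$.

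Next we study $R_L$. It acts as convolution in $t$ with the kernel $k_L(t)=L'\check\eta(L't)$. This kernel satisfies $\|k_L\|_{L^1(\R)}=\|\check\eta\|_{L^1}$, which is independent of $L$ because $\check\eta$ is Schwartz.

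We then combine these facts. The operator $U_\al(\pm t)$ is a Fourier multiplier in $x$ of modulus one, so it commutes with $\langle D_x\rangle^s$ and is an isometry on $H^s$ for each fixed $t$. Therefore
$$\|Q_{\leq L}u(t)\|_{H^s}=\Big\|\int_\R k_L(t-t')\,U_\al(-t')u(t')\,dt'\Big\|_{H^s}\le \int_\R|k_L(t-t')|\,\|u(t')\|_{H^s}\,dt',$$
by Minkowski's integral inequality. Young's inequality in $t$ then gives
$$\|Q_{\leq L}u\|_{L^p_tH^s}\le \|\check\eta\|_{L^1}\,\|u\|_{L^p_tH^s}$$
for every $1\le p\le\infty$, uniformly in $L$.

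The main point of care is rigor rather than difficulty. We first prove the identity for Schwartz $u$. For $p<\infty$ we extend by density. For $p=\infty$ the convolution formula itself defines the operator, and the same bound follows directly.
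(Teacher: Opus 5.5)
Your argument is correct and is essentially the proof in the cited reference \cite{MV15}; the paper itself only quotes the lemma. In both, conjugation by the free group turns $Q_{\leq L}$ into a time convolution with $L\check\eta(L\,\cdot)$, whose $L^1$-norm does not depend on $L$, and unitarity of the group on $H^s$ together with Young's inequality concludes. Your hedge about the normalisation of $\psi_1$ is harmless: with the convention exactly as written, the symbol is $\eta(\sigma/L)+\eta(2\sigma)-\eta(\sigma)$ with $\sigma=\tau-p_{\al+1}(\xi)$, a sum of three $L^1$-normalised dilates of $\eta$, so the constant changes by at most a factor $3$.
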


\begin{lemm}{\rm (see \cite{MV15})}\label{le2-2}
For any $R>0$ and $T>0$,
\begin{equation*}
\|1^{\text{high}}_{T,R}\|_{L^1}\lesssim T\wedge R^{-1}
\end{equation*}
and for $ 1\le p\le \infty $, 
\begin{equation*}
\|1^{\text{low}}_{T,R}\|_{L^p}\lesssim T^{1/p}.
\end{equation*}.
\end{lemm}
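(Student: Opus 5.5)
The proof rests on the definitions of \cite{MV15}, which this paper uses without restating; the plan is correct only if those definitions are as I take them here. Writing $1_T$ for the characteristic function of $[0,T]$, I assume the splitting $1_T=1^{\text{low}}_{T,R}+1^{\text{high}}_{T,R}$ with
$$\mathcal{F}_t\big(1^{\text{low}}_{T,R}\big)(\tau)=\eta(\tau/R)\,\mathcal{F}_t(1_T)(\tau),$$
where $\eta$ is the cutoff fixed in \eqref{defeta}. In physical space this reads $1^{\text{low}}_{T,R}=\rho_R*1_T$, with $\rho_R(t):=R\,\rho(Rt)$ and $\rho:=\mathcal{F}_t^{-1}\eta$. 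The kernel $\rho$ is a Schwartz function, so $\|\rho_R\|_{L^1}=\|\rho\|_{L^1}$ does not depend on $R$. Moreover $\int_\R\rho_R=\eta(0)=1$. These are the only two properties of the kernel that the argument uses.

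\textbf{Low-frequency part.} Young's inequality gives, for every $1\le p\le\infty$,
$$\|1^{\text{low}}_{T,R}\|_{L^p}\le\|\rho_R\|_{L^1}\|1_T\|_{L^p}=\|\rho\|_{L^1}\,T^{1/p}.$$
This is the second bound, uniformly in $R$.

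\textbf{High-frequency part.} There are two bounds to prove.

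First, the bound by $T$. Since $1^{\text{high}}_{T,R}=1_T-\rho_R*1_T$, the triangle inequality and the $p=1$ case above give
$$\|1^{\text{high}}_{T,R}\|_{L^1}\le(1+\|\rho\|_{L^1})\,T.$$

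Second, the bound by $R^{-1}$, which is the only point needing a small idea. Because $\rho_R$ has integral $1$, we can write
$$1^{\text{high}}_{T,R}(t)=\int_\R\rho_R(s)\big(1_T(t)-1_T(t-s)\big)\,ds .$$
Minkowski's inequality, together with $\|1_T-1_T(\cdot-s)\|_{L^1}\le 2|s|$, then yields
$$\|1^{\text{high}}_{T,R}\|_{L^1}\le 2\int_\R|\rho_R(s)|\,|s|\,ds=\frac{2}{R}\int_\R|\rho(u)|\,|u|\,du\lesssim R^{-1},$$
where the last step uses the substitution $u=Rs$ and the finiteness of $\int|u\,\rho(u)|\,du$ for Schwartz $\rho$. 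Taking the minimum of the two bounds gives $\|1^{\text{high}}_{T,R}\|_{L^1}\lesssim T\wedge R^{-1}$.

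No step is a genuine obstacle. The only place where care is needed is the moment identity in the $R^{-1}$ bound, which relies on the normalization $\eta(0)=1$.
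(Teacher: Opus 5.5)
Your proof is correct and is essentially the argument of \cite{MV15}, which is where the paper sends the reader (it gives no proof of its own): there too $1^{\text{low}}_{T,R}$ is the convolution of $1_T$ with the unit-mass kernel $\rho_R$, the low-frequency bound follows from Young's inequality, and writing $1^{\text{high}}_{T,R}(t)=\int_\R\rho_R(s)\bigl(1_T(t)-1_T(t-s)\bigr)\,ds$ gives $\|1^{\text{high}}_{T,R}\|_{L^1}\lesssim\int_\R|\rho(u)|\min(T,|u|/R)\,du\lesssim T\wedge R^{-1}$. The only cosmetic difference is that you prove the bounds by $T$ and by $R^{-1}$ separately. In \cite{MV15} they come out together through the single factor $\min(T,|u|/R)$, which reflects that the symmetric difference of $[0,T]$ and $[s,T+s]$ has measure $2\min(|s|,T)$.
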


\begin{lemm}{\rm (see \cite{MV15})}
Let $u\in L^2(\R^2)$. Then, for any $T>0,\ R>0$ and $L\gg R$,
\begin{equation*}
\|Q_L(1^{\text{low}}_{T,R}u)\|_{L^2_{t,x}}\lesssim \|Q_{\sim L} u\|_{L^2_{t,x}}.
\end{equation*}
\end{lemm}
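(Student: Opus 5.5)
The plan is to compare the modulation of the product $1^{\text{low}}_{T,R}u$ with that of $u$, using that multiplying by $1^{\text{low}}_{T,R}$ shifts the time frequency only by $O(R)$. I use the decomposition of \cite{MV15}. There one writes $1_{[0,T]}=1^{\text{low}}_{T,R}+1^{\text{high}}_{T,R}$, with $1^{\text{low}}_{T,R}$ the time-frequency truncation of $1_{[0,T]}$ to $|\tau|\lesssim R$:
$$\mathcal{F}_t\big(1^{\text{low}}_{T,R}\big)(\tau)=\eta(\tau/R)\,\mathcal{F}_t(1_{[0,T]})(\tau).$$
So $\mathcal{F}_t(1^{\text{low}}_{T,R})$ is supported in $\{|\tau|\le 2R\}$. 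Lemma \ref{le2-2} with $p=\infty$ gives $\|1^{\text{low}}_{T,R}\|_{L^\infty_t}\lesssim 1$ uniformly in $T,R$.

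\emph{Step 1: support localisation.} Since $1^{\text{low}}_{T,R}$ depends only on $t$, the space-time Fourier transform of the product is a convolution in $\tau$ alone:
$$\mathcal{F}(1^{\text{low}}_{T,R}u)(\xi,\tau)=\int_{\R}\mathcal{F}_t(1^{\text{low}}_{T,R})(\tau')\,\hat u(\xi,\tau-\tau')\,d\tau'.$$
On the support of $\psi_L$ we have $|\tau-p_{\al+1}(\xi)|\sim L$, and in the integral $|\tau'|\le 2R\ll L$. Hence
$$|\tau-\tau'-p_{\al+1}(\xi)|\in\big[\tfrac{L}{2}-2R,\,2L+2R\big],$$
which is comparable to $L$ since $L\gg R$. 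Thus only modulations $K\sim L$ of $u$ contribute. Decomposing $u=\sum_K Q_K u$ and applying this to each piece gives the identity
$$Q_L(1^{\text{low}}_{T,R}u)=Q_L\big(1^{\text{low}}_{T,R}\,Q_{\sim L}u\big).$$
The implicit constant in "$\sim$" is fixed by the largeness constant in $L\gg R$, for instance $L\ge 2^5R$.

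\emph{Step 2: bound.} $Q_L$ is a Fourier multiplier with symbol bounded by $1$, so it is bounded on $L^2_{t,x}$ by Plancherel. Multiplication by the bounded function $1^{\text{low}}_{T,R}$ is also bounded on $L^2_{t,x}$. Therefore
$$\|Q_L(1^{\text{low}}_{T,R}u)\|_{L^2_{t,x}}\le\|1^{\text{low}}_{T,R}\|_{L^\infty_t}\,\|Q_{\sim L}u\|_{L^2_{t,x}}\lesssim\|Q_{\sim L}u\|_{L^2_{t,x}}.$$

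\emph{Main obstacle.} There is no real difficulty. The only point needing care is the support bookkeeping in Step 1: the smooth cutoffs $\phi$ have support in dyadic annuli with ratio $4$, so "$\sim L$" must be taken as a finite range of dyadic $K$ with $K\in[L/8,8L]$, say, for which the identity holds exactly. Everything else is Plancherel together with the uniform $L^\infty$ bound on $1^{\text{low}}_{T,R}$.
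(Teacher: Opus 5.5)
Your proof is correct and is essentially the argument of \cite{MV15}, which the paper cites without reproducing: $\mathcal{F}_t(1^{\text{low}}_{T,R})$ is supported in $\{|\tau|\le 2R\}$ with $R\ll L$, so $Q_L(1^{\text{low}}_{T,R}u)=Q_L(1^{\text{low}}_{T,R}Q_{\sim L}u)$, and the uniform bound $\|1^{\text{low}}_{T,R}\|_{L^\infty}\lesssim 1$ then concludes. Your definition $\mathcal{F}_t(1^{\text{low}}_{T,R})=\eta(\tau/R)\,\mathcal{F}_t(1_{[0,T]})$ is the one used there, and your support bookkeeping, including the $K=1$ modulation when $L$ is small, is correct.
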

We recall now the following standard  linear estimates in Bourgain's spaces (see \cite{IKT}).
\begin{prop}
For any time interval $I=[t_0, t_1]$, we have
\begin{equation}\label{Bour1}
 \left\|\int_{t_0}^t e^{(t-t') L_{\alpha+1}} f(t')dt'\right\|_{X^{0,\frac 12,1}_I}\lesssim \|f\|_{X^{0,-\frac 12,1}_I}\wedge |I|^\frac{1}{2} \|f\|_{L^2_I L^2_x}\; .
\end{equation}
\end{prop}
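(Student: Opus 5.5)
The statement to prove is the standard linear estimate \eqref{Bour1} in Besov-type Bourgain spaces restricted to an interval $I$. My plan is to reduce it to a free-evolution estimate by conjugating with the linear group. Set $F(t)=U_\al(-t)f(t)$ and $v(t)=U_\al(-t)\int_{t_0}^t e^{(t-t')L_{\al+1}}f(t')\,dt'=\int_{t_0}^t F(t')\,dt'$. Since $\|g\|_{X^{0,b,1}}$ is the $L^2_\xi$-valued $B^{b}_{2,1}$-norm in time of $U_\al(-t)g$, the claim is equivalent, frequency by frequency in $\xi$ (Plancherel in $x$), to a scalar estimate in $t$: for $G\in L^2_\xi$-valued functions of $t$,
$$
\Big\|\int_{t_0}^t G(t')dt'\Big\|_{B^{1/2}_{2,1}(I)}\lesssim \|G\|_{B^{-1/2}_{2,1}(I)}\wedge |I|^{1/2}\|G\|_{L^2_I}.
$$
Here $B^{b}_{2,1}(I)$ denotes the restriction norm built from the inhomogeneous Littlewood–Paley decomposition in $\tau$ used for $Q_L$.

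For the first bound, I take an extension $\tilde G$ of $G$ with nearly optimal $B^{-1/2}_{2,1}(\R)$ norm. The map $G\mapsto \int_{t_0}^t G$ is not time-localized, so I insert a smooth cutoff $\eta((t-t_0)/|I|)$-type function equal to $1$ on $I$. By translation invariance I may take $t_0=0$. I then use the classical decomposition
$$
\int_0^t \tilde G=\int \hat{\tilde G}(\tau)\frac{e^{it\tau}-1}{i\tau}\,d\tau ,
$$
and split into $|\tau|\le 1/|I|$ and $|\tau|>1/|I|$. On the high part, $e^{it\tau}/(i\tau)$ gains exactly one power of $\langle\tau\rangle$, which maps $B^{-1/2}_{2,1}$ to $B^{1/2}_{2,1}$ (pieces $Q_L$ are preserved). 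The $-1/(i\tau)$ part gives a constant times the cutoff, whose $B^{1/2}_{2,1}$ norm is controlled by $\sum_{L}L^{-1/2}\|\hat{\tilde G}\phi_L\|_{L^2}$ via Cauchy–Schwarz on each dyadic block. On the low part, I Taylor expand $(e^{it\tau}-1)/(i\tau)=\sum_k t^k(i\tau)^{k-1}/k!$; the factors $t^k\eta(t/|I|)$ have $B^{1/2}_{2,1}$ norms growing at most geometrically like $C^k|I|^{k}$ times a harmless factor, and $|\tau|^{k-1}\le |I|^{1-k}$, so the series is summable. In the non-homogeneous setting with $|I|\lesssim 1$ this is the standard argument of \cite{IKT}, and for large $|I|$ one uses the same decomposition with cutoff at $|\tau|\sim 1$.

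For the second bound, I bound the $B^{-1/2}_{2,1}$ norm of $G1_I$ by $|I|^{1/2}\|G\|_{L^2_I}$. For the dyadic pieces with $L\lesssim |I|^{-1}$, I use $\|Q_L(G1_I)\|_{L^2}\le\|\hat{(G1_I)}\|_{L^\infty_\tau}L^{1/2}\le \|G\|_{L^1_I}L^{1/2}\le |I|^{1/2}\|G\|_{L^2_I}L^{1/2}$, so these pieces contribute $\sum_{L\lesssim |I|^{-1}}L^{-1/2}\cdot L^{1/2}|I|^{1/2}\|G\|_{L^2_I}$. This produces a logarithm, so instead I directly estimate $\int_0^tG$ in $B^{1/2}_{2,1}$. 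I use $\|\int_0^t G\|_{L^\infty_I}\le |I|^{1/2}\|G\|_{L^2}$ and $\|\partial_t\int_0^tG\|_{L^2_I}=\|G\|_{L^2_I}$. I extend $\int_0^t G$ by a function $h$ that is constant outside $I$ (times a cutoff at scale $\max(|I|,1)$), so that $h\in H^1$ with $\|h'\|_{L^2}\le\|G\|_{L^2_I}$ and $\|h\|_{L^\infty}\lesssim |I|^{1/2}\|G\|_{L^2_I}$. Then $B^{1/2}_{2,1}\subset$ interpolation gives $\|h\|_{B^{1/2}_{2,1}}\lesssim\|h\|_{L^2}^{1/2}\|h\|_{\dot H^1}^{1/2}+\|h\|_{L^2}$, using dyadic sums: $L^{1/2}\min(\|h\|_{L^2},L^{-1}\|h'\|_{L^2})$ summed over $L$. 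For $|I|\le1$ this is $\lesssim |I|^{1/2}\|G\|_{L^2_I}$, since $\|h\|_{L^2}\lesssim |I|^{1/2}\|G\|$ on a unit-size support.

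The main obstacle is uniformity in $|I|$ and the extension/restriction bookkeeping. In particular, the restriction norm must be compatible with the sharp cutoff $1_I$, which is false in $X^{0,1/2,1}$ directly; this is exactly why I work with the integrated function $\int_0^t G$, which is continuous, and extend it by constants rather than multiplying by $1_I$. For large $|I|$ the second bound is weaker than needed only by constants, and I would handle it by splitting $I$ into unit subintervals.
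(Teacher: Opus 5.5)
Your first bound is the standard argument; the paper gives no proof here and only refers to Ionescu--Kenig--Tataru. Your ingredients are the cutoff $\eta((t-t_0)/|I|)$, the split at $|\tau|\sim |I|^{-1}$, Cauchy--Schwarz on dyadic blocks for the constant term, and the Taylor expansion at low frequencies; together they work for $|I|\lesssim 1$. The one point you leave implicit is that multiplication by $\eta((t-t_0)/|I|)$ is bounded on $B^{1/2}_{2,1}$ uniformly in $|I|\le 1$. This follows from the scale invariance of $\dot B^{1/2}_{2,1}(\mathbb{R})$ and the embedding $B^{1/2}_{2,1}\hookrightarrow L^\infty$.

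The genuine gap is in the second bound, and it comes from the scale of your cutoff. With a cutoff at scale $\max(|I|,1)=1$ you only have $\|h\|_{L^2}\lesssim |I|^{1/2}\|G\|_{L^2_I}$ and $\|h'\|_{L^2}\lesssim \|G\|_{L^2_I}$. Your interpolation therefore gives
\[
\|h\|_{B^{1/2}_{2,1}}\lesssim \|h\|_{L^2}^{1/2}\|h'\|_{L^2}^{1/2}+\|h\|_{L^2}\lesssim |I|^{1/4}\,\|G\|_{L^2_I},
\]
not $|I|^{1/2}\|G\|_{L^2_I}$, so the sentence ``for $|I|\le 1$ this is $\lesssim |I|^{1/2}\|G\|_{L^2_I}$'' is wrong. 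The loss is not just in the interpolation; the extension itself is bad. Take $G\equiv 1$ on $I=[0,\epsilon]$. Your $h$ is a ramp of height $\epsilon$ followed by a plateau of unit length, so $|\hat h(\tau)|\sim \epsilon|\tau|^{-1}$ for $1\lesssim|\tau|\lesssim \epsilon^{-1}$. Each of these $\sim\log(1/\epsilon)$ dyadic blocks contributes $\sim\epsilon$, so $\|h\|_{B^{1/2}_{2,1}}\sim\epsilon\log(1/\epsilon)$, while the target is $|I|^{1/2}\|G\|_{L^2_I}=\epsilon$. The exact factor $|I|^{1/2}$ matters: it is what produces $T^{1/2}N^{-\delta/2}$ in the Appendix.

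The fix is to localize at scale $|I|$. Set $h(t)=\eta((t-t_0)/|I|)\int_{t_0}^{t}1_I G$. It coincides with $\int_{t_0}^t G$ on $I$ and is supported on an interval of length $\lesssim |I|$. Hence
\[
\|h\|_{L^2}\lesssim |I|^{1/2}\|h\|_{L^\infty}\le |I|\,\|G\|_{L^2_I},\qquad \|h'\|_{L^2}\le \|G\|_{L^2_I}+|I|^{-1/2}\|\eta'\|_{L^2}\,|I|^{1/2}\|G\|_{L^2_I}\lesssim\|G\|_{L^2_I}.
\]
Then $\sum_{L\ge 1}L^{1/2}\min(|I|,L^{-1})\lesssim |I|^{1/2}$ closes the estimate; the crossover is now at $L\sim|I|^{-1}$ rather than $|I|^{-1/2}$.

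Also drop your remarks about large $|I|$: neither bound holds with a constant uniform in $|I|$. If $U_\alpha(-t)f\equiv\varphi$ on $I=[0,T]$ with $\|\varphi\|_{L^2}=1$, the Duhamel term equals $tU_\alpha(t)\varphi$. Its $X^{0,\frac12,1}_I$ norm is at least $\|t\|_{L^2(0,T)}\sim T^{3/2}$. On the other side, $|I|^{1/2}\|f\|_{L^2_IL^2_x}=T$, and $\|f\|_{X^{0,-\frac12,1}_I}\lesssim T^{1/2}$ (extend $U_\alpha(-t)f$ by $\eta((t-T/2)/T)\varphi$, whose time frequencies essentially lie in the block $L=1$). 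So the proposition must be read for $|I|\lesssim 1$, which is the only case the paper uses ($|I|\le 2$). Splitting $I$ into unit subintervals cannot produce a uniform constant.
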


\subsection{Strichartz estimates}
We will need the following Strichartz estimates that are essentially proven in Theorem 2.1 of \cite{KPV3}.
\begin{lemm}\label{str}
Let  $L_{\al+1}$ satisfying Hypothesis \ref{Hy1}, $T>0$, $N>0 $, $ \varphi\in L^2(\R) $ and $ f\in L^\frac{4}{3}_T L^1_x $. Then
\begin{eqnarray}
\|e^{-tL_{\al+1}}P_{N}\varphi\|_{L^4_T L^\infty_{x}}
&\lesssim & N^{\fr{1-\al}{4}}||P_{N}\varphi||_{L^2_x} \label{stri1} \\
\text{ and } \quad \quad \Bigr\|\int_{\R} e^{-(t-t')L_{\al+1}}P_{N}f dt'\Bigl\|_{L^4_T L^\infty_x}
&\lesssim &N^{\fr{1-\al}{2}}||P_{N}f||_{L^{\fr43}_T L^1_{x}}. \label{stri2}
\end{eqnarray}
\end{lemm}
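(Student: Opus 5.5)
The two estimates in Lemma \ref{str} follow from a frequency-localized dispersive decay estimate combined with the standard $TT^*$ argument. For \eqref{stri1}, I would write $e^{-tL_{\al+1}}P_N\varphi = K_N(t,\cdot)\ast \varphi$, where
\[
K_N(t,x)=\int_{\R} e^{i(x\xi + t p_{\al+1}(\xi))}\phi_N(\xi)\,d\xi ,
\]
up to the sign convention. The first step is to prove the decay bound
\[
\|K_N(t,\cdot)\|_{L^\infty_x}\lesssim N^{\fr{1-\al}{2}}|t|^{-\fr12}.
\]
For $N\ge \xi_0$, this is Van der Corput's lemma with second derivatives. By Hypothesis \ref{Hy1} and the oddness of $p_{\al+1}$, we have $|p''_{\al+1}(\xi)|\sim N^{\al-1}$ on the support of $\phi_N$, and $\phi_N$ has bounded total variation. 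This gives $|K_N|\lesssim (|t|N^{\al-1})^{-1/2}$, which is the required bound.

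Combining this decay with the trivial bound $\|K_N\|_{L^\infty}\lesssim N$, the $L^1\to L^\infty$ estimate for $e^{-tL_{\al+1}}P_N$ decays like $|t|^{-1/2}$. The $TT^*$ argument then applies: the convolution in time with $|t-t'|^{-1/2}$ maps $L^{4/3}_t$ to $L^4_t$ by Hardy--Littlewood--Sobolev. This yields
\[
\|TT^*F\|_{L^4_tL^\infty_x}\lesssim N^{\fr{1-\al}{2}}\|F\|_{L^{4/3}_tL^1_x},
\]
hence $\|T\|_{L^2\to L^4L^\infty}\lesssim N^{\fr{1-\al}{4}}$, which is \eqref{stri1}. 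The same computation applied to $\int_\R e^{-(t-t')L_{\al+1}}P_Nf\,dt'$, which is exactly $TT^*$ applied to $f$ localized at frequency $\sim N$ (with $\tilde P_N$ absorbing the double projection), gives \eqref{stri2} directly with constant $N^{\fr{1-\al}{2}}$. Restricting to $[0,T]$ is harmless, since we may extend $f$ by zero outside $[0,T]$.

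The main obstacle is the low- and intermediate-frequency range $N\lesssim \xi_0$. There the hypothesis gives no lower bound on $p''_{\al+1}$; it may vanish, for instance at $\xi=0$ by oddness. For these $N$, I would fall back on the bound $\|K_N\|_{L^\infty}\lesssim N$ together with Bernstein's inequality and Hölder in time, which give estimates with constants depending on $T$ and $\xi_0$. Since $N^{\fr{1-\al}{4}}\gtrsim 1$ for bounded $N$ when $\al>1$, this loss can be absorbed in the implicit constant for $T\lesssim 1$. In practice the lemma is only used for $N\gg 1$, and as the text says, these statements are essentially contained in Theorem 2.1 of \cite{KPV3}. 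I would therefore state the result for $N\ge \xi_0$ and handle small $N$ in this crude way.
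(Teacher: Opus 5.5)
Your argument is correct and is essentially the standard proof behind the paper's citation of Theorem 2.1 in \cite{KPV3}: a frequency-localized $|t|^{-1/2}$ decay bound for the kernel via van der Corput, then $TT^*$ and Hardy--Littlewood--Sobolev in time (the van der Corput step needs $N\ge 2\xi_0$, not just $N\ge\xi_0$, so that $\text{supp}\,\phi_N\subset\{|\xi|\ge\xi_0\}$). Your caution about $N\lesssim\xi_0$ is justified, since Hypothesis \ref{Hy1} allows e.g.\ $p_{\al+1}$ linear near $0$, where no $T$-uniform bound can hold; your crude Bernstein/H\"older bound, with constants depending on $T$ and $\xi_0$, is the right fix and suffices because the paper only uses the lemma for $0<T\le 2$.
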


\section{Refined linear and  bilinear Strichartz estimates}\label{Sect3}
\subsection{A refined linear Strichartz estimate}
\begin{lemm}\label{Cu2}
	Let $0<T\le 2,\,  s\geq 0$ and $u \in C([0,T];H^s(\R))$ satisfying
	\eqref{gKdV} on $]0,T[\times \R$  with $L_{\al+1}$ satisfying Hypothesis \ref{Hy1}. Then for any $ N\ge 1$ it holds
	
	\begin{align*}
		\langle N \rangle^s\|u_{N}\|_{L^2_T L^\infty_x}
		\lesssim T^{\fr14} N^{\fr{2-\al}{3}}
		\|u\|_{L^\infty_T H^s} (1+\|u\|_{L^2_T L^2_x}). 
	\end{align*}
\end{lemm}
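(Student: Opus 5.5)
We follow the refined Strichartz strategy sketched in the introduction: cut $[0,T]$ into short intervals adapted to the frequency $N$ and write $u_N$ in Duhamel form on each of them. Fix $N\ge 1$ and a length $\delta=N^{-\gamma}$, with $\gamma>0$ chosen at the end. Split $[0,T]=\bigcup_{j} I_j$ with $I_j=[a_j,a_{j+1}]$, $|I_j|\le \delta$, and $\#\{j\}\lesssim 1+T\delta^{-1}$. On each $I_j$ we write
$$
u_N(t)=e^{-(t-a_j)L_{\al+1}}u_N(a_j)-\frac12\int_{a_j}^t e^{-(t-t')L_{\al+1}}\partial_x P_N(u^2)(t')\,dt',\qquad t\in I_j ,
$$
which is legitimate since $u\in C([0,T];H^s)$ solves \eqref{gKdV} in the distributional sense.

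For the linear part, H\"older in time on $I_j$ followed by \eqref{stri1} gives
$$
\|e^{-(t-a_j)L_{\al+1}}u_N(a_j)\|_{L^2_{I_j}L^\infty_x}\lesssim \delta^{1/4}N^{\frac{1-\al}{4}}\|u_N\|_{L^\infty_TL^2_x}.
$$
For the Duhamel part, we first remove the truncation $\int_{a_j}^t$ by the usual Christ--Kiselev argument, or by a direct variant of \eqref{stri2}, since $4>4/3$. Then \eqref{stri2} and Bernstein for the derivative give
$$
\|\cdot\|_{L^2_{I_j}L^\infty_x}\lesssim \delta^{1/4}N^{\frac{1-\al}{2}}\,N\,\|P_N(u^2)\|_{L^{4/3}_{I_j}L^1_x}\lesssim \delta^{1/4}\delta^{3/4}N^{\frac{3-\al}{2}}\|u\|_{L^\infty_{T}L^2_x}^2 .
$$
Here we used $\|u^2\|_{L^1_x}\le\|u\|_{L^2_x}^2$. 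Alternatively we keep one factor in $L^2_T$, i.e. $\|u^2\|_{L^{4/3}_{I_j}L^1}\le |I_j|^{1/4}\|u\|_{L^\infty_{I_j}L^2}\|u\|_{L^2_{I_j}L^2}$, to produce the factor $\|u\|_{L^2_TL^2_x}$ in the statement.

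We now square and sum over $j$. The linear part gives $(T\delta^{-1})^{1/2}\delta^{1/4}N^{\frac{1-\al}{4}}\|u\|_{L^\infty_TL^2}$. For the nonlinear part we use the $L^2_{I_j}L^2$ version and $\sum_j\|u\|_{L^2_{I_j}L^2}^2=\|u\|_{L^2_TL^2}^2$, which gives $\delta^{3/4}N^{\frac{3-\al}{2}}\|u\|_{L^\infty_TL^2}\|u\|_{L^2_TL^2}$. This loses the $T^{1/4}$, so we recover it by using $\|u\|_{L^2_{I_j}L^2}\le \delta^{1/2}\|u\|_{L^\infty L^2}$ on part of the norm, or by absorbing it with $T\le2$ and the choice of $\delta$; the bookkeeping of the powers of $T$ is routine. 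Balancing the two contributions requires $\delta^{-1/4}N^{\frac{1-\al}{4}}=\delta^{3/4}N^{\frac{3-\al}{2}}$, i.e. $\delta=N^{-\frac{5-\al}{4}}$. The common bound is then $N^{\frac{1-\al}{4}+\frac{5-\al}{16}}$, which does not match $N^{\frac{2-\al}{3}}$. So the right accounting must put one factor of $u^2$ in $L^\infty_x$ at lower cost, or use $L^\infty_x$ Bernstein differently. The main obstacle is precisely this calibration of $\delta$ and of the H\"older exponents so that the exponent comes out as $\frac{2-\al}{3}$. We would try $\delta=N^{-\frac{\al+1}{3}}$, which turns the linear factor into $\delta^{-1/4}N^{\frac{1-\al}{4}}=N^{\frac{2-\al}{6}}$ times $T^{1/2}$, and re-estimate the Duhamel term in $L^1_{I_j}$ in time with \eqref{stri1} by Minkowski, costing $N\cdot N^{\frac{1-\al}{4}}\delta$.

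Finally, the weight $\langle N\rangle^s$ is handled by the fractional Leibniz bound $\langle N\rangle^s\|P_N(u^2)\|_{L^1}\lesssim \|u\|_{H^s}\|u\|_{L^2}$, valid for $s\ge0$ by splitting into high--low and high--high interactions. Applying this in each term yields the stated factor $\|u\|_{L^\infty_TH^s}(1+\|u\|_{L^2_TL^2_x})$.
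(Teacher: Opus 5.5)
Your skeleton (Duhamel formula on short intervals, \eqref{stri1} for the free part, \eqref{stri2} with Christ--Kiselev for the Duhamel part, then square-summing over $j$) is exactly the paper's. The proposal, however, breaks down at the summation step and never reaches the estimate.

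\textbf{The arithmetic slip.} After square-summing, the Duhamel contribution is $\delta^{1/2}N^{\frac{3-\alpha}{2}}$, not $\delta^{3/4}N^{\frac{3-\alpha}{2}}$. Your own per-interval bound $\delta^{1/4}\cdot N^{\frac{3-\alpha}{2}}\cdot \delta^{1/4}\|u\|_{L^\infty_TL^2_x}\|u\|_{L^2_{I_j}L^2_x}$ gives
$\sum_j \delta N^{3-\alpha}\|u\|_{L^\infty_TL^2_x}^2\|u\|_{L^2_{I_j}L^2_x}^2=\delta N^{3-\alpha}\|u\|_{L^\infty_TL^2_x}^2\|u\|_{L^2_TL^2_x}^2$.
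The all-$L^\infty_T$ version gives $T\delta N^{3-\alpha}\|u\|_{L^\infty_TL^2_x}^4$, again $\delta^{1/2}$ after the square root.

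Balancing $\delta^{-1/4}N^{\frac{1-\alpha}{4}}=\delta^{1/2}N^{\frac{3-\alpha}{2}}$ then gives $\delta=N^{-\frac{5-\alpha}{3}}$, which is the paper's choice. Both contributions become $N^{\frac{5-\alpha}{12}+\frac{1-\alpha}{4}}=N^{\frac{2-\alpha}{3}}$ exactly. So there is no calibration obstacle: your ``does not match'' conclusion, and the search for another $\delta$, come from this slip. Incidentally, the erroneous exponent $\frac{9-5\alpha}{16}$ is smaller than $\frac{2-\alpha}{3}$ for $\alpha<5$, so even at face value it was not an obstruction.

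\textbf{The fallback would fail.} \eqref{stri1} needs $L^2_x$ data, so the Minkowski route costs an extra $N^{1/2}$ from Bernstein on $P_N(u^2)\in L^1_x$. Even ignoring that, with $\delta=N^{-\frac{\alpha+1}{3}}$ the Duhamel term is of size $\delta^{3/4}N^{\frac{5-\alpha}{4}}=N^{\frac{2-\alpha}{2}}$, which is worse than $N^{\frac{2-\alpha}{3}}$ for $\alpha<2$.

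\textbf{The powers of $T$.} These are not ``routine absorption with $T\le 2$'': an upper bound on $T$ cannot create a positive power of $T$. The fix, as in the paper, is to take intervals of length $|I_j|\sim TN^{-\frac{5-\alpha}{3}}$, with about $N^{\frac{5-\alpha}{3}}$ of them. The linear part then gives $T^{1/4}N^{\frac{2-\alpha}{3}}\|u\|_{L^\infty_TH^s}$.

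Your Hölder step $\langle N\rangle^s\|P_N(u^2)\|_{L^{4/3}_{I_j}L^1_x}\lesssim |I_j|^{1/4}\|u\|_{L^\infty_TH^s}\|u\|_{L^2_{I_j}L^2_x}$ is worth keeping. It gives the Duhamel part the bound $T^{1/2}N^{\frac{2-\alpha}{3}}\|u\|_{L^\infty_TH^s}\|u\|_{L^2_TL^2_x}\lesssim T^{1/4}(\cdots)$. This is what produces the $\|u\|_{L^2_TL^2_x}$ in the statement. The paper's written argument bounds $P_N(u^2)$ in $L^\infty_TL^1_x$, so it literally only yields $\|u\|_{L^\infty_TL^2_x}$ there.
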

\begin{proof}
We follow \cite{KT03}. We chop the time interval $[0,T]$ into small pieces of length $\sim N^{-\delta}T$, $\de=\fr{5-\al}{3}$, i.e., we define $\{I_{j,N}\}_{j\in J_{N}}$ with $\# J_{N}\sim N^\delta$ so that $\bigcup_{j\in J_{N}}=[0,T],\ |I_{j,N}|\sim N^{-\delta}T$. Then substituting $u $ by its integral formulation on each small interval, applying Lemma \ref{str} and making use of the classical Christ-Kiselev lemma (cf. \cite{CK01}), we eventually get
	\begin{align*}
		\langle N \rangle^{2s}&\|u_{N}\|_{L^2_T L^\infty_x}^2=\langle N \rangle^{2s}\sum_{j\in J_{N}}\|u_{N}\|_{L^2({I_{j,N}};L^\infty_x)}^2
		\\& \lesssim \langle N \rangle^{2s}\sum_{j\in J_{N}} |I_{j,N}|^{\fr12} \|u_{N}\|_{L^4({I_{j,N}};L^\infty_x)}^2
		\\& \lesssim \langle N \rangle^{2s}\sum_{j\in J_{N}} |I_{j,N}|^{\fr12}N^{\fr{1-\al}{2}}
		\big(\|u_{N}\|_{L^\infty({I_{j,N}};L^2_x)}^2
		+N^{\fr{1-\al}{2}}  \|\pa_x P_N(u^2)\|_{L^{\fr43}({I_{j,N}};L^1_x)}^2 \big)
		\\
		& \lesssim \langle N \rangle^{2s}\sum_{j\in J_{N}} |I_{j,N}|^{\fr12}N^{\fr{1-\al}{2}}
		\big(\|u_{N}\|_{L^\infty_T L^2_x}^2
		+N^{\fr{1-\al}{2}} N^2  |I_{j,N}|^{\fr32}\| P_N(u^2)\|_{L^\infty_T L^1_x)}^2 \big) \\
		& \lesssim \sum_{j\in J_{N}} |I_{j,N}|^{\fr12}N^{\fr{1-\al}{2}}
		\big(\|u\|_{L^\infty_T H^s}^2
		+N^{\fr{1-\al}{2}} N^2  |I_{j,N}|^{\fr32}\| u\|_{L^\infty_T H^s}^2\| u\|_{L^\infty_T L^2_x}^2 \big) ,
		\end{align*}
		where we use the fact that $\langle N \rangle^{s}\|P_{N}(u^2) \|_{L^{\infty}_{T} L^1_{x}}\lesssim  \|u\|_{L^\infty_T H^s}\|u\|_{L^\infty_T L^2_{x}}$  for $ s\ge 0 $.
		Now we optimize by requiring the expression  in the above parentheses to be bounded uniformly in $ N $, i.e. $N^{\fr{1-\al}{2}} N^2  N^{-\fr32 \delta} \sim 1 $. This leads to $ \delta=\frac{5-\alpha}{3} $. With this choice of $ \delta $, recalling that  $\# J_{N}\sim N^\delta$, we get 
		\begin{align*}
\langle N \rangle^{2s}\|u_{N}\|_{L^2_T L^\infty_x}^2& \lesssim \sum_{j\in J_{N}}  (TN^{-\delta })^{\fr12}N^{\fr{1-\al}{2}}
		\|u\|_{L^\infty_T H^s}^2 (1+\|u\|_{L^2_T L^2_x}^2)
				\\
				& \lesssim   T^\frac{1}{2} N^{\frac{\delta}{2}} N^{\fr{1-\al}{2}}
		\|u\|_{L^\infty_T H^s}^2 (1+\|u\|_{L^2_T L^2_x}^2)
				\\& \lesssim T^{\fr12} N^{\fr{4-2\al}{3}} \|u\|_{L^\infty_T H^s}^2 (1+\|u\|_{L^2_T L^2_x}^2)\, .
	\end{align*}
	This completes the proof.
\end{proof}

\subsection{Refined bilinear Strichartz estimates}
The following bilinear estimates on small interval of times can be found for instance in \cite{MT25}.
\begin{prop}\label{prov}
	Let $a\in L^\infty(\R^2)$ be such that $\|a\|_{L^\infty}\lesssim 1, \delta\in  [1,\infty[, \al\ge 1, N_1\gtrsim N_2>0 $ with $N_1\geq 1$, $0<T\le 2 $ and $v_1,v_2\in X^{0,\fr12,1}$. Suppose that $I\subset \R$ satisfies $|I|\sim N_1^{-\delta}T$. Then it holds
	\begin{align}\label{prov1}
		\|\Lambda_a(P_{N_1} v_1 P_{\lesssim N_2}v_2)\|_{L^2(I;L^2)}
		\lesssim T^{\fr14} N_1^{-\fr{\al+\delta-1}{4}}\|P_{ N_1}v_1\|_{X^{0,\fr12,1}}\|P_{\lesssim N_2}v_2\|_{X^{0,\fr12,1}}.
	\end{align}
	Moreover, when $N_1\ \gg N_2$, \eqref{prov1} can be improved to
	\begin{align}\label{prov2}
		\|\Lambda_a(P_{N_1}v_1,P_{\lesssim N_2}v_2)\|_{L^2(I;L^2)}
		\lesssim N_1^{-\fr{\al}{2}}\|P_{ N_1}v_1\|_{X^{0,\fr12,1}}\|P_{\lesssim N_2}v_2\|_{X^{0,\fr12,1}}.
	\end{align}
\end{prop}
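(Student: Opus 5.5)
Since $X^{0,\frac12,1}$ functions are superpositions of modulated free solutions, I would reduce Proposition \ref{prov} to bilinear estimates for free solutions. Write $v_i=\sum_{L_i} Q_{L_i}v_i$ and, for each dyadic $L_i$, represent
$$Q_{L_i}P_{N_i}v_i(t)=\int_{|\tau_i|\lesssim L_i} e^{it\tau_i}\,U_\al(t)f_{i,\tau_i}\,d\tau_i ,\qquad \widehat{f_{i,\tau_i}}(\xi)=\mathcal{F}(Q_{L_i}P_{N_i}v_i)(\xi,\tau_i+p_{\al+1}(\xi)).$$
By Minkowski and Cauchy--Schwarz in $\tau_i$ one gets $\int\|f_{i,\tau_i}\|_{L^2}d\tau_i\lesssim L_i^{1/2}\|Q_{L_i}P_{N_i}v_i\|_{L^2}$. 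Summing over $L_i$ then shows it suffices to prove \eqref{prov1} and \eqref{prov2} with $v_i$ replaced by $U_\al(t)\varphi_i$ and the $X^{0,\frac12,1}$ norms replaced by $\|\varphi_i\|_{L^2}$. The factors $e^{it\tau_i}$ are harmless, since the $L^2(I;L^2)$ norm does not see unimodular phases depending only on $t$. The symbol $a$ plays no role in the argument once one passes to absolute values of Fourier transforms, or alternatively uses \eqref{estcom}.

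For \eqref{prov1}, with no separation of frequencies, I would use the Strichartz estimate \eqref{stri1} together with H\"older in time on $I$:
$$\|\Lambda_a(P_{N_1}U_\al\varphi_1,P_{\lesssim N_2}U_\al\varphi_2)\|_{L^2_IL^2}\lesssim \|P_{N_1}U_\al\varphi_1\|_{L^\infty_IL^2}\,|I|^{1/4}\,\|P_{\lesssim N_2}U_\al\varphi_2\|_{L^4_IL^\infty}.$$
I would put the $L^4L^\infty$ norm on whichever factor gives the better power, either by using \eqref{estcom} or by symmetrizing. For the high-frequency factor, \eqref{stri1} gives $N_1^{\frac{1-\al}{4}}$, and $|I|^{1/4}\sim T^{1/4}N_1^{-\delta/4}$, so the product is $T^{1/4}N_1^{-\frac{\al+\delta-1}{4}}$, as claimed. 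The step that needs care is justifying that the $L^4_IL^\infty$ bound may be placed on the $N_1$-localized function, that is, that for $\Lambda_a$ with bounded $a$ one may bound $\|\Lambda_a(f,g)\|_{L^2}\lesssim\|f\|_{L^\infty}\|g\|_{L^2}$ with the roles swapped. If $a$ only satisfies \eqref{estcom} in one slot, I would instead bound by $\|\widehat{f}\|*\|\widehat g\|$ after writing $\Lambda_a$ in Fourier variables. This works because the estimate is for $L^2_x$ and the time integration can be handled by Plancherel in $x$ at fixed $t$.

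For \eqref{prov2}, when $N_1\gg N_2$, I would prove the classical bilinear smoothing estimate
$$\|P_{N_1}U_\al\varphi_1\,P_{\lesssim N_2}U_\al\varphi_2\|_{L^2_{t,x}}\lesssim N_1^{-\al/2}\|\varphi_1\|_{L^2}\|\varphi_2\|_{L^2},$$
which is global in time, so no gain from $|I|$ is needed. The proof takes the space-time Fourier transform. The output at $(\xi,\tau)$ is an integral over $\xi_1$ of $\hat\varphi_1(\xi_1)\hat\varphi_2(\xi-\xi_1)$ restricted to $\tau=p(\xi_1)+p(\xi-\xi_1)$. Applying Cauchy--Schwarz in $\xi_1$ and then changing variables $(\xi_1,\xi)\mapsto(\xi,\tau)$ produces the Jacobian $|p'_{\al+1}(\xi_1)-p'_{\al+1}(\xi-\xi_1)|$. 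By Hypothesis \ref{Hy1}, since $|\xi_1|\sim N_1\gg|\xi-\xi_1|$, this Jacobian is $\gtrsim N_1^{\al}$; when $|\xi-\xi_1|\lesssim\xi_0$ this uses that $p'$ is bounded there. The bound $N_1^{-\al/2}$ follows. Inserting the bounded symbol $a$ does not affect the argument, since we estimate $|\widehat{\cdot}|$ pointwise.

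The main obstacle I expect is the transfer step from free solutions to $X^{0,\frac12,1}$ when the norms are restricted to the interval $I$ and combined with the pseudo-product. In particular, the $\ell^1$ summation in $L$ must absorb the factor $L^{1/2}$ exactly, which is precisely why the Besov-type $X^{0,\frac12,1}$ norm, rather than $X^{0,\frac12+}$, appears.
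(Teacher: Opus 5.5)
The paper does not prove this proposition; it quotes it from \cite{MT25}. Your overall scheme is the standard one, and most of it is right:
\begin{itemize}
\item The transference from $X^{0,\frac12,1}$ to free solutions is correct; the $\ell^1$ sum over $L$ absorbs exactly the factor $L^{1/2}$.
\item The exponent in \eqref{prov1} is exactly $|I|^{1/4}N_1^{\frac{1-\al}{4}}$, obtained from H\"older in time and \eqref{stri1} on the $N_1$-factor.
\item Your proof of \eqref{prov2}, via Cauchy--Schwarz and the Jacobian $|p'_{\al+1}(\xi_1)-p'_{\al+1}(\xi-\xi_1)|\gtrsim N_1^{\al}$, is correct. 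So is your remark that $a$ is harmless there, because you bound the \emph{space--time} Fourier transform pointwise.
\end{itemize}
One small caveat on \eqref{prov2}: Hypothesis \ref{Hy1} gives that lower bound only for $N_1\gg\xi_0$. For $1\le N_1\lesssim\xi_0$ the global-in-time estimate can fail (e.g.\ if $p_{\al+1}$ is linear near $0$), but there the trivial bound $|I|^{1/2}N_2^{1/2}$ suffices.

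The gap is the treatment of a general bounded symbol in \eqref{prov1}.
\begin{itemize}
\item The inequality $\|\Lambda_a(f,g)\|_{L^2_x}\lesssim\|f\|_{L^\infty_x}\|g\|_{L^2_x}$ is false for merely bounded $a$, and the proposition does not assume \eqref{estcom}.
\item Symmetrizing via \eqref{eq2-1} only produces another bounded symbol, so it does not help.
\item Your fallback (replace $\hat f,\hat g$ by $|\hat f|,|\hat g|$ and use Plancherel in $x$ at fixed $t$) fails. At fixed $t$, $|\mathcal F_x(U_\al(t)\varphi)|=|\hat\varphi|$ does not depend on $t$. The majorant is therefore a time-independent product, not a product of free solutions, so \eqref{stri1} cannot be applied to it and all dispersive gain is lost.
\end{itemize}

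The absolute values must be taken in $(\tau,\xi)$, exactly as you do for \eqref{prov2}. Because $1_I$ is a sharp cutoff, one more step is needed. Pick $\chi_I(t)=\chi((t-c_I)/|I|)$ smooth with $\chi_I\ge 1_I$, where $c_I$ is the center of $I$. Set $\hat\psi_i=|\hat\varphi_i|$ and $\widehat{\tilde\chi_I}=|\hat\chi_I|$. Then
\[
\bigl|\mathcal F_{t,x}\bigl(\chi_I\Lambda_a(U_\al\varphi_1,U_\al\varphi_2)\bigr)\bigr|\lesssim \mathcal F_{t,x}\bigl(\tilde\chi_I\,U_\al\psi_1\,U_\al\psi_2\bigr),
\]
hence
\[
\|\Lambda_a(U_\al\varphi_1,U_\al\varphi_2)\|_{L^2(I;L^2)}\lesssim\|\tilde\chi_I\|_{L^4_t}\,\|U_\al\psi_1\|_{L^4_tL^\infty_x}\,\|\psi_2\|_{L^2}.
\]
Here $\|\tilde\chi_I\|_{L^4}\lesssim\|\chi_I\|_{L^2}^{1/2}\|\hat\chi_I\|_{L^1}^{1/2}\lesssim|I|^{1/4}$. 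The Strichartz estimate is now used globally in time, which is valid for $N_1\gg\xi_0$; small $N_1$ is again trivial.

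With this repair your argument proves the proposition as stated. Alternatively, your original argument already suffices if you restrict to symbols satisfying \eqref{estcom}, which puts $L^\infty_x$ on the $N_1$-slot; the paper imposes this on all the symbols it uses.
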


\begin{prop}\label{pro}
	Let $a\in L^\infty(\R^2)$ be such that $\|a\|_{L^\infty}\lesssim 1, \delta\in [1,\infty[, \al\ge 1, N_1\gtrsim N_2>0 $ with $  N_1\geq 1, \, 0<T\le 2 $ and $\varphi_1,\varphi_2 \in L^2(\R)$. Suppose that $I\subset \R$ satisfies $|I|\sim N_1^{-\delta}T$. Then it holds
	\begin{align}\label{prop1}
		\|\Lambda_a(e^{-tL_{\al+1}}P_{N_1}\varphi_1,e^{-tL_{\al+1}}P_{\lesssim N_2}\varphi_2)\|_{L^2(I;L^2)}
		\lesssim T^{\fr14} N_1^{-\fr{\al+\delta-1}{4}}\|P_{N_1}\varphi_1\|_{L^2}\|P_{\lesssim N_2}\varphi_2\|_{L^2}.
	\end{align}
	Moreover, when $N_1\ \gg N_2$, \eqref{prop1} can be improved to
	\begin{align}\label{prop2}
		\|\Lambda_a(e^{-tL_{\al+1}}P_{N_1}\varphi_1,e^{-tL_{\al+1}}P_{\lesssim N_2}\varphi_2)\|_{L^2(I;L^2)}
		\lesssim N_1^{-\fr{\al}{2}}\|P_{N_1}\varphi_1\|_{L^2}\|P_{\lesssim N_2}\varphi_2\|_{L^2}.
	\end{align}
\end{prop}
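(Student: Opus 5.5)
We prove Proposition \ref{pro} first; Proposition \ref{prov} then follows from it by the standard transfer argument. Indeed, an element of $X^{0,\frac12,1}$ is a superposition $v(t)=\int e^{it\tau_0}e^{-tL_{\al+1}}f_{\tau_0}\,d\tau_0$ with $\int\|f_{\tau_0}\|_{L^2}d\tau_0\lesssim\|v\|_{X^{0,\frac12,1}}$. The modulation factors $e^{it\tau_0}$ have modulus one and do not affect the $L^2(I;L^2)$ norm. So the free estimates \eqref{prop1}--\eqref{prop2}, applied for each fixed pair $(\tau_0,\tau_0')$ and integrated, give \eqref{prov1}--\eqref{prov2}.

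\textbf{Bound \eqref{prop2}.} We use the classical bilinear smoothing estimate on all of $\R_t$. By Plancherel in $(t,x)$, we write
$$\mathcal{F}\big(\Lambda_a(e^{-tL}\varphi_1,e^{-tL}\varphi_2)\big)(\xi,\tau)=\int a(\xi,\xi_1)\hat\varphi_1(\xi_1)\hat\varphi_2(\xi-\xi_1)\,\delta\big(\tau-p(\xi_1)-p(\xi-\xi_1)\big)\,d\xi_1 .$$
For fixed $\xi$, we change variables $\xi_1\mapsto\tau$. The Jacobian is $|p'(\xi_1)-p'(\xi-\xi_1)|$. By Hypothesis \ref{Hy1}, on $|\xi_1|\sim N_1\gg|\xi-\xi_1|$ it satisfies $\gtrsim N_1^{\al}$; low frequencies $|\xi|\le\xi_0$ of the small factor only perturb this by $O(1)$. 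Cauchy--Schwarz then gives the $L^2_{t,x}$ norm bounded by $N_1^{-\al/2}\|\varphi_1\|_{L^2}\|\varphi_2\|_{L^2}$. Restricting to $I$ only decreases the norm.

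\textbf{Bound \eqref{prop1}.} Here $N_2\sim N_1$ is allowed, so the Jacobian may degenerate and we use H\"older in time on the short interval instead:
$$\|\Lambda_a(\cdot,\cdot)\|_{L^2(I;L^2)}\le|I|^{1/4}\,\|\Lambda_a(\cdot,\cdot)\|_{L^4(I;L^2)} .$$
Next we put one factor in $L^\infty_tL^2_x$ and the other in $L^4_tL^\infty_x$. Strictly this needs a bound of the form \eqref{estcom} for $\Lambda_a$; if only $\|a\|_\infty$ is available, we first decompose $a$ in Fourier series on the frequency box to separate variables. By \eqref{stri1}, placing the high-frequency factor $\varphi_1$ in $L^4L^\infty$ and the other factor in $L^\infty L^2$ gives
$$|I|^{1/4}N_1^{\frac{1-\al}{4}}\|\varphi_1\|_{L^2}\|\varphi_2\|_{L^2}=T^{1/4}N_1^{-\frac{\delta}{4}}N_1^{\frac{1-\al}{4}}\|\varphi_1\|_{L^2}\|\varphi_2\|_{L^2},$$
which is exactly $T^{\frac14}N_1^{-\frac{\al+\delta-1}{4}}$. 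When the Strichartz estimate is applied instead to the factor localized at $\lesssim N_2$, we sum its dyadic pieces $M\lesssim N_2$. This uses $M^{\frac{1-\al}{4}}$ with $\al\ge1$, which is bounded by $1$ rather than $N_1^{\frac{1-\al}{4}}$. Hence we always place the $N_1$ factor in $L^4L^\infty$, and no summation issue arises.

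\textbf{Main obstacle.} The key difficulty is rigor when $a$ is merely $L^\infty$, because a general pseudoproduct does not map $L^\infty\times L^2\to L^2$. We plan to handle this via the convention \eqref{estcom} imposed in the paper. Alternatively, we can prove \eqref{prop1} directly on the Fourier side: Cauchy--Schwarz, with $1_I$ contributing a $|I|^{1/2}$ factor, interpolated against the dispersive measure bound. A second, smaller point is the transfer to $X^{0,\frac12,1}$ on an interval $I$, where modulations below $|I|^{-1}$ are handled by the superposition argument above, which needs no localization in time.
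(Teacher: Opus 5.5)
Your argument for \eqref{prop2} is the standard global bilinear smoothing estimate and works for any bounded $a$, since after Plancherel only $|a|\le 1$ is used. Two details should be stated: for fixed $\xi$ the variable $\xi_1$ runs over an interval on which $|p'(\xi_1)-p'(\xi-\xi_1)|\gtrsim N_1^{\alpha}$, so the change of variables is injective; and the range $1\le N_1\lesssim \xi_0$ follows from the trivial bound. Your H\"older--Strichartz proof of \eqref{prop1} is also correct and gives exactly $|I|^{1/4}N_1^{\frac{1-\alpha}{4}}=T^{1/4}N_1^{-\frac{\alpha+\delta-1}{4}}$, \emph{provided} $\Lambda_a$ satisfies \eqref{estcom}. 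That is the standing requirement the paper places on its symbols, so your route suffices for every use of the proposition in the paper. The paper itself gives no proof; it cites \cite{MT25}. Your two-line argument is a genuinely more elementary alternative in that setting.

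However, the proposition as stated only assumes $\|a\|_{L^\infty}\lesssim 1$, and for such $a$ your proof of \eqref{prop1} has a gap. A merely bounded symbol gives no $L^\infty\times L^2\to L^2$ bound.

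Your Fourier-series fallback fails. The Fourier coefficients of a bounded function on the frequency box are only in $\ell^2$, not $\ell^1$, so the resulting sum of products of translates is not controlled by $\|a\|_{L^\infty}$.

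The ``direct Fourier-side'' alternative needs more than you indicate:
\begin{enumerate}
\item Taking $\min(N_1^{-\alpha/2},|I|^{1/2}N_1^{1/2})\le |I|^{1/4}N_1^{\frac{1-\alpha}{4}}$ handles only the part where the second factor is at frequency $\ll N_1$. The bound $N_1^{-\alpha/2}$ is false when both frequencies are $\sim N_1$.
\item The naive bound via $\sup_{\xi,\tau}|\{\xi_1:|\tau-p(\xi_1)-p(\xi-\xi_1)|\lesssim |I|^{-1}\}|$ breaks down for opposite-sign high--high interactions. There $p'(\xi_1)-p'(\xi-\xi_1)\approx p''(\xi_1)\,\xi$ is essentially constant in $\xi_1$, so for $|\xi|\lesssim (|I|N_1^{\alpha})^{-1}$ the set is the whole support. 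You then lose a factor $(|I|N_1^{1+\alpha})^{1/2}$.
\end{enumerate}

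What works for general $a$ is the following.
\begin{enumerate}
\item Replace $1_I$ by a smooth bump adapted to $I$.
\item Dominate the space--time Fourier transform by the positive kernel built from $|\hat\varphi_1|$, $|\hat\varphi_2|$ and $|I|\,|\hat\eta(|I|\cdot)|$.
\item Expand the square and apply Schur's test.
\begin{itemize}
\item For same-sign pieces, fix $(\xi,\xi_1)$ and integrate in $\xi_1'$.
\item For opposite-sign pieces, fix $(\xi_1,\xi_2')$ and integrate in $\xi_1'$, with $\xi_2=\xi_1'+\xi_2'-\xi_1$.
\end{itemize}
\end{enumerate}
In both cases the phase has second derivative of size $N_1^{\alpha-1}$; in the second case this uses that $p''$ is odd, so $p''(\xi_2)-p''(\xi_1')$ adds rather than cancels. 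Hence its sublevel sets at height $|I|^{-1}$ have measure $\lesssim (|I|N_1^{\alpha-1})^{-1/2}$. This gives the squared bound $|I|^{1/2}N_1^{\frac{1-\alpha}{2}}$, which is \eqref{prop1}.
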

With these last estimates in hands we establish the following bilinear estimates on small intervals for solutions to \eqref{gKdV}.
\begin{prop}\label{sti}
	Let $a\in L^\infty(\R^2)$ be such that $\|a\|_{L^\infty}\lesssim 1, \delta=\fr{5-\al}{3}, \al\ge 1 , N_1\gtrsim N_2>0 $ with $ N_1\geq 1$ and $0<T\le 2$. Suppose that $I\subset \R$ satisfies $|I|\sim N_1^{-\delta}T$. Let $u_1,u_2 \in C(I;L^2(\R))$ satisfying
	\begin{equation*}
		\begin{cases}
			\pa_t u_k+L_{\al+1} u_k+ \pa_x f_k=0,\\
			u_k(t_0,x)=\varphi_k(x),
		\end{cases}
	\end{equation*}
	on $I\times \R$ for $k=1,2$, with $L_{\al+1}$ satisfying Hypothesis \ref{Hy1} and $ t_0\in I$. Then
	\begin{align}\label{pros1}
		\|\Lambda_a(P_{N_1}u_1,P_{\lesssim N_2}u_2)\|_{L^2(I;L^2)}^2
		&\lesssim T^{\fr12}N_1^{-\fr{\al+\delta-1}{2}}(\|P_{N_1}\varphi_1\|_{ L^2_{x}}^2+\|P_{N_1}f_1 \|_{L^{\infty}_I L^1_{x}}^2)\\
		&\quad\times(\|P_{\lesssim N_2}\varphi_2\|_{ L^2_{x}}^2+\|P_{\lesssim N_2}f_2\|_{L^{\infty}_I L^1_{x}}^2). \nonumber
	\end{align}
	Moreover,
	\begin{align}\label{pros2}
		\|\Lambda_a(P_{N_1}u_1,P_{\lesssim N_2}u_2)\|_{L^2(I;L^2)}^2
		&\lesssim N_1^{-\al}(\|P_{N_1}\varphi_1\|_{ L^2_{x}}^2+\|P_{N_1}f_1 \|_{L^{\infty}_I L^1_{x}}^2)\\
		&\quad\times(\|P_{\lesssim N_2}\varphi_2\|_{ L^2_{x}}^2+\|P_{\lesssim N_2}f_2\|_{L^{\infty}_I L^1_{x}}^2), \nonumber
	\end{align}
	whenever $N_1\gg N_2$.
\end{prop}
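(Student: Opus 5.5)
Write each solution on $I$ via Duhamel from $t_0$:
$$P_{N_k}u_k(t)=e^{-(t-t_0)L_{\al+1}}P_{N_k}\varphi_k-\int_{t_0}^t e^{-(t-t')L_{\al+1}}\pa_xP_{N_k}f_k(t')\,dt',$$
with $P_{\lesssim N_2}$ in place of $P_{N_1}$ for $k=2$. Substituting both into $\Lambda_a(P_{N_1}u_1,P_{\lesssim N_2}u_2)$ produces four terms: linear--linear, linear--Duhamel, Duhamel--linear and Duhamel--Duhamel. The linear--linear term is handled directly by Proposition \ref{pro}, using \eqref{prop1} in general and \eqref{prop2} when $N_1\gg N_2$. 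The time translation $t_0$ is harmless, since $e^{t_0L_{\al+1}}$ is unitary on $L^2$.

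For the Duhamel parts I follow the standard Minkowski/Christ--Kiselev argument used in Lemma \ref{Cu2}. I write
$$\int_{t_0}^t e^{-(t-t')L_{\al+1}}F(t')dt'=\int_I 1_{t'<t}\,e^{-tL_{\al+1}}e^{t'L_{\al+1}}F(t')\,dt'.$$
The truncation $1_{t'<t}$ is removed either by Christ--Kiselev, or more simply by noting that the bilinear estimate holds for free waves with arbitrary data. Fixing $t'$ (and $t''$ in the Duhamel--Duhamel term), the integrand is a bilinear expression in free waves with data $e^{t'L_{\al+1}}\pa_xP_{N_1}f_1(t')$. Proposition \ref{pro} then bounds it by the prefactor $C(N_1)$ (either $T^{1/4}N_1^{-(\al+\delta-1)/4}$ or $N_1^{-\al/2}$) times $\|\pa_xP_{N_1}f_1(t')\|_{L^2_x}$ times the corresponding factor for $u_2$. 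Minkowski in $t'$ gives a bound $C(N_1)\,\|\pa_xP_{N_1}f_1\|_{L^1_IL^2_x}$. To keep the sharp $1_{t'<t}$ cut clean I would instead decompose $1_{t'<t}$ into the time-restricted free solutions $1_{[t',\infty)}(t)e^{-(t-t')L}$. The $L^2(I)$ norm on the left only shrinks when restricted to $[t',\infty)\cap I$, so the bound stays uniform.

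It remains to convert $\|\pa_xP_{N}f\|_{L^1_IL^2_x}$ into the $L^\infty_IL^1_x$ norm. Bernstein gives $\|\pa_xP_{N_1}f_1\|_{L^2_x}\lesssim N_1^{3/2}\|P_{N_1}f_1\|_{L^1_x}$, so
$$\|\pa_xP_{N_1}f_1\|_{L^1_IL^2_x}\lesssim |I|N_1^{3/2}\|P_{N_1}f_1\|_{L^\infty_IL^1_x}\sim TN_1^{3/2-\delta}\|P_{N_1}f_1\|_{L^\infty_IL^1_x}.$$
With $\delta=\frac{5-\al}{3}$ this gives $N_1^{3/2-\delta}=N_1^{(2\al-1)/6}$, which is a positive power and is not acceptable.

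This exponent count is the main obstacle. The fix I would try is to not place the derivative on the Duhamel data in $L^2$. Instead I would split $I$ into subintervals and use the bilinear estimate with $X^{0,1/2,1}$ norms (Proposition \ref{prov}). This requires bounding the Duhamel term in $X^{0,\frac12,1}_I$ through \eqref{Bour1}:
$$\|\cdot\|_{X^{0,\frac12,1}_I}\lesssim |I|^{1/2}\|\pa_xP_{N_1}f_1\|_{L^2_IL^2_x}\lesssim |I|\,N_1\cdot N_1^{1/2}\|P_{N_1}f_1\|_{L^\infty_IL^1_x}.$$
I would then check whether the combination matches. For the low-frequency factor $N_2\lesssim N_1$, Bernstein gives $N_2^{3/2}\le N_1^{3/2}$, so this factor is handled like the high one.

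Since the statement's right-hand side has the $f$ terms with no $N_1$ power, the intended accounting must presumably be that the $N_1$-powers produced by the Duhamel pieces are absorbed by the choice of $\delta$. I would therefore carefully redo the power count: the relevant gain comes from $|I|^{1/2}$ in \eqref{Bour1} combined with the fact that $\Lambda_a$ of a free wave and a Duhamel term loses only $N_1\,|I|$ relative to the linear case. If necessary, I would use \eqref{stri2} (Strichartz with $L^{4/3}_TL^1_x$ input) in place of Bernstein, exactly as in Lemma \ref{Cu2}, where $\delta$ was chosen precisely so that $N_1^{\frac{1-\al}{2}}N_1^2|I|^{3/2}\sim1$. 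I expect the final verification to reduce to this same identity. Squaring and expanding the four terms then yields \eqref{pros1} and \eqref{pros2}.
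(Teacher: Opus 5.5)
\textbf{Gap: the Duhamel contributions are never actually bounded.} These contributions are the whole content of the proposition, and neither estimate you carry out for them works.

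Your Minkowski-in-$t'$ plus Bernstein computation gives the factor $|I|N_1^{3/2}$, i.e. a loss $N_1^{(2\al-1)/6}$, as you note yourself. The $X^{0,\frac12,1}$ route through Proposition \ref{prov} and \eqref{Bour1} is no better. It produces $|I|^{1/2}\|\pa_xP_{N_1}f_1\|_{L^2_IL^2_x}\lesssim |I|N_1^{3/2}\|P_{N_1}f_1\|_{L^\infty_IL^1_x}$, again converting $L^1_x$ to $L^2_x$ by Bernstein with no dispersive gain.

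Your closing suggestion to ``use \eqref{stri2} in place of Bernstein'' is not yet an argument. Estimate \eqref{stri2} lands in $L^4_TL^\infty_x$, whereas Proposition \ref{pro} needs an $L^2_x$ datum. Moreover, removing $1_{t'<t}$ by Minkowski, as you propose, forces an $L^1$ norm in $t'$ and destroys any Strichartz gain. So the identity $N_1^{\frac{1-\al}{2}}N_1^2|I|^{3/2}\sim T^{3/2}$ you hope for is never reached by the steps you describe.

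\textbf{The missing idea.} This is what the paper uses, via Corollaries 3.11--3.12 of \cite{MT3}. Put the Duhamel data into $L^2_x$ with the \emph{dual} of \eqref{stri1}:
\[
\Bigl\|\int_I e^{t'L_{\al+1}}\pa_xP_{N_1}f_1(t')\,dt'\Bigr\|_{L^2_x}\lesssim N_1^{1+\frac{1-\al}{4}}\|P_{N_1}f_1\|_{L^{4/3}_IL^1_x}.
\]
Then feed $e^{-tL_{\al+1}}$ of this datum into Proposition \ref{pro}.

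The truncation $1_{t'<t}$ must be removed by Christ--Kiselev, not Minkowski. Apply it to the linear map $F\mapsto\Lambda_a\bigl(e^{-tL_{\al+1}}P_{N_1}\varphi_1,\int_I e^{-(t-t')L_{\al+1}}F(t')\,dt'\bigr)$ from $L^{4/3}_IL^1_x$ to $L^2_IL^2_x$. This is legitimate because $4/3<2$. For the Duhamel--Duhamel term, apply it once in each variable.

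H\"older then gives $\|\cdot\|_{L^{4/3}_I}\le|I|^{3/4}\|\cdot\|_{L^\infty_I}$, so the Duhamel piece costs $N_1^{\frac{5-\al}{2}}|I|^{3/2}\sim T^{3/2}$, exactly absorbed by $\delta=\frac{5-\al}{3}$. For the second factor, sum $\tilde N_2^{\frac{5-\al}{2}}$ over dyadic $\tilde N_2\lesssim N_2\lesssim N_1$. The same argument with \eqref{prop2} gives \eqref{pros2}.

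As a consistency check, the dual-Strichartz factor $N_1^{\frac{1-\al}{4}}|I|^{3/4}$ compared with your $N_1^{1/2}|I|$ gains exactly $N_1^{(1-2\al)/6}$. That is precisely the loss you found.
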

\begin{proof}
Following \cite{MT3}, we substitute $ u_i $ by its integral formulation on $ I $. By Proposition \ref{pro}, \eqref{stri2}  and an application of Christ-Kiselev lemma
	(see for instance Corollaries 3.11 and 3.12 in \cite{MT3}) , we have
	\begin{align*}
		\|\Lambda_a(P_{N_1}u_1,P_{\lesssim N_2}u_2)\|_{L^2(I;L^2)}^2
		&\lesssim T^{\fr12}N_1^{-\fr{\al+\delta-1}{2}}(\|P_{N_1}\varphi_1\|_{ L^2_{x}}^2+N_1^2 N_1^{\fr{1-\al}{2}}\|P_{N_1}f_1 \|_{L^{\fr43}_I L^1_{x}}^2)\\
		&\quad\times(\|P_{\lesssim N_2}\varphi_2\|_{ L^2_{x}}^2
		+\sum_{0<\tilde{N}_2\lesssim N_2} \tilde{N}_2^2 \tilde{N}_2^{\fr{1-\al}{2}}\|P_{\tilde{N}_2}f_2\|_{L^{\fr43}_I L^1_{x}}^2
		)
		\\&\lesssim T^{\fr12} N_1^{-\fr{\al+\delta-1}{2}}(\|P_{N_1}\varphi_1\|_{ L^2_{x}}^2+N_1^{\fr{5-\al}{2}}|I|^{\fr32}\|P_{N_1}f_1 \|_{L^{\infty}_I L^1_{x}}^2)\\
		&\quad\times\Bigl(\|P_{\lesssim N_2}\varphi_2\|_{ L^2_{x}}^2+N_2^{\fr{5-\al}{2}}|I|^{\fr32}\|P_{N_2}f_2\|_{L^{\infty}_I L^1_{x}}^2\Bigr)
		\\&\lesssim T^{\fr12} N_1^{-\fr{\al+\delta-1}{2}}(\|P_{N_1}\varphi_1\|_{ L^2_{x}}^2+T^{\fr32}\|P_{N_1}f_1 \|_{L^{\infty}_I L^1_{x}}^2)\\
		&\quad\times(\|P_{\lesssim N_2}\varphi_2\|_{ L^2_{x}}^2+T^{\fr32}\|P_{\lesssim N_2}f_2\|_{L^{\infty}_I L^1_{x}}^2)
		\\&\lesssim T^{\fr12}N_1^{-\fr{\al+\delta-1}{2}}(\|P_{N_1}\varphi_1\|_{ L^2_{x}}^2+\|P_{N_1}f_1 \|_{L^{\infty}_I L^1_{x}}^2)\\
		&\quad\times(\|P_{\lesssim N_2}\varphi_2\|_{ L^2_{x}}^2+\|P_{\lesssim N_2}f_2\|_{L^{\infty}_I L^1_{x}}^2),
	\end{align*}
	where we used that for our choice of $\delta$, it holds  $ N_1^{\fr{5-\al}{2}}|I|^{\fr32}\sim   T^\frac{3}{2} $.
	Moreover, whenever $N_1 \gg  N_2$, by Proposition \ref{pro}, $T^{\fr12}N_1^{-\fr{\al+\delta-1}{2}}$ is replaced by $N_1^{-\al}$, which leads to the desired result.
\end{proof}

We are now in position to state our refined bilinear estimates on $ [0,T] $ for solutions to \eqref{gKdV}.
\begin{prop}\label{bl0}
	Let $a\in L^\infty(\R^2)$ be such that $\|a\|_{L^\infty}\lesssim 1, \al\ge 1, 0<T\le 2, N_1\gtrsim N_2,N_1\geq 1, s\geq 0$ and $u_1,u_2 \in L^\infty(]0,T[;H^s(\R))$ satisfying
	\eqref{gKdV} on $]0,T[\times \R$ for $k=1,2$, with $L_{\al+1}$ satisfying Hypothesis \ref{Hy1}. Then
	\begin{align}
		\langle N_1 \rangle^{s}\|\Lambda_a(P_{N_1}u_1,P_{\lesssim N_2}u_2)\|_{L^2_{T,x}}
		&\lesssim T^{\fr14} N_1^{\fr{2-\al}{3}} 
		  \|u_1\|_{L^\infty_T H^{s}}(1+\|u_1\|_{L^\infty_T L^2_{x}}) \nonumber \\
		  & \hspace*{15mm} \times\|u_2\|_{L^\infty_T L^2_{x}}(1+\|u_2\|_{L^\infty_T L^2_{x}}),\label{bili1} \; .
	\end{align}
	Moreover,
	\begin{align}
		\langle N_1 \rangle^{s}\|\Lambda_a(P_{N_1}u_1,P_{\lesssim N_2}u_2)\|_{L^2_{T,x}}
		&\lesssim N_1^{\fr{5-4\al}{6}}  \|u_1\|_{L^\infty_T H^{s}}(1+\|u_1\|_{L^\infty_T L^2_{x}}) \nonumber \\
		 & \hspace*{15mm} \times\|u_2\|_{L^\infty_T L^2_{x}}(1+\|u_2\|_{L^\infty_T L^2_{x}}),\label{bili2} \; ,	\end{align}
	whenever $N_1\gg N_2$.
\end{prop}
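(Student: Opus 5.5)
Our plan is to chop $[0,T]$ into intervals $\{I_{j,N_1}\}_{j\in J_{N_1}}$ with $|I_{j,N_1}|\sim N_1^{-\delta}T$, $\delta=\fr{5-\al}{3}$ and $\#J_{N_1}\sim N_1^{\delta}$, exactly as in the proof of Lemma \ref{Cu2}. We then write
\[
\|\Lambda_a(P_{N_1}u_1,P_{\lesssim N_2}u_2)\|_{L^2_{T,x}}^2=\sum_{j\in J_{N_1}}\|\Lambda_a(P_{N_1}u_1,P_{\lesssim N_2}u_2)\|_{L^2(I_{j,N_1};L^2)}^2
\]
and apply Proposition \ref{sti} on each interval with $f_k=\fr12 u_k^2$. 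We take $t_0$ to be any point of $I_{j,N_1}$ where $u_k(t_0)$ is defined with $\|u_k(t_0)\|_{H^s}\le\|u_k\|_{L^\infty_TH^s}$. This is legitimate since $u_k\in L^\infty_TH^s$ solves the Duhamel formula a.e., so $u_k$ is in fact in $C([0,T];H^{-\infty})$ and the integral formulation holds on each subinterval.

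The data terms are bounded as follows:
\[
\langle N_1\rangle^{s}\|P_{N_1}\varphi_1\|_{L^2}\le \|u_1\|_{L^\infty_TH^s},\qquad \|P_{\lesssim N_2}\varphi_2\|_{L^2}\le\|u_2\|_{L^\infty_TL^2_x}.
\]
For the nonlinear terms we use, as in Lemma \ref{Cu2}, the bound $\langle N_1\rangle^s\|P_{N_1}(u_1^2)\|_{L^\infty_TL^1_x}\lesssim\|u_1\|_{L^\infty_TH^s}\|u_1\|_{L^\infty_TL^2_x}$, valid for $s\ge0$ by distributing $\langle N_1\rangle^s\lesssim\langle\xi_1\rangle^s+\langle\xi-\xi_1\rangle^s$. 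For $u_2$ we only need $\|P_{\lesssim N_2}(u_2^2)\|_{L^\infty_TL^1_x}\lesssim\|u_2\|_{L^\infty_TL^2_x}^2$. Note that $P_{\lesssim N_2}$ is bounded on $L^1$, being a convolution with an $L^1$-normalized kernel.

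For \eqref{bili1}, estimate \eqref{pros1} gives, per interval, the factor $T^{1/2}N_1^{-\fr{\al+\delta-1}{2}}$ times the squared right-hand side. Summing over the $\sim N_1^{\delta}$ intervals yields
\[
T^{1/2}N_1^{\delta-\fr{\al+\delta-1}{2}}=T^{1/2}N_1^{\fr{\delta-\al+1}{2}}=T^{1/2}N_1^{\fr{4-2\al}{3}}.
\]
Taking square roots gives $T^{1/4}N_1^{\fr{2-\al}{3}}$ as claimed.

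For \eqref{bili2} with $N_1\gg N_2$, we instead use \eqref{pros2}, which gives the factor $N_1^{-\al}$ per interval. Summing then gives $N_1^{\delta-\al}=N_1^{\fr{5-4\al}{3}}$, hence $N_1^{\fr{5-4\al}{6}}$ after taking square roots. We absorb $T\le2$ into the constant.

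The only delicate point is justifying Proposition \ref{sti} for merely $L^\infty_TL^2$ solutions (rather than $C(I;L^2)$). We handle it as above: the equation implies $\partial_tu_k\in L^\infty_TH^{-\infty}$, so $u_k$ has a weakly continuous representative in $L^2$. Alternatively, we apply Proposition \ref{sti} to the frequency-localized pieces, which are continuous in time. The rest is bookkeeping of the exponents already arranged by the choice $\delta=\fr{5-\al}{3}$.
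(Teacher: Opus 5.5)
Your proposal is correct and is essentially the paper's proof: you chop $[0,T]$ into $\sim N_1^{\delta}$ intervals of length $\sim N_1^{-\delta}T$ with $\delta=\frac{5-\alpha}{3}$, apply \eqref{pros1} (resp.\ \eqref{pros2}) of Proposition~\ref{sti} on each with $f_k=\frac12u_k^2$, bound the data and $L^\infty_{I}L^1_x$ terms by $\|u_1\|_{L^\infty_TH^s}(1+\|u_1\|_{L^\infty_TL^2_x})$ and $\|u_2\|_{L^\infty_TL^2_x}(1+\|u_2\|_{L^\infty_TL^2_x})$, and sum to get $T^{1/2}N_1^{\frac{4-2\alpha}{3}}$ (resp.\ $N_1^{\frac{5-4\alpha}{3}}$) before taking square roots. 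Your deviations (a generic good time in $I_{j,N_1}$ instead of its left endpoint $c_j$, and an explicit check that Proposition~\ref{sti} applies to merely $L^\infty_TL^2$ solutions, which the paper leaves implicit) are harmless; the one thing to tidy is the bound on $\langle N_1\rangle^s\|P_{N_1}(u_1^2)\|_{L^1_x}$, which you should justify by noting that every term of $P_{N_1}(u_1^2)$ contains a factor $P_{\gtrsim N_1}u_1$ and then using H\"older and the $L^1$-boundedness of $P_{N_1}$, since distributing Fourier weights does not act directly on an $L^1_x$ norm.
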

\begin{proof}
	We chop the time interval $[0,T]$ into small pieces of length $\sim N_1^{-\delta}T$, i.e., we define $\{I_{j,N_1}\}_{j\in J_{N_1}}$ with $\# J_{N_1}\sim N_1^\delta$ so that $\bigcup_{j\in J_{N_1}}=[0,T],\ |I_{j,N_1}|\sim N_1^{-\delta}T$. For $j\in J_{N_1}$, we denote by $c_{j,N_1}$
	 the infimum of the interval $ I_{j,N_1} $. 
	For simplicity, we write $c_j=c_{j,N_1}$.
	Then applying Proposition \ref{sti}, on $I_{j,N_1}$ it holds
	\begin{align*}
		\|\Lambda_a(P_{N_1}u_1,P_{N_2}u_2)\|_{L^2(I_{j,N_1};L^2)}^2
		&\lesssim T^{\fr12}N_1^{-\fr{\al+\delta-1}{2}}(\|P_{N_1}u_1(c_j)\|_{ L^2_{x}}^2+\|P_{N_1}(u_1^2) \|_{L^{\infty}_{I_{j,N_1}} L^1_{x}}^2)\\
		&\quad\times(\|P_{\lesssim N_2}u_2(c_j)\|_{ L^2_{x}}^2+\|P_{\lesssim N_2}(u_2^2)\|_{L^{\infty}_{I_{j,N_1}} L^1_{x}}^2).
	\end{align*}
	This leads to 
	\begin{align*}
		\langle N_1 \rangle^{2s} &\|\Lambda_a(P_{N_1}u_1,P_{ N_2}u_2)\|_{L^2_{T,x}}^2=\langle N_1 \rangle^{s}\sum_{j\in J_{N_1}}\|\Lambda_a(P_{N_1}u_1,P_{N_2}u_2)\|_{L^2(I_{j,N_1};L^2)}^2
		\nonumber \\
		&\lesssim T^{\fr12}N_1^{-\fr{\al-\delta-1}{2}} 
		 \|u_1\|_{L^\infty_T H^{s}}^2 (1+\|u_1\|_{L^\infty_T L^2_x}^2)  \|u_2\|_{L^\infty_T L^2_x}^2 (1+\|u_2\|_{L^\infty_T L^2_x}^2)\nonumber \\	
		 	&\lesssim T^{\fr12}N_1^{\fr{4-2\al}{3}}  \|u_1\|_{L^\infty_T H^{s}}^2 (1+\|u_1\|_{L^\infty_T L^2_x}^2)  \|u_2\|_{L^\infty_T L^2_x}^2 (1+\|u_2\|_{L^\infty_T L^2_x}^2) ,
	\end{align*}
	where we use the fact that $\langle N_1 \rangle^{s}\|P_{N_1}(u_1^2) \|_{L^{2}_{T} L^1_{x}}\leq T^{\fr12} \|u_1\|_{L^\infty_T H^s}\|u_1\|_{L^\infty_T L^2_{x}}$ for $ s\ge 0$.
	Finally, when $N_1\gg  N_2$, it can be obtained in the same way by making use of \eqref{pros2} instead of \eqref{pros1}.
\end{proof}

\begin{lemm}\label{Cuv}Let $0<T\le 2 , s\geq 0$, $v\in X^{0,\fr12,1}_T $and $u \in C([0,T];H^s(\R))$ satisfying
	\eqref{gKdV} on $]0,T[\times \R$  with $L_{\al+1}$ satisfying Hypothesis \ref{Hy1}. Then
	for $N_1 \gg N_2,\ N_1\ge 1 $ it holds
	\begin{align*}
		\langle N_1 \rangle^s \|u_{N_1}v_{N_2}\|_{L^2_T L^2_x} &
		\lesssim  N_1^{\fr{5-4\al}{6}}  \|u\|_{L^\infty_T H^s} (1+\|u\|_{L^\infty_T L^2_x}) \|v_{N_2}\|_{X^{0,\fr12,1}}.
	\end{align*}
\end{lemm}
\begin{proof}
	Similar with the proof of Proposition \ref{bl0}, by Proposition \ref{prov}, we have
	\begin{align*}
		\langle N_1 \rangle^{2s} &\|u_{N_1}v_{N_2}\|_{L^2_{T,x}}^2=\langle N_1 \rangle^{2s}\sum_{j\in J_{N_1}}\|u_{N_1}v_{N_2}\|_{L^2(I_{j,N_1};L^2)}^2
		\\ &\lesssim  \sum_{j\in J_{N_1}} N_1^{-\al}\langle N_1 \rangle^s (\|P_{N_1}u\|_{L^\infty_T L^2_{x}}^2 +\|P_{N_1}(u^2) \|_{L^{\infty}_{T} L^1_{x}}^2) \|v_{N_2}\|_{X^{0,\fr12,1}}^2\\
		&\lesssim   N_1^{\fr{5-4\al}{3}}\|u\|_{L^\infty_T H^s}^2 (1+\|u\|_{L^2_T L^2_x})^2\|v_{N_2}\|_{X^{0,\fr12,1}}^2,
	\end{align*}
	which leads to the desired result.
\end{proof}

\begin{lemm}\label{le3-1}Let $0<T<1,s\geq 0$, $v\in X^{0,\fr13}_T $and $u \in C([0,T];H^s(\R))$ satisfying
	\eqref{gKdV} on $]0,T[\times \R$  with $L_{\al+1}$ satisfying Hypothesis \ref{Hy1}. Then
	for $N_1 \geq N_2$ with $ N_1\ge 1 $ it holds
	\begin{align}\label{tr1}
		\langle N_1 \rangle^{s}\|u_{N_1}v_{N_2}\|_{L^2_T L^2_x} &
		\lesssim T^{\fr16-} N_1^{\fr{4-2\al}{9}-} N_2^{\fr16+} \|u\|_{L^\infty_T H^s} (1+\|u\|_{L^\infty_T L^2_x}) \|v_{N_2}\|_{X_T^{0,\fr13}}.
	\end{align}
	Moreover when $N_1 \gg N_2>0 $ it holds
	\begin{align}\label{tr2}
		\langle N_1 \rangle^{s}\|u_{N_1}v_{N_2}\|_{L^2_T L^2_x} &
		\lesssim  N_1^{\fr{5-4\al}{9}-} N_2^{\fr16+} \|u\|_{L^\infty_T H^s} (1+\|u\|_{L^\infty_T L^2_x}) \|v_{N_2}\|_{X_T^{0,\fr13}}.
	\end{align}
\end{lemm}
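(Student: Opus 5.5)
The estimate will come from interpolating between two endpoint bounds on $\|u_{N_1}v_{N_2}\|_{L^2_TL^2_x}$: one where $v_{N_2}$ is measured in $X^{0,\frac12,1}$, and a trivial one where $v_{N_2}$ is measured in $X^{0,0}=L^2_{t,x}$. Fix an extension $\tilde v$ of $v$ with $\|\tilde v_{N_2}\|_{X^{0,\frac13}}\le 2\|v_{N_2}\|_{X^{0,\frac13}_T}$. Then decompose $\tilde v_{N_2}=\sum_{L\ge1}Q_L\tilde v_{N_2}$ in modulation and bound each piece separately.

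\emph{High-modulation endpoint ($X^{0,0}$).} By H\"older, the Bernstein inequality gives $\|u_{N_1}Q_L\tilde v_{N_2}\|_{L^2_{T,x}}\le \|u_{N_1}\|_{L^\infty_TL^2_x}\|Q_L\tilde v_{N_2}\|_{L^2_tL^\infty_x}\lesssim N_2^{1/2}\|u_{N_1}\|_{L^\infty_TL^2_x}\|Q_L \tilde v_{N_2}\|_{L^2}$. This loses a factor $N_2^{1/2}$. A better option is to put $u_{N_1}$ in $L^2_TL^\infty_x$ via Lemma \ref{Cu2}, which gives $T^{1/4}N_1^{\frac{2-\al}{3}}$, and keep $Q_L\tilde v_{N_2}$ in $L^\infty_tL^2_x$. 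The latter costs $L^{1/2}$ by Sobolev in time, since $Q_L$ localises $\tau$ to an interval of size $L$ after conjugation by the group.

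\emph{Low-modulation endpoint.} For the other endpoint use Lemma \ref{Cuv} (for $N_1\gg N_2$) or its analogue through \eqref{prov1} (for $N_1\gtrsim N_2$). Applied to the single piece $Q_L\tilde v_{N_2}$, whose $X^{0,\frac12,1}$ norm is $\sim L^{1/2}\|Q_L\tilde v_{N_2}\|_{L^2}$, this gives the factor $T^{1/4}N_1^{\frac{2-\al}{3}}$ (respectively $N_1^{\frac{5-4\al}{6}}$) times $L^{1/2}\|Q_L\tilde v_{N_2}\|_{L^2}$.

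\emph{Interpolation and summation.} Taking a convex combination with weight $\frac23$ on the $X^{0,\frac12,1}$ bound and $\frac13$ on the crude $L^\infty_t$-free bound $N_2^{1/2}\|u_{N_1}\|_{L^\infty_TL^2}\|Q_L\tilde v\|_{L^2}$ yields
\[
\langle N_1\rangle^s\|u_{N_1}Q_L\tilde v_{N_2}\|_{L^2_{T,x}}\lesssim \bigl(T^{\frac14}N_1^{\frac{2-\al}{3}}\bigr)^{\frac23}N_2^{\frac16}L^{\frac13}\|Q_L\tilde v_{N_2}\|_{L^2}\,\|u\|_{L^\infty_TH^s}(1+\|u\|_{L^\infty_TL^2_x}).
\]
This reproduces exactly $T^{\frac16}N_1^{\frac{4-2\al}{9}}N_2^{\frac16}$ in \eqref{tr1}, and $N_1^{\frac{5-4\al}{9}}N_2^{\frac16}$ in \eqref{tr2}, since $\frac23\cdot\frac{5-4\al}{6}=\frac{5-4\al}{9}$. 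Summing over $L$ then requires $\ell^1$ summability in $L$ of $L^{1/3}\|Q_L\tilde v\|_{L^2}$, which is not controlled by the $X^{0,\frac13}$ norm.

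\emph{Main obstacle.} This summation is the step I expect to be hardest. I would handle it by perturbing the exponents slightly: interpolate with weight $\frac23-$ on the $X^{0,\frac12,1}$ side, so that only $L^{\frac13-}$ appears and Cauchy--Schwarz in $L$ closes against $\|\tilde v_{N_2}\|_{X^{0,\frac13}}$. The price is the ``$-$'' on the powers of $T$ and $N_1$ and the ``$+$'' on $N_2$, exactly as in the statement.

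\emph{Time restriction.} A remaining technical point is that Lemma \ref{Cuv} only sees $\tilde v$ on $]0,T[$. Multiplying by $1_{[0,T]}$ does not preserve $X^{0,\frac12,1}$, so I would instead apply the interpolated bound to $Q_L\tilde v$ on the full line; the estimates for $u$ only need $u$ on $[0,T]$, so no cutoff on $\tilde v$ is required. The restriction $T<1$ just ensures $T^{\frac16-}\ge T^{\frac16}$-type harmless losses.
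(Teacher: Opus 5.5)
Your argument is correct and is essentially the paper's proof. Both interpolate the bound from Lemma \ref{Cu2} (for \eqref{tr1}) or Lemma \ref{Cuv} (for \eqref{tr2}), where $v_{N_2}$ carries half a modulation derivative, against the crude Bernstein bound $N_2^{1/2}\|u_{N_1}\|_{L^\infty_TL^2_x}\|v_{N_2}\|_{L^2_{T,x}}$; the paper uses abstract interpolation between $X^{0,\frac12+}_T$ and $X^{0,0}_T$, while you do it by hand on the pieces $Q_L\tilde v_{N_2}$ and resum by Cauchy--Schwarz, which also avoids interpolating restriction spaces.

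One small inaccuracy, which the paper's own interpolation shares: in \eqref{tr2} the exponent $\frac{5-4\alpha}{6}$ is negative for $\alpha>\frac54$. So the weight $\frac23-$ gives $N_1^{\frac{5-4\alpha}{9}+}$, not the $N_1^{\frac{5-4\alpha}{9}-}$ you claim. This is harmless, since every later use of \eqref{tr2} only needs strict inequalities.
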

\begin{proof}
	In view of Lemma \ref{Cu2} and the continuous embedding $ X^{0,\frac12+}_T \hookrightarrow L^\infty_T L^2_x $ we have
	\begin{align*}
		\langle N_1 \rangle^{s}\|u_{N_1}v_{N_2}\|_{L^2_T L^2_x} &\lesssim \langle N_1 \rangle^{s}\|u_{N_1}\|_{L^2_T L^\infty_x} \|v_{N_2}\|_{L^\infty_T L^2_x}
		\\ &\lesssim T^{\fr14} N_1^{\fr{2-\al}{3}} \|u\|_{L^\infty_T H^s} (1+\|u\|_{L^\infty_T L^2_x}) \|v_{N_2}\|_{X_T^{0,\fr12+}} \, .
	\end{align*}
	Interpolating with the crude estimate
	\begin{align*}
		\langle N_1 \rangle^{s}\|u_{N_1}v_{N_2}\|_{L^2_T L^2_x} \lesssim \langle N_1 \rangle^{s}\|u_{N_1}\|_{L^\infty_T L^2_x} \|v_{N_2}\|_{L^2_T L^\infty_x}
		\lesssim N_2^{\fr12}\|u\|_{L^\infty_T H^s} \|v_{N_2}\|_{L^2_T L^2_x}.
	\end{align*}
	we obtain \eqref{tr1}. Interpolating the crude estimate with Lemma \ref{Cuv} and the continuous embedding $ X^{0,\frac12+}_T \hookrightarrow X^{0,\frac12,1}_T $, we get \eqref{tr2}.
\end{proof}
\section{A priori estimates on solutions to \eqref{gKdV}}\label{Sect4}
We define the space
\begin{align} \label{defFs}
	F^{s,b}=X^{s-\theta,b+\fr12}+X^{s-\beta,b+\fr16},
\end{align}
endowed with the usual norm, and we define
\begin{align}\label{defYr}
	Y^s=L^\infty_t H^s\cap F^{s,\fr12}=L^\infty_t H^s\cap (X^{s-\theta,1}+X^{s-\beta,\fr23}).
\end{align}
For $ (\xi_1,\xi_2,\xi_3)\in \R^3 $ with $ \xi_1+\xi_2+\xi_3=0 $, let us define the resonance function of order 2 associated with \eqref{gKdV} by
\begin{equation} \label{Omega2}
	\Omega^{(2)}(\xi_1, \xi_2,\xi_3) =  \omega(\xi_1)+\omega(\xi_2)+\omega(\xi_3)
\end{equation}
where $\omega$ is the dispersive symbol. For $\xi_1, \xi_2, \xi_3\in\R$, it will be convenient to define the quantities $|\xi_{max}|\ge |\xi_{med}|\ge |\xi_{min}|$ to be the maximum, median and minimum of $|\xi_1|, |\xi_2|$ and $|\xi_3|$ respectively.

In the sequel we will need  the following result  proved in \cite{MV15}.

\begin{lemm}[\cite{MV15}, Lemma 2.1]\label{res2}
	Let $\alpha>0$. Let
	$ (\xi_1,\xi_2,\xi_3)\in \R^3 $ with $ \xi_1+\xi_2+\xi_3=0 $. Then
	\begin{equation}\label{resonance}
		|\Omega^{(2)}(\xi_1,\xi_2,\xi_3)| \sim |\xi_{\min}| |\xi_{\max}|^\alpha.
	\end{equation}
\end{lemm}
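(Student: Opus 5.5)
The plan is a direct computation. I will do it for the model symbol $\omega(\xi)=\xi|\xi|^{\alpha}$, which is odd and has $\omega'(\xi)\sim|\xi|^\alpha$ and $\omega''(\xi)\sim \mathrm{sgn}(\xi)|\xi|^{\alpha-1}$. Only these properties will be used, so the same argument should cover symbols satisfying Hypothesis \ref{Hy1} in the frequency range where those bounds hold.

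\emph{Reduction.} Since $\xi_1+\xi_2+\xi_3=0$, two of the $\xi_i$ have the same sign (or one vanishes, in which case both sides are $0$, since the remaining two are opposite and $\omega$ is odd). $\Omega^{(2)}$ is symmetric in its arguments, so I may relabel so that $\xi_1,\xi_2$ share a sign. Because $\omega$ is odd, $\Omega^{(2)}(-\xi_1,-\xi_2,-\xi_3)=-\Omega^{(2)}(\xi_1,\xi_2,\xi_3)$, so I may further assume $0<\xi_2\le\xi_1$. Then $\xi_3=-(\xi_1+\xi_2)$, and
\[
|\xi_{\max}|=\xi_1+\xi_2\sim\xi_1,\qquad |\xi_{\min}|=\xi_2,
\]
so it suffices to show $\omega(\xi_1+\xi_2)-\omega(\xi_1)-\omega(\xi_2)\sim \xi_2\,\xi_1^{\alpha}$.

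\emph{Key identity.} Using $\omega(0)=0$, write
\[
\omega(\xi_1+\xi_2)-\omega(\xi_1)-\omega(\xi_2)=\int_0^{\xi_2}\bigl(\omega'(\xi_1+s)-\omega'(s)\bigr)\,ds,
\qquad
\omega'(\xi_1+s)-\omega'(s)=\int_s^{s+\xi_1}\omega''(\sigma)\,d\sigma .
\]
It therefore suffices to show that the inner integral is $\sim \xi_1^\alpha$ uniformly for $s\in[0,\xi_2]\subset[0,\xi_1]$.

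\emph{Bounds on the inner integral.} For the upper bound, $\omega'>0$, so the inner integral is at most $\omega'(\xi_1+s)\lesssim(2\xi_1)^\alpha$. For the lower bound, $\omega''>0$ on $(0,\infty)$, and I split according to the monotonicity of $\omega''$:
\begin{itemize}
\item If $\alpha\ge1$, $\omega''$ is nondecreasing. Integrating only over $[s+\xi_1/2,\,s+\xi_1]$ gives a lower bound $\gtrsim \xi_1(\xi_1/2)^{\alpha-1}\sim\xi_1^\alpha$.
\item If $\alpha<1$, $\omega''$ is nonincreasing. The integrand is then at least $(s+\xi_1)^{\alpha-1}\ge(2\xi_1)^{\alpha-1}$ on the whole interval, giving again $\gtrsim\xi_1^\alpha$.
\end{itemize}
Integrating in $s$ over $[0,\xi_2]$ yields $|\Omega^{(2)}|\sim\xi_2\xi_1^\alpha\sim|\xi_{\min}||\xi_{\max}|^\alpha$.

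\emph{Main obstacle.} The only delicate point is the lower bound, i.e.\ the absence of cancellation. The sign of $\omega''$ makes the integrand positive, and the crude monotonicity bound above suffices in both regimes of $\alpha$. For a general symbol under Hypothesis \ref{Hy1}, the bounds on $\omega'$ and $\omega''$ are only available for $|\xi|\ge\xi_0$. The estimate is then to be understood for $|\xi_{\max}|$ large, with the low-frequency region handled by smoothness and compactness.
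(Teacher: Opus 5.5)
The paper does not prove this lemma; it quotes it from \cite{MV15}. Your proposal therefore stands as a self-contained replacement for that citation, and as such it is correct.

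For the model symbol $\omega(\xi)=\xi|\xi|^\alpha$ every step checks out: the reduction to $0<\xi_2\le\xi_1$ by symmetry and oddness, the representation
\[
\omega(\xi_1+\xi_2)-\omega(\xi_1)-\omega(\xi_2)=\int_0^{\xi_2}\int_s^{s+\xi_1}\omega''(\sigma)\,d\sigma\,ds ,
\]
and the two monotonicity bounds. Together these give $\int_s^{s+\xi_1}\omega''\sim\xi_1^\alpha$ uniformly in $s\in[0,\xi_1]$, for every $\alpha>0$. Compared with the bare citation, your argument shows why there is no cancellation (the integrand has a fixed sign). It also shows exactly in which frequency regime the estimate survives for a general symbol.

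On that last point, two corrections to your final paragraph.

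\emph{Monotonicity of $\omega''$.} Your lower bound uses the monotonicity of $\omega''$. This is not among the properties you listed, and Hypothesis \ref{Hy1} does not provide it; it only gives two-sided bounds for $\xi\ge\xi_0$. The repair is immediate:
\begin{itemize}
\item for $s\ge\xi_0$, use $p''_{\alpha+1}(\sigma)\ge c\,\sigma^{\alpha-1}$ on $[s,s+\xi_1]$ and run your argument on the pure power;
\item for $0\le s<\xi_0$, bound directly $p'_{\alpha+1}(\xi_1+s)-p'_{\alpha+1}(s)\ge c\,\xi_1^\alpha-\sup_{[0,\xi_0]}|p'_{\alpha+1}|\ge \frac c2\,\xi_1^\alpha$, valid once $\xi_1\ge\xi_0$ and $\xi_1^\alpha\ge 2c^{-1}\sup_{[0,\xi_0]}|p'_{\alpha+1}|$.
\end{itemize}
The second bullet avoids any use of $p''_{\alpha+1}$ on $(0,\xi_0)$, where Hypothesis \ref{Hy1} says nothing about its sign or size. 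The upper bound is handled the same way, with $\omega'>0$ replaced by $|p'_{\alpha+1}(s)|\lesssim\xi_1^\alpha$.

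\emph{Low frequencies.} The restriction to large $|\xi_{\max}|$ is not a technicality that ``smoothness and compactness'' can remove. Take $p_{\alpha+1}(\xi)=\xi$ for $|\xi|\le\frac12$, which is compatible with Hypothesis \ref{Hy1}. Then $\Omega^{(2)}\equiv0$ on all triples with $|\xi_{\max}|\le\frac12$, while $|\xi_{\min}||\xi_{\max}|^\alpha>0$ there as soon as no $\xi_i$ vanishes. So for a general symbol the lemma only holds for $|\xi_{\max}|\ge C(p_{\alpha+1})$. This is exactly your caveat, made precise, and it covers the regime $|\xi_{\max}|\gg1$ in which the paper applies the lemma.
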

\begin{defi}[Time extension operator] For $0<T<1 $ and $ u\in C([0,T]; {\mathcal D}'(\T)) $ we introduce  the extension operator $ \rho_T $ defined by
	\begin{equation}\label{defrho}
		\rho_T(u)(t):= U_\alpha(t)\eta(t) U_\alpha(-\mu_T(t)) u(\mu_T(t))\; ,
	\end{equation}
	where $ \eta $ is the smooth cut-off function defined in  \eqref{defeta} and $\mu_T $ is the  continuous piecewise affine
	function defined  by
	$$
		\mu_T(t)=\left\{\begin{array}{rcl}
			0  &\text{for } &  t \le 0 \\
			t  &\text {for }& t\in [0,T] \\
			T & \text {for } & t \ge T
		\end{array}
		\right. .
		$$
\end{defi}
\begin{lemm}\label{lem32}
	For any $ s,s_1,s_2\in \R $ and any $1/2<b_1,b_2\le 1 $,
	$\rho_T $ is a continuous linear operator from $  C([0,T]; H^s(\T)) $ into $ C(\R;H^s(\T)) $  and from $ (X^{s_1,b_1}_T+X^{s_2,b_2}_T) $ into $ (X^{s_1,b_1}+X^{s_2,b_2}) $ with a norm that is uniformly bounded in $T \in ]0,1]$.
\end{lemm}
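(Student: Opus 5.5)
The plan is to handle the two claims of Lemma \ref{lem32} separately, working throughout with the profile $w(t):=U_\alpha(-\mu_T(t))u(\mu_T(t))$. With this notation $\rho_T(u)=U_\alpha(t)\eta(t)w(t)$.

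\textbf{Continuity in $C(\R;H^s)$.} Since $\mu_T$ is continuous and $U_\alpha$ is a unitary group on $H^s$ that is strongly continuous, $w\in C(\R;H^s)$. Moreover $\|w(t)\|_{H^s}\le \|u\|_{L^\infty_T H^s}$. Multiplying by $\eta$ and applying $U_\alpha(t)$ keeps continuity and gives $\|\rho_T u\|_{L^\infty_t H^s}\le \|u\|_{L^\infty_TH^s}$, uniformly in $T$. Linearity is immediate, and $\rho_T u=u$ on $[0,T]$, so $\rho_T u$ is indeed an extension.

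\textbf{Boundedness on $X^{s_1,b_1}_T+X^{s_2,b_2}_T$.} Since $\rho_T$ is linear, it is enough to prove, for $1/2<b\le 1$ and any $s$, that $\|\rho_T v\|_{X^{s,b}}\lesssim \|v\|_{X^{s,b}_T}$ uniformly in $T\in]0,1]$. Indeed, given $u=u_1+u_2$ on $]0,T[$ with $u_i\in X^{s_i,b_i}_T$, we then have $\rho_T u=\rho_T u_1+\rho_T u_2$. The norm $X^{s_i,b_i}_T$ is an infimum over extensions, and $\rho_T v$ depends only on $v|_{[0,T]}$ (by continuity, since $b>1/2$ gives $X^{s,b}\hookrightarrow C(\R;H^s)$). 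So we fix an extension $\tilde v\in X^{s,b}$ with $\|\tilde v\|_{X^{s,b}}\le 2\|v\|_{X^{s,b}_T}$ and set $g(t):=U_\alpha(-t)\tilde v(t)\in H^b_tH^s_x$. Then $\|\rho_T v\|_{X^{s,b}}=\|\eta(t)\,g(\mu_T(t))\|_{H^b_tH^s_x}$. Taking the Fourier transform in $x$ and weighting by $\langle\xi\rangle^s$ reduces this to a scalar claim, applied for each fixed $\xi$ and then integrated: for $f\in H^b(\R)$ with $1/2<b\le1$,
\[
\|\eta\, f\circ\mu_T\|_{H^b(\R)}\lesssim \|f\|_{H^b(\R)},
\]
with a constant independent of $T\in]0,1]$.

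\textbf{The scalar estimate (main obstacle).} Multiplication by $\eta$ is bounded on $H^b$, so it remains to bound $f\circ\mu_T$ on $\operatorname{supp}\eta\subset[-2,2]$. Write $f\circ\mu_T=f(0)$ on $]-\infty,0]$, $f$ on $[0,T]$, and $f(T)$ on $[T,\infty[$.
\begin{itemize}
\item The $L^2([-2,2])$ part is controlled by $\|f\|_{L^\infty}\lesssim\|f\|_{H^b}$, using $b>1/2$.
\item For $b=1$, $(f\circ\mu_T)'=f'\,1_{[0,T]}$, which gives the bound directly.
\item For $1/2<b<1$, use the Gagliardo seminorm $\iint |F(t)-F(t')|^2|t-t'|^{-1-2b}\,dt\,dt'$. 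The pieces where both $t,t'$ lie in the same region are bounded by the corresponding seminorm of $f$ or vanish. For the cross pieces, e.g. $t<0<t'<T$, we have $|F(t)-F(t')|=|f(0)-f(t')|$. Integrating in $t$ gives $\int_0^T |f(0)-f(t')|^2 |t'|^{-2b}dt'$. This is bounded by the Hardy-type inequality $\int |f(t')-f(0)|^2|t'|^{-2b}dt'\lesssim \|f\|_{\dot H^b}^2$, which is valid for $1/2<b<3/2$.
\item The piece $t<0$, $t'>T$ gives $|f(0)-f(T)|^2\int\!\!\int |t-t'|^{-1-2b}\lesssim |f(0)-f(T)|^2T^{1-2b}$. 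This is controlled by the same Hardy/Hölder-continuity argument, using $|f(0)-f(T)|\lesssim T^{b-1/2}\|f\|_{\dot H^b}$.
\end{itemize}
All constants are independent of $T$. This last step is where the restriction $b>1/2$ is essential, and it is the main point to check carefully.
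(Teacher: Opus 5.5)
The paper states this lemma without proof. The operator $\rho_T$ and its boundedness come from the Molinet--Vento framework \cite{MV15}, and the sum-space version is the immediate consequence of linearity that you spell out.

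Your argument is correct and self-contained, and it takes the natural route. Passing to the profile $g(t)=U_\alpha(-t)\tilde v(t)$ turns $\|\rho_T v\|_{X^{s,b}}$ into $\|\eta\, g\circ\mu_T\|_{H^b_tH^s_x}$, so everything reduces to the scalar bound $\|\eta\, f\circ\mu_T\|_{H^b}\lesssim\|f\|_{H^b}$. Your splitting of the Sobolev--Slobodeckij integral then gives constants independent of $T$:
\begin{itemize}
\item the same-region pieces are bounded by the seminorm of $f$ or vanish;
\item the cross pieces follow from the fractional Hardy inequality $\int|f(t)-f(t_0)|^2|t-t_0|^{-2b}\,dt\lesssim\|f\|_{\dot H^b}^2$ at $t_0=0$ and $t_0=T$, valid for $1/2<b<3/2$;
\item the piece $t<0<T<t'$ follows from $|f(0)-f(T)|\lesssim T^{b-1/2}\|f\|_{\dot H^b}$.
\end{itemize}

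One point needs rewording. The function $F=f\circ\mu_T$ equals $f(0)$ on $]-\infty,0]$ and $f(T)$ on $[T,\infty[$, so in general $F\notin H^b(\R)$. You therefore cannot literally invoke boundedness of multiplication by $\eta$ on $H^b$.

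What you need, and what your bullets implicitly use, is the local bound $\|\eta F\|_{H^b}\lesssim\|F\|_{L^\infty}+[F]_{\dot W^{b,2}(\R)}$, where $[\cdot]_{\dot W^{b,2}}$ is the Gagliardo seminorm. It follows from the splitting $\eta(t)F(t)-\eta(t')F(t')=\eta(t)\bigl(F(t)-F(t')\bigr)+F(t')\bigl(\eta(t)-\eta(t')\bigr)$. With $0\le\eta\le1$ this gives $[\eta F]_{\dot W^{b,2}}\le[F]_{\dot W^{b,2}}+\|F\|_{L^\infty}[\eta]_{\dot W^{b,2}}$. For $b=1$ the analogue is $\|(\eta F)'\|_{L^2}\le\|\eta'\|_{L^2}\|F\|_{L^\infty}+\|F'\|_{L^2}$. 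Since $\|F\|_{L^\infty}\le\|f\|_{L^\infty}\lesssim\|f\|_{H^b}$ for $b>1/2$, this closes the argument.
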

We would like to prove  that a $L^\infty_T L^2_x$-solution $ u $ to \eqref{gKdV} belongs to $ F^{0,\frac12}_T $ under suitable conditions on $\alpha $ and $ (\theta,\beta) $. To do this we proceed in two steps. First, in the next lemma,  we check that such solution $ u $ belongs to $X^{s-(\frac{3}{2}+),1}_T$. Then in Propositon \ref{y0}, with this new information in hands, we are able to prove that 
$u\in Y^0_T $ whenever $ \alpha>\frac{37}{26} $.
\begin{rema}
	Actually it is possible to  relax the condition to $ \alpha>7/5 $ by adding a supplementary step. This step consists in using Lemma \ref{first} below to prove in a second time that $u\in  X^{s-\theta,1}_T+X^{s-\gamma,\frac12+}_T $ for $ \theta>\frac{11-4\alpha}{6} $ and $\gamma>\frac{10-5\alpha}{6}$. Then in a third step we could prove Proposition \ref{y0} below with a relaxing condition on $\alpha$. However, to close the estimates on the differences in the next section we will have a stronger requirement on $\alpha $ that is $\alpha>\frac{55}{38} $. Since 
	anyway $\frac{55}{38} >\frac{37}{26} $ we decide not to present this small improvement here. 
\end{rema}
\begin{lemm}\label{first}
	Let $0<T<1,\alpha\ge 1, s\geq 0$ and $u\in L^\infty_T H^s$ be a solution to \eqref{gKdV} associated with an initial datum $u_0 \in H^s(\R)$. Then $u\in X^{s-(\frac{3}{2}+),1}_T$ and it holds
	\begin{align*}
		\|u\|_{X^{s-(\frac{3}{2}+),1}_T}&\lesssim \|u\|_{L^\infty_T H^s}(1+\|u\|_{L^\infty_T L^2_x}).
	\end{align*}
	
\end{lemm}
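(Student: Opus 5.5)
We bound the $X^{s-(\frac32+),1}_T$-norm through the equation itself. The $b=1$ Bourgain norm is, up to the $\langle\xi\rangle$ weight, the $L^2_{t,x}$-norm of $v$ plus that of $(\partial_t+L_{\alpha+1})v$. So it suffices to control $u$ in $L^2_TH^{s-\frac32-}$ and $\partial_x(u^2)$ in $L^2_TH^{s-\frac32-}$.

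\textbf{Extension.} We work with the extension $\tilde u=\rho_T(u)$ from \eqref{defrho}. Since $U_\alpha(-t)\tilde u(t)=\eta(t)U_\alpha(-\mu_T(t))u(\mu_T(t))$, we have
\[
\|\tilde u\|_{X^{\sigma,1}}\lesssim \|\eta\, U_\alpha(-\mu_T)u(\mu_T)\|_{H^1_tH^\sigma_x}.
\]
Outside $[0,T]$ the function $U_\alpha(-\mu_T(t))u(\mu_T(t))$ is constant in time. Inside $]0,T[$ its time derivative is
\[
-\tfrac12 U_\alpha(-t)\partial_x(u^2)(t),
\]
using the Duhamel formulation in $\mathcal D'$, which is valid for $u\in L^\infty_TH^s$ solving \eqref{gKdV}. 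Hence, with $\sigma=s-\frac32-$,
\[
\|u\|_{X^{\sigma,1}_T}\lesssim \|u\|_{L^\infty_TH^{\sigma}}+\|\partial_x(u^2)\|_{L^2_TH^{\sigma}_x}.
\]
The first term is trivially bounded by $\|u\|_{L^\infty_TH^s}$. For the second, we note that $\|\partial_x(u^2)\|_{H^\sigma}\lesssim \|\langle\xi\rangle^{\sigma+1}\widehat{u^2}\|_{L^2_\xi}$.

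\textbf{Nonlinear term.} We now estimate $\|\langle\xi\rangle^{s-\frac12-}\widehat{u^2}\|_{L^2_\xi}$.
\begin{itemize}
\item \emph{Case $s=0$.} We use $\|\widehat{u^2}\|_{L^\infty_\xi}\le\|u\|_{L^2}^2$, together with the fact that $\langle\xi\rangle^{-\frac12-}\in L^2_\xi$. This gives $\|u^2\|_{H^{-\frac12-}}\lesssim\|u\|_{L^2}^2$.
\item \emph{Case $s>0$.} We use the fractional Leibniz-type bound $\langle\xi\rangle^s\lesssim\langle\xi_1\rangle^s+\langle\xi-\xi_1\rangle^s$. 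This yields
\[
\|\langle\xi\rangle^s\widehat{u^2}\|_{L^\infty_\xi}\lesssim \|u\|_{H^s}\|u\|_{L^2},
\]
which is Young's inequality on the Fourier side. The $L^\infty$ bound on $\langle\xi\rangle^s\widehat{u^2}$ applies at every frequency. Multiplying by $\langle\xi\rangle^{-\frac12-}\in L^2_\xi$ then gives $\|u^2\|_{H^{s-\frac12-}}\lesssim\|u\|_{H^s}\|u\|_{L^2}$.
\end{itemize}
Integrating over $[0,T]$ with $T<1$ produces $\|\partial_x(u^2)\|_{L^2_TH^\sigma}\lesssim\|u\|_{L^\infty_TH^s}\|u\|_{L^\infty_TL^2}$. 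Combining with the first term yields the claimed bound
\[
\|u\|_{X^{s-(\frac32+),1}_T}\lesssim\|u\|_{L^\infty_TH^s}\bigl(1+\|u\|_{L^\infty_TL^2_x}\bigr).
\]

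\textbf{Main obstacle.} The delicate point is the first step: justifying that the low-regularity solution satisfies the Duhamel identity, so that $\partial_t\bigl(U_\alpha(-t)u\bigr)=-\frac12U_\alpha(-t)\partial_x(u^2)$ holds in $L^2_TH^\sigma$. This follows because $u^2\in L^\infty_TL^1_x\subset L^\infty_TH^{-\frac12-}$, so the equation holds in $C([0,T];H^{-\frac32-})$. We also need to check that the kinks of $\mu_T$ at $0$ and $T$ do not create problems in $H^1_t$. They do not: the map $t\mapsto U_\alpha(-\mu_T(t))u(\mu_T(t))$ is continuous, with a piecewise $L^2$ derivative. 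The cutoff $\eta$ is smooth, so multiplying by it preserves $H^1_t$ with a uniform constant.
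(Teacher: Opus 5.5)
Your proposal is correct and follows essentially the paper's argument. The paper builds the Duhamel extension $\tilde{\tilde u}(t)=\eta(t)e^{-tL_{\alpha+1}}u_0+\eta(t)\int_0^t e^{-(t-t')L_{\alpha+1}}\partial_x(\tilde u^2)(t')\,dt'$, invokes the classical $X^{s,1}$ linear estimate (which is exactly the $H^1_t$ computation you carry out directly on the interaction representation of $\rho_T(u)$), and concludes with $L^1(\R)\hookrightarrow H^{-(\frac12+)}$; your Fourier-side bound $\|\langle\xi\rangle^s\widehat{u^2}\|_{L^\infty_\xi}\lesssim\|u\|_{H^s}\|u\|_{L^2}$ makes explicit the $s>0$ case that the paper leaves implicit.
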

\begin{proof} We set $ \tilde{u}=\rho_T(u) $
	and  we look at the following extension $\tilde{\tilde{u}}$ of $u $
	$$
	\tilde{\tilde{u}}(t)= \eta(t) e^{-tL_{\al+1}} u_0 + \eta(t) \int_0^t e^{-(t-t')L_{\al+1}}\partial_x ( \tilde{u}^2)(t') \, dt'
	$$
	Classical  linear estimates in Bourgain's spaces (see for instance \cite{Gi}) lead to
	\begin{align*}
	\|u\|_{X^{s-(\frac{3}{2}+),1}_T}\lesssim \|\tilde{\tilde{u}} \|_{X^{s-(\frac{3}{2}+),1}}
	&\lesssim \|u\|_{L^\infty_T H^{s-(\frac{3}{2}+)}}+ \|\tilde{u}^2\|_{L^\infty_T H^{s+1-(\frac{3}{2}+)}}
	\\& \lesssim  \|u\|_{L^\infty_T H^s}(1+\|u\|_{L^\infty_T L^2_x})\, ,
    \end{align*}
	where we used  that $ L^1(\R) \hookrightarrow H^{-(\frac12+)} $ in the last step.
\end{proof}

\begin{prop}\label{y0}
	Let $0<T<1,\, \alpha\in ]\frac{37}{26},2], \,  s\geq 0 $ and $u\in L^\infty_T H^s$ be a solution to \eqref{gKdV} associated with an initial datum $u_0 \in H^s(\R)$. 
	Assume moreover that  the following conditions are fulfilled :
	\begin{equation}\label{toto1}
		\theta>\frac{11-4\alpha}{6} \text{ and } \beta>\frac{4-2\alpha}{3}\; .
	\end{equation}
	Then $u\in Y^s_T$ and it holds
	\begin{align}\label{eqy0}
		\|u\|_{Y_T^{s}} &\lesssim \|u\|_{L^\infty_T H^s} (1+\|u\|_{L^\infty_T L^2_x})^3.
	\end{align}
\end{prop}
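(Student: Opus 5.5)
The plan is to estimate the extension $\tilde{\tilde u}$ of Lemma \ref{first} in $F^{s,\frac12}$, splitting the nonlinearity $\partial_x(\tilde u^2)$ dyadically. Here $\tilde u=\rho_T(u)$, which by Lemma \ref{lem32} lies in $L^\infty_tH^s\cap X^{s-(\frac32+),1}$ with the bound of Lemma \ref{first}. The $L^\infty_T H^s$ part of the $Y^s_T$ norm is given by hypothesis, so only the $F^{s,\frac12}_T$ part is needed. The linear term $\eta(t)e^{-tL_{\al+1}}u_0$ lies in $X^{s,1}\subset X^{s-\theta,1}$ trivially.

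For the Duhamel term I use the standard estimate
\[
\Big\|\eta(t)\int_0^t e^{-(t-t')L_{\al+1}}F\,dt'\Big\|_{X^{\sigma,b}}\lesssim\|F\|_{X^{\sigma,b-1}},
\]
with $b=1$ and $b=\frac23$. This reduces the problem to bounding $P_N\partial_x(\tilde u^2)$ in $X^{s-\theta,0}=L^2_{t,x}$ weighted by $\langle N\rangle^{s-\theta}$, or in $X^{s-\beta,-\frac13}$. I decompose $P_N(\tilde u^2)$ into the interactions $u_{N_1}u_{N_2}$, always restricted to $[0,T]$ via the $L^2_{T,x}$ bounds of Section \ref{Sect3}.

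\textbf{High--low interactions ($N_1\sim N\gg N_2$).} Estimate \eqref{bili2} of Proposition \ref{bl0} gives
\[
N\langle N\rangle^{s-\theta}\|\Lambda(u_{N},u_{\lesssim N})\|_{L^2_{T,x}}\lesssim N^{1-\theta+\frac{5-4\al}{6}}\|u\|_{L^\infty_TH^s}(1+\|u\|_{L^\infty_TL^2})^3 .
\]
This is summable exactly when $\theta>\frac{11-4\al}{6}$, which is the first condition in \eqref{toto1}. This piece is put in $X^{s-\theta,1}$.

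\textbf{High--high to low interactions ($N_1\sim N_2\gtrsim N$).} Using \eqref{bili1} in the same way would require $\theta>\frac{5-2\al}{3}$, which is too much. So here I want the $X^{s-\beta,\frac23}$ component. Write
\[
\|P_N\partial_x(u_{N_1}u_{N_2})\|_{X^{s-\beta,-\frac13}}\lesssim N\langle N\rangle^{s-\beta}\sup_{\|w\|_{X^{0,\frac13}}\le1}\Big|\int P_N(u_{N_1}u_{N_2})\,w_N\Big| .
\]
I then bound the integral by $\|u_{N_1}w_N\|_{L^2_{T,x}}\|u_{N_2}\|_{L^\infty_TL^2}$ using Lemma \ref{le3-1}. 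When $N\ll N_1$, \eqref{tr2} gives the factor
\[
N^{1-\beta+\frac16+}N_1^{\frac{5-4\al}{9}-}.
\]
When $N\sim N_1$, \eqref{tr1} gives $N^{1-\beta+\frac16+\frac{4-2\al}{9}}$. The $H^s$ weight is placed on $u_{N_1}$, since $N_1\gtrsim N$. Summation then requires roughly $\beta>\frac76+\frac{4-2\al}{9}$, which is incompatible with $\beta>\frac{4-2\al}{3}$. So this direct route loses, and modulation considerations must enter.

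The main obstacle is therefore the high--high and comparable-frequency interactions. There I will use the resonance Lemma \ref{res2}: on $P_N(u_{N_1}u_{N_2})$ one has $|\Omega|\sim N N_1^{\al}$, so one of the three functions carries modulation $\gtrsim NN_1^\al$.

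\begin{itemize}
\item If the output has high modulation, the $-\frac13$ (or $0$) power of modulation gains $(NN_1^\al)^{-1/3}$ against the bilinear bound \eqref{bili1}.
\item If one input $u_{N_i}$ has high modulation, I use $u\in X^{s-(\frac32+),1}$ from Lemma \ref{first}: $\|Q_{\gtrsim NN_1^\al}u_{N_i}\|_{L^2}\lesssim (NN_1^\al)^{-1}N_1^{\frac32+-s}\|u\|$. I pair it with the other factor in $L^2_TL^\infty_x$ via Lemma \ref{Cu2}, possibly interpolating with $L^\infty_TL^2$ and Bernstein.
\end{itemize}

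The high-output-modulation branch is the one I expect to produce the condition $\beta>\frac{4-2\al}{3}$. The requirement $\al>\frac{37}{26}$ should then appear when balancing the input-high-modulation branch against the loss $N_1^{\frac32}$ from Lemma \ref{first}. Low-frequency outputs $N\lesssim 1$ are handled by Bernstein. Collecting everything, each piece is bounded by $\|u\|_{L^\infty_TH^s}(1+\|u\|_{L^\infty_TL^2})^3$, which gives \eqref{eqy0}.
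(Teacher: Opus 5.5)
Your overall plan is the paper's. You treat the high--low part with \eqref{bili2} in $X^{s-\theta,1}$, which gives $\theta>\frac{11-4\alpha}{6}$. You treat the high--high part by duality in $X^{s-\beta,-\frac13}$ with the resonance relation. You apply Lemma \ref{first} to a high-modulation input. Your high-output-modulation branch gives $N^{\frac23-\beta}N_1^{\frac{2-2\alpha}{3}}$, hence $\beta>\frac{4-2\alpha}{3}$; this is \eqref{zq3}. The paper reaches it via Lemma \ref{Cu2} after writing $Q_{\ll NN_1^\alpha}\tilde u_{N_1}=\tilde u_{N_1}-Q_{\gtrsim NN_1^\alpha}\tilde u_{N_1}$, because in its ordering the inputs in that region are modulation-truncated.

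The problem is the input-high-modulation branch, which as you describe it does not reach $\alpha>\frac{37}{26}$. You bound $\|Q_{\gtrsim NN_1^\alpha}\tilde u_{N_1}\|_{L^2}\lesssim N_1^{\frac32+}(NN_1^\alpha)^{-1}\|u\|_{X^{-(\frac32+),1}}$. You then control the other input against the test function $P_N\varphi\in X^{0,\frac13}$ ``via Lemma \ref{Cu2}, interpolating with $L^\infty_TL^2$ and Bernstein''. That interpolation is exactly \eqref{tr1}, which contributes a factor $N_1^{\frac{4-2\alpha}{9}}N^{\frac16+}$. For $N\ll N_1$ this gives
\[
N^{1-\beta}\,N_1^{\frac{4-2\alpha}{9}}N^{\frac16+}\,N_1^{\frac32+}(NN_1^{\alpha})^{-1}=N^{\frac16-\beta+}\,N_1^{\frac{35-22\alpha}{18}+},
\]
which is summable over $N_1\gg N$ only if $\alpha>\frac{35}{22}$. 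As written, your argument therefore misses the whole interval $]\frac{37}{26},\frac{35}{22}]$, including the part $]\frac{55}{38},\frac{35}{22}]$ that Theorem \ref{th1} needs.

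The missing idea is to exploit the frequency separation between $\tilde u_{N_1'}$ (frequency $N_1$) and $P_N\varphi$ (frequency $N\ll N_1$). You must use \eqref{tr2}, i.e.\ interpolate the crude Bernstein bound with Lemma \ref{Cuv}. That lemma rests on the improved short-time bilinear estimate \eqref{prov2}, with gain $N_1^{-\alpha/2}$, rather than on the linear estimate of Lemma \ref{Cu2}. This yields the factor $N_1^{\frac{5-4\alpha}{9}-}N^{\frac16+}$ and hence \eqref{zq1}, namely $N_1^{\frac{37-26\alpha}{18}}N^{-\frac56+}$. That bound is summable over $N_1\gg N$ precisely when $\alpha>\frac{37}{26}$. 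The $N$-sum then only needs $\beta>\frac{20-13\alpha}{9}$, which is implied by $\beta>\frac{4-2\alpha}{3}$.

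The Lemma \ref{Cu2}-based bound \eqref{tr1} is adequate only in the diagonal case $N\sim N_1$ (the paper's \eqref{zq2}). There the resulting condition $\beta>\frac{19-11\alpha}{9}$ follows from $\beta>\frac{4-2\alpha}{3}$ for $\alpha\ge\frac75$. You already used \eqref{tr2} in your discarded direct route; it is in this high-modulation branch that it is indispensable.
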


\begin{proof}
	We will work with the extension $\tilde{u}=\rho_T( u)$ of $u$.
	Recall that $\text{supp}\ \tilde{u} \subset [-2,2]\times\R$ and that, according to Lemmas \ref{lem32},
	\begin{align*}
		||\tilde{u}||_{L^\infty_t H^s}\lesssim ||u||_{L^\infty_T H^s}\quad \text{and}\quad ||\tilde{u}||_{X^{\theta,b}}\lesssim ||u||_{X^{\theta,b}_T}
	\end{align*}
	for any $(s,\theta,b)\in \R\times\R\times]1/2,1]$.
	It thus remains to control the $F^{s,\fr12}_T$ -norm of $u$.
	We first notice that linear estimates in Bourgain's spaces (see for instance \cite{Gi}) lead to
	\begin{align*}
		||u||_{F^{s,\fr12}_T}\lesssim ||u_0||_{H^s}+||\partial_x(u^2)||_{F^{s,-\fr12}_T}
	\end{align*}
	and then decompose $u^2$ as
	\begin{align}\label{zq}
		u^2=P_{\lesssim 1}u^2+\sum_{N\gg 1}\Big( P_N(P_{\ll N}u \, u_{\sim N})+\sum_{N'_1\sim N_1\gtrsim N}P_N(u_{N_1}u_{N'_1}) \Big).
	\end{align}
	The contribution of the first term in the above right-hand side is easily controlled by 
	$$
	\| \partial_x P_{\le 1}(u^2) \|_{F^{s,-\frac12}_T} \lesssim \| P_{\le 1} (u^2) \|_{L^\infty_T L^1_x} \lesssim ||u||_{L^\infty_T H^s_x} ||u||_{L^\infty_T L^2_x}
	$$
	 By \eqref{bili2} of Proposition \ref{bl0},  the contribution of the second term in the RHS of \eqref{zq}  is estimated as follows
	
	\begin{align*}
		\Big\| \sum_{N\gg 1}\pa_x P_N(P_{\ll N}u u_{\sim N}) \Big\|_{F^{s,-\fr12}_T} &\lesssim \Big\| \sum_{N\gg 1}\pa_x P_N(P_{\ll N}u \, u_{\sim N}) \Big\|_{X^{s-\theta,0}_T}
		\\&\lesssim \sum_{N\gg 1}\Big\|  \pa_x P_N(P_{\ll N}u \, u_{\sim N}) \Big\|_{X^{s-\theta,0}_T}
		\\&\lesssim \sum_{N\gg 1}N^{-\theta+1}N^{\fr{5-4\al}{6}} \|u\|_{L^\infty_T H^s_x} \|u\|_{L^\infty_T L^2_x} (1+\|u\|_{L^\infty_T L^2_x})^2
		\\&\lesssim \|u\|_{L^\infty_T H^s_x} \|u\|_{L^\infty_T L^2_x} (1+\|u\|_{L^\infty_T L^2_x})^2,
	\end{align*}
	where we use the fact that $\theta>\fr{11-4\al}{6}$.
	To estimate the third term, we take advantage of the $X^{s-\beta,-\fr13}_T$ -part of $F^{s,-\fr12}_T$.
	For $N\gg 1$ and $ N_1\sim N_1'\gtrsim N$, we have
	\begin{align}
		\|\pa_x P_N(u_{N_1}u_{N'_1})\|_{F^{s,-\fr12}_T}&\lesssim \|\pa_x P_N(u_{N_1}u_{N'_1})\|_{X^{s-\beta,-\fr13}_T}\nonumber 
		\\&\lesssim N \Nn^{s-\beta} \|\eta(\frac{\cdot}{2T})P_N(\tilde{u}_{N_1}\tilde{u}_{N'_1})\|_{X^{0,-\fr13}}\nonumber
		\\&\lesssim  N \Nn^{s-\beta} \sup_{\|\varphi\|_{X^{0,\fr13}} \leq 1\atop \text{Supp} \, \varphi\subset [-2T,2T]} \Bigl| \int_{\R}\int_{\R} P_N(\tilde{u}_{N_1}\tilde{u}_{N'_1}) P_N\varphi dxdt \Bigr|\, . \label{zqa}
	\end{align}
	Note that, in view of the resonance relation \eqref{resonance} and on the time support of $\varphi$,
	\begin{align}
		\int_{\R}\int_{\R} P_N(\tilde{u}_{N_1}\tilde{u}_{N'_1}) P_N\varphi dxdt
		&=\int_{-2T}^{2T}\int_{\R} Q_{\gtrsim NN_1^{\al}}\tilde{u}_{N_1}\tilde{u}_{N'_1} P_N\varphi dxdt\nonumber \\
		&\quad+\int_{-2T}^{2T}\int_{\R} Q_{\ll NN_1^{\al}}\tilde{u}_{N_1}Q_{\gtrsim NN_1^{\al}}\tilde{u}_{N'_1} P_N\varphi dxdt
		\nonumber \\
		&\quad\quad+\int_{\R}\int_{\R} Q_{\ll NN_1^{\al}}\tilde{u}_{N_1}Q_{\ll NN_1^{\al}}\tilde{u}_{N'_1} Q_{\sim NN_1^{\al}}P_N\varphi dxdt.
		\label{s4est}
	\end{align}
	To estimate  the first term, we  separate the contributions of $ N_1\sim N_1'\gg N $ and $ N_1\sim N_1'\sim N$. When
	$ N_1\sim N_1'\gg N $, Lemma \ref{first} and \eqref{tr2} of Lemma \ref{le3-1}  lead to
	\begin{align}
		&\Bigl|\int_{-2T}^{2T}\int_{\R} Q_{\gtrsim NN_1^{\al}}\tilde{u}_{N_1}\tilde{u}_{N'_1} P_N\varphi dxdt\Bigr|
		\lesssim \|Q_{\gtrsim NN_1^{\al}}\tilde{u}_{N_1}\|_{L^2_t L^2_x}
		\|\tilde{u}_{N'_1} P_N\varphi\|_{L^2_{2T} L^2_x} \nonumber
		\\& \lesssim  N_1^{\fr{5-4\al}{9}-} N^{\fr16+} N_1^{-s}\|\tilde{u}\|_{L^\infty_t H^s_x} (1+\|\tilde{u}\|_{L^\infty_t L^2_x}) \|P_N\varphi\|_{X^{0,\fr13}}N_1^{\frac{3}{2}+}(NN_1^\al)^{-1}\|\tilde{u}_{N_1}\|_{X^{-(\frac{3}{2}+),1}}\nonumber\\
		& \lesssim  N_1^{\fr{37-26\al}{18}} N^{-\fr56+} N_1^{-s}\|u\|_{L^\infty_T H^s_x} \|u\|_{L^\infty_T L^2_x} (1+\|u\|_{L^\infty_T L^2_x})^2
		\label{zq1} \; .
	\end{align}
	Note that for $ s=0 $ this will be summable over $ N_1\gg N $ as soon as $ \alpha>\frac{37}{26} $. 
	Now,  when $ N_1\sim N_1'\sim N$ we get
	\begin{align}
		&\Bigl|\int_{-2T}^{2T}\int_{\R} Q_{\gtrsim NN_1^{\al}}\tilde{u}_{N_1}\tilde{u}_{N'_1} P_N\varphi dxdt\Bigr|
		\lesssim \|Q_{\gtrsim NN_1^{\al}}\tilde{u}_{N_1}\|_{L^2_t L^2_x}
		\|\tilde{u}_{N'_1} P_N\varphi\|_{L^2_{2T} L^2_x}\nonumber
		\\& \lesssim T^{\fr16} N^{\fr{4-2\al}{9}} N^{\fr16+} N^{-s}\|\tilde{u}\|_{L^\infty_t H^s_x} (1+\|\tilde{u}\|_{L^\infty_t L^2_x}) \|P_N\varphi\|_{X^{0,\fr13}}N^{\frac{3}{2}+}(N^{1+\al})^{-1}\|\tilde{u}_{N_1}\|_{X^{-(\frac{3}{2}+),1}}\nonumber 
		\\& \lesssim T^{\fr16} N^{\fr{10-11\al}{9}+} N^{-s}\|u\|_{L^\infty_T H^s_x}\|u\|_{L^\infty_T L^2_x} (1+\|u\|_{L^\infty_T L^2_x})^2\; .
		\label{zq2}
	\end{align}
	The second term in the RHS of \eqref{s4est} can be treated in the same way as the first one by using that, according to Lemma \ref{le2-1},
	$
	\|Q_{\ll NN_1^{\al}}\tilde{u}_{N_1}\|_{L^p_t L^2_x}\lesssim \|\tilde{u}_{N_1}\|_{L^p_t L^2_x}$ for $1\leq p\leq \infty$.
	To treat the  third term of \eqref{s4est}, we first rewrite
	$Q_{\ll NN_1^{\al}}\tilde{u}_{N_1}$ as
	$$
	Q_{\ll NN_1^{\al}}\tilde{u}_{N_1}=\tilde{u}_{N_1}-Q_{\gtrsim NN_1^{\al}}\tilde{u}_{N_1}\, .
	$$
	In view of the time support of $ \tilde{u} $, the contribution of $\tilde{u}_{N_1}$ in the third term of \eqref{s4est} is estimated  as follows
	\begin{align}
		\Bigl|\int_{\R}\int_{\R}  \tilde{u}_{N_1}Q_{\ll NN_1^{\al}} & \tilde{u}_{N'_1} Q_{\sim NN_1^{\al}}P_N\varphi dxdt\Bigr|\nonumber \\
		&\lesssim \|u_{N_1} \|_{L^2_{[-2,2]} L^\infty_x} \|Q_{\ll NN_1^{\al}}\tilde{u}_{N_1}\|_{L^\infty_t L^2_x}
		\|Q_{\sim NN_1^{\al}}P_N\varphi\|_{L^2_t L^2_x}
		\nonumber \\
		& \lesssim N_1^{\fr{2-\al}{3}} N_1^{-s} \|\tilde{u}\|_{L^\infty_t H^s_x} (1+\|\tilde{u}\|_{L^2_t L^2_x}) \|\tilde{u}\|_{L^\infty_t L^2_x} (NN_1^\al)^{-\fr13} \|P_N\varphi\|_{X^{0,\fr13}} \nonumber \\
		& \lesssim N^{-\fr13} N_1^{\fr{2-2\al}{3}} N_1^{-s} \|u\|_{L^\infty_T H^s_x} \|u\|_{L^\infty_T L^2_x} (1+\|u\|_{L^2_T L^2_x})  \, .
		\label{zq3}
	\end{align}
	Finally we bound the contribution of $Q_{\gtrsim NN_1^{\al}} \tilde{u}_{N_1}$ in the third term of \eqref{s4est} by
	\begin{align}
		\Bigl|\int_{\R}\int_{\R} Q_{\gtrsim NN_1^{\al}}\tilde{u}_{N_1} & Q_{\ll NN_1^{\al}}\tilde{u}_{N'_1} Q_{\sim NN_1^{\al}}P_N\varphi dxdt\Bigr| \nonumber \\
		& \lesssim \| Q_{\gtrsim NN_1^{\al}} \tilde{u}_{N_1} \|_{L^2_t L^2_x}\|Q_{\ll NN_1^{\al}}\tilde{u}_{N_1'}\|_{L^\infty_t L^2_x}
		\|Q_{\sim NN_1^{\al}}P_N\varphi\|_{L^2_t L^\infty_x}
		\nonumber \\
		& \lesssim N_1^{\frac{3}{2}+}(NN_1^\al)^{-1}\|\tilde{u}_{N_1}\|_{X^{-(\frac{3}{2}+),1}} \|\tilde{u}_{N_1}\|_{L^\infty_t L^2_x} N^{\fr12}(NN_1^\al)^{-\fr13} \|P_N\varphi\|_{X^{0,\fr13}} \nonumber \\
		& \lesssim  N ^{-\fr56} N_1^{\frac{9-8\al}{6}+} N_1^{-s} (1+\|u\|_{L^\infty_T L^2_x}) \|u\|_{L^\infty_T H^s_x} \|u\|_{L^\infty_T L^2_x}\, .
		\label{zq4}
	\end{align}
	Gathering \eqref{zqa}-\eqref{zq4}, we obtain that for $s\ge 0 $, $ \alpha>\frac{37}{26}$  and $ N\gg 1 $
	\begin{align*}
		& \sum_{N\gg 1} \sum_{N'_1\sim N_1\gtrsim N}  \|\pa_x P_N(\tilde{u}_{N_1}\tilde{u}_{N'_1})\|_{F^{s,-\fr12}_T} \\
		 &\lesssim  \|u\|_{L^\infty_T H^s_x} \|u\|_{L^\infty_T L^2_x} (1+\|u\|_{L^\infty_T L^2_x})^2
		N^{1-\beta}  \max(N^{\frac{11-13\alpha}{9}+}, N^{\frac{10-11\alpha}{9}+}, N^\frac{1-2\alpha}{3}, N^{\frac{2-4\alpha}{3}+})
		\\&\lesssim  \|u\|_{L^\infty_T H^s_x} \|u\|_{L^\infty_T L^2_x} (1+\|u\|_{L^\infty_T L^2_x})^2
		N^{1-\beta}  N^\frac{1-2\alpha}{3} \lesssim \|u\|_{L^\infty_T H^s_x} \|u\|_{L^\infty_T L^2_x} (1+\|u\|_{L^\infty_T L^2_x})^2,
	\end{align*}
	as soon as $ \alpha\in ]\frac{37}{26},2] $  and $ \beta> \frac{4-2\alpha}{3}$.
	Thus, the proof is completed.
\end{proof}


\begin{prop}\label{h0}
	let $0<T\le 2,  \;  \alpha\in ]\frac{37}{26},2],\;s\geq 0$ and  $\{w_n\} $ be an acceptable weight in the sense of  Definition \ref{def1} with  $ \omega_N \lesssim N^{0+} $ for $ N\ge 1$. Let, moreover,  $u\in L^\infty_T H^s$ be a solution to \eqref{gKdV} associated with an initial datum $u_0 \in H^s(\R)$. Assume a dyadic sequence $\{\omega_N\}$ satisfies  $\omega_N\lesssim \langle N \rangle^{0+}$. Then $u\in L^\infty_T H^s_{\omega}$ and it holds
	\begin{align}\label{eqh0}
		&\|u\|_{L^\infty_T H^s_{\omega}}^2  \lesssim \|u_0\|_{H^s_{\omega}}^2 +T^{0+}\|u\|_{L^\infty_T H^s}(\|u\|_{Y^{s}_T}\|u\|_{L^\infty_T L^2_x}+\|u\|_{Y^{0}_T}\|u\|_{L^\infty_T H^s}) (1+\|u\|_{L^\infty_T L^2_x}),
	\end{align}
	for  any couple 
	\begin{equation}\label{coco}
		(\theta,\beta) \in  ]\frac{11-4\alpha}{6}, \frac{4\alpha-2}{3}[ \times \ ]  \frac{4-2\alpha}{3},\alpha-1[ \;.
	\end{equation}
\end{prop}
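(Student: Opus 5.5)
We follow the improved energy method of \cite{MV15}. Since $u\in L^\infty_T H^s$ only, we first regularize: we apply $P_N$ to \eqref{gKdV}. Then $P_N u\in C([0,T];H^\infty)$ solves $\pa_t u_N+L_{\al+1}u_N+\fr12\pa_x P_N(u^2)=0$, with $P_N(u^2)\in L^\infty_T L^1_x$. Hence $t\mapsto\|P_N u(t)\|_{L^2}^2$ is absolutely continuous, and
$$
\|P_N u(t)\|_{L^2}^2=\|P_N u_0\|_{L^2}^2-\int_0^t\int_\R P_N(u^2)\,\pa_x P_N u\,dx\,dt' .
$$
We multiply by $\omega_N^2\langle N\rangle^{2s}$, take the supremum in $t$ and sum over $N$. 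Low frequencies $N\lesssim 1$ are trivially bounded by $T\|u\|_{L^\infty_T L^2}^2\|u\|_{L^\infty_T H^s}$ via Bernstein. For $N\gg1$ we decompose $u^2$ as in \eqref{zq}, which splits the trilinear term into high--low interactions $P_N(P_{\ll N}u\,u_{\sim N})$ and high--high interactions $u_{N_1}u_{N_1'}$ with $N_1\sim N_1'\gtrsim N$.

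\textbf{High--low term.} Here there is a derivative loss, removed by the standard commutator trick. We write
$$
\int P_N(u_{\ll N}u_{\sim N})\pa_x u_N
=\int u_{\ll N}\,u_N\,\pa_x u_N+\int[P_N,u_{\ll N}]u_{\sim N}\,\pa_x u_N .
$$
The first piece equals $-\fr12\int\pa_x u_{\ll N}\,u_N^2$. The commutator transfers the derivative onto the low frequency. Both contributions take the form $N_2\int\Lambda_a(u_{N_2},u_{\sim N})u_N$ with $N_2\ll N$, where $a$ is bounded and satisfies \eqref{estcom}.

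These integrals are then handled with the \cite{MV15} argument. We extend with $\rho_T$ (localized by $1_{[0,t]}$) and use the resonance relation \eqref{resonance}: $|\Omega^{(2)}|\sim N_2N^\al$. Splitting $1_{[0,t]}=1^{\mathrm{high}}_{t,R}+1^{\mathrm{low}}_{t,R}$ with $R\sim N_2N^\al$ and using Lemma \ref{le2-2}, the high part gains $(N_2N^\al)^{-1}$ in $L^1_t$. It is controlled by $N_2(N_2N^\al)^{-1}\,N_2^{1/2}\|u\|^3_{L^\infty L^2}$-type bounds, refined with \eqref{bili2}. On the low part, one of the three functions carries modulation $\gtrsim N_2N^\al$. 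We place that function in $F^{s,\fr12}$:
\begin{itemize}
\item the $X^{s-\theta,1}$ component gives $(N_2N^\al)^{-1}N^{\theta}$;
\item the $X^{s-\beta,2/3}$ component gives $(N_2N^\al)^{-2/3}N^{\beta}$.
\end{itemize}
The two remaining functions are paired in $L^2_{t,x}$ through the refined bilinear estimate \eqref{bili2}, which gives $N^{\fr{5-4\al}{6}}$. The powers of $N_2$ are harmless for $N_2\ge1$, while $N_2<1$ is handled by Bernstein. This yields a factor $N^{\theta+\fr{5-4\al}{6}-\al}N^{0+}$ and a factor $N^{\beta+\fr{5-4\al}{6}+1-\fr{2\al}{3}}$ (after accounting for the derivative). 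These are summable precisely when $\theta<\fr{4\al-2}{3}$ and $\beta<\al-1$ respectively, which explains the upper bounds in \eqref{coco}. The small power of $T$ comes from the $T^{1/4}$ in Proposition \ref{bl0}, or from interpolating with the trivial bound.

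\textbf{High--high term.} For $N_1\sim N_1'\gtrsim N$ the resonance is $\sim N N_1^\al$. The derivative $\pa_x$ on $u_N$ costs only $N\le N_1$, so the same modulation decomposition applies. We put the high-modulation factor in $F^{0,\fr12}$ or $F^{s,\fr12}$, and the other pair in \eqref{bili1}/\eqref{bili2} or Lemma \ref{le3-1}. We distribute the weight $\langle N\rangle^{s}\omega_N^2\lesssim \langle N_1\rangle^{s}N_1^{0+}$ onto one high-frequency factor; this is why both $\|u\|_{Y^s}\|u\|_{L^\infty L^2}$ and $\|u\|_{Y^0}\|u\|_{L^\infty H^s}$ appear in \eqref{eqh0}. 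The gain $N_1^{-\al}$ leaves room for the sums over $N\lesssim N_1$ and over $N_1$.

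\textbf{Main obstacle.} The hardest part is the high--low term, specifically checking that the exponent from the $X^{s-\beta,2/3}$ part combined with the refined bilinear estimate remains negative. This closes exactly for $\beta<\al-1$. For this to be compatible with $\beta>\fr{4-2\al}{3}$ one needs $\al>7/5$, which holds under $\al>37/26$. The same compatibility check for $\theta$ between $\fr{11-4\al}{6}$ and $\fr{4\al-2}{3}$ also requires $\al>3/2$-type thresholds to be verified, and we would first try the $\theta$-case by also exploiting the extra $N_2$ smallness when needed. If that bookkeeping fails, we would instead estimate the low part by taking the high modulation on the test function $u_N$ itself, following \cite{MV15}.
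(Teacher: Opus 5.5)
\textbf{There is a genuine gap: the estimate that closes the argument is misidentified, and the terms with two high-modulation factors are missing.} Your overall architecture is the one the paper uses: the energy identity for $P_Nu$, the commutator reduction of the high--low term to $N_2\int\Lambda_a(u_{N_2},u_{\sim N})u_N$, the splitting $1_{[0,t]}=1^{\mathrm{high}}_{t,R}+1^{\mathrm{low}}_{t,R}$, and placing the high-modulation factor in $F^{s,\frac12}$.

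\emph{First issue: \eqref{bili2} is not available where it matters.} When the high modulation sits on the lowest-frequency factor, the two remaining factors have comparable frequencies. This happens in the high--low term, and on the output $u_N$ in the high--high term. Estimate \eqref{bili2} requires separated input frequencies, so it cannot be used there. Among the paper's tools, only Lemma \ref{Cu2} (or \eqref{bili1}) applies, with the worse factor $N^{\frac{2-\alpha}{3}}$. The paper uses Lemma \ref{Cu2} in every term, and the upper bounds in \eqref{coco} come exactly from
\[
\sum_{N_2\lesssim N} N_2\,N^{\frac{2-\alpha}{3}}\max\Big\{N^{\theta}(N_2N^{\alpha})^{-1},\,N^{\beta}(N_2N^{\alpha})^{-\frac23}\Big\}^{1-}\lesssim \max\Big\{N^{\theta-\frac{4\alpha-2}{3}+},\,N^{\beta+1-\alpha+}\Big\}.
\]
Your exponents $\theta+\frac{5-4\alpha}{6}-\alpha$ and $\beta+\frac{5-4\alpha}{6}+1-\frac{2\alpha}{3}$ correspond instead to $\theta<\frac{10\alpha-5}{6}$ and $\beta<\frac{8\alpha-11}{6}$. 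So they do not give the conditions you claim. Moreover, since $\frac{8\alpha-11}{6}<\alpha-1$, your $\beta$-bound would not cover all of \eqref{coco}.

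\emph{Second issue: the double high-modulation terms.} After the resonance decomposition, the non-resonant factors carry projections $Q_{\ll N_2N^{\alpha}}$ and are no longer solutions of \eqref{gKdV}. So neither Lemma \ref{Cu2} nor \eqref{bili1}--\eqref{bili2} applies to them; Lemma \ref{le2-1} only lets you keep such a factor in $L^p_tL^2_x$. You must therefore write $Q_{\ll}u=u-Q_{\gtrsim}u$ on the factor sent to $L^2_tL^\infty_x$. With your bilinear pairing you would have to do this on both remaining factors. This creates terms such as $I_\infty(1^{\mathrm{low}}_{t,R}Q_{\ll}u_{N_2},Q_{\gtrsim}u_{\sim N},Q_{\sim}u_N)$.

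The naive bound for these terms (Bernstein on $u_{N_2}$, one factor in $Y^0$, the other in $L^\infty_tL^2_x$) sums to $\max\{N^{\theta-\alpha+\frac12},N^{\beta-\frac{2\alpha}{3}+\frac56}\}$. This diverges for $\theta\ge\alpha-\frac12$ or $\beta\ge\frac{4\alpha-5}{6}$, and both ranges intersect \eqref{coco}. The paper splits into two cases:
\begin{itemize}
\item If $N_2\le N^{\frac{4-2\alpha}{3}}$, then $N_2^{1/2}\le N^{\frac{2-\alpha}{3}}$, so Bernstein is as good as Lemma \ref{Cu2}.
\item If $N_2>N^{\frac{4-2\alpha}{3}}$, the second high-modulation factor is bounded through the a priori estimate $u\in X^{s-(\frac32+),1}_T$ of Lemma \ref{first}, gaining $N^{\frac32+}(N_2N^{\alpha})^{-1}$. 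This closes because $\frac32-\alpha<\frac23(2-\alpha)$.
\end{itemize}
This is the essential use of Lemma \ref{first}. Without it, or some other way of exploiting both high modulations, your argument does not reach the full range \eqref{coco}.

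\emph{Minor points.} In the high--high term, the weight $\omega_N^2\langle N\rangle^{2s}$ is bounded by $\omega_{N_1}^2\langle N_1\rangle^{2s}$ and split as $\langle N_1\rangle^{s}$ on each high-frequency factor; this is what produces the two products in \eqref{eqh0}. Nonemptiness of \eqref{coco} is not part of the proof, since the statement holds for any admissible couple. In any case, $\frac{11-4\alpha}{6}<\frac{4\alpha-2}{3}$ only needs $\alpha>\frac54$.
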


\begin{proof}
	First, we notice that Proposition \ref{y0} ensures that $u\in Y^s_T$. Applying the operator $P_N$ with $N>0$ dyadic to \eqref{gKdV} and taking $L^2_x$-scalar product of the resulting equation with $P_N u$, multiplying by $\omega_N^2 \langle N \rangle^{2s}$ and integrating on $]0,t[$ with $0<t<T$, we obtain
	\begin{align*}
		\|P_N u\|_{L^\infty_T H^s_{\omega}}^2 &\lesssim \|P_N u_0\|_{L^\infty_T H^s_{\omega}}^2 + \omega_N^2 \langle N \rangle^{2s} \sup_{t\in]0,T[}\Big|\int_0^t\int_{\R}P_N(u^2)\pa_x P_N u dxdt'\Big|.
	\end{align*}
	Thus it remains to estimate
	\begin{align*}
		\sum_{N>0} \omega_N^2 \langle N \rangle^{2s} \sup_{t\in]0,T[}\Big|\int_0^t\int_{\R}P_N(u^2)\pa_x P_N u dxdt'\Big|.
	\end{align*}
	The contribution of the sum over $N\lesssim 1$ is easily estimated by $ T \|u\|_{L^\infty_T L^2_x}^3$ so that we only have to consider the sum over $N\gg 1$.
	We work with the  extension $\tilde{u}=\rho_T(u) $ of $u$ supported in time in $[-2,2]$ (see Lemma \ref{lem32}). To simplify the notation we drop the tilde in the sequel.
	For $ N\gg 1 $ we have the two following contributions denoted by $A$ and $B$, where
\EQ{
A=& \sum_{N \gg  1}\omega_N^2 \langle  N\rangle^{2s}\int_0^t  \int_{\R^2} \partial_x P_N(u P_{\ll N} u ) P_N u \, dsdxdy\\
 =&  \sum_{N \gg  1}\sum_{0<N_3\ll N}\omega_N^2  \langle  N\rangle^{2s}\Bigl( -\int_0^t \int_{\R^2} \partial_x P_{N_3}u  (P_N u)^2\, dsdxdy\\
 &+\int_0^t \int_{\R^2} [\partial_x P_N, P_{N_3}u] P_{\sim N} u \cdot P_N u  dsdxdy\Bigr) \\
 = &  \sum_{N \gg  1}\sum_{0<N_3\ll N} \omega_N^2  \langle  N\rangle^{2s} N_3 \int_0^t \int_{\R^2} \Lambda_{a_1}(P_{N_3}u, P_{\sim N} u) P_N u  \, dsdxdy \, ,\label{intep}
}
with 
\begin{equation}\label{a1}
a_1(\xi_1,\xi_2)=N_3^{-1}\phi_{N_3}(\xi_1)\tilde \phi_N(\xi_2)[\phi_N(\xi_1+\xi_2)(\xi_1+\xi_2)-\phi_N(\xi_2)\xi_2] \; ,
\end{equation}
 and
\EQ{
B= &  \sum_{N \gg 1}\sum_{N_1\gtrsim N}\omega_N^2  \langle  N\rangle^{2s}\int_0^t  \int_{\R^2} \partial_x P_N( P_{\sim N_1}u P_{N_1} u) P_N u dsdxdy\\
= &  \sum_{N_1 \gg 1}\sum_{1\ll N \lesssim N_1} \om_N^2 \langle  N\rangle^{2s} N \int_0^t  \int_{\R^2}\Lambda_{a_2}(P_N u,P_{\sim N_1}  u) P_{N_1} u  dsdxdy\\
\lesssim & \sum_{N_1 \gg 1}\sum_{1\ll N \lesssim N_1}\om_{N_1}^2  \langle  N_1\rangle^{2s} N \int_0^t  \int_{\R^2}    \Lambda_{a_2}(P_N u, P_{\sim N_1} u) P_{N_1} u \, dsdxdy \label{intep1},
}
with $a_2(\xi,\xi_1)=\frac{\xi}{N}\phi_{N}(\xi)$.  It is direct to check that $\|a_2\|_{L^\infty(\R^2)} \lesssim 1 $ and that $ a_2 $ satisfies \eqref{estcom}. Moreover, it is well-known (see for instance \cite{KT03}) that, for $ N_3\ll N$, the following commutator estimate holds
$$
\|\Lambda_{a_1}(g,h)\|_{L^2_x}=N_3^{-1}  \|[\partial_x P_N, P_{N_3}g] P_{\sim N} h\|_{L^2_x}\lesssim \|g\|_{L^\infty_x} \|h\|_{L^2_x} \;, \quad \forall (g,h)\in L^\infty(\R) \times L^2(\R) ,
$$
and by the mean value theorem, 
$$
\|a_1(\xi_1,\xi_2)\|_{L^\infty(\R^2}\lesssim N_3^{-1} \phi_{N_3}(\xi_1) |\xi_1| \sup_{\R} \Bigl(\phi_N+|\phi'(\frac{\cdot}{N})(\frac{\cdot}{N})| \Bigr)\lesssim 1 \; .
$$
Therefore $a_1 $ also satisfies  \eqref{estcom} and   it thus  suffices to bound
\begin{equation}\label{DefC}
I=\sum_{N \gg  1}\sum_{0<N_3\lesssim N}\om_N^2 \langle  N\rangle^{2s} N_3 \int_0^t \int_{\R^2} \Lambda_a( P_{N_3}u,  P_{\sim N} u) P_N u dsdxdy
\end{equation}
for $ a\in L^\infty(\R^2) $ with $ \|a\|_{L^\infty(\R^2)}\lesssim 1$ satisfying \eqref{estcom}. 
For any $0<t<T$, we set
	\begin{align*}
		I_t(u_1,u_2,u_3)=\int_0^t\int_\R \Lambda_a(u_1,u_2)u_3 dxdt',
	\end{align*}
	so that \eqref{DefC} may be rewritten as 
	\begin{align*}
		I&\lesssim  \sum_{N_1 \gg 1} \sum_{0<N \lesssim N_1} \omega_{N_1}^2  N \sup_{t\in]0,T[}\langle N_1 \rangle^{2s}|I_t(u_N,u_{N_1},u_{\sim N_1})|.
	\end{align*}

	We set $R=N_1^{-(0+)} N N_1^\alpha \ll N N_1^{\al}$ since  $N_1\gg 1$. We split $I_t$ as
	\begin{align*}
		I_t(u_N,u_{N_1},u_{\sim N_1})=I_\infty(1^{\text{high}}_{t,R}u_N,u_{N_1},u_{\sim N_1})+I_\infty(1^{\text{low}}_{t,R}u_N,u_{N_1},u_{\sim N_1})=:I^{\text{high}}_t+I^{\text{low}}_t,
	\end{align*}
	where $I_\infty(u_N,u_{N_1},u_{\sim N_1})=\int_{\R^2}\Lambda_a(u_N,u_{N_1})u_{\sim N_1}dxdt'$. The contribution of $I^{\text{high}}_t$ is estimated, thanks to \eqref{eq2-2}, Lemma \ref{le2-2} and H\"{o}lder's inequality, by
	\begin{align*}
		\langle N_1 \rangle^{2s}|I_t^{\text{high}}| & \lesssim \langle N_1 \rangle^{2s} N^{\fr12}\|1^{\text{high}}_{t,R}\|_{L^1}  \|u_N\|_{L^\infty_t L^2_x} \|u_{N_1}\|_{L^\infty_t L^2_x}\|u_{\sim N_1}\|_{L^\infty_t L^2_x}\\
		&\lesssim   N^{\fr12}  T^{0+}  (N N_1^{\al-})^{-1+} \|u\|_{L^\infty_T L^2_x} \|u\|_{L^\infty_T H^s}^2
		\\
		& \lesssim T^{0+} N_1^{\fr{2-\al}{3}} N_1^{\theta}(N N_1^\al)^{-1+} \|u\|_{L^\infty_T L^2_x} \|u\|_{L^\infty_T H^s}^2,
	\end{align*}
	where in the last step we used that $ \theta \ge \frac{2\alpha-1}{6} $ since $\frac{2\alpha-1}{6} \leq \frac{11\alpha-4}{6} $ for $\alpha\le 2$.\\
	Since $R\ll N N_1^{\al}$,  in view of the resonance relation \eqref{resonance}, we can use the following decomposition to evaluate the contribution $I_t^{\text{low}}$,
	\begin{align*}
		I_\infty(1^{\text{low}}_{t,R} u_N,u_{N_1},u_{\sim N_1})&=I_\infty(1^{\text{low}}_{t,R} Q_{\gtrsim N N_1^\al}u_N,u_{N_1},u_{\sim N_1})
		\\& \quad +I_\infty(1^{\text{low}}_{t,R} Q_{\ll N N_1^\al}u_N,Q_{\gtrsim N N_1^\al}u_{N_1},u_{\sim N_1})
		\\& \quad\quad +I_\infty(1^{\text{low}}_{t,R} Q_{\ll N N_1^\al}u_N,Q_{\ll N N_1^\al}u_{N_1},Q_{\sim N N_1^\al}u_{\sim N_1})
		\\&=: I^{1,\text{low}}_t+I^{2,\text{low}}_t+I^{3,\text{low}}_t.
	\end{align*}
	To get a positive power of $ T $ in the estimates on each of this terms we will make use of Lemma \ref{le2-2} together with the following trivial  Holder inequality in time  
	\begin{equation}\label{triv}
		\|u\|_{L^{2+}_t  L^2_x}\lesssim \|u\|_{L^2_{tx}}^{1-}  \|u\|_{L^\infty_t L^2_{x}}^{0+} 
	\end{equation} 
	Indeed , by \eqref{triv}, Lemma \ref{le2-2}, Lemma \ref{Cu2} and  \eqref{estcom}, we have
	\begin{align*}
		\langle N_1 \rangle^{2s} &| I^{1,\text{low}}_t|
		\lesssim  \langle N_1 \rangle^{2s}\|Q_{\gtrsim N N_1^\al} u_N\|_{L^{2+}_t L^2_x} \|u_{N_1}\|_{L^2_t L^\infty_x}  \|u_{\sim N_1}\|_{L^\infty_t L^2_x} \|1^{\text{low}}_{t,R}\|_{L^{\infty-}} \\
		&\lesssim  T^{0+} N_1^{\fr{2-\al}{3}} \max\Big\{\langle N\rangle^{\theta}(N N_1^\al)^{-1},\langle N\rangle^{\beta}(N N_1^\al)^{-\fr23}\Big\}^{1-} \|u\|_{Y^{0}_T}
		\|u\|_{L^\infty_T H^s}^2 (1+\|u\|_{L^2_T L^2_x}) ,
	\end{align*}
	where we used that $ u = \rho_T(u) $ is supported in time in $[-2,2] $.
	Similarly, by  \eqref{triv}, Lemma \ref{le2-1}, Lemma \ref{le2-2}, Lemma \ref{Cu2} and  \eqref{estcom}, we have
	\begin{align*}
		\langle N_1  &\rangle^{2s}|  I^{2,\text{low}}_t|\lesssim  \langle N_1 \rangle^{2s}
		\|Q_{\gtrsim N N_1^\al} u_{N_1}\|_{L^{2+}_t L^2_x} \|Q_{\ll N N_1^\al} u_{N}\|_{L^\infty_t L^2_x} \| u_{\sim N_1}\|_{L^2_t L^\infty_x} \|1^{\text{low}}_{t,R}\|_{L^{\infty-}} 
		\\&\lesssim T^{0+} N_1^{\fr{2-\al}{3}} \max\Big\{N_1^{\theta}(N N_1^\al)^{-1},N_1^{\beta}(N N_1^\al)^{-\fr23}\Big\}^{1-} \|u\|_{Y^{s}_T}  \|u\|_{L^\infty_T L^2_x} \|u\|_{L^\infty_T H^s} (1+\|u\|_{L^2_T L^2_x}).
	\end{align*}
	To treat the  third term $I^{3,\text{low}}_t$, we rewrite
	$Q_{\ll NN_1^{\al}}u_{N_1}$ as
	$$
	Q_{\ll NN_1^{\al}}u_{N_1}=u_{N_1}-Q_{\gtrsim NN_1^{\al}}u_{N_1}\, .
	$$
	 The contribution of $u_{N_1}$ in the third term is estimated  as above by 
	\begin{align*}
		&\quad \langle N_1 \rangle^{2s}|I_\infty(1^{\text{low}}_{t,R} Q_{\ll N N_1^\al}u_N,u_{N_1},Q_{\sim N N_1^\al}u_{\sim N_1})|
		\\&\lesssim  \langle N_1 \rangle^{2s}
		\|Q_{\sim  N N_1^\al} u_{\sim N_1}\|_{L^{2+}_t L^2_x} \|Q_{\ll N N_1^\al} u_{N}\|_{L^\infty_t L^2_x} \|u_{N_1}\|_{L^2_t L^\infty_x} \|1^{\text{low}}_{t,R}\|_{L^{\infty-}}
		\\&\lesssim T^{0+} N_1^{\fr{2-\al}{3}} \max\Big\{N_1^{\theta}(N N_1^\al)^{-1},N_1^{\beta}(N N_1^\al)^{-\fr23}\Big\}^{1-} \|u\|_{Y^{s}_T} \|u\|_{L^\infty_T L^2_x} \|u\|_{L^\infty_T H^s} (1+\|u\|_{L^2_T L^2_x}).
	\end{align*}
	Finally to bound the contribution of $Q_{\gtrsim NN_1^{\al}}u_{N_1}$ in the third term we separate two cases. In the case $N\leq N_1^{\fr{4-2\al}{3}}$,  Lemmas \ref{le2-1} and \ref{le2-2} leads to 
	\begin{align*}
		&\quad \langle N_1 \rangle^{2s}|I_\infty(1^{\text{low}}_{t,R} Q_{\ll N N_1^\al}u_N,Q_{\gtrsim N N_1^\al}u_{N_1},Q_{\sim N N_1^\al}u_{\sim N_1})|
		\\&\lesssim \langle N_1 \rangle^{2s} N^{\fr12}\max\Big\{\langle N_1\rangle^{\theta}(N N_1^\al)^{-1},\langle N_1\rangle^{\beta}(N N_1^\al)^{-\fr23}\Big\}^{1-} \|u_{N_1}\|_{Y^0}  \\
		 & \qquad\qquad  \times  \|Q_{\ll N N_1^\al}u_N\|_{L^\infty_t L^2_x} \|Q_{\sim N N_1^\al} u_{\sim N_1}\|_{L^\infty_t L^2_x} \|1^{\text{low}}_{t,R}\|_{L^{2-}_t}
		\\&\lesssim T^{\fr 12+}
		N_1^{\fr{2-\al}{3}}
		\max\Big\{N_1^{\theta}(N N_1^\al)^{-1},N_1^{\beta}(N N_1^\al)^{-\fr23}\Big\}^{1-} \|u\|_{Y^{s}_T} \|u\|_{L^\infty_T H^s} \|u\|_{L^\infty_T L^2_x}.
	\end{align*}
	Whereas in the case $N> N_1^{\fr{4-2\al}{3}}$, Lemmas  \ref{le2-1}, \ref{le2-2} and \ref{first} lead to
	\begin{align*}
		&\quad \langle N_1 \rangle^{2s}|I_\infty(1^{\text{low}}_{t,R} Q_{\ll N N_1^\al}u_N,Q_{\gtrsim N N_1^\al}u_{N_1},Q_{\sim N N_1^\al}u_{\sim N_1})|
		\\&\lesssim \langle N_1 \rangle^{2s} N^{\fr12}\max\Big\{\langle N_1\rangle^{\theta}(N N_1^\al)^{-1},\langle N_1\rangle^{\beta}(N N_1^\al)^{-\fr23}\Big\}^{1-} \|u_{\sim N_1}\|_{Y^0} 
		\\
		 & \qquad\qquad  \times \|Q_{\ll N N_1^\al}u_N\|_{L^\infty_t L^2_x} \|Q_{\gtrsim N N_1^\al} u_{N_1}\|_{L^2_t L^2_x} \|1^{\text{low}}_{t,R}\|_{L^{\infty-}}
		\\&\lesssim T^{0+} \langle N_1 \rangle^{2s}
		N^{\fr12} N_1^{\fr32+}(N N_1^\al)^{-1}
		\max\Big\{N_1^{\theta}(N N_1^\al)^{-1},N_1^{\beta}(N N_1^\al)^{-\fr23}\Big\}^{1-} \\
		& \qquad\qquad  \times \|u_{\sim N_1}\|_{Y^{0}}
		\|u_N\|_{L^\infty_t L^2_x} \|u_{N_1}\|_{X^{-(\fr32+),1}}
		\\&\lesssim T^{0+}
		N_1^{\fr{\al-2}{3}} N_1^{\fr32+} N_1^{-\al}
		\max\Big\{N_1^{\theta}(N N_1^\al)^{-1},N_1^{\beta}(N N_1^\al)^{-\fr23}\Big\}^{1-} 
		\\& \qquad\qquad  \times \|u_{\sim N_1}\|_{Y^{s}}
		\|u_N\|_{L^\infty_t L^2_x} \|u_{N_1}\|_{X^{s-(\fr32+),1}}
		\\&\lesssim
		T^{0+}N_1^{\fr{2-\al}{3}}
		\max\Big\{N_1^{\theta}(N N_1^\al)^{-1},N_1^{\beta}(N N_1^\al)^{-\fr23}\Big\}^{1-} 
		\\& \qquad\qquad  \times \|u\|_{Y^{s}_T}
		\|u\|_{L^\infty_T L^2_x} \|u\|_{L^\infty_T H^s} (1+\|u\|_{L^\infty_T L^2_x}),
	\end{align*}
	where we use that $\fr32-\al<\fr23 (2-\al)$ since $\al>\fr12$. Therefore, for $ N_1\gg 1 $ it holds
	\begin{align*}
		\quad \langle N_1 \rangle^{2s} & |I_t(u_N,u_{N_1},u_{\sim N_1})|
		 \lesssim T^{0+} N_1^{\fr{2-\al}{3}} \max\Big\{N_1^{\theta}(N N_1^\al)^{-1},N_1^{\beta}(N N_1^\al)^{-\fr23}\Big\}^{1-} \\
		 & \hspace*{32mm}  \times
		\|u\|_{L^\infty_T H^s}(\|u\|_{Y^{s}_T}\|u\|_{L^\infty_T L^2_x}+\|u\|_{Y^{0}_T}\|u\|_{L^\infty_T H^s}) (1+\|u\|_{L^\infty_T L^2_x})
	\end{align*}
	and we have the following estimate on the contribution of the sum over $N_1\gg 1$ :
	\begin{align*}
		&\quad\sum_{N_1\gg 1} \sum_{0< N \lesssim N_1} \omega_{N_1}^2 N \sup_{t\in]0,T[} \langle N_1 \rangle^{2s}|I_t(u_N,u_{N_1},u_{\sim N_1})|
		\\ &\lesssim  T^{0+} \sum_{N_1\gg 1} \sum_{0<N \lesssim N_1} \omega_{N_1}^2 N N_1^{\fr{2-\al}{3}} \max\Big\{N_1^{\theta}(N N_1^{\al})^{-1},
		N_1^{\beta}(N N_1^{\al})^{-\fr23}\Big\}^{1-}
		\\ & \qquad\qquad \times \|u\|_{L^\infty_T H^s}(\|u\|_{Y^{s}_T}\|u\|_{L^\infty_T L^2_x}+\|u\|_{Y^{0}_T}\|u\|_{L^\infty_T H^s}) (1+\|u\|_{L^\infty_T L^2_x})
		\\&\lesssim T^{0+}\sum_{N_1\gg 1}   \max\Big\{N_1^{\theta-\fr{4\al-2}{3}+},\
		N_1^{\beta-\al+1+} \Big\}
		\\& \qquad\qquad \times\|u\|_{L^\infty_T H^s}(\|u\|_{Y^{s}_T}\|u\|_{L^\infty_T L^2_x}+\|u\|_{Y^{0}_T}\|u\|_{L^\infty_T H^s}) (1+\|u\|_{L^\infty_T L^2_x})
		\\& \lesssim  T^{0+}\|u\|_{L^\infty_T H^s}(\|u\|_{Y^{s}_T}\|u\|_{L^\infty_T L^2_x}+\|u\|_{Y^{0}_T}\|u\|_{L^\infty_T H^s}) (1+\|u\|_{L^\infty_T L^2_x}),
	\end{align*}
	where we use  that, $\omega_{N_1} \lesssim N_1^{0+}$, $\theta<\fr{4\al-2}{3}$ and $\beta<\al-1$.
	
	Thus, we have
	\begin{align*}
		&\|u\|_{L^\infty_T H^s_{\omega}}^2  \lesssim \|u_0\|_{L^\infty_T H^s_{\omega}}^2 + T^{0+}\|u\|_{L^\infty_T H^s}(\|u\|_{Y^{s}_T}\|u\|_{L^\infty_T L^2_x}+\|u\|_{Y^{0}_T}\|u\|_{L^\infty_T H^s}) (1+\|u\|_{L^\infty_T L^2_x}),
	\end{align*}
	which completes the proof.
\end{proof}

\section{Estimates on the difference of two solutions to \eqref{gKdV}}
\label{Sect5}
We define the Banach space
\begin{align*}
\bar{H}^s(\R)=\{\varphi\in H^s(\R)\ |\ \|\varphi\|_{\bar{H}^s}<\infty\}
\end{align*}
with
\begin{align*}
\|\varphi\|_{\bar{H}^s}:=\|\langle|\xi|^{-1}\rangle \langle\xi\rangle^{s}\hat{\varphi}\|_{L^2},
\end{align*}
equipped with the norm $\|\cdot\|_{\bar{H}^s}$.
Finally, we define the function spaces $Z^s$, $s\in\R$, by
\begin{align*}
Z^s=L^\infty_t \bar{H}^s\cap F^{s,\fr12}.
\end{align*}

\begin{prop}\label{yr}
Let $0<T\le 2,\ \frac{37}{26}<\al\leq 2,\ r<0$, and $u,v\in L^\infty_T L^2$ be a solution to \eqref{gKdV} associated with an initial datum $u_0,v_0 \in L^2(\R)$ such that $u_0-v_0 \in \bar{L}^2(\R)$. Then, for any $\theta>\frac{11-4\alpha}{6} $ and $\beta> \frac{4-2\alpha}{3} $, $u-v\in Z^r_T$ and it holds
\begin{align} \label{estpropyr}
\|u-v\|_{Z_T^{r}} &\lesssim \|u-v\|_{L^\infty_T \bar{H}^r_x}+T^{0+}\|u-v\|_{Z^{r}_T} \|u+v\|_{Y^0_T}(1+\|u+v\|_{Y^0_T}^2).
\end{align}
provided
\begin{equation}\label{condprop51}
\begin{cases}
\theta-r<\al \\
\beta-r<\fr23\al \\
\theta-\beta<\frac{22\alpha-11}{18} \\
r>\frac{2-2\alpha}{3},
\end{cases}
\end{equation}
\end{prop}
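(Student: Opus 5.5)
Set $w=u-v$ and $z=u+v$, so that $w$ solves $\partial_t w+L_{\al+1}w+\frac12\partial_x(zw)=0$. The $L^\infty_T\bar H^r$ part of the $Z^r_T$-norm appears on both sides of \eqref{estpropyr}, so we only need to bound $\|w\|_{F^{r,\frac12}_T}$. We work with the extensions $\tilde w=\rho_T(w)$ and $\tilde z=\rho_T(z)$ given by Lemma \ref{lem32}. As in the proof of Proposition \ref{y0}, the linear estimates in Bourgain spaces give
$$\|w\|_{F^{r,\frac12}_T}\lesssim\|w(0)\|_{H^r}+\|\partial_x(zw)\|_{F^{r,-\frac12}_T}.$$
We then split $zw$ into the usual three pieces: the low-frequency output $P_{\lesssim1}(zw)$; the high$\times$low interactions $P_N(z_{\sim N}P_{\ll N}w)$ and $P_N(P_{\ll N}z\,w_{\sim N})$; and the high$\times$high$\to$low interactions $P_N(z_{N_1}w_{N_1'})$ with $N_1\sim N_1'\gtrsim N$. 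Two facts are available for $z$: $z\in Y^0_T$ by Proposition \ref{y0}, and each of $u,v$ satisfies the refined estimates of Section \ref{Sect3}. For $w$ we only have $w\in L^\infty_T\bar H^r\cap F^{r,\frac12}_T$. To obtain the factor $T^{0+}$ we use Lemma \ref{le2-2} together with \eqref{triv}, in the same way as in Proposition \ref{h0}.

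\emph{High$\times$low terms.} These are estimated in the $X^{r-\theta,0}$ component of the norm.

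When $w$ carries the low frequency, we apply the refined bilinear estimate of Lemma \ref{Cuv} to $z_{\sim N}$ (writing $z$ through $u$ and $v$). To do this, the low-frequency factor $P_{\ll N}w$ must be placed in an $X^{0,\frac12,1}$-type space. Since $w\in X^{r-\theta,1}+X^{r-\beta,\frac23}$, we interpolate between the two pieces, or alternatively use Lemma \ref{le3-1} with the $X^{0,\frac13}$ norm. Summing over the low frequencies $N_2\ll N$ costs a power $N_2^{\theta-r}$ or $N_2^{\beta-r}+$. These powers are controlled by the resonance factor provided $\theta-r<\al$ and $\beta-r<\frac23\al$.

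When $w$ carries the high frequency, we put $w_{\sim N}$ in $F^{r}$ and $P_{\ll N}z$ in $L^\infty$ via Bernstein, or use a modulation decomposition. The $\bar H^r$ weight handles the very low frequencies $N_2\lesssim 1$.

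\emph{High$\times$high$\to$low terms.} For these we follow the duality argument \eqref{zqa}--\eqref{zq4}, testing against $P_N\varphi$ with $\varphi\in X^{0,\frac13}$, and split according to which factor carries modulation $\gtrsim NN_1^\al$.

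If $w_{N_1'}$ has the high modulation, we gain $(NN_1^\al)^{-1}N_1^{\theta-r}$ or $(NN_1^\al)^{-\frac23}N_1^{\beta-r}$. The remaining product $z_{N_1}P_N\varphi$ is handled by \eqref{tr2} or \eqref{tr1} from Lemma \ref{le3-1}.

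If $z_{N_1}$ has the high modulation, we use Lemma \ref{first} for $z$. The remaining product involving $w$, which has negative regularity, is estimated in $L^2_{t,x}$ by Bernstein and $\|w_{N_1}\|_{L^\infty_tL^2}\lesssim N_1^{-r}\|w\|_{L^\infty H^r}$. The resulting power $N_1^{-r}N_1^{\frac{2-2\al}{3}}$, compared with \eqref{zq3}, is what forces the condition $r>\frac{2-2\al}{3}$.

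The resonant term where the test function has modulation $\sim NN_1^\al$ is handled as in \eqref{zq3}, using $\|z_{N_1}\|_{L^2_TL^\infty_x}$ from Lemma \ref{Cu2}. The mixed contributions coming from the two different components of $F$ produce the compatibility condition $\theta-\beta<\frac{22\al-11}{18}$.

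\emph{Main obstacle.} The hardest step is the high$\times$low interaction in which $w$ sits at low frequency while $z$ is at high frequency, and more generally every place where a refined bilinear estimate is needed for $w$ itself. We cannot put $zw$ in $L^\infty L^1$ on short intervals, because $r<0$. Our first approach is to avoid Strichartz estimates on $w$ altogether by always placing $w$ in the $F^r$-norm and using the refined estimates only on $z$ (Lemmas \ref{Cuv} and \ref{le3-1}). If that leaves an unsummable power, we fall back on short-time $X^{s,b}$ versions of Proposition \ref{prov}, applied on intervals $|I|\sim N^{-\delta}T$ with $w$ placed in short-time Bourgain spaces, as the introduction indicates. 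This is where the exponent bookkeeping is tightest, and it is where the four conditions in \eqref{condprop51} must be checked to suffice.
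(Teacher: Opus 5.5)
The proposal has a genuine gap. Your primary plan is to keep $w$ in $L^\infty_T\bar H^r\cap F^{r,\frac12}_T$ and use refined estimates only on $z$. That plan does not close the high$\times$low interactions. These are precisely where the paper needs refined estimates on the difference $w$ itself, namely \eqref{Est2} and \eqref{Est3} of Proposition \ref{bl2}.

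\textbf{$w$ at low frequency.} For $P_N(z_{\sim N}P_{\ll N}w)$, Lemma \ref{Cuv} needs $\|P_{N_2}w\|_{X^{0,\frac12,1}}$. The space $X^{r-\theta,1}+X^{r-\beta,\frac23}$ controls this only with the loss $\langle N_2\rangle^{\max(\theta,\beta)-r}$. Summing over $N_2\ll N$, the $X^{r-\theta,0}$ component gives $N^{1+r-\theta}N^{\frac{5-4\alpha}{6}}N^{\theta-r}=N^{\frac{11-4\alpha}{6}}$, which diverges for every $\alpha\le 2$.
\begin{itemize}
\item The conditions $\theta-r<\alpha$ and $\beta-r<\frac23\alpha$ that you invoke are what make the high$\times$high sums over $N_1$ converge; they do not touch this loss.
\item The hypothesis $\theta>\frac{11-4\alpha}{6}$ is exactly what comes out when $P_{\ll N}w$ costs only $\langle N_2\rangle^{-r}$, as in \eqref{Est2}. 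So there is no room for your extra factor $N_2^{\theta}$.
\end{itemize}

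\textbf{$w$ at high frequency.} For $P_N(P_{\ll N}z\,w_{\sim N})$, even with $w_{\sim N}$ in $L^\infty_TH^r$, Bernstein on $P_{\ll N}z$ gives $N^{\frac32-\theta}$. That requires $\theta>\frac32$, which is impossible for $\alpha\le\frac32$ since $\theta<\alpha+r<\alpha$. Nor can Lemma \ref{Cuv} be run with $w$ in the high-frequency slot. Its proof places the nonlinearity in $L^\infty_{I_j}L^1_x$, and for $P_N(P_{\gtrsim N}w\,P_{\gtrsim N}z)$ this is not controlled by $\|w\|_{L^\infty_TH^r}$ when $r<0$.

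\textbf{The missing ingredient.} It is the one you list only as a fallback, and it is the core of the proof (the Appendix):
\begin{itemize}
\item write $w$ by Duhamel on intervals $|I_j|\sim N^{-\frac{5-\alpha}{3}}T$;
\item keep the low$\times$high parts of $\partial_x(zw)$ in $L^\infty_{I_j}L^1_x$;
\item bound the high$\times$high part $\int_{t_j}^t e^{(t-t')L_{\alpha+1}}P_N(P_{\gtrsim N}w\,P_{\gtrsim N}z)\,dt'$ in $X^{0,\frac12,1}_{I_j}$, via \eqref{Bour1} and a resonance decomposition using $w\in Y^r_T$ and $z\in Y^0_T$.
\end{itemize}
This yields Proposition \ref{Cw2}, \eqref{Est2}, \eqref{Est3} and Lemma \ref{le3-2}, under the side conditions \eqref{eq92}. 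Without this step the proposal is not a proof.

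\textbf{Smaller points.}
\begin{itemize}
\item You do not prove the qualitative claim $u-v\in Z^r_T$, without which \eqref{estpropyr} is vacuous. The paper gets it from $u,v\in Y^0_T$ and the Duhamel bound $\|P_Nw\|_{L^\infty_T\bar H^0}\lesssim\|P_Nw_0\|_{\bar H^0}+N^{\frac12}\|w\|_{L^\infty_TL^2}\|z\|_{L^\infty_TL^2}$ for $N\le1$.
\item The factor $T^{0+}$ here does not come from Lemma \ref{le2-2} and \eqref{triv}; those handle the sharp cutoff $1_{[0,t]}$ in energy estimates. It comes from a Bernstein interpolation in the high$\times$low terms, and from measuring in $X^{r-\beta,-\frac13+}$ in the high$\times$high terms.
\item In the high$\times$high term where $z_{N_1}$ carries modulation $\gtrsim NN_1^\alpha$, Lemma \ref{first} combined with Bernstein leaves $N_1^{\frac32-\alpha-r+}$. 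This is not summable over $N_1\gg N$ when $\alpha\le\frac32$, so you must use $z\in Y^0_T$ instead.
\item Even then, bounding $\|\tilde w_{N_1'}P_N\varphi\|_{L^2}$ by Bernstein gives at $N_1\sim N$ the factor $N^{\frac12+\theta-\beta-\alpha}$. That is the condition $\theta-\beta<\alpha-\frac12$, strictly stronger than the stated $\theta-\beta<\frac{22\alpha-11}{18}$. The paper gets the weaker condition from \eqref{tr3}, which is once more a refined estimate on $w$.
\end{itemize}
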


\begin{proof}

Note that $w=u-v$ satisfies
\begin{equation}\label{diff}
\pa_t w+L_{\al+1} w+\fr12 \pa_x(w z)=0,
\end{equation}
with $z=u+v$. By Proposition \ref{y0},  $u,\ v \in Y^0_T$ for any  $\theta>\frac{11-4\alpha}{6} $ and $\beta>  \frac{4-2\alpha}{3}  $. Since $u_0-v_0 \in \bar{L}^2(\R)$, we claim that the difference $w=u-v$ belongs to $Z^0_T$. Indeed, according to the definition of $Z^s$, it suffices to check that $P_{\leq 1} w$ belongs to $L^\infty_T \bar{H}^0$. By the Duhamel formula, for any dyadic integer $0<N\leq 1$, we have
\begin{align*}
\|P_N w\|_{L^\infty_T \bar{H}^0} & \lesssim \|P_N(u_0-v_0)\|_{\bar{H}^0}+N^{\fr12}\|w\|_{L^\infty_T L^2}\|z\|_{L^\infty_T L^2} \\
& \lesssim \|P_N(u_0-v_0)\|_{\bar{H}^0}+N^{\fr12}(\|u\|_{L^\infty_T L^2}^2+\|v\|_{L^\infty_T L^2}^2)
,
\end{align*}
which implies that $P_{\leq 1} w \in L^\infty_T \bar{H}^0$. Thus, $w=u-v \in Z^0_T \hookrightarrow Z^r_T$.

We extend $w$ from $(0,T)$ to $\R$ by using the extension operator $\rho_T$. On account of the uniform bounds on $\rho_T$, it remains to estimate the $F^{r,\fr12}_T$ -norm of $w$. From classical linear estimate in the framework of Bourgain's spaces (see \cite{Gi}), the Duhamel formulation associated with \eqref{diff} leads to
\begin{align*}
\|w\|_{F^{r,\fr12}_T} \lesssim \|w_0\|_{H^{r}}+\|\pa_x(zw)\|_{F^{r,-\fr12}_T}.
\end{align*}
 It thus suffices to estimate
\begin{align*}
\|\pa_x(zw)\|_{F^{r,-\fr12}_T} \lesssim \Big( \sum_{N>0} \|P_N \pa_x(zw)\|_{F^{r,-\fr12}_T}^2\Big)^{\fr12}.
\end{align*}

We first estimate the low-high contribution $P_N(P_{\ll N}z P_{\sim N}w)$. The contribution of the sum over $N\lesssim 1$ is easily estimated by
\begin{align*}
\|\pa_x P_N(P_{\ll N}z P_{\sim N}w)\|_{F^{r,-\fr12}_T} &\lesssim  N\|P_N(P_{\ll N}z P_{\sim N}w)\|_{L^2_T H^{r-\theta}}
\lesssim  T^{\frac12} N^{0+} \|w\|_{L^\infty_{T} H^r_{x}}\|z\|_{L^\infty_{T} L^2_{x}}.
\end{align*}
It remains to estimate the sum over $N\gg 1$.  We notice that Bernstein inequalities lead,  for any $0<\epsilon\ll 1 $, to
$$
\|P_N(P_{\ll N}z P_{\sim N}w)\|_{L^2_T L^2_x} \lesssim T^{\fr\epsilon2} N^{\frac\epsilon2} \|P_N(P_{\ll N}z P_{\sim N}w)\|_{L^2_T L^2_x}^{1-\ep}
(\|P_{\ll N}z\|_{L^\infty_T L^2_x} \|P_{\sim N}w\|_{L^\infty_T L^2_x})^{\ep}
$$
Applying \eqref{Est3} of Proposition  \ref{bl2}, we then obtain
\begin{align*}
\|\pa_x P_N(P_{\ll N} & z P_{\sim N}w)\|_{F^{r,-\fr12}_T} \lesssim  N\|P_N(P_{\ll N}z P_{\sim N}w)\|_{L^2_T H^{r-\theta}}
\\&\lesssim  T^{0+}N^{1+} N^{r-\theta} N^{\fr{5-4\al}{6}}N^{-r} (\|P_{N}w\|_{L^\infty_{T} H^r_{x}}+\|w\|_{Z^{r}_T}
\|z\|_{L^\infty_T L^2_{x}}^{1-}+\|w\|_{L^\infty_T \bar{H}^r_{x}}\|z\|_{Y^{0}_T}^{1-})\\
&\quad\times(\|z\|_{L^\infty_T L^2_{x}}+\|z\|_{L^\infty_T L^2_{x}}^{2-})
\\&\lesssim   T^{0+} N^{\fr{11-4\al}{6}-\theta+} (\|w\|_{L^\infty_{T} H^r_{x}}+\|w\|_{Z^{r}_T}
\|z\|_{L^\infty_T L^2_{x}}^{1-}+\|w\|_{L^\infty_T \bar{H}^r_{x}}\|z\|_{Y^{0}_T}^{1-})\\
&\quad\times(\|z\|_{L^\infty_T L^2_{x}}+\|z\|_{L^\infty_T L^2_{x}}^{2-})
\\&\lesssim T^{0+}\|w\|_{Z^{r}_T} \|z\|_{Y^0_T}(1+\|z\|_{Y^0_T}^2).
\end{align*}
that is acceptable since $\theta>\fr{11-4\al}{6}$. 
Similarly, we can estimate the high-low contribution $P_N(P_{\sim N}z P_{\ll N}w)$.
The contribution of the sum over $N\lesssim 1$ is easily estimated by
\begin{align*}
\|\pa_x P_N(P_{\sim N}z P_{\ll N}w)\|_{F^{r,-\fr12}_T} &\lesssim  N\|P_N(P_{\sim N}z P_{\ll N}w)\|_{L^2_T H^{r-\theta}}
\lesssim T^{\frac12}N^{0+} \|w\|_{L^\infty_{T} H^r_{x}}\|z\|_{L^\infty_{T} L^2_{x}}.
\end{align*}
It remains to estimate the sum over $N\gg 1$.
Applying \eqref{Est2}  of Proposition \ref{bl2}, we obtain in the same way as above,
\begin{align*}
\|\pa_x & P_N(P_{\sim N}z P_{\ll N}w)\|_{F^{r,-\fr12}_T} \lesssim  N\|P_N(P_{\sim N}z P_{\ll N }w)\|_{L^2_T H^{r-\theta}}
\\&\lesssim T^{0+} N^{1+} N^{r-\theta} N^{\fr{5-4\al}{6}} N^{-r} (\|z\|_{L^\infty_T L^2_{x}}+\|z\|_{L^\infty_T L^2_{x}}^{2-})\\
&\qquad\qquad\times  (\|w\|_{L^\infty_{T} H^r_{x}}+\|w\|_{Z^{r}_T}
\|z\|_{L^\infty_T L^2_{x}}^{1-}+\|w\|_{L^\infty_T \bar{H}^r_{x}}\|z\|_{Y^{0}_T}^{1-})
\\&\lesssim    T^{0+} N^{\fr{11-4\al}{6}-\theta+} \|w\|_{Z^{r}_T} \|z\|_{Y^0_T}(1+\|z\|_{Y^0_T}^2).
\end{align*}
That is again acceptable since $r<0$ and $\theta>\fr{11-4\al}{6}$.  Now we deal with the high-high interactions term:
\begin{align*}
\|\pa_x P_N(P_{\gtrsim N}z P_{\gtrsim N}w)\|_{F^{r,-\fr12}_T} &\lesssim \sum_{N'_1\sim N_1\gtrsim N} N\| P_N(P_{N_1}z P_{N'_1}w)\|_{F^{r,-\fr12}_T}.
\end{align*}
We separate the contributions of $N\lesssim N'_1\sim N_1 \lesssim 1$, $N \lesssim 1 \ll N'_1\sim N_1$ and $1 \ll N \lesssim N'_1\sim N_1$.

In the case $N\lesssim N'_1\sim N_1 \lesssim 1$, it is easily estimated by
\begin{align*}
\sum_{1\gtrsim N'_1\sim N_1\gtrsim N} N\| P_N(P_{N_1}z P_{N'_1}w)\|_{F^{r,-\fr12}_T} &\lesssim  \sum_{1\gtrsim N'_1\sim N_1\gtrsim N} N\| P_N(P_{N_1}z P_{N'_1}w)\|_{L^2_T H^{r-\theta}} \\
 & \lesssim T^{\frac12} \|w\|_{L^\infty_{T} H^r_{x}}\|z\|_{L^\infty_{T} L^2_{x}}.
\end{align*}

In the case $N \lesssim 1 \ll N'_1\sim N_1$, we work with the extensions $ \tilde{w}=\rho_T(w) $ and $ \tilde{z}=\rho_T(z) $ of $w $ and $ z$ on $ \R $. It holds 
\begin{align*}
\sum_{N'_1\sim N_1 \gg 1 \gtrsim N} N\| P_N &(P_{N_1}z P_{N'_1}w)\|_{F^{r,-\fr12}_T} \lesssim T^{0+} \sum_{N'_1\sim N_1 \gg 1 \gtrsim N} N\|P_N(P_{N_1}\tilde{z} P_{N'_1}\tilde{w})\|_{X^{r-\beta,-\fr13+}}
\\&\lesssim  T^{0+} \sum_{N'_1\sim N_1 \gg 1 \gtrsim N} N\|\sum_{L,L_1,L_2} Q_L P_N(Q_{L_1} P_{N_1}\tilde{z} \  Q_{L_2}P_{N'_1}\tilde{w})\|_{X^{0,-\fr13+}}.
\end{align*}
Note that, in the above estimates,  we gain a factor $ T^{0+} $ by taking in the right-hand side member the norm $X^{r-\beta,-\fr13+}$ instead of $X^{r-\beta,-\fr13}$.
By the resonance relation \eqref{resonance}, this contribution can be separated in three parts.
The contribution of the sum over $L\gtrsim N N_1^{\al}$ can be controlled by
\begin{align*}
&T^{0+}\sum_{N'_1\sim N_1 \gg 1 \gtrsim N} N\|Q_{\gtrsim N N_1^{\al}} P_N(P_{N_1}\tilde{z} \  P_{N'_1}\tilde{w})\|_{X^{0,-\fr13+}}
\\&\lesssim T^{0+}\sum_{N'_1\sim N_1 \gg 1 \gtrsim N} N (N N_1^{\al})^{-\fr13+} \|P_N(P_{N_1}\tilde{z} \  P_{N'_1}\tilde{w})\|_{L^2_t L^2_x}
\\&\lesssim T^{0+}\sum_{N'_1\sim N_1 \gg 1 \gtrsim N} N (N N_1^{\al})^{-\fr13+} N^{\fr12} N_1^{-r} \|P_{N'_1}\tilde{w}\|_{L^\infty_{t} H^r_{x}}\|P_{N_1}\tilde{z}\|_{L^\infty_{t} L^2_{x}}
\\&\lesssim  T^{0+} \|w\|_{L^\infty_{T} H^r_{x}}\|z\|_{L^\infty_{T} L^2_{x}},
\end{align*}
where we use  that $ \frac{2-2\alpha}{3}\ge -\alpha/3 $ for $ \alpha\le 2 $ and thus the condition  $ r>\frac{2-2\alpha}{3} $ forces  $r>-\al/3$. The contribution of the sum over $L\ll N N_1^{\al}$ and $L_1\gtrsim N N_1^{\al}$ can be controlled by
\begin{align*}
& T^{0+} \sum_{N'_1\sim N_1 \gg 1 \gtrsim N} N\|Q_{\ll N N_1^{\al}} P_N(Q_{\gtrsim N N_1^{\al}} P_{N_1}\tilde{z} \  P_{N'_1}\tilde{w})\|_{X^{0,-\fr13+}}
\\&\lesssim  T^{0+} \sum_{N'_1\sim N_1 \gg 1 \gtrsim N} N \|P_N(Q_{\gtrsim N N_1^{\al}} P_{N_1}\tilde{z} \  P_{N'_1}\tilde{w})\|_{L^2_t L^2_x}
\\&\lesssim  T^{0+} \sum_{N'_1\sim N_1 \gg 1 \gtrsim N} N N^{\fr12} N_1^{-r} \|P_{N'_1}\tilde{w}\|_{L^\infty_{T} H^r_{x}} \max\Big\{N_1^{\theta}(NN_1^\al)^{-1},
N_1^{\beta}(NN_1^\al)^{-\fr23}\Big\} \|P_{N_1}\tilde{z}\|_{Y^0}
\\&\lesssim   T^{0+} \|w\|_{L^\infty_{T} H^r_{x}}\|z\|_{Y^0_{T}},
\end{align*}
where we use the fact that $\theta-\al<r$ and $\beta-\fr23\al<r$.
Finally the contribution of the last region can be bounded by
\begin{align*}
& T^{0+} \sum_{N'_1\sim N_1 \gg 1 \gtrsim N} N\|Q_{\ll N N_1^{\al}} P_N(Q_{\ll N N_1^{\al}} P_{N_1}\tilde{z} \  Q_{\sim N N_1^{\al}} P_{N'_1}\tilde{w})\|_{X^{0,-\fr13+}}
\\&\lesssim  T^{0+}  \sum_{N'_1\sim N_1 \gg 1 \gtrsim N} N \|P_N(Q_{\ll N N_1^{\al}} P_{N_1}\tilde{z} \  Q_{\sim N N_1^{\al}} P_{N'_1}\tilde{w})\|_{L^2_t L^2_x}
\\&\lesssim  T^{0+} \sum_{N'_1\sim N_1 \gg 1 \gtrsim N} N N^{\fr12} N_1^{-r} \|P_{N_1}\tilde{z}\|_{L^\infty_{t} L^2_{x}} \max\Big\{N_1^{\theta}(NN_1^\al)^{-1},
N_1^{\beta}(NN_1^\al)^{-\fr23}\Big\} \|P_{N'_1}\tilde{w}\|_{Y^r}
\\&\lesssim   T^{0+}  \|w\|_{Y^r_{T}}\|z\|_{L^\infty_{T} L^2_{x}},
\end{align*}
where we use the fact $\theta-\al<r$ and $\beta-\fr23\al<r$ again.

In the case $1 \ll N \lesssim N'_1\sim N_1$, we have
\begin{align*}
\|\pa_x P_N &(P_{\gtrsim N}z P_{\gtrsim N}w)\|_{F^{r,-\fr12}_T} \lesssim   T^{0+} \sum_{N'_1\sim N_1\gtrsim N}N \Nn^{r-\beta} \|\eta(\frac{\cdot}{2T})P_N(\tilde{z}_{N_1}\tilde{w}_{N'_1})\|_{X^{0,-\fr13+}}
\\&=  T^{0+} \sum_{N'_1\sim N_1\gtrsim N} N \Nn^{r-\beta}
\sup_{\|\varphi\|_{X^{0,\fr13-}} \leq 1\atop \text{Supp} \, \varphi\subset [-2T,2T]} \Bigl| \int_{\R}\int_{\R} P_N(\tilde{z}_{N_1}\tilde{w}_{N'_1}) P_N\varphi dxdt \Bigr|\, .
\end{align*}
Note that, in view of the resonance relation \eqref{resonance} and on the time support of $\varphi$,
\begin{align}
\int_{\R}\int_{\R} P_N(\tilde{z}_{N_1}\tilde{w}_{N'_1}) P_N\varphi dxdt
&=\int_{-2T}^{2T}\int_{\R} Q_{\gtrsim NN_1^{\al}}\tilde{z}_{N_1}\tilde{w}_{N'_1} P_N\varphi dxdt\nonumber \\
&\quad+\int_{-2T}^{2T}\int_{\R} Q_{\ll NN_1^{\al}}\tilde{z}_{N_1}Q_{\gtrsim NN_1^{\al}}\tilde{w}_{N'_1} P_N\varphi dxdt
\nonumber \\
&\qquad+\int_{\R}\int_{\R} Q_{\ll NN_1^{\al}}\tilde{z}_{N_1}Q_{\ll NN_1^{\al}}\tilde{w}_{N'_1} Q_{\sim NN_1^{\al}}P_N\varphi dxdt.\label{s5est}
\end{align}
To estimate  the first term, we  separate the contributions of $ N_1\sim N_1'\gg N $ and $ N_1\sim N_1'\sim N$. When
$ N_1\sim N_1'\gg N $, \eqref{tr4} of Lemma \ref{le3-2}  lead, for $\|\varphi\|_{X^{0,\fr13-}} \leq 1$ with $ \text{Supp} \, \varphi\subset [-2T,2T]$, to
\begin{align}
N^{1+r-\beta}&\Bigl|\int_{-2T}^{2T}\int_{\R} Q_{\gtrsim NN_1^{\al}}\tilde{z}_{N_1}\tilde{w}_{N'_1} P_N\varphi dxdt\Bigr|
\lesssim N^{1+r-\beta}\|Q_{\gtrsim NN_1^{\al}}\tilde{z}_{N_1}\|_{L^2_t L^2_x}
\|\tilde{w}_{N'_1} P_N\varphi\|_{L^2_{2T} L^2_x}\nonumber
\\& \lesssim N^{1+r-\beta} N_1^{\fr{5-4\al}{9}-} N^{\fr16+} N_1^{-r} \|\tilde{w}\|_{Y^{r}} (1+\|\tilde{z}\|_{Y^0}) \|P_N\varphi\|_{X^{0,\fr13-}} 
\nonumber\\& \qquad\qquad \times \max\Big\{N_1^{\theta}(NN_1^\al)^{-1},
N_1^{\beta}(NN_1^\al)^{-\fr23}\Big\} \|\tilde{z}_{N_1}\|_{Y^0}\nonumber 
\\& \lesssim  N^{\frac76+r-\beta}  N_1^{-r+\fr{5-4\al}{9}+} \|w\|_{Y^{r}_T} (1+\|z\|_{Y^0_T}) \max\Big\{N_1^{\theta}(NN_1^\al)^{-1},
N_1^{\beta}(NN_1^\al)^{-\fr23}\Big\} \|z_{N_1}\|_{Y_T^{0}}, \label{tutu1}
\end{align}
We notice that the sum over $ N_1\gg N $ is finite  for $\theta-r-\alpha<0 $ and $ \beta-r -\fr23 \alpha<0 $ since $\frac{5-4\al}{9}<0 $ for $ \alpha>\frac54 $. Then the sum over $ N\gg 1 $ is finite  whenever
\begin{equation}\label{tutu2}
\theta-\beta<\frac{26\alpha-13}{18} \quad \text{and} \quad \alpha>\frac{19}{20} \; .
\end{equation}
Now when $ N_1\sim N_1'\sim N$,  by \eqref{tr3} of Lemma \ref{le3-2}, we get
\begin{align*}
&N^{1+r-\beta}\Bigl|\int_{-2T}^{2T}\int_{\R} Q_{\gtrsim NN_1^{\al}}\tilde{z}_{N_1}\tilde{w}_{N'_1} P_N\varphi dxdt\Bigr|
\lesssim N^{1+r-\beta}\|Q_{\gtrsim NN_1^{\al}}\tilde{z}_{N_1}\|_{L^2_t L^2_x}
\|\tilde{w}_{N'_1} P_N\varphi\|_{L^2_{2T} L^2_x}
\\& \lesssim N^{1+r-\beta} N^{\fr{4-2\al}{9}-} N^{\fr16+} N_1^{-r} \|\tilde{w}\|_{Y^{r}} (1+\|\tilde{z}\|_{Y^0}) \|P_N\varphi\|_{X^{0,\fr13-}}
\\&\qquad\qquad \times \max\Big\{N_1^{\theta}(NN_1^\al)^{-1},
N_1^{\beta}(NN_1^\al)^{-\fr23}\Big\} \|\tilde{z}_{N_1}\|_{Y^0}
\\& \lesssim N^{-\beta+\fr{29-4\al}{18}+}  \|w\|_{Y^{r}_T} (1+\|z\|_{Y^0_T}) \max\Big\{N^{\theta-\alpha-1},
N^{\beta-\fr23 \alpha-\fr23}\Big\} \|z_{N_1}\|_{Y_T^{0}}
\end{align*}
that can be summed over $ N\gg1 $ whenever 
\begin{equation}\label{tutu22}
\theta-\beta<\frac{22\alpha-11}{18} \quad \text{and} \quad \alpha>\frac{17}{16} \; .
\end{equation}
The second term can be treated in the same way as the first one. When
$ N_1\sim N_1'\gg N $, Lemma \ref{le2-1} and \eqref{tr2} of Lemma \ref{le3-1}  lead,  for $\|\varphi\|_{X^{0,\fr13-}} \leq 1$ with $ \text{Supp} \, \varphi\subset [-2T,2T]$,  to
\begin{align*}
&N^{1+r-\beta}\Bigl|\int_{-2T}^{2T}\int_{\R} Q_{\ll NN_1^{\al}}\tilde{z}_{N_1}Q_{\gtrsim NN_1^{\al}}\tilde{w}_{N'_1} P_N\varphi dxdt\Bigr|
\lesssim N^{1+r-\beta} \|Q_{\gtrsim NN_1^{\al}}\tilde{w}_{N'_1}\|_{L^2_t L^2_x}
\|Q_{\ll NN_1^{\al}}\tilde{z}_{N_1} P_N\varphi\|_{L^2_{2T} L^2_x}
\\& \lesssim N^{1+r-\beta} N_1^{\fr{5-4\al}{9}-} N^{\fr16+}  \|\tilde{z}\|_{L^\infty_t L^2_x} (1+\|\tilde{z}\|_{L^\infty_t L^2_x}) \|P_N\varphi\|_{X^{0,\fr13-}} \max\Big\{N_1^{\theta}(NN_1^\al)^{-1},
N_1^{\beta}(NN_1^\al)^{-\fr23}\Big\} \|\tilde{w}_{N_1}\|_{Y^0}
\\& \lesssim N^{\frac76+r-\beta} N_1^{-r+\fr{5-4\al}{9}+}  \|z\|_{L^\infty_T L^2_x} (1+\|z\|_{L^2_T L^2_x}) \max\Big\{N_1^{\theta}(NN_1^\al)^{-1},
N_1^{\beta}(NN_1^\al)^{-\fr23}\Big\} \|w_{N_1}\|_{Y_T^{r}},
\end{align*}
whereas when $ N_1\sim N_1'\sim N$,  by Lemma \ref{le2-1} and \eqref{tr1} of Lemma \ref{le3-1}, we get
\begin{align*}
&N^{1+r-\beta}\Bigl|\int_{-2T}^{2T}\int_{\R} Q_{\ll NN_1^{\al}}\tilde{z}_{N_1}Q_{\gtrsim NN_1^{\al}}\tilde{w}_{N'_1} P_N\varphi dxdt\Bigr|
\\& \lesssim N^{1+r-\beta} \|Q_{\gtrsim NN_1^{\al}}\tilde{w}_{N'_1}\|_{L^2_t L^2_x}
\|Q_{\ll NN_1^{\al}}\tilde{z}_{N_1} P_N\varphi\|_{L^2_{2T} L^2_x}
\\& \lesssim N^{1+r-\beta} N^{\fr{4-2\al}{9}-} N^{\fr16+} N_1^{-r} \|z\|_{L^\infty_T L^2_x} (1+\|z\|_{L^2_T L^2_x}) 
 \|P_N\varphi\|_{X^{0,\fr13-}} \\
 & \qquad\qquad \times  \max\Big\{N_1^{\theta}(NN_1^\al)^{-1},
N_1^{\beta}(NN_1^\al)^{-\fr23}\Big\} \|w_{N_1}\|_{Y_T^{r}}
\\& \lesssim   N^{-\beta+\frac{29-4\alpha}{18}}  \max\Big\{N^{\theta-\alpha-1},
N^{\beta-\fr23 \alpha-\fr23}\Big\}  \|z\|_{L^\infty_T L^2_x} (1+\|z\|_{L^2_T L^2_x}) \|w_{N}\|_{Y_T^{r}}.
\end{align*}
To treat the  third term of \eqref{s5est}, we first rewrite
$Q_{\ll NN_1^{\al}}\tilde{z}_{N_1}$ as
$$
Q_{\ll NN_1^{\al}}\tilde{z}_{N_1}=\tilde{z}_{N_1}-Q_{\gtrsim NN_1^{\al}}\tilde{z}_{N_1}\, .
$$
In view of the time support of $ \tilde{w} $, the contribution of $\tilde{w}_{N_1}$ in the third term of \eqref{s5est} is estimated  as follows
\begin{align*}
N^{1+r-\beta}\Bigl|\int_{\R}\int_{\R} \tilde{z}_{N_1}&Q_{\ll NN_1^{\al}}\tilde{w}_{N'_1} Q_{\sim NN_1^{\al}}P_N\varphi dxdt\Bigr| \\
& \lesssim N^{1+r-\beta}\|\tilde{z}_{N_1} \|_{L^2_{[-2,2]} L^\infty_x} \|Q_{\ll NN_1^{\al}}\tilde{w}_{N'_1}\|_{L^\infty_t L^2_x}
\|Q_{\sim NN_1^{\al}}P_N\varphi\|_{L^2_t L^2_x}
\\
& \lesssim N^{1+r-\beta} N_1^{\fr{2-\al}{3}}  \|\tilde{z}\|_{L^\infty_t L^2_x} (1+\|\tilde{z}\|_{L^2_t L^2_x}) \|\tilde{w}_{N_1}\|_{L^\infty_t L^2_x} (NN_1^\al)^{-\fr13+} \|P_N\varphi\|_{X^{0,\fr13-}} \\
& \lesssim N^{\fr23+r-\beta} N_1^{\fr{2-2\al}{3}-r+} \|z\|_{L^\infty_T L^2_x} (1+\|z\|_{L^2_T L^2_x}) \|w_{N_1}\|_{L^\infty_T H^r_x} \, .
\end{align*}
that is summable for 
\begin{equation}\label{tutu222}
r>\frac{2-2\alpha}{3} \quad \text{and} \quad \beta>\frac{4-2\alpha}{3} \; .
\end{equation}
Finally the contribution of $Q_{\gtrsim NN_1^{\al}} \tilde{u}_{N_1}$ in the third term of \eqref{s5est}
\begin{align*}
N^{1+r-\beta}\Bigl|\int_{\R}\int_{\R} & Q_{\gtrsim NN_1^{\al}}\tilde{z}_{N_1}  Q_{\ll NN_1^{\al}}\tilde{w}_{N'_1} Q_{\sim NN_1^{\al}}P_N\varphi dxdt\Bigr| \\
& \lesssim N^{1+r-\beta} \| Q_{\gtrsim NN_1^{\al}} \tilde{z}_{N_1} \|_{L^2_t L^2_x}\|Q_{\ll NN_1^{\al}}\tilde{w}_{N_1'}\|_{L^\infty_t L^2_x}
\|Q_{\sim NN_1^{\al}}P_N\varphi\|_{L^2_t L^\infty_x}
\\
& \lesssim N^{1+r-\beta}\max\Big\{N_1^{\theta}(NN_1^\al)^{-1},
N_1^{\beta}(NN_1^\al)^{-\fr23}\Big\} \\
& \qquad \qquad \times \|\tilde{z}_{N_1}\|_{Y^{0}} \|\tilde{w}_{N_1}\|_{L^\infty_t L^2_x} N^{\fr12}(NN_1^\al)^{-\fr13+} \|P_N\varphi\|_{X^{0,\fr13-}} \\
& \lesssim  N ^{\fr76+r-\beta} N_1^{-\frac{\al}{3}-r+} \|w_{N_1}\|_{L^\infty_T H^r_x} \max\Big\{N_1^{\theta}(NN_1^\al)^{-1},
N_1^{\beta}(NN_1^\al)^{-\fr23}\Big\} \|z_{N_1}\|_{Y_T^{0}}\, .
\end{align*}
that is acceptable under the same conditions \eqref{tutu2}  as for  \eqref{tutu1} since $ \frac{5-4\alpha}{9}>-\frac{\alpha}{3} $ as soon as $ \alpha<5 $.

Gathering the above results and in particular the conditions \eqref{tutu2}, \eqref{tutu22} and \eqref{tutu222}, we obtain that for  $1 \ll N \lesssim N'_1\sim N_1$,
\begin{align*}
\sum_{N'_1\sim N_1\gtrsim N \gg 1} & \|\pa_x P_N(\tilde{z}_{N_1}\tilde{w}_{N'_1})\|_{F^{0,-\fr12}_T}
\lesssim  T^{0+}\|w\|_{Y^{r}_T} \|z\|_{Y^0_T}(1+\|z\|_{Y^0_T}^2),
\end{align*}
under the following conditions
\begin{equation*}
\begin{cases}
\theta-r-\al<0 \\
\beta-r-\fr23\al<0 \\
\theta-\beta<\frac{22\alpha-11}{18} \\
r>\frac{2-2\alpha}{3},
\end{cases}
\end{equation*}
where we used that $22\alpha-11<26\alpha-13 $ as soon as $\alpha>\frac12 $.
Therefore \eqref{estpropyr} holds under  the conditions \eqref{condprop51} and the proof of the proposition is complete.
\end{proof}

\begin{prop}\label{hr}
Let $0<T\le 2,\  \frac{37}{26}<\al\leq2,\ r<0$, and $u,v\in L^\infty_T L^2$ be a solution to \eqref{gKdV} associated with an initial datum $u_0,v_0 \in L^2(\R)$ such that $u_0-v_0 \in \bar{L}^2(\R)$. Then 
we get
\begin{equation}\label{prou1}
\|w\|_{L^\infty_T \bar{H}^r}^2 \lesssim \| w_0\|_{L^\infty_T \bar{H}^r}^2 + T^{0+}\|w\|_{Z^{r}_T}^2 \|z\|_{Y^0_T} (1+\|z\|_{Y^0_T}^2).
\end{equation}
provided 
\begin{equation}\label{coco2}
(\theta,\beta) \in  \Bigl]\frac{11-4\alpha}{6}, \frac{5-\alpha}{3}\Bigr[ \times\Bigl]  \frac{4-2\alpha}{3},\min(\alpha-1,\frac29 (5-\alpha))\Bigr[ \;,
\end{equation}
and 
\begin{equation}\label{coco3}
\begin{cases}
\theta-2r<\alpha \\
\beta-\frac{4}{3} r<\fr23\al \\
 \theta+r < \frac{10\alpha-11}{6}   \\
 \beta+r <\frac{8\alpha-11}{6}\\
-\alpha/5<r<\frac{3\al-5}{3} \; .
\end{cases}
\end{equation}
\end{prop}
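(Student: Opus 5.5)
We run an energy estimate on the equation \eqref{diff} for $w=u-v$, following the proof of Proposition \ref{h0} but now at negative regularity $r$ and with the weight $\langle|\xi|^{-1}\rangle$ on low frequencies. We apply $P_N$ to \eqref{diff}, take the $L^2_x$-scalar product with $P_N w$, multiply by $\langle N^{-1}\rangle^2\langle N\rangle^{2r}$, integrate on $]0,t[$ and sum over $N$. It then suffices to bound
\[
\sum_{N>0}\langle N^{-1}\rangle^2\langle N\rangle^{2r}\sup_{t\in]0,T[}\Bigl|\int_0^t\int_\R P_N(zw)\,\partial_x P_N w\,dx\,dt'\Bigr| .
\]
The sum over $N\lesssim 1$ is harmless. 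There the weight $N^{-2}$ is compensated by the derivative and by Bernstein: $\|P_N(zw)\|_{L^\infty_x}\lesssim\|z\|_{L^2}\|w\|_{L^2}$ and $\|P_N w\|_{L^1_x}\lesssim N^{-1/2}\|P_N w\|_{L^2}$. We still have to check the $\bar H^r$ weight here, and we use $\|w\|_{L^\infty_T L^2}\lesssim\|w\|_{L^\infty_T\bar H^0}$, which is finite by the previous proposition.

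For $N\gg1$ we split $z w$ into three interactions. We work with the extensions $\tilde w=\rho_T(w)$ and $\tilde z=\rho_T(z)$ (Lemma \ref{lem32}) and, exactly as in Proposition \ref{h0}, use the decomposition $1_t=1^{\rm high}_{t,R}+1^{\rm low}_{t,R}$ with $R=N_{\max}^{-(0+)}N_{\min}N_{\max}^\alpha$. The high part is handled by \eqref{eq2-2} and Lemma \ref{le2-2}. The low part is split into modulation pieces $Q_{\gtrsim N_{\min}N_{\max}^\alpha}$ via the resonance relation \eqref{resonance}. In each piece the factor with high modulation is placed in $F^{\cdot,1/2}$ (the $Y^0$-norm for $z$, the $Z^r$-norm for $w$), which gains $\max\{N_1^\theta(NN_1^\alpha)^{-1},N_1^\beta(NN_1^\alpha)^{-2/3}\}$. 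The other two factors are estimated with Lemma \ref{Cu2} or with the refined bilinear estimates of the appendix for $w$ (Proposition \ref{bl2}, Lemma \ref{le3-2}). The three interactions are:
\begin{enumerate}[(i)]
\item \emph{Low $z$, high $w$:} $P_{N_3}z\,P_{\sim N}w$ with $N_3\ll N$. Here there is the usual commutator structure as in \eqref{intep}: the derivative falls on the low-frequency $z$ and gives a factor $N_3$, so $z$ effectively carries no $N$ derivative. The resulting conditions are of the type $\theta-2r<\alpha$ and $\beta-\tfrac43 r<\tfrac23\alpha$. These come from having two factors of $w$, each carrying $N^{-r}$, against the gain from the resonance.
\item \emph{Low $w$, high $z$:} $P_{\ll N}w\,P_{\sim N}z$. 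No commutator is available, so the full derivative $N$ stays. We can only use the weight $N^{2r}$ against the high-frequency $w$, while the low-frequency $w$ in $\bar H^r$ is harmless. This case produces the conditions $\theta+r<\frac{10\alpha-11}{6}$ and $\beta+r<\frac{8\alpha-11}{6}$. They come from combining the Strichartz gain $N^{\frac{2-\alpha}{3}}$ for $z$ (Lemma \ref{Cu2}) with the loss $N^{1-r}$.
\item \emph{High–high to low:} $P_N(z_{N_1}w_{N_1'})$ with $N\lesssim N_1$, paired with $P_N w$. This is treated as the term $B$ in \eqref{intep1}. The output $N^{2r}$ is smaller than $N_1^{2r}$, and we need to sum over $N\le N_1$. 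This gives the constraints $r>-\alpha/5$ and $r<\frac{3\alpha-5}{3}$, plus the upper bounds $\theta<\frac{5-\alpha}{3}$ and $\beta<\frac29(5-\alpha)$ (the latter from the $T^{0+}$ and $L^{2+}$ interpolation \eqref{triv}).
\end{enumerate}

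The main obstacle is the term in which the high-modulation factor is $Q_{\gtrsim NN_1^\alpha}w$ and the remaining product $z\,w$ must be put in $L^2$. Since $r<0$, we cannot use the $L^\infty_T L^1$ bound on $zw$ to get refined bilinear Strichartz estimates. Instead we rely on the appendix estimates for $w$, which themselves cost $\|w\|_{Z^r}$ and $\|z\|_{Y^0}$. We will first try the Strichartz route of Lemma \ref{Cu2} on $z$ together with $L^\infty_tL^2_x$ on $w$. Only when $N$ is large compared with a power of $N_1$ will we switch to Lemma \ref{first}, the bound in $X^{-(3/2+),1}$ for $z$, mimicking the case split $N\lessgtr N_1^{\frac{4-2\alpha}{3}}$ in Proposition \ref{h0}.

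Finally we collect all exponents and check that under \eqref{coco2}–\eqref{coco3} every dyadic sum converges with a factor $T^{0+}$. This yields \eqref{prou1}.
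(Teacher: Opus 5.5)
\textbf{Gap: the output range $N\lesssim 1$ is not harmless.} You bound this range by $\|z\|_{L^\infty_TL^2}\|w\|_{L^\infty_TL^2}\|w\|_{L^\infty_T\bar H^r}$ and appeal to the finiteness of $\|w\|_{L^\infty_T\bar H^0}$. (The Bernstein bound $\|P_Nw\|_{L^1}\lesssim N^{-1/2}\|P_Nw\|_{L^2}$ you quote is also false.) Finiteness is useless here. Both \eqref{prou1} and the uniqueness argument $\|w\|_{Z^r_T}\lesssim T^{0+}C\|w\|_{Z^r_T}$ built on it require every term to be controlled by $r$-norms with $r<0$.

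Only the low$\times$low part of $P_N(zw)$ is trivial. The high$\times$high$\to$low part $P_N(z_{\sim N_1}w_{N_1})$ with $N\lesssim 1\ll N_1$ is not, because $\|w_{N_1}\|_{L^2}\sim N_1^{-r}\|w_{N_1}\|_{H^r}$ is not summable in $N_1$. The paper treats this region inside $J_3$ and splits it at $N\sim N_1^{2r}$.
\begin{itemize}
\item Below the split, Bernstein plus the $\bar H^r$ weight $\langle N^{-1}\rangle$ on the output factor $w_N$ gives $N^{1/2}N_1^{-r}$, which is summable.
\item Above the split, you need the full machinery: the time cut-off with $R=N_1^{-(0+)}NN_1^\alpha$, the resonance relation and the modulation splitting. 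Again $w_N$ is measured in $\bar H^r$, while $w_{N_1}$ keeps an uncompensated factor $N_1^{-r}$.
\end{itemize}
This produces $\max\{N_1^{\theta-\alpha-2r},N_1^{\beta-\frac23\alpha-\frac43 r},N_1^{-\alpha-4r},N_1^{-\frac23\alpha-\frac{10}{3}r}\}$. That is exactly where $\theta-2r<\alpha$, $\beta-\frac43 r<\frac23\alpha$ and $r>-\alpha/5$ come from, and through $c_4$ it is the origin of the threshold $\alpha>\frac{55}{38}$.

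\textbf{Your attribution of the other constraints does not match estimates that close.}
\begin{itemize}
\item In (i), the weight $\langle N\rangle^{2r}$ exactly absorbs the two factors $w_{\sim N}$ and $w_N$. So $J_1$ closes under the conditions of Proposition \ref{h0} ($\theta<\frac{4\alpha-2}{3}$, $\beta<\alpha-1$) together with \eqref{eq92} for Proposition \ref{Cw2}. No condition $\theta-2r<\alpha$ arises there.
\item In (ii), the loss is $N^{1+2r}N^{-r}=N^{1+r}$, not $N^{1-r}$. Combining it with the Strichartz gain $N^{\frac{2-\alpha}{3}}$ of Lemma \ref{Cu2} or Proposition \ref{Cw2} would force $\theta+r<\frac{4\alpha-5}{3}$. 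This fails, for instance, at $\alpha=\frac32$, $\theta=\frac56+$, $\beta=\frac13+$, $r=-\frac14$, which is admissible for \eqref{coco2}--\eqref{coco3}.
\item What the paper uses in (ii) are the low--high refined bilinear estimates \eqref{Est11} and \eqref{Est22}, and \eqref{Est1}, \eqref{Est2} when $\tilde N\gg1$. They give the gain $N^{\frac{5-4\alpha}{6}}\tilde N$, where the factor $\tilde N$ comes from the $\bar H^r$ weight. These are valid only for $\tilde N\gtrsim N^{-2/3}$, which is why $\tilde N\lesssim N^{-\frac23(r+1)}$ is treated separately by Bernstein.
\item A further split at $\tilde N\sim N^{\frac{5-4\alpha}{3}}$ is needed for the term where both $z_{\sim N}$ and $w_N$ carry high modulation. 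That term, not \eqref{triv}, is the source of $\theta<\frac{5-\alpha}{3}$ and $\beta<\frac29(5-\alpha)$.
\item Likewise, $r<\frac{3\alpha-5}{3}$ comes from $I^{3,\mathrm{low}}_t$ in this low-$w$/high-$z$ interaction with $\tilde N\gg1$, not from the high--high term.
\end{itemize}
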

\begin{rema}
It is worth noticing that the admissible set in \eqref{coco2} is not empty as soon as $ \alpha>7/5 $.
\end{rema} 
\begin{rema}\label{rem4}
It is easy to check  that \eqref{coco2}-\eqref{coco3} ensure, in particular, that the conditions \eqref{eq92}, namely,
$$
r>\max\Bigl(\fr{4-5\al}6, \theta-\alpha,\beta-\frac23 \alpha\Bigr), \quad \theta<\frac{2+5\alpha}{6} 
\quad\text{and} \quad \beta<\alpha/2 \; 
$$
that are shared by   Propositions \ref{Cw2}-\ref{bl2}  and Lemmas \ref{Cwv}-\ref{le3-2} in the Appendix are fulfilled. Indeed, 
  clearly  $0>2r>\theta-\alpha\Rightarrow r>\theta-\alpha $ and  $0>\frac{4}{3}r>\beta-\frac{2}{3} \alpha\Rightarrow r>\beta-
\frac{2}{3}\alpha $. Moreover, it is  easy to check that  for $\alpha \in [7/5,2] $, $\fr{4-5\al}6 \le -\alpha/5 $, $\frac{2+5\alpha}{6}\ge \frac{5-\alpha}{3} $ and 
$ \alpha/2 \ge \alpha-1 $ which proves the claim.
\end{rema} 
\begin{proof}
Recall that the difference $w=u-v$ satisfies \eqref{diff} with $z=u+v$. In this proof, we will make a constant use of Remark \ref{rem4} above. Applying the operator $P_N$ with $N>0$ dyadic to \eqref{diff}, taking the $L^2$ scalar product with $P_N w$ and integrating on $]0,t[$, we obtain
\begin{align*}
\|P_N w\|_{L^\infty_T \bar{H}^r}^2 &\lesssim \|P_N w_0\|_{L^\infty_T \bar{H}^r}^2 + \langle N^{-1} \rangle^2 \Nn^{2r} \sup_{t\in]0,T[}\Big|\int_0^t\int_{\R}P_N(zw)\pa_x P_N w dxdt'\Big|.
\end{align*}
Thus it remains to estimate
\begin{align*}
J:=\sum_{N>0} \langle N^{-2} \rangle \Nn^{2r} \sup_{t\in]0,T[}\Big|\int_0^t\int_{\R}P_N(zw)\pa_x P_N w dxdt'\Big|.
\end{align*}
We work with the  extensions $\tilde{z}=\rho_T(z)$ and $\tilde{w}=\rho_T(w) $ of $z$ and $w$ supported in time in $[-2,2]$ defined in \eqref{defrho}. To simplify the notation we drop the tilde in the sequel. As in \eqref{intep}-\eqref{intep1}, we eventually get 
\begin{align*}
J&\lesssim\sum_{N>0} \sum_{\tilde{N}\ll N}  \langle N^{-1} \rangle^2 \Nn^{2r} \tilde{N} \sup_{t\in]0,T[}\Big|\int_0^t \!\!\int_{\R}\Lambda_{a_1}(z_{\tilde{N}},w_{\sim N}) w_N dxdt'\Big| 
\\ &\quad +\sum_{N>0} \sum_{\tilde{N}\ll N}  \langle N^{-1} \rangle^2 \Nn^{2r} N \sup_{t\in]0,T[}\Big|\int_0^t\int_{\R}\Lambda_{a_2}(z_{\sim N} , w_{\tilde{N}})  w_N dxdt'\Big|
\\&\quad \quad +  \sum_{N>0} \sum_{N_1\gtrsim N} \langle N^{-1} \rangle^2 \Nn^{2r} N \sup_{t\in]0,T[}\Big|\int_0^t\int_{\R}\Lambda_{a_2}(z_{N_1} w_{\sim N_1}) w_N dxdt'\Big|,
\end{align*}
where $a_1$ is defined in \eqref{a1} and  $a_2(\xi,\xi_1)=\frac{\xi}{N}\phi_{N}(\xi)$. Therefore, it suffices to estimate
\begin{align*}
J&\lesssim\sum_{N>0} \sum_{0<\tilde{N}\ll N}  \langle N^{-1} \rangle^2 \Nn^{2r} \tilde{N}  \sup_{t\in]0,T[}\Big|I_t(z_{\tilde{N}},w_{\sim N}, w_N) \Big|
\\ &\quad +\sum_{N>0} \sum_{0<\tilde{N}\ll N}  \langle N^{-1} \rangle^2 \Nn^{2r} N \sup_{t\in]0,T[}\Big|I_t(w_{\tilde{N}}, z_{\sim N},   w_N)\Big|
\\&\quad \quad +  \sum_{N>0} \sum_{N_1\gtrsim N} \langle N^{-1} \rangle^2 \Nn^{2r} N \sup_{t\in]0,T[}\Big|I_t(w_{N_1}, z_{\sim N_1}, w_N)\Big|,
\\& =:J_1+J_2+J_3.
\end{align*}
\emph{Estimates for $J_1$}.
The contribution of the sum over $0<\tilde{N}\lesssim 1$ is easily estimated by
\begin{align*}
\sum_{0<\tilde{N}\lesssim 1} \sum_{N\gg\tilde{N}}  \langle N^{-1} \rangle^2 \Nn^{2r} \ \tilde{N} \tilde{N}^{\fr12}\|z_{\tilde{N}}\|_{L^\infty_t L^2_x} \|w_N\|_{L^2_{tx}}^2\lesssim  T \|z\|_{L^\infty_T L^2_x} \|w\|_{L^\infty_T \bar{H}^r_x}^2.
\end{align*}
It remains to estimate the sum over $\tilde{N}\gg 1$.
We set $R=N^{-(0+)}(\tilde{N} N^\al)$ so that $R\ll \tilde{N} N^\al$ since $N\gg \tilde{N}\gg 1$. We split $I_t$ as
\begin{align*}
I_t(z_{\tilde{N}},w_{\sim N},w_{N})=I_\infty(1^{\text{high}}_{t,R}z_{\tilde{N}},w_{\sim N},w_{N})+I_\infty(1^{\text{low}}_{t,R}z_{\tilde{N}},w_{\sim N},w_{N})=:I^{\text{high}}_t+I^{\text{low}}_t.
\end{align*}
The contribution of $I^{\text{high}}_t$ is estimated, thanks to Lemma \ref{le2-2} and H\"{o}lder's inequality, by
\begin{align*}
N^{2r} \tilde{N} |I_t^{\text{high}}|&\lesssim N^{2r} \tilde{N}^{\fr32} \|1^{\text{high}}_{t,R}\|_{L^1} \|z_{\tilde{N}}\|_{L^\infty_t L^2_x} \|w_{N}\|_{L^\infty_t L^2_x}^2 \\
& \lesssim  T^{0+} \tilde{N}^{\fr32} N^{0+}(\tilde{N} N^{\al-})^{-1+} \|z_{\tilde{N}}\|_{L^\infty_t L^2_x} \|w_{N}\|_{L^\infty_t H^r_x}^2 \\
&\lesssim T^{0+} \tilde{N}^{\frac12+}N^{-\al+0+}\|z_{\tilde{N}}\|_{L^\infty_t L^2_x} \|w_{N}\|_{L^\infty_t H^r_x}^2,
\end{align*}
that is acceptable for $\alpha>1/2 $. 
To evaluate the contribution of $I_t^{\text{low}}$, we use the decomposition
\begin{align*}
I_\infty(1^{\text{low}}_{t,R} z_{\tilde{N}},w_{\sim N},w_{N})&=I_\infty(1^{\text{low}}_{t,R} z_{\tilde{N}},Q_{\gtrsim \tilde{N} N^\al}w_{\sim N},w_{N})
\\& \quad +I_\infty(1^{\text{low}}_{t,R} z_{\tilde{N}},Q_{\ll \tilde{N} N^\al} w_{\sim N},Q_{\gtrsim \tilde{N} N^\al} w_{N})
\\& \quad\quad +I_\infty(1^{\text{low}}_{t,R} Q_{\sim \tilde{N} N^\al}z_{\tilde{N}},Q_{\ll \tilde{N} N^\al}w_{\sim N},Q_{\ll \tilde{N} N^\al}w_{N})
\\&=: I^{1,\text{low}}_t+I^{2,\text{low}}_t+I^{3,\text{low}}_t.
\end{align*}
For the first term, by Lemma \ref{le2-2}, Lemma \ref{Cu2} and \eqref{triv} we have
\begin{align*}
N^{2r} \tilde{N} |I^{1,\text{low}}_t| & \lesssim  N^{2r} \tilde{N}\|w_{N} \|_{L^\infty_t L^2_x} \|z_{\tilde{N}}\|_{L^2_t L^\infty_x}
\|Q_{\gtrsim \tilde{N} N^{\al}}w_{\sim N}\|_{L^{2+}_t L^2_x} \|1^{\text{low}}_{t,R}\|_{L^{\infty-}} \\
& \lesssim  T^{0+} (1+\|z\|_{L^\infty_T L^2_x}) N^{2r} \tilde{N} \tilde{N}^{\frac{2-\alpha}{3}}\|w_{N} \|_{L^\infty_t L^2_x} \|z\|_{L^\infty_t L^2_x}
\|Q_{\gtrsim \tilde{N} N^{\al}}w_{\sim N}\|_{L^{2+}_t L^2_x}
\\ &\lesssim  T^{0+}  (1+\|z\|_{L^\infty_T L^2_x})  \tilde{N} \tilde{N}^{\frac{2-\alpha}{3}}
\|w\|_{L^\infty_T H^r_x} \|z\|_{L^\infty_T L^2_x}  \\
& \qquad\qquad \times \max\Big\{N^{\theta}(\tilde{N} N^{\al})^{-1},
N^{\beta}(\tilde{N} N^{\al})^{-\fr23}\Big\}^{1-}\|w_{\sim N}\|_{Y_T^{r}}.
\end{align*}
For the second term, we proceed in the same way with Lemma \ref{le2-1} in hands to get
\begin{align*}
N^{2r} \tilde{N}  |I^{2,\text{low}}_t|
 & \lesssim N^{2r} \tilde{N}  \|Q_{\gtrsim \tilde{N} N^{\al}}w_{ N}\|_{L^{2+}_t L^2_x}
\|Q_{\ll \tilde{N}}  w_{\sim N}\|_{L^\infty_t L^2_x} \|  z_{\tilde{N}}\|_{L^2_T L^\infty_x}\|1^{\text{low}}_{t,R}\|_{L^{\infty-}}
\\
& \lesssim T^{0+} \tilde{N} \tilde{N}^{\frac{2-\alpha}{3}}
(1+\|z\|_{L^\infty_T L^2_x})  \|w\|_{L^\infty_T H^r_x}\|z\|_{L^\infty_T L^2_x}  \\
& \qquad\qquad \times\max\Big\{N^{\theta}(\tilde{N} N^{\al})^{-1},
N^{\beta}(\tilde{N} N^{\al})^{-\fr23}\Big\}^{1-} \|w_{N}\|_{Y_T^{r}}.
\end{align*}
To treat the  third term $I^{3,\text{low}}_t$, we rewrite
$Q_{\ll \tilde{N}N^{\al}}w_{\sim N}$ as
$$
Q_{\ll \tilde{N}N^{\al}}w_{\sim N}=w_{\sim N}-Q_{\gtrsim \tilde{N}N^{\al}}w_{\sim N}\, .
$$
With  Proposition \ref{Cw2} in hand (recall that according to Remark \ref{rem4}, the conditions \eqref{eq92} are satisfied), the contribution of $w_{\sim N}$ in the third term is estimated  as above  by 
\begin{align*}
N^{2r} \tilde{N}  |I_\infty(1^{\text{low}}_{t,R} Q_{\sim \tilde{N} N^\al}z_{\tilde{N}}, & w_{\sim N},Q_{\ll \tilde{N} N^\al}w_{N})| \\
& \lesssim  N^{2r} \tilde{N} \|Q_{\sim \tilde{N} N^\al}z_{\tilde{N}}\|_{L^{2+}_t L^2_x}
\|w_{\sim N}\|_{L^2_t L^\infty_x} \|Q_{\ll \tilde{N} N^\al}w_N\|_{L^\infty_t L^2_x}\|1^{\text{low}}_{t,R}\|_{L^{\infty-}}
\\& \lesssim T^{0+}\tilde{N}   N^{\fr{2-\al}{3}}  
\|w\|_{Y^{r}_T} (1+\|z\|_{Y^0_T}) \|w_{N}\|_{L^\infty_t H^r_x} \\
& \qquad\qquad \times \max\Big\{\Nt^{\theta}(\tilde{N} N^{\al})^{-1},
\Nt^\beta (\tilde{N} N^{\al})^{-\fr23}\Big\}^{1-} \|z_{\tilde{N}}\|_{Y_T^{0}},
\end{align*}
whereas, making use of Lemma \ref{first}, the  contribution of $Q_{\gtrsim \tilde{N}N^{\al}}w_{\sim N}$  is estimated  by
\begin{align*}
&\quad N^{2r} \tilde{N}  |I_\infty(1^{\text{low}}_{t,R} 
Q_{\sim \tilde{N} N^\al}z_{\tilde{N}},Q_{\gtrsim \tilde{N} N^\al}w_{\sim N},Q_{\ll \tilde{N} N^\al}w_N)|
\\&\lesssim T^{0+}N^{2r}\tilde{N}
\tilde{N}^{\fr12} \Nt^{\frac32+} (\tilde{N} N^\alpha)^{-1}
\max\Big\{N^{\theta}(\tilde{N} N^\al)^{-1},N^{\beta}(\tilde{N} N^\al)^{-\fr23}\Big\}^{1-} \\
& \qquad\qquad \times\|w_N\|_{L^\infty_t L^2_x}
\|z_{\tilde{N}}\|_{X^{-(\frac{3}{2}+),1}} \|w_{\sim N}\|_{Y^{0}}
\\&\lesssim T^{0+} \tilde{N}^{\frac12} \Nt^{\frac32+} N^{-\alpha}
\max\Big\{N^{\theta}(\tilde{N} N^\al)^{-1},N^{\beta}(\tilde{N} N^\al)^{-\fr23}\Big\}^{1-} \|w\|_{L^\infty_t H^r_x}
\|z\|_{Y^0} \|w\|_{Y^{r}}
\\&\lesssim T^{0+} \tilde{N}  N^{-\alpha+1+}
\max\Big\{N^{\theta}(\tilde{N} N^\al)^{-1},N^{\beta}(\tilde{N} N^\al)^{-\fr23}\Big\}^{1-} \|w\|_{L^\infty_t H^r_x}
\|z\|_{Y^0} \|w\|_{Y^{r}} \; ,
\end{align*}
where we use the fact that $Y^0 \hookrightarrow {X^{-(\frac{3}{2}+),1}}$ since $\theta<\frac{5-\al}{3}<\fr32$. 
Since $1<\alpha\le 2 $ we deduce that  
\begin{align*}
\sum_{\tilde{N}\gg 1} &\sum_{N\gg \tilde{N}}  \langle N^{-1} \rangle^2  \Nn^{2r} \tilde{N}  \sup_{t\in]0,T[}\Big|I_t(z_{\tilde{N}},w_{\sim N}, w_N) \Big|\\
  & \lesssim T^{0+} \sum_{N\gg 1} \sum_{0<\tilde{N}\ll N}  N^{\fr{2-\al}{3}} \tilde{N} \max\Big\{N^{\theta}(\tilde{N} N^\al)^{-1},N^{\beta}(\tilde{N} N^\al)^{-\fr23}\Big\}^{1-}
\|w\|_{Y^{r}_T}^2  \|z\|_{Y^{0}_T} (1+\|z\|_{Y^{0}_T}) \\
& \lesssim T^{0+} \sum_{N\gg 1}    \max\Big\{N^{\theta+\frac{2-4\alpha}{3}+},N^{\beta+1-\alpha+}\Big\}
\|w\|_{Y^{r}_T}^2  \|z\|_{Y^{0}_T} (1+\|z\|_{Y^{0}_T})\\
& \lesssim T^{0+} \|w\|_{Y^{r}_T}^2  \|z\|_{Y^{0}_T} (1+\|z\|_{Y^{0}_T})
\end{align*}
where in the last step we used that  $\theta<\frac{4\alpha-2}{3} $ and $\beta<\alpha-1 $.

\emph{Estimates for $J_2$}.
The contribution of the sum over $N\lesssim 1$ is easily estimated by
\begin{align*}
 \sum_{0<N\lesssim 1} \sum_{0<\tilde{N}\ll N}  \langle N^{-1}\rangle^2 \Nn^{2r} N \tilde{N}^{\fr12}\|z_{N}\|_{L^2_T L^2_x} \|w_N\|_{L^2_T L^2_x}\|w_{\tilde{N}}\|_{L^\infty_T L^2_x}\lesssim T  \|z\|_{L^\infty_t L^2_x} \|w\|_{L^\infty_t \bar{H}^r_x}^2.
\end{align*}
The contribution of the sum over $ N\gg 1 $ and $ 0<\tilde{N} \lesssim N^{-\frac{2}{3}(r+1+)} \lesssim 1$ (since $r>-\frac{\alpha}{5}\geq -1$) is estimated as follows 
\begin{align*}
 \sum_{N\gg 1}   \sum_{0<\tilde{N}\lesssim  N^{-\frac{2}{3}(r+1+)} }  &  N^{2r} N \tilde{N}^{\fr12}\|z_{N}\|_{L^2_T L^2_x} \|w_N\|_{L^2_T L^2_x}\|w_{\tilde{N}}\|_{L^\infty_T L^2_x}  \\
  &   \lesssim   \sum_{N\gg 1}   \sum_{0<\tilde{N}\lesssim   N^{-\frac{2}{3}(r+1+)} } 
  N^{1+r} \tilde{N}^{\fr32} \|z_{N}\|_{L^2_T L^2_x} \|w_N\|_{L^2_T H^r_x}\|w_{\tilde{N}}\|_{L^\infty_T \bar{H}^r_x}\\
  & \lesssim T \, \|z\|_{L^\infty_t L^2_x} \|w\|_{L^\infty_t \bar{H}^r_x}^2.
 \end{align*}
 It thus remains to estimate the sum over $ N\gg 1 $ and $ \tilde{N} \gg N^{-\frac{2}{3}(r+1+)} $.
 We 
set $R=N^{-(0+)} (\tilde{N} N^\al)$ so that $ R\ll \tilde{N} N^\al $. Note also that since $ r<0 $, it holds $\tilde{N} N^\al\gg N^{\alpha-\frac23} \gg N^{\frac13} $ and thus $ R\gg 1$.
We split $I_t$ as
\begin{align*}
I_t(w_{\tilde{N}},z_{\sim N},w_{N})=I_\infty(1^{\text{high}}_{t,R}w_{\tilde{N}},z_{\sim N},w_{N})+I_\infty(1^{\text{low}}_{t,R}w_{\tilde{N}},z_{\sim N},w_{N})=:I^{\text{high}}_t+I^{\text{low}}_t.
\end{align*}
Thanks to Lemma \ref{le2-2} and H\"{o}lder's inequality, the contribution of $I^{\text{high}}_t$ is estimated, for $\tilde{N}\gg 1$, by
\begin{align*}
N^{1+2r} |I_t^{\text{high}}|&\lesssim N^{1+2r} \tilde{N}^{\fr12} \|1^{\text{high}}_{t,R}\|_{L^1} \|w_{\tilde{N}}\|_{L^\infty_t L^2_x} \|w_{N}\|_{L^\infty_t L^2_x}\|z_{N}\|_{L^\infty_t L^2_x} \\
& \lesssim  T^{0+} \tilde{N}^{\fr12} N^{0+}(\tilde{N} N^{\al-})^{-1+} 
\tilde{N}^{-r} N^{1+r} \|w_{\tilde{N}}\|_{L^\infty_t H^r_x} \|w_{N}\|_{L^\infty_t H^r_x}\|z_{N}\|_{L^\infty_t L^2_x} \\
&\lesssim T^{0+} N^{1+r-\alpha+}  \tilde{N}^{-\frac12-r+}
 \|w\|_{L^\infty_t H^r_x}^2 \|z\|_{L^\infty_t L^2_x},
\end{align*}
that is acceptable since $\alpha>1$ and $r<0 $ ensures that $1+r-\alpha<0 $ and $ \frac12-\alpha<0 $. 
In the same way, for  $\tilde{N}\lesssim 1$, we obtain
\begin{align*}
N^{1+2r} |I_t^{\text{high}}|&\lesssim N^{1+2r} \tilde{N}^{\fr12} \tilde{N} \|1^{\text{high}}_{t,R}\|_{L^1} \|w_{\tilde{N}}\|_{L^\infty_t \bar{H}^0_x} \|w_{N}\|_{L^\infty_t L^2_x}\|z_{N}\|_{L^\infty_t L^2_x} \\
& \lesssim  T^{0+} \tilde{N}^{\fr32} N^{0+}(\tilde{N} N^{\al-})^{-1+} 
 N^{1+r} \|w_{\tilde{N}}\|_{L^\infty_t \bar{H}^r_x} \|w_{N}\|_{L^\infty_t H^r_x}\|z_{N}\|_{L^\infty_t L^2_x} \\
&\lesssim T^{0+}  \tilde{N}^{\fr12+}  N^{1+r-\alpha+} 
 \|w\|_{L^\infty_t \bar{H}^r_x}^2 \|z\|_{L^\infty_t L^2_x}
\end{align*}
that is also acceptable for the same reasons.
To evaluate the contribution $I_t^{\text{low}}$, we use the decomposition
\begin{align*}
I_\infty(1^{\text{low}}_{t,R} w_{\tilde{N}},z_{\sim N},w_{N})&=I_\infty(1^{\text{low}}_{t,R} w_{\tilde{N}},  Q_{\gtrsim \tilde{N}N^{\al}} z_{\sim N},w_{N})
\\& \quad +I_\infty(1^{\text{low}}_{t,R}w_{\tilde{N}},Q_{\ll \tilde{N}N^{\al}}z_{\sim N},Q_{\gtrsim \tilde{N}N^{\al}}w_{N})
\\& \quad\quad +I_\infty(1^{\text{low}}_{t,R} Q_{\sim \tilde{N}N^{\al}} w_{\tilde{N}},Q_{\ll \tilde{N}N^{\al}}z_{\sim N},Q_{\ll \tilde{N}N^{\al}}w_{N})
\\&=: I^{1,\text{low}}_t+I^{2,\text{low}}_t+I^{3,\text{low}}_t.
\end{align*}
In the case of $N^{-\fr23(r+1)-}\lesssim \tilde{N} \lesssim 1$, we use \eqref{Est11} of Proposition \ref{bl2}. This is justified since $r<0 $ forces $-\fr23(r+1)-\ge -2/3 $.
\begin{align}
N^{1+2r} |I^{1,\text{low}}_t|
&\lesssim N^{1+2r} \|w_{\tilde{N}} w_N\|_{L^2_t L^2_x}
\|Q_{\gtrsim \tilde{N}N^{\al}} z_{\sim N}\|_{L^{2+}_t L^2_x}  \|1^{\text{low}}_{t,R}\|_{L^{\infty-}_t}
\nonumber\\& \lesssim T^{0+} N^{1+r} \tilde{N}N^{\fr{5-4\al}{6}} (1+\|z\|_{Y^0_T}^2) 
\|w\|_{Z^r_T}^2\\
& \qquad\qquad \times
 \max\Big\{N^{\theta}(\tilde{N}N^\al)^{-1},
N^{\beta}(\tilde{N}N^\al)^{-\fr23}\Big\}^{1-} \|z_{\sim N}\|_{Y_T^{0}}
\nonumber\\& \lesssim T^{0+} \tilde{N}^{0+}\max\Big\{N^{\theta+r+\frac{11-10\alpha}{6}}, N^{\beta+r+\frac{11-8\alpha}{6}}\Bigr)
\|w\|_{Z_T^{r}}^2 \|z\|_{Y^0_T} (1+\|z\|_{Y^0_T}^2),\label{za1} 
\end{align}
that is acceptable whenever 
\begin{equation}\label{az1}
 \theta+r < \frac{10\alpha-11}{6} \quad \text{and} \quad  \beta+r <\frac{8\alpha-11}{6} .
\end{equation}
In the case of $\tilde{N}\gg 1$, by \eqref{Est1} of Proposition \ref{bl2} we have 
\begin{align}
N^{1+2r} &|I^{1,\text{low}}_t|
\lesssim N^{1+2r}\|w_{\tilde{N}} w_N\|_{L^2_t L^2_x}
\|Q_{\gtrsim \tilde{N}N^{\al}} z_{\sim N}\|_{L^{2+}_t L^2_x}  \|1^{\text{low}}_{t,R}\|_{L^{\infty-}_t}\nonumber 
\\& \lesssim T^{0+} N^{1+r}\tilde{N}^{-r} N^{\fr{5-4\al}{6}} (1+\|z\|_{Y^0_T}^2) \|w_{\tilde{N}}\|_{Z^r_T}
 \|w_{N}\|_{Z^r_T} \\
 & \qquad\qquad \times\max\Big\{N^{\theta}(\tilde{N}N^\al)^{-1},
N^{\beta}(\tilde{N}N^\al)^{-\fr23}\Big\}^{1-} \|z_{\sim N}\|_{Y_T^{0}}
\nonumber \\& \lesssim T^{0+} \max\Big\{ \tilde{N}^{-r-1+}N^{\theta+r+\frac{11-10\alpha}{6}}, \tilde{N}^{-r-\frac{2}{3}+} N^{\beta+r+\frac{11-8\alpha}{6}}\Bigr)
\|w\|_{Z_T^{r}}^2 \|z\|_{Y^0_T} (1+\|z\|_{Y^0_T}^2), \label{za2}
\end{align}
that is acceptable under the same conditions as above since $ -\frac12<-\fr{\al}5<r<0$.\\
To treat the  second term $I^{2,\text{low}}_t$, we rewrite
$Q_{\ll \tilde{N}N^{\al}}z_{\sim N}$ as
\begin{equation} \label{hhg}
Q_{\ll \tilde{N}N^{\al}}z_{\sim N}=z_{\sim N}-Q_{\gtrsim \tilde{N}N^{\al}}z_{\sim N}\, .
\end{equation}
The contribution of $z_{\sim N}$ in $I^{2,\text{low}}_t$   can be estimated in the same way as $I^{1,\text{low}}_t$  under the same conditions.
In the case of $N^{-\fr23(r+1)-}\lesssim \tilde{N} \lesssim 1$, we use \eqref{Est22} of Proposition \ref{bl2} to get 
\begin{align*}
N^{1+2r} |I_\infty(1^{\text{low}}_{t,R} & w_{\tilde{N}},z_{\sim N},Q_{\gtrsim \tilde{N}N^{\al}}w_{N})|
\lesssim N^{1+2r} \|w_{\tilde{N}} z_{\sim N} \|_{L^2_t L^2_x}
\|Q_{\gtrsim \tilde{N}N^{\al}} w_{N}\|_{L^{2+}_t L^2_x}  \|1^{\text{low}}_{t,R}\|_{L^{\infty-}_t}
\\& \lesssim T^{0+} N^{1+r} \tilde{N}N^{\fr{5-4\al}{6}} (1+\|z\|_{Y^0_T}^2) \|w\|_{Z^r_T} 
\|z\|_{Y^0_T} \\
& \qquad\qquad \times
 \max\Big\{N^{\theta}(\tilde{N}N^\al)^{-1},
N^{\beta}(\tilde{N}N^\al)^{-\fr23}\Big\}^{1-} \|w_{N}\|_{Z_T^{r}}
\\& \lesssim T^{0+}  \tilde{N}^{0+}\max\Big\{N^{\theta+r+\frac{11-10\alpha}{6}}, N^{\beta+r+\frac{11-8\alpha}{6}}\Bigr)
\|w\|_{Z_T^{r}}^2 \|z\|_{Y^0_T} (1+\|z\|_{Y^0_T}^2),
\end{align*}
whereas in the case of $\tilde{N}\gg 1$, by \eqref{Est2} of Proposition \ref{bl2} we have 
\begin{align*}
N^{1+2r} &|I_\infty(1^{\text{low}}_{t,R}  w_{\tilde{N}},z_{\sim N},Q_{\gtrsim \tilde{N}N^{\al}}w_{N})|
\lesssim N^{1+2r} \|w_{\tilde{N}} z_{\sim N} \|_{L^2_t L^2_x}
\|Q_{\gtrsim \tilde{N}N^{\al}} w_{N}\|_{L^{2+}_t L^2_x}  \|1^{\text{low}}_{t,R}\|_{L^{\infty-}_t}
\\& \lesssim T^{0+} N^{1+r} \tilde{N}^{-r}N^{\fr{5-4\al}{6}} (1+\|z\|_{Y^0_T}^2) \|w\|_{Z^r_T} \|z\|_{Y^0_T} \\
& \qquad\qquad \times
 \max\Big\{N^{\theta}(\tilde{N}N^\al)^{-1},
N^{\beta}(\tilde{N}N^\al)^{-\fr23}\Big\}^{1-} \|w_{N}\|_{Z_T^{r}}
\\
& \lesssim T^{0+}\max\Big\{ \tilde{N}^{-r-1+}N^{\theta+r+\frac{11-10\alpha}{6}}, \tilde{N}^{-r-\frac{2}{3}+} N^{\beta+r+\frac{11-8\alpha}{6}}\Bigr)
\|w\|_{Z_T^{r}}^2 \|z\|_{Y^0_T} (1+\|z\|_{Y^0_T}^2).
\end{align*}
Now the contribution of $Q_{\gtrsim \tilde{N}N^{\al}}z_{\sim N}$ in $I^{2,\text{low}}_t$ can be estimated as follows :
 In the case $0<\tilde{N}\lesssim N^{\fr{5-4\al}{3}}\ll 1$, we have by Bernstein inequality
\begin{align*}
N^{1+2r}  & |I_\infty(1^{\text{low}}_{t,R}     w_{\tilde{N}},Q_{\gtrsim \tilde{N}N^{\al}} z_{\sim N},Q_{\gtrsim \tilde{N}N^{\al}}w_{N})| \\
& \lesssim  T^{\frac12 +} N^{1+2r}
\tilde{N}^{\fr12}\max\Big\{N^{\theta}(\tilde{N} N^\al)^{-1},N^{\beta}(\tilde{N} N^\al)^{-\fr23}\Big\}^{1-} \|z_{\sim N}\|_{Y^{0}} \|w_{\tilde{N}}\|_{L^\infty_t L^2_x} \| w_{N}\|_{L^\infty_t L^2_x} \\
& \lesssim  T^{\frac12 +} N^{1+r}
\tilde{N}^{\fr12} \tilde{N} \max\Big\{N^{\theta}(\tilde{N} N^\al)^{-1},N^{\beta}(\tilde{N} N^\al)^{-\fr23}\Big\}^{1-} \|z_{\sim N}\|_{Y^{0}} \|w_{\tilde{N}}\|_{L^\infty_t \bar{H}^r_x} \| w_{N}\|_{L^\infty_t H^r_x}
\\&\lesssim T^{\frac12 +} N^{1+r}
N^{\fr{5-4\al}{6}}  \tilde{N}
\max\Big\{N^{\theta}(\tilde{N} N^\al)^{-1},N^{\beta}(\tilde{N} N^\al)^{-\fr23}\Big\}^{1-} \|w\|_{Y^{r}_T} \|w\|_{Z^{r}_T}  \|z\|_{Y^0_T} \\
& \lesssim T^{0+}  \tilde{N}^{0+}\max\Big\{N^{\theta+r+\frac{11-10\alpha}{6}}, N^{\beta+r+\frac{11-8\alpha}{6}}\Bigr)
\|w\|_{Z_T^{r}}^2 \|z\|_{Y^0_T},
\end{align*}
that is acceptable whenever \eqref{az1} is satisfied.
In the case of $N^{\fr{5-4\al}{3}}\lesssim\tilde{N}\lesssim 1$, we  get 
\begin{align}
N^{1+2r} & |I_\infty(1^{\text{low}}_{t,R} w_{\tilde{N}}, Q_{\gtrsim \tilde{N} N^\al}z_{\sim N},Q_{\gtrsim \tilde{N} N^\al}w_N)|
\nonumber\\
&\lesssim T^{0+} N^{1+r}\tilde{N}^{\fr12} \tilde{N}
\max\Big\{N^{\theta}(\tilde{N} N^\al)^{-1},N^{\beta}(\tilde{N} N^\al)^{-\fr23}\Big\}^{2-} \|z_{\sim N}\|_{Y^0_T} \| w_{\tilde{N}}\|_{L^\infty_t \bar{H}^r_x} \|w_{N}\|_{Y^r_T} \nonumber \\
&\lesssim T^{0+} 
\max\Big\{N^{\theta-\alpha}\tilde{N}^{-\fr12},N^{\beta-\fr23 \al}\tilde{N}^{-\fr16}  \Big\} N^{1+r}
\tilde{N} \nonumber\\
& \qquad\qquad \times\max\Big\{N^{\theta}(\tilde{N} N^\al)^{-1},N^{\beta}(\tilde{N} N^\al)^{-\fr23}\Big\}^{1-} \|z_{\sim N}\|_{Y^0_T} \| w_{\tilde{N}}\|_{L^\infty_t \bar{H}^r_x} \|w_{N}\|_{Y^r_T}  \nonumber
 \end{align}
 At this stage we notice that for $N^{\fr{5-4\al}{3}}\lesssim\tilde{N}\lesssim 1$
 $$
 \max\Big\{N^{\theta-\alpha}\tilde{N}^{-\fr12},N^{\beta-\fr23 \al}\tilde{N}^{-\fr16}  \Big\} \le N^{\fr{5-4\al}{6}}
 $$
 whenever 
 \begin{equation}\label{az2}
 \theta <\frac{5-\alpha}{3} \quad\text{and}\quad  \beta <\frac{2}{9} (5-\alpha) \, .
 \end{equation}
 Under this condition we thus get 
 \begin{align*}
 N^{1+2r} & |I_\infty(1^{\text{low}}_{t,R} w_{\tilde{N}}, Q_{\gtrsim \tilde{N} N^\al}z_{\sim N},Q_{\gtrsim \tilde{N} N^\al}w_N)| \\
&\lesssim T^{0+} 
N^{\fr{5-4\al}{6}}  N^{1+r}
\tilde{N} \max\Big\{N^{\theta}(\tilde{N} N^\al)^{-1},N^{\beta}(\tilde{N} N^\al)^{-\fr23}\Big\}^{1-}\|w\|_{Z_T^{r}}^2 \|z\|_{Y^0_T}, 
 \end{align*}
 that is acceptable as above. 
Finally for $ \tilde{N} \gg 1$ we proceed similarly to get that under the same conditions \eqref{az2} it holds 
\begin{align}
N^{1+2r} & |I_\infty(1^{\text{low}}_{t,R} w_{\tilde{N}}, Q_{\gtrsim \tilde{N} N^\al}z_{\sim N},Q_{\sim \tilde{N} N^\al}w_N)|
\nonumber\\
&\lesssim T^{0+} N^{1+r}\tilde{N}^{\fr12}\tilde{N}^{-r}
\max\Big\{N^{\theta}(\tilde{N} N^\al)^{-1},N^{\beta}(\tilde{N} N^\al)^{-\fr23}\Big\}^{2-} \|z_{\sim N}\|_{Y^0_T} \| w_{\tilde{N}}\|_{L^\infty_t H^r_x} \|w_{N}\|_{Y^r_T} \nonumber\\
&\lesssim T^{0+} 
\max\Big\{N^{\theta-\alpha}\tilde{N}^{-\fr12},N^{\beta-\fr23 \al}\tilde{N}^{-\fr16}  \Big\} N^{1+r}\tilde{N}^{-r}\\
& \qquad\qquad \times
 \max\Big\{N^{\theta}(\tilde{N} N^\al)^{-1},N^{\beta}(\tilde{N} N^\al)^{-\fr23}\Big\}^{1-} \|z_{\sim N}\|_{Y^0_T} \| w_{\tilde{N}}\|_{L^\infty_t \bar{H}^r_x} \|w_{N}\|_{Y^r_T} \nonumber\\
&\lesssim T^{0+} 
N^{\fr{5-4\al}{6}}  N^{1+r}
\tilde{N}^{-r} \max\Big\{N^{\theta}(\tilde{N} N^\al)^{-1},N^{\beta}(\tilde{N} N^\al)^{-\fr23}\Big\}^{1-}
 \|w\|_{Z_T^{r}}^2 \|z\|_{Y^0_T} \label{s5re1}
  \end{align}
  that is acceptable as in \eqref{za2}. Therefore, these contributions are acceptable whenever  \eqref{az1} and \eqref{az2}  are satisfied.

To treat the  third term $I^{3,\text{low}}_t$, we  separated three cases. 
In the case  $0<\tilde{N}\lesssim N^{-\fr23(r+1)-}\ll 1$,
\begin{align*}
N^{1+2r} |I^{3,\text{low}}_t|
&\lesssim N^{1+2r}\tilde{N}^{\fr12}\|w_{\tilde{N}}\|_{L^\infty_t L^2_x}
\|z_{\sim N}\|_{L^\infty_T L^2_x} \|P_N w\|_{L^\infty_T L^2_x} \|1^{\text{low}}_{t,R}\|_{L^1}
\\& \lesssim T  \tilde{N}^{\fr32}  N^{1+r}
\|z\|_{L^\infty_T L^2_x} \|w_{N}\|_{L^\infty_T H^r_x} \|w_{\tilde{N}}\|_{L^\infty_T \bar{H}^r_x}
\\& \lesssim  T  N^{0-}
\|z\|_{L^\infty_T L^2_x} \|w\|_{Z^r_T}^2,
\end{align*}
that is acceptable.
In the case of $N^{-\fr23(r+1)-}\lesssim \tilde{N} \lesssim 1$, we have 
\begin{align*}
N^{1+2r} |I^{3,\text{low}}_t|
&\lesssim N^{1+2r} \tilde{N}^{\fr12}\|Q_{\sim \tilde{N}N^{\al}}w_{\tilde{N}}\|_{L^{2+}_t L^2_x}
\|z_{\sim N}\|_{L^\infty_T L^2_x} \|P_N w\|_{L^\infty_T L^2_x} \|1^{\text{low}}_{t,R}\|_{L^{2-}}
\\& \lesssim T^{\fr12+} \tilde{N}^{\fr12} N^{1+r}
\|z\|_{L^\infty_T L^2_x} \|w_{N}\|_{L^\infty_T H^r_x} \max\Big\{(\tilde{N}N^\al)^{-1},
(\tilde{N}N^\al)^{-\fr23}\Big\}^{1-} \|w_{\tilde{N}}\|_{Y_T^{r}}
\\& \lesssim T^{0+}
\max\Big\{N^{-\al+\fr43 (r+1)+},
N^{-\fr23\al+\fr{10}{9}(r+1)+}\Big\} \|w_{\tilde{N}}\|_{Y_T^{r}} \|z\|_{L^\infty_T L^2_x} \|w_{N}\|_{L^\infty_T H^r_x}
\end{align*}
that is acceptable whenever $1+r < \frac{3}{5} \alpha \Leftrightarrow r<\frac{3\alpha-5}{5}$.
In the case $\tilde{N}\gg 1$, we rewrite $Q_{\ll \tilde{N} N^\alpha} z_{\sim N}$ as in \eqref{hhg}. The contribution of $ z_{\sim N} $ is then estimated thanks to Lemma \ref{Cu2} by  
\begin{align*}
&N^{1+2r} |I^{3,\text{low}}_t|
\lesssim N^{1+2r}  \|Q_{\sim \tilde{N}N^{\al}}w_{\tilde{N}}\|_{L^{2+}_t L^2_x}
\|z_{\sim N} \|_{L^2_t L^\infty_x} \|P_N w\|_{L^\infty_T L^2_x} \|1^{\text{low}}_{t,R}\|_{L^{\infty-}}
\\& \lesssim T^{0+} N^{\fr{2-\al}{3}}  N^{1+r}
\|z\|_{L^\infty_T L^2_x} (1+\|z\|_{Y^0_T}) \|w_{N}\|_{L^\infty_T H^r_x} \\
& \qquad\qquad \times\max\Big\{\tilde{N}^{\theta-r}(\tilde{N}N^\al)^{-1},
\tilde{N}^{\beta-r}(\tilde{N}N^\al)^{-\fr23}\Big\}^{1-} \|w_{\tilde{N}}\|_{Y_T^{r}}
\\& \lesssim T^{0+}
\max\Big\{\tilde{N}^{\theta-r-1}N^{r+\fr{5-4\al}{3}+},
\tilde{N}^{\beta-r-\fr23}N^{r+\fr{5-3\al}{3}+}\Big\}
\|z\|_{L^\infty_T L^2_x}  \|w_{N}\|_{L^\infty_T H^r_x} \|w_{\tilde{N}}\|_{Y_T^{r}} (1+\|z\|_{Y^0_T}),
\end{align*}
that is acceptable if $\theta<\frac{4\alpha-2}{3} $, $\beta<\alpha-1 $ and 
\begin{equation}\label{condr}
r  <\frac{3\alpha-5}{3}\; .
\end{equation}
Since  $ r<0 $, it is worth noticing that \eqref{condr} implies the condition $r<\frac{3\alpha-5}{5} $ obtained in the case  $N^{-\fr23(r+1)-}\lesssim \tilde{N} \lesssim 1$. 
For the contribution of $Q_{\gtrsim \tilde{N} N^\alpha} z_{\sim N}$ we proceed similarly as in \eqref{s5re1} to get, under condition \eqref{az2}, that 
\begin{align*}
N^{1+2r} & |I_\infty(1^{\text{low}}_{t,R} Q_{\sim \tilde{N} N^\al}w_{\tilde{N}}, Q_{\gtrsim \tilde{N} N^\al}z_{\sim N},Q_{\ll \tilde{N} N^\al}w_N)|
\\
&\lesssim T^{0+} N^{1+r}\tilde{N}^{\fr12} \tilde{N}^{-r}
\max\Big\{\tilde{N}^{\theta}(\tilde{N} N^\al)^{-1},\tilde{N}^{\beta}(\tilde{N} N^\al)^{-\fr23}\Big\} \\
& \qquad\qquad \times
\max\Big\{N^{\theta}(\tilde{N} N^\al)^{-1},N^{\beta}(\tilde{N} N^\al)^{-\fr23}\Big\}^{1-} \|z_{\sim N}\|_{Y^0_T} \| w_{\tilde{N}}\|_{Y^r_T}\|w_{N}\| _{L^\infty_t H^r_x}  \\
&\lesssim T^{0+} 
N^{\fr{5-4\al}{6}}  N^{1+r}
\tilde{N}^{-r} \max\Big\{N^{\theta}(\tilde{N} N^\al)^{-1},N^{\beta}(\tilde{N} N^\al)^{-\fr23}\Big\}^{1-}
 \|w\|_{Z_T^{r}}^2 \|z\|_{Y^0_T} 
      \end{align*}
which is acceptable as in \eqref{s5re1}. 
Gathering the  above results we infer that the contribution of   $J_2$ is acceptable under the additional conditions \eqref{az1}, \eqref{az2} and \eqref{condr}. 

\emph{Estimates for $J_3$}.
We separate the contributions of $  N_1 \lesssim 1$, $N \lesssim 1 \ll N_1$ and $1 \ll N \lesssim N_1$. We first notice that $\langle N^{-1} \rangle N\sim \Nn$, the contribution of the sum over $ N_1 \lesssim 1$ is easily estimated thanks to Cauchy-Schwarz by
\begin{align}\label{J3-1}
\sum_{0<N_1\lesssim 1} \sum_{0<N\lesssim N_1}  N^{\fr32} \langle N^{-1} \rangle^{2} \|w_N\|_{L^\infty_T H^r_x}\|z_{N_1}\|_{L^2_T L^2_x} \|w_{N_1}\|_{L^2_T H^r_x}\lesssim T  \|z\|_{L^\infty_t L^2_x} \|w\|_{L^\infty_t H^r_x}\|w\|_{L^\infty_t \bar{H}^r_x}.
\end{align}

For the contribution of  $N \lesssim 1 \ll N_1$,  the sum over $N<N_1^{2r-}$ and $N_1 \gg 1$, we proceed in the same way to get
\begin{align}\label{J3-2}
&\sum_{N_1 \gg 1} \sum_{0<N<N_1^{2r-}} \langle N^{-1} \rangle^2  N |I_t(w_N,w_{N_1},z_{\sim N_1})|
\nonumber\\&\lesssim \sum_{N_1 \gg 1}\sum_{0<N<N_1^{2r-}} \langle N^{-1} \rangle^2  N N^{\fr12}  \|w_N\|_{L^\infty_t L^2_x} \|w_{N_1}\|_{L^2_t L^2_x} \|z_{\sim N_1}\|_{L^2_t L^2_x}
\nonumber\\
&\lesssim \sum_{N_1 \gg 1}\sum_{0<N<N_1^{2r-}} N_1^{-r} N^{\fr12}  \|w_N\|_{L^\infty_t \bar{H}^r_x} \|w_{N_1}\|_{L^2_t H^r_x} \|z_{\sim N_1}\|_{L^2_t L^2_x}
\nonumber\\
&\lesssim T  \|z\|_{L^\infty_t L^2_x} \|w\|_{L^\infty_t H^r_x}\|w\|_{L^\infty_t \bar{H}^r_x}.
\end{align}
It remains to estimate the sum over $1\gtrsim N\geq N_1^{2r-}$ and $N_1 \gg 1$. We set $R=N_1^{-(0+)}(N N_1^\al)$ so that 
 $R\ll N {N_1}^\al$ since $N_1\gg 1$. Note also that $R\gg 1 $ since $r >-\al/5>-\alpha/2 $. We split $I_t$ as
\begin{align*}
I_t(w_N,w_{N_1},z_{\sim N_1})=I_\infty(1^{\text{high}}_{t,R}w_N,w_{N_1},z_{\sim N_1})+I_\infty(1^{\text{low}}_{t,R}w_N,w_{N_1},z_{\sim N_1})=:I^{\text{high}}_t+I^{\text{low}}_t.
\end{align*}
The contribution of $I^{\text{high}}_t$ is estimated, thanks to Lemma \ref{le2-2} and H\"{o}lder's inequality, by
\begin{align*}
&N^{-1}|I_t^{\text{high}}|\lesssim N^{-1} N^{\fr12}\|1^{\text{high}}_{t,R}\|_{L^1}  \|w_{N}\|_{L^\infty_t L^2_x} \|w_{N_1}\|_{L^\infty_t L^2_x} \|z_{\sim N_1}\|_{L^\infty_t L^2_x}
\\&\lesssim T^{0+} \, N^{\fr12}N_1^{0+}(N N_1^{\al-})^{-1+} N_1^{-r}\|w_{N}\|_{L^\infty_t \bar{H}^r_x} \|w_{N_1}\|_{L^\infty_t H^r_x} \|z_{\sim N_1}\|_{L^\infty_t L^2_x}
\\&\lesssim T^{0+} \, N_1^{-2r-\al+}\|w_{N}\|_{L^\infty_t \bar{H}^r_x} \|w_{N_1}\|_{L^\infty_t H^r_x} \|z_{\sim N_1}\|_{L^\infty_t L^2_x},
\end{align*}
which is acceptable since $r >-\alpha/2 $.
To evaluate the contribution $I_t^{\text{low}}$, we use the decomposition
\begin{align*}
I_\infty(1^{\text{low}}_{t,R} w_N,w_{N_1},z_{\sim N_1})&=I_\infty(Q_{\gtrsim N N_1^\al}(1^{\text{low}}_{t,R} w_N),w_{N_1},z_{\sim N_1})
\\& \quad +I_\infty(Q_{\ll N N_1^\al}(1^{\text{low}}_{t,R} w_N),Q_{\gtrsim N N_1^\al}w_{N_1},z_{\sim N_1})
\\& \quad\quad +I_\infty(Q_{\ll N N_1^\al}(1^{\text{low}}_{t,R} w_N),Q_{\ll N N_1^\al}w_{N_1},Q_{\sim N N_1^\al}z_{\sim N_1})
\\&=: I^{1,\text{low}}_t+I^{2,\text{low}}_t+I^{3,\text{low}}_t.
\end{align*}
Then
\begin{align*}
N^{-1} |I^{1,\text{low}}_t|
& \lesssim T^\frac12 N^{-1}  N^{\fr12} \max\Big\{\langle N\rangle^{\theta}(N N_1^\al)^{-1},\langle N\rangle^{\beta}(N N_1^\al)^{-\fr23}\Big\} \|w_N\|_{Y^{0}} \|w_{N_1}\|_{L^\infty_t L^2_x} \|z_{\sim N_1}\|_{L^\infty_t L^2_x} \\
& \lesssim  T^\frac12 N^{-\fr12} N_1^{-r} \max\Big\{(N N_1^\al)^{-1},(N N_1^\al)^{-\fr23}\Big\} \|w_N\|_{Y^{r}} \|w_{N_1}\|_{L^\infty_t H^r_x} \|z_{\sim N_1}\|_{L^\infty_t L^2_x}.
\end{align*}
For the estimates on $I_t^{2,\text{low}}$ and $I_t^{3,\text{low}}$, we benefit from the low frequency weight in the $ \bar{H}_x^r$-norm to get \footnote{It is worth noticing that this is these estimates that give   rise to  the conditions $c_4 $ and $ c_6 $ in \eqref{coco4}  and  which, combined with the condition $ \theta >\frac{11-4\alpha}{6} $ that already appears in Proposition \ref{h0}, lead to  our  restriction $ \alpha>\frac{55}{38}$ (see Subsection \ref{subsect6}).}
\begin{align*}
N^{-1} |I^{2,\text{low}}_t|
& \lesssim T^\frac12 N^{-1}  N^{\fr12} \max\Big\{\langle N\rangle^{\theta}(N N_1^\al)^{-1},\langle N\rangle^{\beta}(N N_1^\al)^{-\fr23}\Big\} \|w_{N_1}\|_{Y^{0}} \|w_{N}\|_{L^\infty_t L^2_x} \|z_{\sim N_1}\|_{L^\infty_t L^2_x} \\
&\lesssim   T^\frac12 N^{\fr12}  N_1^{-r} \max\Big\{N_1^{\theta}(N N_1^\al)^{-1},N_1^{\beta}(N N_1^\al)^{-\fr23}\Big\} \|w_{N_1}\|_{Y^{r}} \|w_N\|_{L^\infty_t  \bar{H}_x^r} \|z_{\sim N_1}\|_{L^\infty_t L^2_x}  .
\end{align*}
and similarly
\begin{align*}
N^{-1} | I^{3,\text{low}}_t|
&\lesssim   T^\frac12 N^{\fr12}  N_1^{-r} \max\Big\{N_1^{\theta}(N N_1^\al)^{-1},N_1^{\beta}(N N_1^\al)^{-\fr23}\Big\} \|z_{\sim N_1}\|_{Y^{0}} \|w_N\|_{L^\infty_t  \bar{H}_x^r} \|w_{N_1}\|_{L^\infty_t H^r_x}.
\end{align*}
Therefore, the contribution of the sum over $1\gtrsim N\geq N_1^{2r-}$ and $N_1 \gg 1$, can be bounded as follows
\begin{align}\label{J3-3}
&\sum_{N_1 \gg 1} \sum_{1\ge N\geq N_1^{2r-}} \langle N^{-1} \rangle^2  \langle N \rangle^{2r} N |I_t(w_N,w_{N_1},z_{\sim N_1})|
\nonumber\\
&\lesssim \sum_{N_1 \gg 1}\sum_{1\ge N\geq N_1^{2r-}} T^\frac12  N_1^{-r}N^{-\fr12}   \max\Big\{N_1^{\theta} N_1^{-\alpha} ,N_1^{\beta}N^{\frac13} N_1^{-\fr23 \alpha}, (N N_1^\al)^{-1},(N N_1^\al)^{-\fr23}\Big\} \|z\|_{Y^{0}} \|w\|_{Z^{r}}^2
\nonumber\\
&\lesssim \sum_{N_1 \gg 1}  T^\frac12 \max\Big\{N_1^{\theta-\alpha-2r}  ,N_1^{\beta-\frac23 \alpha-\frac43 r}, N_1^{-\alpha-4r},
N_1^{-\frac23 \alpha-\frac{10}{3}r}\Big\} N_1^{0+}\|z\|_{Y^{0}} \|w\|_{Z^{r}}^2
\nonumber\\
&\lesssim  T^\frac12\|z\|_{Y^{0}} \|w\|_{Z^{r}}^2,
\end{align}
under the additional conditions 
\begin{equation}\label{condi}
 \theta-\al-2r<0, \quad \beta-\frac23 \alpha-\frac{4}{3} r<0\quad \text{and} \quad  -5r<\alpha .
 \end{equation}

In the case $1 \ll N \lesssim N_1$,
we set $R={N_1}^{-(0+)}(N {N_1}^\al)$ so that   $R\ll N {N_1}^\al$ since $N_1\gg 1$.
We split $I_t$ as
\begin{align*}
I_t(w_{N_1},z_{\sim N_1},w_{N})=I_\infty(1^{\text{high}}_{t,R}w_{N_1},z_{\sim N_1},w_{N})+I_\infty(1^{\text{low}}_{t,R}w_{N_1},z_{\sim N_1},w_{N})=:I^{\text{high}}_t+I^{\text{low}}_t.
\end{align*}
The contribution of $I^{\text{high}}_t$ is estimated, thanks to Lemma \ref{le2-2} and H\"{o}lder's inequality, by
\begin{align*}
N^{1+2r} |I_t^{\text{high}}|&\lesssim N^{1+2r} N^{\fr12}\|1^{\text{high}}_{t,R}\|_{L^1} \|z_{N_1}\|_{L^\infty_t L^2_x} \|w_{N}\|_{L^\infty_t L^2_x} \|w_{N_1}\|_{L^\infty_t L^2_x}
\\&\lesssim T^{0+} N^{1+2r} N^{\fr12}N_1^{0+}(N N_1^{\al-})^{-1+} \|z_{N_1}\|_{L^\infty_t L^2_x} \|w_{N}\|_{L^\infty_t L^2_x} \|w_{N_1}\|_{L^\infty_t L^2_x} \\
&\lesssim  T^{0+} N^{r+\frac12} N_1^{-r-\alpha+}  
 \|z_{N_1}\|_{L^\infty_t L^2_x} \|w_{N}\|_{L^\infty_t H^r_x} \|w_{N_1}\|_{L^\infty_t H^r_x}
\end{align*}
that is acceptable since $ r>-1/2$ and $ \alpha>1/2$.
To evaluate the contribution $I_t^{\text{low}}$, we use the decomposition
\begin{align*}
I_\infty(1^{\text{low}}_{t,R} w_{N_1},z_{\sim N_1},w_{N})&=I_\infty(1^{\text{low}}_{t,R} Q_{\gtrsim N N_1^{\al}}w_{N},z_{\sim N_1},w_{N_1})
\\& \quad +I_\infty(1^{\text{low}}_{t,R} Q_{\ll N N_1^{\al}}w_{N},Q_{\gtrsim N N_1^{\al}}z_{\sim N_1},w_{N_1})
\\& \quad\quad +I_\infty(1^{\text{low}}_{t,R} Q_{\ll N N_1^{\al}}w_{N},Q_{\ll N N_1^{\al}}z_{\sim N_1},Q_{\sim N N_1^{\al}}w_{N_1})
\\&=: I^{1,\text{low}}_t+I^{2,\text{low}}_t+I^{3,\text{low}}_t.
\end{align*}
For the first term, by Lemma \ref{Cu2} we have 
\begin{align*}
&N^{1+2r} |I^{1,\text{low}}_t|
\lesssim N^{1+2r}  \|Q_{\gtrsim N N_1^{\al}}w_{N}\|_{L^{2+}_t L^2_x}
\|z_{\sim N_1}\|_{L^2_T L^\infty_x} \| w_{N_1}\|_{L^\infty_T L^2_x} \|1^{\text{low}}_{t,R}\|_{L^{\infty-}}
\\& \lesssim T^{0+} N_1^{\fr{2-\al}{3}} N^{1+2r}
\|z\|_{L^\infty_T L^2_x} (1+\|z\|_{L^2_T L^2_x}) \|w_{N_1}\|_{L^\infty_T L^2_x}  \max\Big\{N^{\theta}(N N_1^{\al})^{-1},
N^{\beta}(N N_1^{\al})^{-\fr23}\Big\}^{1-} \|w_{N}\|_{Y_T^{0}}
\\& \lesssim T^{0+} N_1^{\fr{2-\al}{3}} N_1^{-r} N^{1+r}
\|z\|_{L^\infty_T L^2_x} (1+\|z\|_{L^2_T L^2_x}) \|w_{N_1}\|_{L^\infty_T H^r_x}  \max\Big\{N^{\theta}(N N_1^{\al})^{-1},
N^{\beta}(N N_1^{\al})^{-\fr23}\Big\}^{1-}\|w_{N}\|_{Y_T^{r}}.
\end{align*}
To bound $I^{2,\text{low}}_t$,  we rewrite 
$Q_{\ll N N_1^{\al}}w_{ N}$ as
\begin{equation}\label{de1}
Q_{\ll N N_1^{\al}}w_{N}=w_{N}-Q_{\gtrsim N N_1^{\al}}w_{N}\, .
\end{equation}
To estimate the contribution of $w_{N}$, we  separate the contributions of $ N_1\gg N $ and $ N_1\sim N$. When
$ N_1\gg N $, Proposition \ref{bl2} leads to
\begin{align*}
&\quad  N^{1+2r} |I_\infty(1^{\text{low}}_{t,R}w_N,Q_{\gtrsim N N_1^\al}z_{\sim N_1},w_{N_1})|
\\
&\lesssim  N^{1+2r} \|Q_{\gtrsim N N_1^{\al}}z_{\sim N_1}\|_{L^{2+}_t L^2_x}
\|w_{N} w_{N_1}\|_{L^2_t L^2_x} 
\|1^{\text{low}}_{t,R}\|_{L^{\infty-}}
\\& \lesssim   T^{0+} N_1^{\fr{5-4\al}{6}} N_1^{-r} N^{1+r} \max\Big\{N_1^{\theta}(N N_1^{\al})^{-1},
N_1^{\beta}(N N_1^{\al})^{-\fr23}\Big\}^{1-}
\|w\|_{Y^{r}_T}^2 (1+\|z\|_{Y^0_T}^2)   \|z_{N_1}\|_{Y_T^{0}}
\\& \lesssim   T^{0+} N^{\fr{2-\al}{3}} N_1^{-r} N^{1+r} \max\Big\{N_1^{\theta}(N N_1^{\al})^{-1},
N_1^{\beta}(N N_1^{\al})^{-\fr23}\Big\}^{1-}
\|w\|_{Y^{r}_T}^2 (1+\|z\|_{Y^0_T}^2)  \|z_{N_1}\|_{Y_T^{0}},
\end{align*}
whereas when $ N_1\sim N$, making use of Proposition \ref{Cw2} we get
\begin{align*}
	&\quad  N^{1+2r} |I_\infty(1^{\text{low}}_{t,R}w_N,Q_{\gtrsim N N_1^\al}z_{\sim N_1},w_{N_1})|
	\\
	&\lesssim  N^{1+2r} \|Q_{\gtrsim N N_1^{\al}}z_{\sim N_1}\|_{L^{2+}_t L^2_x}
	\|w_{N} \|_{L^2_t L^\infty_x} \|w_{N_1}\|_{L^\infty_t L^2_x}
	\|1^{\text{low}}_{t,R}\|_{L^{\infty-}}
	\\& \lesssim   T^{0+} N^{\fr{2-\al}{3}} N_1^{-r} N^{1+r} \max\Big\{N_1^{\theta}(N N_1^{\al})^{-1},
	N_1^{\beta}(N N_1^{\al})^{-\fr23}\Big\}^{1-}
	\|w\|_{Y^{r}_T} (1+\|z\|_{Y^0_T}) \|w_{N_1}\|_{L^\infty_t H^r_x}  \|z_{N_1}\|_{Y_T^{0}}.
\end{align*}
On the other hand, proceeding as in \eqref{s5re1}, the contribution of $Q_{\gtrsim NN_1^{\al}}w_{N}$  can be bounded as follows
\begin{align*}
&\quad   N^{1+2r} |I_\infty(1^{\text{low}}_{t,R} Q_{\gtrsim N N_1^\al}w_N,Q_{\gtrsim N N_1^\al}z_{\sim N_1},w_{N_1})|
\\&\lesssim  T^{0+} 
N^{\frac{3}{2}+r} N_1^{-r}  \max\Big\{N^{\theta}(N N_1^\al)^{-1},N^{\beta}(N N_1^\al)^{-\fr23}\Big\}^{1-} \\
&\qquad\qquad \times 
 \max\Big\{N_1^{\theta}(N N_1^\al)^{-1},N_1^{\beta}(N N_1^\al)^{-\fr23}\Big\} \|w_{N}\|_{Y^{r}} \|z_{N_1}\|_{Y^0} \| w_{N_1}\|_{L^\infty_t H^r_x}
\\
&\lesssim  T^{0+} N_1^{\fr{2-\al}{3}} N_1^{-r} N^{1+r}\max\Big\{N^{\theta}(N N_1^{\al})^{-1},
N^{\beta}(N N_1^{\al})^{-\fr23}\Big\}^{1-}
 \|w_{N}\|_{Y^{r}} \|z_{N_1}\|_{Y^0} \| w_{N_1}\|_{L^\infty_t H^r_x}
\end{align*}
provided $ \theta\le \frac{2\alpha+2}{3} $ and $\beta\le \frac{2+\alpha}{3} $ that is verified for $ (\theta,\beta) $ satisfying \eqref{coco} and $ \alpha\le 2$.
To bound $I^{3,\text{low}}_t$,  we  use again the decomposition \eqref{de1}. 
To estimate the contribution of $w_{N}$, we  separate the contributions of $ N_1\gg N $ and $ N_1\sim N$ again. Then the contribution of  $w_{N}$ can be bounded as above by 
\begin{align*}
&\quad N^{1+2r} |I_\infty(1^{\text{low}}_{t,R}w_N,Q_{\ll N N_1^\al}z_{\sim N_1},Q_{\sim N N_1^{\al}} w_{N_1})|
\\
&\lesssim  N^{1+2r} \|Q_{\sim N N_1^{\al}}w_{\sim N_1}\|_{L^{2+}_t L^2_x}
\|w_{N} \|_{L^2_t L^\infty_x} \|z_{N_1}\|_{L^\infty_t L^2_x}
\|1^{\text{low}}_{t,R}\|_{L^{\infty-}}
\\& \lesssim  T^{0+} N^{\fr{2-\al}{3}}  N_1^{-r} 
N^{1+r} \max\Big\{N_1^{\theta}(N N_1^{\al})^{-1},
N_1^{\beta}(N N_1^{\al})^{-\fr23}\Big\}^{1-} 
\|w\|_{Y^{r}_T}^2 (1+\|z\|_{Y^0_T})  \|z_{N_1}\|_{Y_T^{0}}.
\end{align*}
whereas the contribution of  $Q_{\gtrsim NN_1^{\al}}w_{N}$  can be bounded as above by 
\begin{align*}
& \quad  N^{1+2r}  |I_\infty(1^{\text{low}}_{t,R} Q_{\gtrsim N N_1^\al}w_N,Q_{\ll N N_1^\al}z_{\sim N_1},Q_{\sim N N_1^\al}w_{N_1})|
\\
&\lesssim  T^{0+} 
N^{\frac{3}{2}+r} N_1^{-r}  \max\Big\{N^{\theta}(N N_1^\al)^{-1},N^{\beta}(N N_1^\al)^{-\fr23}\Big\}^{1-} \\
&\qquad\qquad \times 
 \max\Big\{N_1^{\theta}(N N_1^\al)^{-1},N_1^{\beta}(N N_1^\al)^{-\fr23}\Big\} \|w_{N}\|_{Y^{r}} \|z_{N_1}\|_{L^\infty_t L^2_x} \| w_{N_1}\|_{Y^r}
\\
&\lesssim  T^{0+} N_1^{\fr{2-\al}{3}} N_1^{-r} N^{1+r}\max\Big\{N^{\theta}(N N_1^{\al})^{-1},
N^{\beta}(N N_1^{\al})^{-\fr23}\Big\}^{1-}
 \|w_{N}\|_{Y^{r}} \|z_{N_1}\|_{L^\infty_t L^2_x} \| w_{N_1}\|_{Y^r}
\end{align*}
as soon as \eqref{coco} is satisfied. 
Gathering the above estimates we infer that for $ \alpha\le 2 $ and $(\theta,\beta) $ satisfying \eqref{coco},  the contribution $1 \ll N \lesssim N_1$ in $J_3$ can be controlled by
\begin{align}\label{J3-4}
J_3 &\lesssim  T^{0+}\sum_{N_1\gtrsim N \gg 1} N^{1+r} N_1^{-r}  \|w\|_{Y^{r}_T}^2 \|z\|_{Y^0_T} (1+\|z\|_{Y^0_T}^2) \\
&\qquad\times\Bigl( N^{\fr{2-\al}{3}} 
\max\Big\{N_1^{\theta}(N N_1^{\al})^{-1},
N_1^{\beta}(N N_1^{\al})^{-\fr23}\Big\}^{1-} +N_1^{\fr{2-\al}{3}}\max\Big\{N^{\theta}(N N_1^{\al})^{-1},
N^{\beta}(N N_1^{\al})^{-\fr23}\Big\}^{1-}\Bigr) 
\nonumber\\
 &\lesssim T^{0+} \|w\|_{Y^{r}_T}^2 \|z\|_{Y^0_T} (1+\|z\|_{Y^0_T}^2),
\end{align}
 provided $\alpha>1/2 $, $\theta<\frac{4\alpha-2}{3}$,  $\beta <\alpha-1$, $ \theta -\alpha-r<0$ and  $\beta-\frac23 \alpha -r <0$.
This ensures that the contribution of  $ J_3 $ is acceptable under the additional condition \eqref{condi}.

In conclusion, \eqref{prou1} holds for $\alpha\in ]\frac{37}{26},2] $, $r\in ]-\frac12,0[ $  and $ (\theta,\beta)$ satisfying \eqref{coco}, provided  \eqref{az1}, \eqref{az2}, \eqref{condr} and \eqref{condi} are satisfied. Thus, we obtain the conditions \eqref{coco2}-\eqref{coco3}.
\end{proof}

\section{Unconditional local well-posedness in $L^2(\R)$} \label{Sect6}
\subsection{Choice of $\theta,\beta $ and $ r$.} \label{subsect6}
In this subsection we  check that for $ \alpha\in ]\frac{55}{38} ,2] $ we can find   $\theta,\beta $ and $ r$ such that the conditions in  Proposition \ref{bl1}, Proposition \ref{bl2}, Proposition \ref{y0}, Proposition \ref{h0}, Proposition \ref{yr} and Proposition \ref{hr} are satisfied. For this we have to take $ \alpha\in ]\frac{37}{26},2] $ with $\theta,\beta $ and $ r$ satisfying 
\begin{equation}\label{coco4}
\begin{cases}
c_1 : \frac{11-4\alpha}{6}<\theta<\frac{5-\alpha}{3}  \\
c_2 :\frac{4-2\alpha}{3} <\beta<\min(\alpha-1,\frac29 (5-\alpha)) \\
c_3 :\max(-\alpha/5,\frac{2-2\alpha}{3})<r<\frac{3\al-5}{3} \; .
\end{cases}
\text{ and }
\begin{cases}
c_4 :\theta-2r<\alpha \\
c_5 :\beta-\frac{4}{3} r<\fr23\al \\
c_6 : \theta+r < \frac{10\alpha-11}{6}   \\
c_7 : \beta+r <\frac{8\alpha-11}{6}\\
c_8 : \theta-\beta<\frac{22\alpha-11}{18}\; .\\
\end{cases}
\end{equation}
First we notice that $c_4$ and $c_6 $ ensure that $ \theta<\frac19 (13\alpha-11) $ which combined with $c_1 $ forces $ \alpha >\frac{55}{38}>\frac{37}{26} $. Hence, we have to reduce the range of admissible $\alpha $ to $ ]\frac{55}{38} ,2]$.

Second, we notice that if we take $ \beta=\theta-\frac\alpha3 $ then $c_4\Rightarrow c_5 $, $ c_6\Rightarrow c_7 $ and $c_8 $ is fulfilled as soon as $ \alpha>\frac{11}{16}$. 

Now we  tackle the case $ \alpha\in [\frac32,2] $. In that case we will set 
\begin{equation}\label{choice1}
 \theta=\frac{4-\alpha}{3}+ , \quad \beta=\theta-\frac\alpha3 =\frac{4-2\alpha}{3}+\text{ and } r=\frac{\alpha-3}{6}  \; .
 \end{equation}
  We notice that $ \frac{11-4\alpha}{6} \le \frac{4-\alpha}{3} $ provided $ \alpha\ge \frac32 $ and thus $c_1$ is fulfilled. It is  also straightforward to check that $c_2$, $c_3 $ and $ c_4 $ are fulfilled provided $ \alpha>7/5$. Finally, $c_6$ is fulfilled provided $\alpha>\frac{16}{11} $ that ensures for $ \alpha\in [\frac32,2] $, all the conditions \eqref{coco4} for this choice of $(\theta,\beta,r) $. 

Finally in the case $\alpha\in ]\frac{55}{38},\frac32[ $, we set 
\begin{equation}\label{choice2}
 \theta=\frac{11-4\alpha}{6}+ , \quad \beta=\theta-\frac\alpha3=\frac{11-6\alpha}{6}+\text{ and } r=\frac{13\alpha-22}{11}  \; .
 \end{equation}
 Then  we can check that $ c_1$ is fulfilled provided $ \alpha>1/2 $, $c_2 $ is fulfilled provided $\alpha\in ]\frac{17}{12},\frac32[ $  whereas  $ c_3$, $ c_4 $ and $ c_6 $  are  fulfilled provided $ \alpha\in ]\frac{55}{38},\frac{11}{6}[$. Since $\frac{17}{12}<\frac{55}{38}$, this ensures for $ \alpha\in   ]\frac{55}{38},\frac32[ $, all the conditions \eqref{coco4} for this choice of $(\theta,\beta,r) $.

\subsection{Unconditional uniqueness}
Let $u_0\in L^2(\R)$  and let $u,v$ be two solutions to the Cauchy problem \eqref{gKdV} associated with the same initial datum  $u_0 $ that belong to $L^\infty_T L^2(\R)$ for some $ T>0$. According to Proposition \ref{y0} and Proposition \ref{h0}, setting $ T'=\min(1,T)$, we get that $u,v\in Y^0_{T'}$. Then applying Proposition \ref{yr} and Proposition \ref{hr}, we obtain that $ u-v\in  Z^r_{T'} $ and satisfies 
$$
\|u-v\|_{Z^r_{T'}} \lesssim T^{0+} C\, \|u-v\|_{Z^r_{T'}}\; ,
$$
for some $C= C(\|u\|_{L^\infty_T L^2_x}+\|v\|_{L^\infty_T L^2_x})>0$. 
Thus we have $u\equiv v$ on $[0,T^{''}]$ for some $ T^{''}=T^{''}(\|u\|_{L^\infty_T L^2_x}+\|v\|_{L^\infty_T L^2_x})>0$ and reiterating this argument a finite number of times it follows that $u\equiv v$ on $[0,T]$.

\subsection{Local existence and strong continuity in $L^2(\R)$}
%
Let $u_0\in H^\infty(\R) $.
It is well known (see \cite{ABFS89}) that \eqref{gKdV} is locally well-posed in $H^s(\R)$ for $s>\fr32$ with a minimal time of existence $\tilde{T}=\tilde{T}(\|u_0\|_{H^{\fr32+}})>0$.
So, there exists a solution $u\in C([-\tilde{T},\tilde{T}];H^\infty(\R))$ of \eqref{gKdV} emanating from the initial data $u_0$ with $\tilde{T}=\tilde{T}(\|u_0\|_{H^{\fr32+}})$. According to Proposition \ref{y0} and Proposition \ref{h0}, taking $s=0$ in \eqref{eqy0}, \eqref{eqh0} and $\omega\equiv 1$ in \eqref{eqh0},  it follows from a direct continuity argument that there exists $T=T(\|u_0\|_{L^2})>0 $ such that $\|u\|_{L^\infty_{T'}L^2}\lesssim \|u_0\|_{L^2}$ with $T'=\min\{T,\tilde{T}\}$. Using again \eqref{eqh0} with $\omega\equiv 1$, we deduce that $\|u\|_{L^\infty_{T'}H^s}\lesssim \|u_0\|_{H^s}$ for any $s\geq 0$. Iterating this argument, the above locally well-posed result then ensures that $u\in C([0,T];H^\infty(\R))$ with $\|u\|_{L^\infty_{T}H^s}\lesssim \|u_0\|_{H^s}$ for any $s\geq 0$.

Now let $u_0\in L^2(\R)$. Setting $u_{0,n}=P_{\leq n}u_0$, then the sequence $\{u_{0,n}\}_{n\geq 1}$ belongs to $H^\infty(\R)$ and
converges to $u_0$ in $L^2(\R)$.
According to the argument above, we deduce that the sequence $\{u_n\}_{n\geq 1}$ of solutions to \eqref{gKdV} emanating from  $\{u_{0,n}\}_{n\geq 1}$  satisfies $\{u_n\}_{n\geq 1}\subset C([-T,T];H^\infty(\R))$ with $\|u_n\|_{L^\infty_T L^2} \lesssim \|u_{0,n}\|_{L^2}\lesssim\|u_0\|_{L^2}$.
Note that $u_{0,n_1}-u_{0,n_2}\in \bar{H}^0(\R)$ for any $n_1,\ n_2\geq 1$. Therefore, we infer from Proposition \ref{yr} and Proposition \ref{hr} that $\{u_n\}_{n\geq 1}$ is a Cauchy sequence in $C([-T,T];H^{0-}(\R))$. Now,  we recall that, following \cite{KT03}, for any sequence $\{u_{0,k}\}_{k\geq 1}$ converging to $u_0$ in $L^2(\R)$, there exists a dyadic sequence $\{\omega_N\}$ satisfying Definition \ref{def1} for some $1<\delta<1+$ such that
\begin{align}\label{envelop}
\|u_0\|_{L^2_\omega}<\infty,\quad \sup_{k\geq 1}\|u_{0,k}\|_{L^2_\omega}<\infty \quad \text{and} \quad \omega_N \rightarrow +\infty.
\end{align}
Applying Proposition \ref{h0} with this sequence $\{\omega_N\}$, we obtain that the emanating solutions satisfy
\begin{align*}
\sup_{n\geq 1}\|u_{n}\|_{L^\infty(]0,T[;L^2_\omega)}\lesssim\sup_{n\geq 1}\|u_{0,n}\|_{L^2_\omega}<\infty,
\end{align*}
which ensures that $\{u_{n}\}_{n\geq 1}$ is actually a Cauchy sequence in $C([0,T];L^2(\R))$ and that its limit $ u \in C([0,T];L^2(\R))$ is the solution of \eqref{gKdV} emanating from $u_0 $.

\subsection{Continuity with respect to initial data}
We make use again of the frequency envelope $\{\omega_N\}$. Let $\{u_{0,k}\}_{k\geq 1} \subseteq L^2(\R)$, with $\sup_{k\geq 1}\|u_{0,k}\|_{L^2}\leq 2\|u_{0}\|_{L^2}$, that converges to $u_0$ in $L^2(\R)$.
Then there exists $\{\omega_N\}$ satisfying \eqref{envelop}. Therefore, applying Proposition \ref{h0} with this sequence $\{\omega_N\}$ and Proposition \ref{y0}, we infer that the sequence of solution $\{u_k\}_{k\geq 1}$ emanating from $\{u_{0,k}\}_{k\geq 1}$ is bounded in $L^\infty_T L^2_{\omega}$ for some $ T=T(\|u_0\|_{L^2})>0 $. More precisely, we have 
\begin{align}\label{con1}
\|u\|_{L^\infty([-T,T];L^2_\omega)}+\sup_{k\geq 1}\|u_{k}\|_{L^\infty([0,T];L^2_\omega)}\lesssim\sup_{k\geq 1}\|u_{0,k}\|_{L^2_\omega}<\infty.
\end{align}
Let $u^M$ and $u_k^M$ be the solution to \eqref{gKdV} with initial data $P_{\leq M}u_0$ and $P_{\leq M}u_{0,k}$.

By the triangle inequality, we obtain
\begin{align}\label{con2}
&\|u-u_k\|_{L^\infty_T L^2}\lesssim \|P_{\leq N}(u-u_k)\|_{L^\infty_T L^2}+ \|P_{\geq N} u\|_{L^\infty_T L^2}+\|P_{\geq N} u_k\|_{L^\infty_T L^2},
\end{align}
and
\begin{align}\label{con3}
\|P_{\leq N}(u-u_k)\|_{L^\infty_T L^2} &\lesssim
\|P_{\leq N}(u-u^M)\|_{L^\infty_T L^2}+\|P_{\leq N}(u_k-u_k^M)\|_{L^\infty_T L^2}+\|P_{\leq N}(u^M-u_k^M)\|_{L^\infty_T L^2}
\nonumber\\ &\lesssim N^{-r}\|u-u^M\|_{L^\infty_T H^r}+ N^{-r}\|u_k-u_k^M\|_{L^\infty_T H^r}+\|P_{\leq N}(u^M-u_k^M)\|_{L^\infty_T L^2}.
\end{align}
Fix $\epsilon>0$, by \eqref{con1}, there exists a $N>1$ such that
\begin{align}\label{con4}
\|P_{\geq N} u\|_{L^\infty_T L^2}+\sup_{k\geq 1} \|P_{\geq N} u_k\|_{L^\infty_T L^2}\lesssim \epsilon.
\end{align}
According to Proposition \ref{y0} and Proposition \ref{h0}, we know that $u^M, u_k^M \in Y^0_T$. Note that $u_{0}-P_{\leq M} u_{0},\ u_{0,k}-P_{\leq M} u_{0,k} \in \bar{L}^2(\R)$, by \eqref{con1}, Proposition \ref{yr} and Proposition \ref{hr}, for fix $N>1$, there exists a $M>1$ such that
\begin{align}\label{con5}
\|u-u^M\|_{L^\infty_T H^r}+ &\|u_k-u_k^M\|_{L^\infty_T H^r}
\lesssim \|u_0-P_{\leq M}u_0\|_{\bar{H}^r}+\|u_{0,k}-P_{\leq M}u_{0,k}\|_{\bar{H}^r}
\nonumber\\&\lesssim \|P_{>M}u_0\|_{L^2}+\sup_{k\geq 1}\|P_{>M}u_{0,k}\|_{L^2} \lesssim N^{r} \epsilon.
\end{align}
Finally, note that $P_{\leq M}u_0$ and $P_{\leq M}u_{0,k}$ belong to $H^\infty(\R)$ and $P_{\leq M}u_{0,k} \to P_{\leq M}u_0$ in any $ H^\theta(\R) $ with $\theta\ge 0 $. Therefore, according to the local well-posedness  result in  \cite{ABFS89} (or \cite{MV15}) , the emanating solutions satisfies $ u_k^M \to u^M $ in $C([0,T]; L^2(\R)) $ . In particular, there exists $ K\in \N $ such that for $ k\ge K $, 
\begin{align}\label{con6}
\|P_{\leq N}(u^M-u_k^M)\|_{L^\infty_T L^2} \lesssim \|u^M-u_k^M\|_{L^\infty_T L^2} \lesssim \epsilon.
\end{align}
By combining \eqref{con2}-\eqref{con6}, we deduce that for $ k\ge K $ we have
\begin{align*}
\|u-u_k\|_{L^\infty_T L^2}\lesssim \epsilon,
\end{align*}
which implies the continuity with respect to initial data of the flow-map.
\subsection{Global well-posedness}
In view of Hypothesis \ref{Hy1} on $ L_{\alpha+1} $, it is direct to check that the $L^2$-norm is conserved for smooth solutions to \eqref{gKdV}. The continuity with respect to initial data,  then ensures that this conservation law is shared by our local solution $ u\in C([-T,T]; L^2(\R)) $ with $ T=T(\|u_0\|_{L^2})>0 $. Therefore this solution can be extended for all $ T>0 $ and thus $u\in C(\R;L^2(\R)) $.

\section{Appendix}\label{Sect7}
In this last section we state and prove the refined linear and bilinear Strichartz estimates involving the difference $w=u-v$ of two solutions to \eqref{gKdV} which we made use in Section \ref{Sect5}. The main difference with the refined linear and bilinear Strichartz estimates on the solution $u $ is that, due to the negative Sobolev  regularity of $ w$ , we will need $ w$  and $ z=u+v$ to belong to some  Bourgain's space.
\begin{prop}\label{Cw2}
Let $\al\in [\fr75,2], 0<T\le 2, r<0$   and let $(w,z)\in Y^r_T \times Y^0_T $ satisfying
\begin{equation}\label{Eqw}
\pa_t w+L_{\al+1} w+\fr12 \pa_x(w z)=0
\end{equation}
on $]0,T[\times \R$ with $L_{\al+1}$ satisfying Hypothesis \ref{Hy1}. Then

For $N\geq 1$, we have
\begin{align*}
\|w_{N}\|_{L^2_T L^\infty_x}
\lesssim T^{\fr14} N^{\fr{2-\al}{3}} N^{-r}
\|w\|_{Y^{r}_T} (1+\|z\|_{Y^0_T}). 
\end{align*}
whenever the following conditions are fulfilled :\\
\begin{equation}\label{eq92}
r>\max\Bigl(\fr{4-5\al}6, \theta-\alpha,\beta-\frac23 \alpha\Bigr), \quad \theta<\frac{2+5\alpha}{6} 
\quad\text{and} \quad \beta<\alpha/2 \; .
\end{equation}
\end{prop}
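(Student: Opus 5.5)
We follow the proof of Lemma \ref{Cu2}. The difference is that the nonlinear term $P_N\partial_x(wz)$ can no longer be placed in $L^\infty_T L^1_x$ at regularity $r<0$. It has to be handled instead through the Bourgain-space information contained in $Y^r_T$ and $Y^0_T$.

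\textbf{Step 1: localization in time.} We chop $[0,T]$ into intervals $I_{j,N}$ with $|I_{j,N}|\sim N^{-\delta}T$ and $\delta=\frac{5-\alpha}{3}$. On each interval we write $w_N$ through its Duhamel formula starting from $c_j=\inf I_{j,N}$. By H\"older in time and \eqref{stri1},
$$\|w_N\|_{L^2_{I_{j,N}}L^\infty_x}\lesssim |I_{j,N}|^{\frac14}\Bigl(N^{\frac{1-\alpha}{4}}\|w_N(c_j)\|_{L^2_x}+\Bigl\|\int_{c_j}^t e^{-(t-t')L_{\alpha+1}}\partial_xP_N(wz)\,dt'\Bigr\|_{L^4_{I_{j,N}}L^\infty_x}\Bigr).$$
The linear part is bounded by $N^{-r}\|w\|_{L^\infty_TH^r}$. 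Summing its square over $\#J_N\sim N^\delta$ intervals gives exactly $T^{\frac12}N^{\frac{4-2\alpha}{3}}N^{-2r}\|w\|_{Y^r_T}^2$, as in Lemma \ref{Cu2}.

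\textbf{Step 2: the Duhamel term.} Here we cannot use \eqref{stri2} with $L^{4/3}_IL^1_x$ directly. We decompose $P_N(wz)$ into low--high, high--low and high--high pieces, $P_N(P_{\ll N}w\,z_{\sim N})$, $P_N(w_{\sim N}P_{\ll N}z)$ and $\sum_{N_1\gtrsim N}P_N(w_{N_1}z_{\sim N_1})$.

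For pieces where the $w$ factor is at frequency $\gtrsim N$, we trade $N_1^{-r}$ for the $H^r$ norm and estimate the product in $L^1_x$ pointwise in time. This yields $N^{-r}\|w\|_{L^\infty_TH^r}\|z\|_{L^\infty_TL^2}$ for the $N_1\sim N$ part. For $N_1\gg N$ we gain from $N_1^{-r}$ only after using a bilinear or modulation gain, so a direct bound is not available. For the low--frequency $w$ piece, $\|P_{\ll N}w\|_{L^\infty_x}$ is controlled by Bernstein and is harmless since $r<0$ only costs powers of the low frequency.

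Whenever the crude $L^\infty_TL^1_x$ bound loses powers of $N_1$, we instead use the $X^{\cdot,b}$ parts of $Y^r$ and $Y^0$. We split according to modulations. Either one factor carries modulation $\gtrsim NN_1^\alpha$, which gains $\max\{N_1^\theta(NN_1^\alpha)^{-1},N_1^\beta(NN_1^\alpha)^{-2/3}\}$. Or the output has modulation $\sim NN_1^\alpha$, in which case the retarded estimate \eqref{Bour1} on the short interval gains from the output modulation. The conditions $r>\theta-\alpha$ and $r>\beta-\frac23\alpha$ in \eqref{eq92} are precisely what make these gains beat the loss $N_1^{-r}$, so that the sum over $N_1$ converges. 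The conditions $\theta<\frac{2+5\alpha}{6}$ and $\beta<\alpha/2$ are what keep the low-output (high--high to low) interaction compatible with the factor $N^{\frac{1-\alpha}{2}}N|I|^{3/4}$ from \eqref{stri2}.

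\textbf{Step 3: summing and the expected obstacle.} Collecting terms, each interval contributes $|I_{j,N}|^{\frac12}N^{\frac{1-\alpha}{2}}N^{-2r}\|w\|_{Y^r_T}^2(1+\|z\|_{Y^0_T})^2$. Summing over $j$ gives the claim, provided the Duhamel contribution is bounded by the same power of $N$ as the linear one. With $\delta=\frac{5-\alpha}{3}$ this means $N^2N^{\frac{1-\alpha}{2}}|I|^{\frac32}\lesssim 1$.

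The main obstacle is the high--high to low interaction with $N_1\gg N$. There the negative regularity gives a loss $(N_1/N)^{-r}$, and one must show that the modulation and Bourgain gains absorb it uniformly, using only $X^{\cdot,b}$ norms localized to a short interval. To handle the time cutoff on $I_{j,N}$, I would first try the device $1^{\rm low}/1^{\rm high}$ of Lemma \ref{le2-2} with $R\sim|I_{j,N}|^{-1}$. The lower bound $r>\frac{4-5\alpha}{6}$ is what I expect to emerge from the worst case of this balance.
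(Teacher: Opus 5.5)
You follow the paper's route: the same $\delta=\frac{5-\alpha}{3}$, the same trichotomy, the low--high and high--low pieces handled crudely in $L^\infty_{I_{j,N}}L^1_x$ through \eqref{stri2}, and the high--high piece split by modulations using the resonance relation and \eqref{Bour1} on $I_{j,N}$. Treating the high--high part with $N_1\sim N$ crudely is also fine.

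\textbf{The gap.} The decisive step is left as an ``expected obstacle'', and nothing connects \eqref{Bour1} back to your Step 1. \eqref{Bour1} bounds the Duhamel term in $X^{0,\frac12,1}_{I_{j,N}}$, but Step 1 needs it in $L^4_{I_{j,N}}L^\infty_x$. The missing link is the transference form of \eqref{stri1},
\[ \|P_Nv\|_{L^4_TL^\infty_x}\lesssim N^{\frac{1-\alpha}{4}}\|P_Nv\|_{X^{0,\frac12,1}}. \]
With it, the whole high--high piece reduces to showing
\[ N\Bigl\|\int_{c_j}^t e^{-(t-t')L_{\alpha+1}}P_N(P_{\gtrsim N}w\,P_{\gtrsim N}z)\,dt'\Bigr\|_{X^{0,\frac12,1}_{I_{j,N}}}\lesssim N^{-r}\bigl(\|w\|_{Y^r_T}\|z\|_{L^\infty_TL^2_x}+\|w\|_{L^\infty_TH^r}\|z\|_{Y^0_T}\bigr). \]
This matches the linear part, and the sum over $j$ then closes exactly as in Lemma \ref{Cu2}. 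The paper needs no $1^{\rm low}/1^{\rm high}$ device here, because \eqref{Bour1} is already stated for restriction norms on an interval.

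\textbf{The three regions, and where \eqref{eq92} enters.} Your guesses about which condition does what are off; here is how the computation actually goes.

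If the output carries modulation $\gtrsim NN_1^\alpha$, use the $X^{0,-\frac12,1}$ part of \eqref{Bour1}, Bernstein at frequency $N$, and H\"older on $I_{j,N}$:
\[ N\sum_{N_1\gtrsim N}(NN_1^\alpha)^{-\frac12}N^{\frac12}|I_{j,N}|^{\frac12}N_1^{-r}\|w\|_{L^\infty_TH^r}\|z\|_{L^\infty_TL^2_x}\lesssim T^{\frac12}N^{\frac{1-2\alpha}{6}}N^{-r}\|w\|_{L^\infty_TH^r}\|z\|_{L^\infty_TL^2_x}. \]
This only needs $r>-\frac{\alpha}{2}$, which follows from $r>\frac{4-5\alpha}{6}$ when $\alpha\le 2$. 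That is the lower bound on $r$ actually used here; it does not come from a time-cutoff balance.

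If $w$ (resp. $z$) carries the modulation, use the $|I_{j,N}|^{\frac12}\|\cdot\|_{L^2}$ part of \eqref{Bour1}:
\[ N\sum_{N_1\gtrsim N}|I_{j,N}|^{\frac12}N^{\frac12}\max\Bigl\{N_1^{\theta-r}(NN_1^\alpha)^{-1},N_1^{\beta-r}(NN_1^\alpha)^{-\frac23}\Bigr\}\lesssim T^{\frac12}N^{-r}\bigl(N^{\theta-\frac{2+5\alpha}{6}}+N^{\beta-\frac{\alpha}{2}}\bigr). \]
This is multiplied by $\|w\|_{Y^r_T}\|z\|_{L^\infty_TL^2_x}$ (resp. $\|w\|_{L^\infty_TH^r}\|z\|_{Y^0_T}$).
\begin{itemize}
\item The $N_1$-sum converges because $r>\theta-\alpha$ and $r>\beta-\frac23\alpha$.
\item The conditions $\theta\le\frac{2+5\alpha}{6}$ and $\beta\le\frac{\alpha}{2}$ come from the factor $|I_{j,N}|^{\frac12}N^{\frac12}$, not from the factor $N^{\frac{1-\alpha}{2}}N|I|^{\frac34}$ of \eqref{stri2}.
\end{itemize}

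\textbf{A smaller point.} In the low--high piece, use Cauchy--Schwarz: $\|P_{\ll N}w\,z_{\sim N}\|_{L^1_x}\le\|P_{\ll N}w\|_{L^2_x}\|z_{\sim N}\|_{L^2_x}$, with $\|P_{\ll N}w\|_{L^2_x}\lesssim N^{-r}\|w\|_{H^r}$. An $L^\infty_x$ Bernstein bound on $P_{\ll N}w$ would instead require $z_{\sim N}\in L^1_x$, which you do not have.
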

\begin{proof}
We chop the time interval $[0,T]$ into small pieces of length $\sim N^{-\delta}T$, $\de=\fr{5-\al}{3}$, i.e., we define $\{I_{j,N}\}_{j\in J_{N}}$ with $\# J_{N}\sim N^\delta$ so that $\bigcup_{j\in J_{N}}=[0,T],\ |I_{j,N}|\sim N^{-\delta}T$. Then we notice that the first Strichartz estimate in  Lemma \ref{str} ensures that, for $v\in X^{0,\frac12,1}$, 
$$
\|P_N v\|_{L^4_T L^\infty_x} \lesssim N^{\frac{1-\alpha}{4}} \|P_N v \|_{X^{0,\frac12,1}} \; .
$$
Then according to the second  Strichartz estimate in Lemma \ref{str}  , we have for any $t_j\in I_{j,N} $, 
\begin{align}\label{w2}
	\Bigl\|\int_{t_j}^t e^{(t-t')L_{\al+1}}\pa_x &P_{N}(w z) dt'\Bigr\|_{L^4_{I_{j,N}} L^\infty_{x}}
	\lesssim  N^\frac{1-\alpha}{4} \Bigl( |I_{j,N}|^\frac34  N^\frac{1-\alpha}{4} N \| P_{\ll N}w P_{\sim N}z \|_{L^\infty_{I_{j,N}} L^1_{x}} \nonumber\\
	&+   N^\frac{1-\alpha}{4}  |I_{j,N}|^\frac34 N \|P_{\sim N}wP_{\ll N}z \|_{L^\infty_{I_{j,N}} L^1_{x}} + N  \Bigl\| \int_{t_j}^t e^{(t-t')L_{\alpha+1}} P_{N} (P_{\gtrsim N}w P_{\gtrsim N}z)\Bigr\|_{X^{0,\frac12,1}_{I_{j,N}}} \Bigr)
\end{align}
Now, similarly as in the proof of Lemma \ref{Cu2}, we have to bound the expression under the parentheses by $ \langle N\rangle^{-r} $. It is worth noticing that our choice of $ \delta $ leads to $ N^\frac{1-\alpha}{4}  |I_{j,N}|^\frac34 N\sim T^{\frac34}$.
We  estimate the terms in the above  right-hand side  one by one. It is easy to see that
\begin{align}
	 |I_{j,N}|^\frac34  N^\frac{1-\alpha}{4} N& \|P_{\sim N}w P_{\ll N}z\|_{L^\infty_{I_{j,N}}L^1_x}
	\lesssim \|P_{\sim N}w\|_{L^\infty_{I_{j,N}}L^2_x} \|P_{\ll N}z\|_{L^\infty_{I_{j,N}}L^2_x}
	\nonumber
	\\&\lesssim N^{-r}\|P_N w\|_{L^\infty_{I_{j,N}}H^r} \|z\|_{L^\infty_{T}L^2_x}
	\lesssim N^{-r}\|w\|_{L^\infty_{T}H^r} \|z\|_{L^\infty_{T}L^2_x}, \label{est1} 
\end{align}
and for $r\leq 0$
\begin{align}
	 |I_{j,N}|^\frac34  N^\frac{1-\alpha}{4} N&\|P_{\ll N}w P_{\sim N}z\|_{L^\infty_{I_{j,N}}L^1_x}
	\lesssim \|P_{\ll N}w\|_{L^\infty_{I_{j,N}}L^2_x} \|P_{\sim N}z\|_{L^\infty_{I_{j,N}}L^2_x}
	\nonumber
	\\&\lesssim N^{-r}\|w\|_{L^\infty_{I_{j,N}}H^r} \|z\|_{L^\infty_{T}L^2_x}
	\lesssim N^{-r}\|w\|_{L^\infty_{T}H^r} \|z\|_{L^\infty_{T}L^2_x}.\label{est2}
\end{align}
Now we deal with the last term:
\begin{align*}
	N  \Bigl\| \int_{t_j}^t e^{(t-t')L_{\alpha+1}} P_{N} &  (P_{\gtrsim N}w P_{\gtrsim N}z)\Bigr\|_{X^{0,\frac12,1}_{I_{j,N}}} \\
	& \lesssim N\sum_{N_1\gtrsim N}
	\Bigl\| \int_{t_j}^t e^{(t-t')L_{\alpha+1}} \sum_{L,L_1,L_2} 
	Q_L P_{N}(Q_{L_1} P_{N_1}w Q_{L_2}P_{N_1}z) dt'\Bigr\|_{X^{0,\fr12,1}_{I_{j,N}}}.
\end{align*}
Making use of  \eqref{Bour1} and  Lemma \ref{Cu2}, the contribution of the sum over $L\gtrsim N N_1^{\al}$ can be controlled by
\begin{align*}
	&\sum_{N_1\gtrsim N}\|\int_{t_j}^t e^{(t-t')L_{\al+1}} Q_{\gtrsim N N_1^{\al}} P_{N}(P_{N_1}w P_{N_1}z) dt'\|_{X^{0,\fr12,1}_{I_{j,N}}}
	\\&\lesssim\sum_{N_1\gtrsim N}\|Q_{\gtrsim N N_1^{\al}} P_{N}(P_{N_1}w P_{N_1}z) \|_{X^{0,-\fr12,1}_{I_{j,N}}}
	\\&\lesssim \sum_{N_1\gtrsim N}(N N_1^{\al})^{-\fr12}\| P_{N}(P_{N_1}w P_{N_1}z)\|_{L^2_{I_{j,N}}L^2_x}
	\\&\lesssim \sum_{N_1\gtrsim N}(N N_1^{\al})^{-\fr12}  N^\frac{1}{2} |I_{j,N}|^\frac{1}{2}\|P_{N_1}w\|_{L^\infty_{T}L^2_x} \|P_{N_1}z\|_{L^\infty_{T}L^2_x}
	\\&\lesssim \sum_{N_1\gtrsim N} N_1^{-\fr\al2} N_1^{-r} N^{\frac{\alpha-5}{6}} \|w\|_{L^\infty_{T}H^r}
	\|z\|_{L^\infty_{T}L^2_x} 
	\\&\lesssim N^{- \fr{5+2\al}6} N^{-r}\|w\|_{L^\infty_{T}H^r}\|z\|_{L^\infty_{T}L^2_x},
\end{align*}
where we use  that $-r-\fr{\al}{2}<0$.
The contribution of the region where $L\ll N N_1^{\al}$ and $L_1\gtrsim N N_1^{\al}$ is estimated thanks to \eqref{Bour1}  by
\begin{align*}
	&\sum_{N_1\gtrsim N}\|\int_{t_j}^t e^{(t-t')L_{\al+1}} Q_{\ll N N_1^{\al}} P_{N}(Q_{\gtrsim N N_1^{\al}} P_{N_1}w P_{N_1}z) dt'\|_{X^{0,\fr12,1}_{I_{j,N}}}
	\\&\lesssim\sum_{N_1\gtrsim N} |I_{j,N}|^{\fr12} \|Q_{\ll N N_1^{\al}} P_{N}(Q_{\gtrsim N N_1^{\al}} P_{N_1}w P_{N_1}z) \|_{L^2_{t,x}}
	\\&\lesssim\sum_{N_1\gtrsim N} T^{\fr12} N^{-\fr{\delta}{2}} N^{\fr12}
	\|Q_{\gtrsim N N_1^{\al}} P_{N_1}w\|_{L^2_{t,x}} \|P_{N_1}z\|_{L^\infty_t L^2_{x}}
	\\&\lesssim\sum_{N_1\gtrsim N} T^{\fr12} N^{-\fr{\delta}{2}} N^{\fr12}
	\max\Big\{N_1^{\theta-r}(N N_1^{\al})^{-1}
	,\ N_1^{\beta-r}(N N_1^{\al})^{-\fr23}\Big\} \|w\|_{Y^{r}}
	\|z\|_{L^\infty_t L^2_{x}}
	\\&\lesssim T^{\fr12} N^{-\fr{\delta}{2}}(N^{\theta-\al-\fr12}+N^{\beta-\fr23\al-\fr16}) N^{-r}
	\|w\|_{Y^{r}_T}
	\|z\|_{L^\infty_T L^2_{x}},
\end{align*}
where we use the fact that  $\theta-r-\al<0$ and $\beta-r-\fr23\al<0$. Finally the contribution of the last region can be bounded by
\begin{align*}
	&\sum_{N_1\gtrsim N}\|\int_{t_j}^t e^{(t-t')L_{\al+1}} Q_{\ll N N_1^{\al}} P_{N}(Q_{\ll N N_1^{\al}} P_{N_1}w Q_{\sim N N_1^{\al}} P_{N_1}z) dt'\|_{X^{0,\fr12,1}_{I_{j,N}}}
	\\&\lesssim\sum_{N_1\gtrsim N} |I_{j,N}|^{\fr12} \|Q_{\ll N N_1^{\al}} P_{N}(Q_{\ll N N_1^{\al}} P_{N_1}w Q_{\sim N N_1^{\al}} P_{N_1}z) \|_{L^2_{t,x}}
	\\&\lesssim\sum_{N_1\gtrsim N} T^{\fr12} N^{-\fr{\delta}{2}} N^{\fr12}
	\|P_{N_1}w\|_{L^\infty_t L^2_{x}} \|Q_{\sim N N_1^{\al}} P_{N_1}z\|_{L^2_{t,x}}
	\\&\lesssim\sum_{N_1\gtrsim N} T^{\fr12} N^{-\fr{\delta}{2}} N^{\fr12} N_1^{-r}\|w\|_{L^\infty_t H^r_{x}}
	\max\Big\{N_1^{\theta}(N N_1^{\al})^{-1}
	,\ N_1^{\beta}(N N_1^{\al})^{-\fr23}\Big\} \|z\|_{Y^{0}}
	\\&\lesssim T^{\fr12} N^{-\fr{\delta}{2}}(N^{\theta-\al-\fr12}+N^{\beta-\fr23\al-\fr16}) N^{-r}
	\|w\|_{L^\infty_T H^r_{x}}\|z\|_{Y^{0}_T},
\end{align*}
where we use the fact that $\theta-r-\al<0$ and $\beta-r-\fr23\al<0$ again.
Hence,
\begin{align}
	&N \Bigl\| \int_{t_j}^t e^{(t-t')L_{\al+1}}  \ P_{N}(P_{\gtrsim N}w P_{\gtrsim N}z) dt'\Bigr\|_{X^{0,\fr12,1}_{I_{j,N}}}
	\lesssim N N^{- \fr{5+2\al}6} N^{-r}\|w\|_{L^\infty_{T}H^r}\|z\|_{L^\infty_T L^2_{x}}\nonumber
	\\&+T^{\fr12} N^{-\fr{\delta}{2}} N (N^{\theta-\al-\fr12}
	+N^{\beta-\fr23\al-\fr16}) N^{-r}
	(\|w\|_{Y^{r}_T}
	\|z\|_{L^\infty_T L^2_{x}}+\|w\|_{L^\infty_T H^r_{x}}\|z\|_{Y^{0}_T})\nonumber
	\\
	&\lesssim N^{-r}(\|w\|_{L^\infty_{T}H^r}\|z\|_{L^\infty_{T}L^2_x}+\|w\|_{Y^{r}_T}
	\|z\|_{L^\infty_T L^2_{x}}+\|w\|_{L^\infty_T H^r_{x}}\|z\|_{Y^{0}_T})\nonumber
	\\&\lesssim N^{-r}(\|w\|_{Y^{r}_T}
	\|z\|_{L^\infty_T L^2_{x}}+\|w\|_{L^\infty_T H^r_{x}}\|z\|_{Y^{0}_T}),\label{est3}
\end{align}
where we use the fact that $\alpha\ge 1/2 $, $-\fr{5-\al}{6}+\theta-\al+\fr12\leq 0$ and $-\fr{5-\al}{6}+\beta-\fr23\al+\fr56\leq 0$ that is satisfied as soon as $ \alpha\ge \frac75$, $\theta\le \frac{2+5\alpha}{6} $ and $ \beta \le \alpha/2 $.

Therefore, 
\begin{align*}
	\|w_{N}\|_{L^2_T L^\infty_x}^2&=\sum_{j\in J_{N}}\|w_{N}\|_{L^2({I_{j,N}};L^\infty_x)}^2
	\\& \lesssim \sum_{j\in J_{N}} |I_{j,N}|^{\fr12} \|w_{N}\|_{L^4({I_{j,N}};L^\infty_x)}^2
	\\& \lesssim \sum_{j\in J_{N}} |I_{j,N}|^{\fr12}
	\Bigl( N^{\fr{1-\al}{2}}\|w_{N}\|_{L^\infty({I_{j,N}};L^2_x)}^2
	+N^{\fr{1-\al}{2}}\Bigl\|\int_{t_j}^t e^{(t-t')L_{\al+1}}\pa_x P_{N}(w z) dt'\Bigr\|_{L^4({I_{j,N}};L^\infty_x)}^2 \Bigr)
	\\& \lesssim \sum_{j\in J_{N}} |I_{j,N}|^{\fr12}
	\Bigl( N^{\fr{1-\al}{2}}\|w_{N}\|_{L^\infty({I_{j,N}};L^2_x)}^2
	+N^{\fr{1-\al}{2}} N^{-2r}(\|w\|_{Y^{r}_T}
	\|z\|_{L^\infty_T L^2_{x}}+\|w\|_{L^\infty_T H^r_{x}}\|z\|_{Y^{0}_T})^2 \Bigr)
	\\& \lesssim  (TN^{-\de})^{\fr12}
	N^\de N^{\fr{1-\al}{2}}
	\Bigl(N^{-2r}\|w_{N}\|_{L^\infty_T H^r_{x}}^2
	+N^{-2r}(\|w\|_{Y^{r}_T}
	\|z\|_{L^\infty_T L^2_{x}}+\|w\|_{L^\infty_T H^r_{x}}\|z\|_{Y^{0}_T})^2 \Bigr)
	\\& \lesssim T^{\fr12} N^{\fr{4-2\al}{3}} N^{-2r} \|w\|_{Y^{r}_T}^2 (1+\|z\|_{Y^0_T}^2),
\end{align*}
which leads to the desired result.
\end{proof}

\begin{prop}\label{bl1}
	Let $a\in L^\infty(\R^2)$ be such that $\|a\|_{L^\infty}\lesssim 1, \al\in [\fr75,2], N_1\gtrsim N_2, \, N_1 \geq 1, 0<T\le 2, r<0$ and let $(w_k,z_k)\in Y^r_T \times Y^0_T $ satisfying 
	\begin{equation}\label{Eqwk}
		\pa_t w_k+L_{\al+1} w_k+\fr12 \pa_x(w_k z_k)=0
	\end{equation}
	on $]0,T[\times \R$ for $k=1,2$, with $L_{\al+1}$ satisfying Hypothesis \ref{Hy1}. Then
	\begin{align}
		\|\Lambda_a(P_{N_1}w_1 ,P_{\lesssim N_2}w_2)\|_{L^2_{T,x}} 
		&\lesssim T^{\fr14}N_1^{\fr{2-\al}{3}}N_1^{-r} \langle N_2\rangle^{-r}\nonumber\\
		&\quad\quad \quad \times \prod_{i=1}^2 \Bigl(\|w_i\|_{L^\infty_{T} H^r_{x}}+\|w_i\|_{Y^{r}_T}
		\|z_i\|_{L^\infty_T L^2_{x}}+\|w_i\|_{L^\infty_T H^r_{x}}\|z_i\|_{Y^{0}_T}\Bigr)\label{re1}
	\end{align}
	and
	\begin{align}
		\|\Lambda_a(P_{N_1}w_1  ,P_{\lesssim N_2}z_2)\|_{L^2_{T,x}}
		&\lesssim T^{\fr14} N_1^{\fr{2-\al}{3}}N_1^{-r} (\|z_2\|_{L^\infty_T L^2_{x}}+\|z_2\|_{L^\infty_T L^2_{x}}^2)\nonumber \\
		&\quad\quad \times  (\|P_{N_1}w_1\|_{L^\infty_{T} H^r_{x}}+\|w_1\|_{Y^{r}_T}
		\|z_1\|_{L^\infty_T L^2_{x}}+\|w_1\|_{L^\infty_T H^r_{x}}\|z_1\|_{Y^{0}_T})\; .
		\label{re2}
	\end{align}
	whenever the conditions \eqref{eq92} are fulfilled.
\end{prop}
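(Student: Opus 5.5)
We follow the proof of Proposition \ref{bl0}, with the Duhamel term now handled as in Proposition \ref{Cw2}. Set $\delta=\frac{5-\alpha}{3}$ and chop $[0,T]$ into intervals $I_{j,N_1}$ with $|I_{j,N_1}|\sim N_1^{-\delta}T$ and $\#J_{N_1}\sim N_1^{\delta}$. On each $I_{j,N_1}$ with left endpoint $c_j$, write $P_{N_1}w_1$ and $P_{\lesssim N_2}w_2$ (resp. $P_{\lesssim N_2}z_2$) in Duhamel form starting at $c_j$.

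\textbf{Step 1: small-interval estimate.} For the free parts we apply Proposition \ref{pro}, which gives the factor $T^{1/4}N_1^{-\frac{\alpha+\delta-1}{4}}$. For the Duhamel parts we use Proposition \ref{prov}, combined with the Christ--Kiselev argument (Corollaries 3.11--3.12 of \cite{MT3}) exactly as in Proposition \ref{sti}. It therefore suffices to bound
\[
\Bigl\|\int_{c_j}^t e^{(t-t')L_{\alpha+1}}\partial_x P_{N}(w_kz_k)\,dt'\Bigr\|
\]
in a norm that pairs with these bilinear estimates, by $\langle N\rangle^{-r}$ times the bracket in \eqref{re1}, for $N=N_1$ and for each dyadic $N\lesssim N_2$.

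For $w_k$ this is precisely what \eqref{est1}, \eqref{est2} and \eqref{est3} provide. The low--high and high--low pieces are put in $L^{4/3}_IL^1_x$, and the scaling $N^{\frac{1-\alpha}{4}}|I|^{3/4}N\sim T^{3/4}$ absorbs the derivative, exactly as in Proposition \ref{sti}. The high--high piece is put in $X^{0,\frac12,1}_{I}$ and estimated through the modulation decomposition $L\gtrsim NN_1^\alpha$, then $L_1\gtrsim NN_1^\alpha$, then $L_2\sim NN_1^\alpha$, using \eqref{Bour1}. This is where the conditions \eqref{eq92} enter.

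For $P_{\lesssim N_2}z_2$ in \eqref{re2} we use instead the simpler Proposition \ref{sti}-type bound with $\|P_N(z_2^2)\|_{L^\infty L^1}\lesssim\|z_2\|_{L^2}^2$. This explains the factor $\|z_2\|_{L^\infty_TL^2}+\|z_2\|^2_{L^\infty_TL^2}$.

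\textbf{Step 2: low frequencies.} The index $N\lesssim N_2$ is summed, producing $\langle N_2\rangle^{-r}$ (since $r<0$, $\langle N\rangle^{-r}\le\langle N_2\rangle^{-r}$). The dyadic sum of the $L^2$ pieces over $N\lesssim N_2$ is handled as in Proposition \ref{sti} by summing the squares. The factor $N^{\frac{5-\alpha}{2}}|I|^{3/2}\lesssim T^{3/2}$ holds since $N\lesssim N_1$. For very low $N\le 1$ the Duhamel piece is trivially bounded because the derivative gives a factor $N\le1$.

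\textbf{Step 3: summation over $j$.} Squaring and summing over $j$ gives
\[
\sum_j T^{1/2}N_1^{-\frac{\alpha+\delta-1}{2}}(\cdots)\sim T^{1/2}N_1^{\delta-\frac{\alpha+\delta-1}{2}}=T^{1/2}N_1^{\frac{4-2\alpha}{3}}.
\]
Taking square roots yields \eqref{re1} and \eqref{re2}, with $N_1^{-r}$ coming from $\|P_{N_1}w_1\|_{L^2}\le N_1^{-r}\|w_1\|_{H^r}$.

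\textbf{Main obstacle.} The delicate point is the high--high Duhamel interaction when $w$ sits at negative regularity: $P_N(P_{N'}w\,P_{N'}z)$ with $N'\gtrsim N$ cannot be placed in $L^1_x$ with a good bound. We therefore rely on the $Y^r\times Y^0$ Bourgain control and the resonance relation \eqref{resonance}. We must check that the $X^{0,\frac12,1}_I$ bound from \eqref{est3} transfers through Proposition \ref{prov} with the same $|I|$-dependent gain as the free part. To do this, we would use \eqref{Bour1} to place the Duhamel term in $X^{0,\frac12,1}_I$, extend it from $I$, and apply \eqref{prov1} directly.
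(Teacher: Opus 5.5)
Your proposal is correct and follows essentially the same route as the paper. Both chop $[0,T]$ into intervals of length $N_1^{-\frac{5-\alpha}{3}}T$, write Duhamel from the left endpoint, use Proposition \ref{pro} for the free parts and the Christ--Kiselev/$L^{4/3}_IL^1_x$ bound for the low--high and high--low pieces, put the high--high piece in $X^{0,\frac12,1}_I$ via \eqref{est3} and Proposition \ref{prov}, and resum over $j$ to get $T^{\frac12}N_1^{\frac{4-2\alpha}{3}}$. One correction to your Step 2: for $N\le 1$ the derivative factor only handles the low--high and high--low pieces, while the high--high piece $P_N(P_{\tilde N}w\,P_{\tilde N}z)$ with $\tilde N\gg 1$ still needs the modulation argument behind \eqref{est3}, because $\|P_{\tilde N}w\|_{L^2}$ can grow like $\tilde N^{-r}$ and the naive sum over $\tilde N$ diverges.
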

\begin{proof}
	We take $\delta=\frac{5-\alpha}{3} $ and we chop the time interval $[0,T]$ into small pieces of length $\sim N_1^{-\delta}T$, i.e., we define $\{I_{j}\}_{j\in J_{N_1}}$ with $\# J_{N_1}\sim N_1^\delta$ so that $\bigcup_{j\in J_{N_1}}=[0,T],\ |I_{j}|\sim N_1^{-\delta}T$. By frequency localization considerations, we decompose $P_{N}(w_kz_k) $ as 
	\begin{align*}
		P_{N}(w_kz_k)=P_{N_j} \Bigl( P_{\sim N_j}w_k P_{\ll N_j}z_k+P_{\ll N_j}w_k P_{\sim N_j}z_k+P_{\gtrsim N_j}w_k P_{\gtrsim N_j}z_k \Bigr).
	\end{align*}

	Then applying Propositions \ref{prov}-\ref{pro}, Christ-Kiselev arguments (see Corollaries 3.10 and 3.11 in \cite{MT3}), Bernstein inequality and Holder inequality in time   we have
	\begin{align}
		&\|\Lambda_a(P_{N_1}w_1,P_{\lesssim N_2}w_2)\|_{L^2({I_{j,N_1}};L^2)}^2\nonumber
		\\
		& \lesssim T^{\fr12}N_1^{-\fr{\al+\delta-1}{2}}  \Bigl( \|P_{N_1}w_1\|_{L^\infty_{I_{j}} L^2_{x}}^2+N_1^\frac{1-\alpha}{2}N_1^2 |I_{j}|^{\fr32} \| P_{N_1}(P_{\sim N_1}w_1 P_{\ll N_1}z_1+P_{\ll N_1}w_1 P_{\sim N_1}z_1 )\|_{L^\infty_{I_{j}} L^1_{x}}^2 \nonumber\\
		&\hspace*{3cm} + N_1^2 \Bigl\| \int_{t_j}^t e^{(t-t')L_{\alpha+1}} P_{N_1} (P_{\gtrsim N_1}w_1 P_{\gtrsim N_1}z_1)\Bigr\|_{X^{\frac12,0,1}_{I_{j}}}^2\Bigr)\nonumber \\
	&\quad \quad \quad  \times	 \Bigl( \|P_{\lesssim N_2}w_2\|_{L^\infty_{I_{j}} L^2_{x}}^2+\sum_{0<\tilde{N}_2\lesssim N_2} \tilde{N}_2^\frac{1-\alpha}{2}\tilde{N}_2^2 |I_{j}|^{\fr32} \| P_{\tilde{N}_2}(P_{\sim \tilde{N}_2}w_2 P_{\ll \tilde{N}_2}z_2+P_{\ll \tilde{N}_2}w_2 P_{\sim \tilde{N}_2}z_2 )\|_{L^\infty_{I_{j}} L^1_{x}}^2 \nonumber\\
		&\hspace*{3cm} + \tilde{N}_2^2 \Bigl\| \int_{t_j}^t e^{(t-t')L_{\alpha+1}} P_{\tilde{N}_2} (P_{\gtrsim \tilde{N}_2}w_2 P_{\gtrsim \tilde{N}_2}z_2)\Bigr\|_{X^{\frac12,0,1}_{I_{j}}}^2\Bigr)\nonumber \\
		& \lesssim T^{\fr12}N_1^{-\fr{\al+\delta-1}{2}} \Bigl( \|P_{N_1}w_1\|_{L^\infty_{I_{j}} L^2_{x}}^2+ \| P_{N_1}(P_{\sim N_1}w_1 P_{\ll N_1}z_1\|_{L^\infty_{I_{j}} L^1_{x}}^2+\| P_{\ll N_j}w_1 P_{\sim N_1}z_1 )\|_{L^\infty_{I_{j}} L^1_{x}}^2 \nonumber\\
		&\hspace*{3cm} + N_1^2 \Bigl\| \int_{t_j}^t e^{(t-t')L_{\alpha+1}} P_{N_1} (P_{\gtrsim N_1}w_k P_{\gtrsim N_1}z_k)\Bigr\|_{X^{\frac12,0,1}_{I_{j}}}^2\Bigr)\nonumber \\
		& \quad \quad \quad \times \Bigl[ \|P_{\lesssim N_2}w_2\|_{L^\infty_{I_{j}} L^2_{x}}^2+
		\sup_{0<\tilde{N}_2\lesssim N_2}  \Bigl(\| P_{\tilde{N}_2}(P_{\sim \tilde{N}_2}w_2 P_{\ll \tilde{N}_2}z_2\|_{L^\infty_{I_{j}} L^1_{x}}^2+\| P_{\ll \tilde{N}_2}w_2 P_{\sim \tilde{N}_2}z_2 )\|_{L^\infty_{I_{j}} L^1_{x}}^2  \nonumber\\
		&\hspace*{3cm} + N_2^2 \Bigl\| \int_{t_j}^t e^{(t-t')L_{\alpha+1}} P_{\tilde{N}_2} (P_{\gtrsim \tilde{N}_2}w_2 P_{\gtrsim \tilde{N}_2}z_2)\Bigr\|_{X^{\frac12,0,1}_{I_{j}}}^2\Bigr) \Bigr]\label{est0}
	\end{align}
	Then under the conditions \eqref{eq92} it follows from \eqref{est1}, \eqref{est2} and \eqref{est3} that 
	\begin{align}
		\|\Lambda_a(P_{N_1}w_1&,P_{N_2}w_2)\|_{L^2({I_{j}};L^2)}^2 \nonumber \\
		& \lesssim  T^{\fr12}N_1^{-\fr{\al+\delta-1}{2}} N_1^{-2r} \langle N_2\rangle^{-2r} \prod_{i=1}^2 \Bigl( \|w_i\|_{L^\infty_{I_{j}} L^2_{x}}^2+ \|w_i\|_{Y^{r}_T}
		\|z_i\|_{L^\infty_T L^2_{x}}+\|w_i\|_{L^\infty_T H^r_{x}}\|z_i\|_{Y^{0}_T})^2\Bigr)\, .
	\end{align}
	This yields \eqref{re1} by resumming over the small intervals $I_{j} $, $ j\in J_{N_1}$.  \eqref{re2} can be proven in the same way by estimating the contribution of $P_{\lesssim N_2} z $ as in Proposition \ref{sti}.
\end{proof}
\begin{prop}\label{bl2}
	Let $a\in L^\infty(\R^2)$ be such that $\|a\|_{L^\infty}\lesssim 1, \al\in [\fr75,2], N_1\gg N_2, N_1\geq 1, 0<T\le 2, r<0$  and let $(w_k,z_k)\in Z^r_T \times Y^0_T $ satisfying
	\begin{equation*}
		\pa_t w_k+L_{\al+1} w_k+\fr12 \pa_x(w_k z_k)=0
	\end{equation*}
	on $]0,T[\times \R$ for $k=1,2$, with $L_{\al+1}$ satisfying Hypothesis \ref{Hy1}. Then, assuming that the conditions \eqref{eq92} are fulfilled, it holds 
	\begin{align}
		\|\Lambda_a(P_{N_1}w_1,P_{\lesssim N_2}w_2)\|_{L^2_{T,x}}
		&\lesssim N_1^{\fr{5-4\al}{6}}N_1^{-r}  \langle N_2\rangle^{-r}(\|w_1\|_{L^\infty_{T} H^r_{x}}+\|w_1\|_{Y^{r}_T}
		\|z_1\|_{L^\infty_T L^2_{x}}+\|w_1\|_{L^\infty_T H^r_{x}}\|z_1\|_{Y^{0}_T})\nonumber \\
		&\quad\times(\|w_2\|_{L^\infty_{T} H^r_{x}}+\|w_2\|_{Z^{r}_T}
		\|z_2\|_{L^\infty_T L^2_{x}}+\|w_2\|_{L^\infty_T H^r_{x}}\|z_2\|_{Y^{0}_T}),\label{Est1}
	\end{align}
	\begin{align}
		\|\Lambda_a(P_{N_1}z_1,P_{\lesssim N_2}w_2)\|_{L^2_{T,x}}
		&\lesssim N_1^{\fr{5-4\al}{6}} \langle N_2\rangle^{-r} (\|w_2\|_{L^\infty_{T} H^r_{x}}+\|w_2\|_{Z^{r}_T}
		\|z_2\|_{L^\infty_T L^2_{x}}+\|w_2\|_{L^\infty_T H^r_{x}}\|z_2\|_{Y^{0}_T})\nonumber\\
		&\quad\times(\|z_1\|_{L^\infty_T L^2_{x}}+\|z_1\|_{L^\infty_T L^2_{x}}^2).\label{Est2}
	\end{align}
	and
	\begin{align}
		\|\Lambda_a(P_{N_1}w_1,P_{\lesssim N_2}z_2)\|_{L^2_{T,x}}
		&\lesssim N_1^{\fr{5-4\al}{6}}N_1^{-r} (\|w_1\|_{L^\infty_{T} H^r_{x}}+\|w_1\|_{Z^{r}_T}
		\|z_1\|_{L^\infty_T L^2_{x}}+\|w_1\|_{L^\infty_T H^r_{x}}\|z_1\|_{Y^{0}_T})\nonumber\\
		&\quad\times(\|z_2\|_{L^\infty_T L^2_{x}}+\|z_2\|_{L^\infty_T L^2_{x}}^2)\, .\label{Est3}
	\end{align}
		Moreover, for $ N_1^{-2/3} \lesssim N_2\lesssim 1 \ll N_1$, 
		\begin{align}
			\|\Lambda_a(P_{N_1}w_1,P_{ N_2}w_2)\|_{L^2_{T,x}}
			&\lesssim N_1^{\fr{5-4\al}{6}}N_1^{-r}  N_2 (\|w_1\|_{L^\infty_{T} H^r_{x}}+\|w_1\|_{Y^{r}_T}
			\|z_1\|_{L^\infty_T L^2_{x}}+\|w_1\|_{L^\infty_T H^r_{x}}\|z_1\|_{Y^{0}_T})\nonumber \\
			&\quad\times(\|w_2\|_{L^\infty_{T} H^r_{x}}+\|w_2\|_{Z^{r}_T}
			\|z_2\|_{L^\infty_T L^2_{x}}+\|w_2\|_{L^\infty_T H^r_{x}}\|z_2\|_{Y^{0}_T}),\label{Est11}
		\end{align}
		and
		\begin{align}
			\|\Lambda_a(P_{N_1}z_1,P_{N_2}w_2)\|_{L^2_{T,x}}
			&\lesssim N_1^{\fr{5-4\al}{6}} N_2 (\|w_2\|_{L^\infty_{T} H^r_{x}}+\|w_2\|_{Z^{r}_T}
			\|z_2\|_{L^\infty_T L^2_{x}}+\|w_2\|_{L^\infty_TH^r_{x}}\|z_2\|_{Y^{0}_T})\nonumber\\
			&\quad\times(\|z_1\|_{L^\infty_T L^2_{x}}+\|z_1\|_{L^\infty_T L^2_{x}}^2).\label{Est22}
		\end{align}
\end{prop}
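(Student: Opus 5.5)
The plan is to follow the scheme of Proposition \ref{bl1}, replacing the short-time bilinear bound \eqref{prop1} by the improved high--low bound \eqref{prop2}. This replacement removes the factor $T^{1/2}N_1^{-\frac{\al+\delta-1}{2}}$ and the constraint it carried on the length of the intervals.

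\textbf{Setup and choice of intervals.} We take $\delta=\frac{5-\al}{3}$ and chop $[0,T]$ into intervals $I_j$ with $|I_j|\sim N_1^{-\delta}T$ and $\#J_{N_1}\sim N_1^{\delta}$. On each $I_j$ we write $w_k$ through its Duhamel formula from $t_j$. The forcing $P_N(w_kz_k)$ is decomposed exactly as in the proof of Proposition \ref{bl1} into high--low, low--high and high--high parts.

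\textbf{Per-interval estimate and summation.} Propositions \ref{prov}--\ref{pro} in their $N_1\gg N_2$ form \eqref{prov2}--\eqref{prop2}, together with the Christ--Kiselev corollaries of \cite{MT3}, give on each $I_j$
\[
\|\Lambda_a(P_{N_1}w_1,P_{\lesssim N_2}w_2)\|_{L^2(I_j;L^2)}^2\lesssim N_1^{-\al}\,A_1^2A_2^2 .
\]
Here $A_k$ collects three contributions. The first is the $L^\infty_{I_j}L^2_x$ norm of the free part. The second is the $L^\infty L^1$ norm of the high--low and low--high forcing, bounded through \eqref{est1}--\eqref{est2} with the weight $N^{2}N^{\frac{1-\al}2}|I_j|^{3/2}\sim T^{3/2}$. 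The third is the $X^{0,\frac12,1}_{I_j}$ norm of the high--high Duhamel term, bounded through \eqref{est3}, which requires the conditions \eqref{eq92}. Each of these contributions is controlled by $N_1^{-r}$ (respectively $\langle N_2\rangle^{-r}$) times the bracketed norms in the statement. Summing over the $N_1^{\delta}$ intervals produces the factor $N_1^{\delta-\al}=N_1^{\frac{5-4\al}{3}}$; taking square roots gives the claimed $N_1^{\frac{5-4\al}{6}}$. This proves \eqref{Est1}.

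\textbf{Mixed estimates \eqref{Est2} and \eqref{Est3}.} These follow in the same way, handling the factor involving $z$ as in Proposition \ref{sti}. Since $z$ solves \eqref{gKdV} with nonlinearity $z^2/2$, we have
\[
\|P_{\lesssim N}(z^2)\|_{L^\infty L^1}\lesssim \|z\|_{L^\infty_TL^2}^2,
\]
which produces the factor $\|z\|+\|z\|^2$ and no negative power of frequency. For the low-frequency factor $P_{\lesssim N_2}w_2$ with $N_2\lesssim 1$, the Sobolev weight is $\langle N_2\rangle^{-r}\sim 1$. In that case the $L^2$ norms of the Duhamel pieces are bounded using the $\bar H^r$ component of $Z^r$. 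This explains why $w_2$ is measured in $Z^r$ in these estimates.

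\textbf{Main obstacle: the low-frequency bounds \eqref{Est11} and \eqref{Est22}.} These require the extra factor $N_2$ when $N_1^{-2/3}\lesssim N_2\lesssim 1$. The factor comes from the definition of $\bar H^r$: for $|\xi|\sim N_2\le 1$, the weight $\langle|\xi|^{-1}\rangle$ gives
\[
\|P_{N_2}w_2\|_{L^2}\lesssim N_2\|w_2\|_{\bar H^r}.
\]
The difficulty is to propagate this factor through the Duhamel part of $P_{N_2}w_2$. The forcing term $N_2\,P_{N_2}(w_2z_2)$ already carries a factor $N_2$ from the derivative, and the Strichartz/X-space bounds at frequency $\tilde N_2\le N_2$ give powers $\tilde N_2^{\frac{5-\al}{4}}|I_j|^{3/4}$. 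Since $\tilde N_2\le 1$ and $|I_j|\le 1$, these powers are at most $1$, so the forcing is also bounded by $N_2$ times the same norms. Two checks remain, and this is where I expect the work to be. First, the free part of the bilinear estimate \eqref{prop2} must be applied at the single frequency $N_2$ rather than after summing over $P_{\lesssim N_2}$, because summing would lose the factor $N_2$. Second, the lower bound $N_2\gtrsim N_1^{-2/3}$ must be used so that the high--high contribution to $P_{N_2}w_2$, estimated in $X^{0,\frac12,1}_{I_j}$ with the resonance $N_2N_1^{\al}$, is still dominated by $N_2$ times the admissible norms. Concretely, the factor $|I_j|^{1/2}N_2^{1/2}$ appearing in the analogue of \eqref{est3} has to be absorbed, and the condition $N_2\gtrsim N_1^{-2/3}$ should suffice for this.
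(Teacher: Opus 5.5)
For \eqref{Est1}--\eqref{Est3}, your plan is the paper's: rerun the proof of Proposition \ref{bl1} with \eqref{prop2} in place of \eqref{prop1}, so that summing over the $N_1^{\delta}$ intervals produces $N_1^{\delta-\alpha}=N_1^{\frac{5-4\alpha}{3}}$. One small correction: $z=u+v$ does not solve \eqref{gKdV}. It solves $\partial_t z+L_{\alpha+1}z+\frac12\partial_x(u^2+v^2)=0$, and the quantity you bound is $\|u^2+v^2\|_{L^\infty_TL^1_x}$.

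\textbf{The gap.} It lies in \eqref{Est11}--\eqref{Est22}, which are the only new content of the proposition, and which you leave as ``checks'' that ``should suffice''. The whole difficulty is the high--high part of the Duhamel term of $P_{N_2}w_2$, namely $\sum_{\tilde N_2\gg N_2}P_{N_2}(P_{\tilde N_2}w_2\,P_{\tilde N_2}z_2)$. Your forcing argument does not reach it. Since $r<0$, each piece costs $\tilde N_2^{-r}\|w_2\|_{H^r}\|z_2\|_{L^2}$ in $L^\infty_tL^1_x$ or $L^1_tL^2_x$, and the sum over $\tilde N_2$ is not controlled by the norms at hand. Your claim is also off even for the high--low and low--high pieces. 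As written, $\tilde N_2^{\frac{5-\alpha}{4}}=\tilde N_2\cdot\tilde N_2^{\frac{1-\alpha}{4}}$ already contains the derivative, so bounding it by $1$ discards exactly the factor $N_2$ you need. Without the derivative, $\tilde N_2^{\frac{1-\alpha}{4}}\ge 1$. This part is repairable, either by Bernstein or by using $|I_j|^{3/4}$ together with $N_2\gtrsim N_1^{-2/3}$.

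\textbf{What the paper does.} It splits the $\tilde N_2$-sum into two ranges.

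First range: $N_2\ll\tilde N_2\lesssim 1$, and $\tilde N_2\gg 1$ with $\tilde N_2^{-2r}<N_2^{-1}$. Here Bernstein in $L^1_{I_j}L^2_x$ gives the factor $N_2^{1/2}\tilde N_2^{-r}\le 1$, and Cauchy--Schwarz closes the sum.

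Second range: $\tilde N_2\gg1$ with $\tilde N_2^{-2r}\ge N_2^{-1}$. Here one uses the $X^{0,\frac12,1}_{I_j}$ bound together with the resonance $N_2\tilde N_2^{\alpha}$.
\begin{itemize}
\item The resonance is $N_2\tilde N_2^{\alpha}$, not $N_2N_1^{\alpha}$ as you wrote; $N_1$ plays no role in the expansion of $w_2$.
\item It is $\gg 1$ only because $N_2\tilde N_2^{\alpha}\ge\tilde N_2^{\alpha+2r}$ and $r>-\alpha/2$. This is why some such split is unavoidable.
\item The output-modulation region gives $\tilde N_2^{-\frac{\alpha}{2}-r}$, which is summable.
\item In the two regions where $w_2$ or $z_2$ carries the high modulation, \eqref{Bour1}, Bernstein and the $Y$-norms give
$|I_j|^{1/2}N_2^{1/2}\max\{\tilde N_2^{\theta-r}(N_2\tilde N_2^{\alpha})^{-1},\tilde N_2^{\beta-r}(N_2\tilde N_2^{\alpha})^{-\frac23}\}$.
\end{itemize}
In the last item, the dangerous quantity is the negative power of $N_2$ (up to $N_2^{-1/2}$) coming from the low-frequency resonance. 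The factor $|I_j|^{1/2}N_2^{1/2}$ is harmless; $|I_j|^{1/2}$ is precisely the resource that absorbs the loss, via $|I_j|^{1/2}\sim N_1^{-\frac{5-\alpha}{6}}\lesssim N_2^{\frac{5-\alpha}{4}}$. This is exactly where $N_2\gtrsim N_1^{-2/3}$ enters. What remains is $N_2^{\frac{3-\alpha}{4}}\max\{\tilde N_2^{\theta-r-\alpha},\tilde N_2^{\beta-r-\frac23\alpha}\}$, which is summable under \eqref{eq92}.

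Your intuition about where the lower bound is used is right. But without this decomposition and computation, \eqref{Est11}--\eqref{Est22} remain unproved.
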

\begin{proof}
	\eqref{Est1}-\eqref{Est3} can be proven in  exactly the same way as \eqref{re1}-\eqref{re2}  by making use of \eqref{prop2} instead of \eqref{prop1}. It thus just suffices to prove \eqref{Est11}-\eqref{Est22} that concerns some interactions with very low frequencies of the difference $ w$ of two solutions to \eqref{gKdV}.

	Again we take $ \delta=\fr{5-\al}{3} $ and  chop the time interval $[0,T]$ into small pieces of length $\sim N_1^{-\delta}T$, i.e., we define $\{I_{j}\}_{j\in J_{N_1}}$ with $\# J_{N_1}\sim N_1^\delta$ so that $\bigcup_{j\in J_{N_1}}=[0,T],\ |I_{j}|\sim N_1^{-\delta}T$.
	Then applying Proposition \ref{pro} and proceeding as in \eqref{est0} we have
	\begin{align}
		\|\Lambda_a &(P_{N_1}w_1,P_{N_2}w_2)\|_{L^2({I_{j}};L^2)}^2 \nonumber \\
		&\lesssim N_1^{-\al}  \Bigl( \|P_{N_1}w_1\|_{L^\infty_{I_{j}} L^2_{x}}^2+ \| P_{N_1}(P_{\sim N_1}w_1 P_{\ll N_1}z_1)\|_{L^\infty_{I_{j}} L^1_{x}}^2+\| P_{N_1}(P_{\ll N_1}w_1P_{\sim N_1}z_1)\|_{L^\infty_{I_{j}} L^1_{x}}^2 \nonumber\\
		&\hspace*{3cm} + N_1^2 \Bigl\| \int_{t_j}^t e^{(t-t')L_{\alpha+1}} P_{N_j} (P_{\gtrsim N_1}w_1 P_{\gtrsim N_1}z_1)\Bigr\|_{X^{\frac12,0,1}_{I_{j}}}^2\Bigr) \nonumber \\
		&\times \Bigl(  \|P_{N_2}w_2\|_{L^\infty_{I_{j}} L^2_{x}}^2+ N_2^2 \| P_{N_2}(P_{\sim N_2}w_2 
		P_{\lesssim  N_2}z_2)\|_{L^1_{I_{j}} L^2_{x}}^2+N_2^2 \| P_{N_2}(P_{\lesssim  N_2}w_2 P_{\sim N_2}z_2)\|_{L^1_{I_{j}} L^2_{x}}^2 \nonumber\\
		& + N_2^2 \Bigl\| P_{N_2} (\sum_{\tilde{N}_2\in \Theta_{N_2}} P_{\tilde{N}_2} w_2 P_{\sim\tilde{N}_2}z_2)\Bigr\|_{L^1_{I_{j}} L^2_{x}}^2
		+N_2^2 \Bigl\| \int_{t_j}^t e^{(t-t')L_{\alpha+1}} P_{N_2} (\hspace*{-4mm}\sum_{\tilde{N}_2\gg1\atop \tilde{N}_2^{-2r}>N_2^{-1}}
		P_{\tilde{N}_2} w_2 P_{\sim\tilde{N}_2} z_2)\Bigr\|_{X^{\frac12,0,1}_{I_{j}}}^2\Bigr) \label{est00} 
	\end{align}
	where 
	$$
	\Theta_{N_2}=\Bigl\{\tilde{N}_2\in 2^\Z, N_2\ll \tilde{N}_2\lesssim 1 \text{ or } (\tilde{N}_2\gg 1 \text{ and } 
	\tilde{N}_2^{-2r} <N_2^{-1}) \Bigr\} \; .
	$$
	In view of \eqref{est1}-\eqref{est3}, we just need to estimate the contribution of the terms in the last parenthesis in the above right-hand side member. First, since $N_2\lesssim 1$,  it is straigtforward to check that 
	\begin{align*}
		&\|P_{\sim N_2}w_2 P_{\lesssim N_2}z_2\|_{L^1_{I_{j}}L^2_x}
		\lesssim  N_2^{\fr12}\|P_{\sim N_2}w_2\|_{L^\infty_{I_{j}}L^2_x} \|P_{\lesssim N_2}z_2\|_{L^\infty_{I_{j}}L^2_x}
		\lesssim \|w_2\|_{L^\infty_{T} H^r} \|z_2\|_{L^\infty_{T}L^2_x},
	\end{align*}
	and
	\begin{align*}
		&\|P_{\lesssim N_2}w_2 P_{\sim N_2}z_2\|_{L^1_{I_{j,N_1}}L^2_x}
		\lesssim N_2^{\fr12} \|P_{\lesssim N_2}w_2\|_{L^\infty_{I_{j}}L^2_x} \|P_{\sim N_2}z_2\|_{L^\infty_{I_{j}}L^2_x}
		\lesssim  \|w_2\|_{L^\infty_{T} H^r} \|z_2\|_{L^\infty_{T}L^2_x}.
	\end{align*}
	Now we deal with the contribution of the sum over $ \tilde{N}_2\in \Theta_{N_2} $. 
	For  the contribution of the sum over $\tilde{N}_2\lesssim 1$ we have
	\begin{align*}
		\sum_{N_2\ll\tilde{N}_2\lesssim 1} \|P_{N_2}(P_{\tilde{N}_2}w_2 P_{\tilde{N}_2}z_2) \|_{L^1_{I_{j}}L^2_x}&\lesssim \sum_{N_2\ll\tilde{N}_2\lesssim 1}  N_2^{\fr12}  \|P_{\tilde{N}_2}w_2\|_{L^2_{I_{j}} L^2_x} \|P_{\tilde{N}_2}z_2\|_{L^2_{I_{j}} L^2_x}
		\\&\lesssim \|w_2\|_{L^2_{I_{j}} H^{r}_x} \|z_2\|_{L^2_{I_{j}} L^2_x},
	\end{align*}
	whereas for the contribution of the sum over  ($\tilde{N}_2 \gg 1$ and $ \tilde{N}_2^{-2r}<N_2^{-1}$)  we have 
	\begin{align*}
		\sum_{\tilde{N}_2 \gg 1,\ \tilde{N}_2^{2r}>N_2} \|P_{N_2}(P_{\tilde{N}_2}w_2 P_{\tilde{N}_2}z_2) \|_{L^1_{I_{j}}L^2_x}
		&\lesssim \sum_{\tilde{N}_2 \gg 1,\ \tilde{N}_2^{-2r}< N_2^{-1}}  N_2^{\fr12}  \|P_{\tilde{N}_2}w_2\|_{L^2_{I_{j}} L^2_x} \|P_{\tilde{N}_2}z_2\|_{L^2_{I_{j}} L^2_x}
		\\&\lesssim \sum_{\tilde{N}_2 \gg 1,\ \tilde{N}_2^{-2r}< N_2^{-1}}
		N_2^{\fr12} \langle\tilde{N}_2\rangle^{-r} 
		\|P_{\tilde{N}_2}w_2\|_{L^2_{I_{j}} H^{r}_x} \|P_{\tilde{N}_2}z_2\|_{L^2_{I_{j}} L^2_x}
		\\&\lesssim \sum_{\tilde{N}_2 \gg 1,\ \tilde{N}_2^{-2r}< N_2^{-1}} N_2^{\fr12} \tilde{N}_2^{-r}
		\|P_{\tilde{N}_2}w_2\|_{L^2_{I_{j}} H^{r}_x} \|P_{\tilde{N}_2}z_2\|_{L^2_{I_{j}} L^2_x}
		\\&\lesssim \|w_2\|_{L^2_{I_{j}} H^{r}_x} \|z_2\|_{L^2_{I_{j}} L^2_x}.
	\end{align*}
	It remains to bound the  contribution of the sum over ($\tilde{N}_2 \gg 1$ and $\tilde{N}_2^{-2r}\ge N_2^{-1}$). To do this we first write 
	\begin{align*}
		&\sum_{\tilde{N}_2 \gg 1,\ \tilde{N}_2^{-2r}\ge N_2^{-1}} \|\int_{t_j}^t e^{(t-t')L_{\al+1}} P_{N_2}(P_{\tilde{N}_2}w_2 P_{\tilde{N}_2}z_2) dt'\|_{X^{0,\fr12,1}_{I_{j}}}
		\\&\lesssim  \sum_{\tilde{N}_2 \gg 1,\ \tilde{N}_2^{-2r}\ge N_2^{-1}} \|\sum_{L,L_1,L_2}Q_L P_{N_2}(Q_{L_1} P_{\tilde{N}_2}w_2 Q_{L_2}P_{\tilde{N}_2}z_2) \|_{X^{0,-\fr12,1}_{I_{j}}}.
	\end{align*}
	The contribution of the sum over $L\gtrsim N_2 \tilde{N}_2^{\al}$ can be controlled by
	\begin{align*}
		&\sum_{\tilde{N}_2 \gg 1,\ \tilde{N}_2^{-2r}\ge N_2^{-1}}\|Q_{\gtrsim N_2 \tilde{N}_2^{\al}} P_{N_2}(P_{\tilde{N}_2}w_2 P_{\tilde{N}_2}z_2) \|_{X^{0,-\fr12,1}_{I_{j}}}
		\\&\lesssim \sum_{\tilde{N}_2 \gg 1,\ \tilde{N}_2^{-2r}\ge N_2^{-1}}(N_2 \tilde{N}_2^{\al})^{-\fr12}\|P_{N_2}(P_{\tilde{N}_2}w_2 P_{\tilde{N}_2}z_2)\|_{L^2_{I_{j}}L^2_x}
		\\&\lesssim \sum_{\tilde{N}_2 \gg 1,\ \tilde{N}_2^{-2r}\ge N_2^{-1}}(N_2 \tilde{N}_2^{\al})^{-\fr12}N_2^{\fr12}\|P_{\tilde{N}_2}w_2\|_{L^2_{I_{j}}L^2_x} \|P_{\tilde{N}_2}z_2\|_{L^\infty_{I_{j}}L^2_x}
		\\&\lesssim \sum_{\tilde{N}_2 \gg 1,\ \tilde{N}_2^{-2r}\ge N_2^{-1}}(N_2 \tilde{N}_2^{\al})^{-\fr12} N_2^{\fr12}\tilde{N}_2^{-r}
		\|P_{\tilde{N}_2}w_2\|_{L^2_{I_{j}} \bar{H}^{r}_x} \|P_{\tilde{N}_2}z_2\|_{L^\infty_{I_{j}}L^2_x}
		\\&\lesssim \|w_2\|_{L^2_{I_{j}} H^{r}_x} \|z_2\|_{L^\infty_{I_{j}} L^2_x},
	\end{align*}
	where we use the fact that $-\fr12\al-r<0$ that is automatically satisfied for $\alpha\ge 7/5 $ satisfying \eqref{eq92}.
	To bound the contribution of the region where $L\ll N_2 \tilde{N}_2^{\al}$ and $L_1\gtrsim N_2 \tilde{N}_2^{\al}$ we use that
	$N_1^{-2/3}\lesssim N_2 $ and \eqref{Bour1} to get 
	\begin{align*}
		&\sum_{\tilde{N}_2 \gg 1} \sum_{ \tilde{N}_2^{2r}\vee N_1^{-2/3} \le N_2 \lesssim 1}\|Q_{\ll N_2 \tilde{N}_2^{\al}} P_{N_2}(Q_{\gtrsim N_2 \tilde{N}_2^{\al}} P_{\tilde{N}_2}w_2 P_{\tilde{N}_2}z_2)\|_{X^{0,-\fr12,1}_{I_{j}}}
		\\
		& \lesssim \sum_{\tilde{N}_2 \gg 1} \sum_{ \tilde{N}_2^{2r}\vee N_1^{-2/3} \le N_2 \lesssim 1} N_1^\frac{\alpha-5}{6}\|Q_{\ll N_2 \tilde{N}_2^{\al}} P_{N_2}(Q_{\gtrsim N_2 \tilde{N}_2^{\al}} P_{\tilde{N}_2}w_2 P_{\tilde{N}_2}z_2) \|_{L^2_{I_{j}}L^2_x}\\
		&\lesssim\sum_{\tilde{N}_2 \gg 1,0<N_2\lesssim 1}  N_2^{\fr12}N_2^\frac{5-\alpha}{4}
		\max\Big\{\tilde{N}_2^{\theta-r}(N_2 \tilde{N}_2^{\al})^{-1}
		,\ \tilde{N}_2^{\beta-r}(N_2 \tilde{N}_2^{\al})^{-\fr23}\Big\} \|w_2\|_{Y^{r}_T}
		\|P_{\tilde{N}_2}z_2\|_{L^\infty_{I_{j}} L^2_{x}} 
		\\
		&\lesssim\sum_{\tilde{N}_2 \gg 1,0<N_2\lesssim 1}  
		N_2^\frac{3-\alpha}{4}\max\Big\{\tilde{N}_2^{\theta-r-\alpha}
		,\ \tilde{N}_2^{\beta-r-\fr23 \alpha}\Big\} \|w_2\|_{Y^{r}_T}
		\|P_{\tilde{N}_2}z_2\|_{L^\infty_{I_{j}} L^2_{x}}\\
		&\lesssim
		\|w_2\|_{Z^{r}_T}
		\|z_2\|_{L^\infty_{I_{j}} L^2_{x}},
	\end{align*}
	where we use  that \eqref{eq92} forces $\theta-\al-r<0$ and $\beta-\fr23\al- r<0$. Finally the contribution of the last region can be bounded in the same way by 
	\begin{align*}
		&\sum_{\tilde{N}_2 \gg 1} \sum_{ \tilde{N}_2^{2r}\vee N_1^{-2/3} \le N_2 \lesssim 1}\|Q_{\ll N_2 \tilde{N}_2^{\al}} P_{N_2}(Q_{\ll N_2 \tilde{N}_2^{\al}} P_{\tilde{N}_2}w_2 Q_{\sim N_2 \tilde{N}_2^{\al}} P_{\tilde{N}_2}z_2) \|_{X^{0,-\fr12,1}_{I_{j}}}
		\\
		&\lesssim\sum_{\tilde{N}_2 \gg 1} \sum_{ \tilde{N}_2^{2r}\vee N_1^{-2/3} \le N_2 \lesssim 1}N_1^\frac{\alpha-5}{6}\|Q_{\ll N_2 \tilde{N}_2^{\al}} P_{N_2}(Q_{\ll N_2 \tilde{N}_2^{\al}} P_{\tilde{N}_2}w_2 Q_{\sim N_2 \tilde{N}_2^{\al}} P_{\tilde{N}_2}z_2) \|_{L^2_{I_{j}}L^2_x}
		\\
		&\lesssim\sum_{\tilde{N}_2 \gg 1,0<N_2\lesssim 1}  N_2^{\fr12}N_2^\frac{5-\alpha}{4}\tilde{N}_2^{-r}
		\max\Big\{\tilde{N}_2^{\theta}(N_2 \tilde{N}_2^{\al})^{-1}
		,\ \tilde{N}_2^{\beta}(N_2 \tilde{N}_2^{\al})^{-\fr23}\Big\} \|z_2\|_{Y^{0}_T}
		\|P_{\tilde{N}_2}w_2\|_{L^\infty_{I_{j}} H^r_{x}}
		\\&\lesssim
		\|w_2\|_{L^\infty_{I_{j}} H^r_{x}}
		\|z_2\|_{Y^{0}_T},
	\end{align*}
	where we use again that $\theta-\al-r<0$ and $\beta-\fr23\al- r<0$.
	
	Therefore, coming back to \eqref{est00} we finally obtain 
	\begin{align*}
		\|\Lambda_a(P_{N_1}w_1,P_{N_2}w_2)\|_{L^2({I_{j}};L^2)}^2
		&\lesssim N_1^{-\al}N_1^{-2r} N_2^{2}(\|P_{N_1}w_1\|_{L^\infty_{I_{j}} H^r_{x}}^2+\|w_1\|_{Y^{r}_T}^2
		\|z_1\|_{L^\infty_T L^2_{x}}^2+\|w_1\|_{L^\infty_T H^r_{x}}^2\|z_1\|_{Y^{0}_T}^2)\\
		&\quad\times(\|P_{N_2}w_2\|_{L^\infty_{I_{j}} H^r_{x}}^2+\|w_2\|_{Z^{r}_T}^2
		\|z_2\|_{L^\infty_T L^2_{x}}^2+\|w_2\|_{L^\infty_T H^r_{x}}^2\|z_2\|_{Y^{0}_T}^2).
	\end{align*}

	Then,
	\begin{align*}
		&\|\Lambda_a(P_{N_1}w_1,P_{N_2}w_2)\|_{L^2_{T,x}}^2=\sum_{j\in J_{N_1}}
		\|\Lambda_a(P_{N_1}w_1,P_{N_2}w_2)\|_{L^2({I_{j}};L^2)}^2
		\\&\lesssim \sum_{j\in J_{N_1}}N_1^{-\al}N_1^{-2r} N_2^{2}(\|P_{N_1}w_1\|_{L^\infty_{I_{j}} H^r_{x}}^2+\|w_1\|_{Y^{r}_T}^2
		\|z_1\|_{L^\infty_T L^2_{x}}^2+\|w_1\|_{L^\infty_T H^r_{x}}^2\|z_1\|_{Y^{0}_T}^2)\\
		&\quad\times(\|P_{N_2}w_2\|_{L^\infty_{I_{j}} H^r_{x}}^2+\|w_2\|_{Z^{r}_T}^2
		\|z_2\|_{L^\infty_T L^2_{x}}^2+\|w_2\|_{L^\infty_T H^r_{x}}^2\|z_2\|_{Y^{0}_T}^2)
		\\&\lesssim N_1^{\delta-\al}N_1^{-2r} N_2^{2}(\|P_{N_1}w_1\|_{L^\infty_{T} H^r_{x}}^2+\|w_1\|_{Y^{r}_T}^2
		\|z_1\|_{L^\infty_T L^2_{x}}^2+\|w_1\|_{L^\infty_T H^r_{x}}^2\|z_1\|_{Y^{0}_T}^2)\\
		&\quad\times(\|P_{N_2}w_2\|_{L^\infty_{T} H^r_{x}}^2+\|w_2\|_{Z^{r}_T}^2
		\|z_2\|_{L^\infty_T L^2_{x}}^2+\|w_2\|_{L^\infty_T H^r_{x}}^2\|z_2\|_{Y^{0}_T}^2)
		\\&\lesssim  N_1^{\fr{5-4\al}{3}}
		N_1^{-2r} N_2^{2}(\|P_{N_1}w_1\|_{L^\infty_{T} H^r_{x}}^2+\|w_1\|_{Y^{r}_T}^2
		\|z_1\|_{L^\infty_T L^2_{x}}^2+\|w_1\|_{L^\infty_T H^r_{x}}^2\|z_1\|_{Y^{0}_T}^2)\\
		&\quad\times(\|P_{N_2}w_2\|_{L^\infty_{T} H^r_{x}}^2+\|w_2\|_{Z^{r}_T}^2
		\|z_2\|_{L^\infty_T L^2_{x}}^2+\|w_2\|_{L^\infty_T H^r_{x}}^2\|z_2\|_{Y^{0}_T}^2),
	\end{align*}
	that proves \eqref{Est1}.
	\eqref{Est2} can be proven  in the same way and \eqref{Est3}  can be proved in the same way as Proposition \ref{bl0} and Proposition \ref{bl1}. This completes the proof.
\end{proof}

\begin{lemm}\label{Cwv}Let $\alpha\in [\frac{7}{5},2] $, $0<T \le 2$, $v\in X^{0,\fr12,1}_T $and $w \in Y^r_T$ satisfying
	\eqref{Eqw} on $]0,T[\times \R$  with $L_{\al+1}$ satisfying Hypothesis \ref{Hy1}. Then
	for $N_1 \gg N_2\ge 1 $ it holds
	\begin{align}
		\|w_{N_1}v_{N_2}\|_{L^2_T L^2_x} &
		\lesssim  N_1^{\fr{5-4\al}{6}} N_1^{-r} \|w\|_{Y^r_T} (1+\|z\|_{Y^0_T}) \|v_{N_2}\|_{X^{0,\fr12,1}_T},
		\label{zd}
	\end{align}
	whenever the conditions \eqref{eq92} are fulfilled.
\end{lemm}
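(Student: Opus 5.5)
Lemma \ref{Cwv} is the analogue, for the difference $w$, of Lemma \ref{Cuv}, and we follow the same route. We chop $[0,T]$ into intervals $I_j=I_{j,N_1}$ of length $\sim N_1^{-\delta}T$, with $\delta=\frac{5-\alpha}{3}$ and $\#J_{N_1}\sim N_1^{\delta}$. Then
$$\|w_{N_1}v_{N_2}\|_{L^2_TL^2_x}^2=\sum_j\|w_{N_1}v_{N_2}\|_{L^2(I_j;L^2)}^2 .$$

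On each $I_j$ we write $w_{N_1}$ through its Duhamel formula from $t_j=\inf I_j$:
$$w_{N_1}(t)=e^{-(t-t_j)L_{\al+1}}w_{N_1}(t_j)-\fr12\int_{t_j}^t e^{-(t-t')L_{\al+1}}\pa_xP_{N_1}(wz)\,dt' .$$
We decompose $P_{N_1}(wz)$ into the three pieces high--low, low--high and high--high, exactly as in \eqref{w2}. The factor $v_{N_2}\in X^{0,\frac12,1}$ is kept as is.

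\emph{Linear part.} Since $N_1\gg N_2$, the improved estimate \eqref{prov2} of Proposition \ref{prov} bounds the linear piece times $v_{N_2}$ by
$$N_1^{-\alpha/2}\|w_{N_1}(t_j)\|_{L^2}\|v_{N_2}\|_{X^{0,\frac12,1}} .$$

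\emph{Nonlinear pieces of type $L^\infty_{I_j}L^1_x$.} For the high--low and low--high pieces we combine the Christ--Kiselev argument (Corollaries 3.10--3.11 in \cite{MT3}) with \eqref{prov2}. Since $N_1^{\frac{1-\alpha}{4}}N_1|I_j|^{3/4}\sim T^{3/4}$, estimates \eqref{est1}--\eqref{est2} bound these pieces by $N_1^{-r}\|w\|_{L^\infty_TH^r}\|z\|_{L^\infty_TL^2}$.

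\emph{High--high piece.} This piece lies in $X^{0,\frac12,1}_{I_j}$. Its product with $v_{N_2}$ is handled by \eqref{prov2}, applied to two $X^{0,\frac12,1}$ functions. The bound \eqref{est3} gives
$$N_1\Bigl\|\int_{t_j}^t\cdots\Bigr\|_{X^{0,\frac12,1}_{I_j}}\lesssim N_1^{-r}\bigl(\|w\|_{Y^r_T}\|z\|_{L^\infty_TL^2}+\|w\|_{L^\infty_TH^r}\|z\|_{Y^0_T}\bigr).$$
This is exactly where the conditions \eqref{eq92} and $\alpha\ge 7/5$ are used.

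Collecting the three contributions, we get on each $I_j$
$$\|w_{N_1}v_{N_2}\|_{L^2(I_j;L^2)}^2\lesssim N_1^{-\alpha}N_1^{-2r}\|w\|_{Y^r_T}^2(1+\|z\|_{Y^0_T})^2\|v_{N_2}\|_{X^{0,\frac12,1}}^2 .$$
Here the restriction of $v$ to $I_j$ is controlled by $\|v_{N_2}\|_{X^{0,\frac12,1}_T}$ through an extension argument. Summing over the $N_1^{\delta}$ intervals gives the factor
$$N_1^{\delta-\alpha}=N_1^{\frac{5-4\alpha}{3}},$$
and taking square roots yields \eqref{zd}.

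\emph{Main obstacle.} The delicate step is the high--high Duhamel term. The norm $L^\infty L^1$ is no longer available because $r<0$, so the term must go through $X^{0,\frac12,1}_{I_j}$ via \eqref{Bour1}, using the modulation splitting of Proposition \ref{Cw2}. One must then check that the resulting $X^{0,\frac12,1}$ bound still pairs with $v_{N_2}$ through the Christ--Kiselev/transference form of \eqref{prov2}. Since \eqref{prov2} is already stated for $X^{0,\frac12,1}$ inputs, this should go through directly.
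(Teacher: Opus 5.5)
Your proposal is correct and follows essentially the same route as the paper's proof. You chop $[0,T]$ into $\sim N_1^{\delta}$ intervals of length $N_1^{-\delta}T$ with $\delta=\frac{5-\alpha}{3}$, write Duhamel on each interval, control the high--low/low--high pieces via \eqref{est1}--\eqref{est2} and the high--high piece via \eqref{est3} in $X^{0,\frac12,1}_{I_j}$, use the improved bilinear bound \eqref{prov2} to get $N_1^{-\alpha}$ per interval, and sum to obtain $N_1^{\delta-\alpha}=N_1^{\frac{5-4\alpha}{3}}$; your extra remark that the restriction of $v$ to each $I_j$ is controlled by $\|v_{N_2}\|_{X^{0,\frac12,1}_T}$ is immediate from the definition of restriction norms.
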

\begin{proof}
	Similar with the proof of Proposition \ref{bl1}, decomposing $ [0,T] $ in small intervals $I_{j} $ with $ |I_{j}| \sim N_1^{-\frac{5-\alpha}{3}} T $,  we have
	\begin{align*}
		\|w_{N_1}v_{N_2}\|_{L^2_{T,x}}^2&=\sum_{j\in J_{N_1}}\|w_{N_1}v_{N_2}\|_{L^2(I_{j};L^2)}^2
		\lesssim  \sum_{j\in J_{N_1}} N_1^{-\al}\|v_{N_2}\|_{X^{0,\fr12,1}_T}^2 \nonumber \\
		&\times \Bigl( \|P_{N_1}w\|_{L^\infty_{I_{j}} L^2_{x}}^2+ \| P_{N_1}(P_{\sim N_1}w P_{\ll N_1}z\|_{L^\infty_{I_{j}} L^1_{x}}^2+\| P_{\ll N_1}w P_{\sim N_1}z )\|_{L^\infty_{I_{j}} L^1_{x}}^2 \nonumber\\
		&\hspace*{3cm} + N_1^2 \Bigl\| \int_{t_j}^t e^{(t-t')L_{\alpha+1}} P_{N_1} (P_{\gtrsim N_1}w P_{\gtrsim N_1}z)\Bigr\|_{X^{\frac12,0,1}_{I_{j}}}^2\Bigr)\\
		&\lesssim   N_1^{\fr{5-4\al}{3}} N_1^{-2r}\|w\|_{Y^r_T}^2 (1+\|z\|_{Y^0_T})^2\|v_{N_2}\|_{X^{0,\fr12,1}_T}^2,
	\end{align*}
	which leads to the desired result.
\end{proof}

\begin{lemm}\label{le3-2} Let $\alpha\in [\frac{7}{5},2] $,  $0<T<1$, $v\in X^{0,\fr13-}_T $ and $w \in Y^r_T$ satisfying
	\eqref{Eqw} on $]0,T[\times \R$  with $L_{\al+1}$ satisfying Hypothesis \ref{Hy1}. Then
	for $N_1 \geq 1,\ N_1\geq N_2$ it holds
	\begin{align}\label{tr3}
		\|w_{N_1}v_{N_2}\|_{L^2_T L^2_x} &
		\lesssim T^{\fr16-} N_1^{\fr{4-2\al}{9}-} N_2^{\fr16+} N_1^{-r} \|w\|_{Y^r_T} (1+\|z\|_{Y^0_T}) \|v_{N_2}\|_{X_T^{0,\fr13-}}.
	\end{align}
	whenever the conditions \eqref{eq92} are fulfilled.
	Moreover when $N_1 \geq 1,\ N_1 \gg N_2$ it holds
	\begin{align}\label{tr4}
		\|w_{N_1}v_{N_2}\|_{L^2_T L^2_x} &
		\lesssim  N_1^{\fr{5-4\al}{9}-} N_2^{\fr16+} N_1^{-r} \|w\|_{Y^r_T} (1+\|z\|_{Y^0_T}) \|v_{N_2}\|_{X_T^{0,\fr13-}}.
	\end{align}
\end{lemm}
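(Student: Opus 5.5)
The plan is to mirror the proof of Lemma \ref{le3-1}: interpolate between a crude Bernstein bound and a refined bound that requires about one half time derivative on $v_{N_2}$. The solution $u$ is replaced by the difference $w$, so Lemma \ref{Cu2} is replaced by Proposition \ref{Cw2} and Lemma \ref{Cuv} by Lemma \ref{Cwv}.

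\textbf{Step 1 (refined endpoint for \eqref{tr3}).} By H\"older, Proposition \ref{Cw2} (valid under \eqref{eq92}) and the embedding $X^{0,\frac12+}_T\hookrightarrow L^\infty_TL^2_x$,
$$
\|w_{N_1}v_{N_2}\|_{L^2_TL^2_x}\le \|w_{N_1}\|_{L^2_TL^\infty_x}\|v_{N_2}\|_{L^\infty_TL^2_x}\lesssim T^{\frac14}N_1^{\frac{2-\al}{3}}N_1^{-r}\|w\|_{Y^r_T}(1+\|z\|_{Y^0_T})\|v_{N_2}\|_{X^{0,\frac12+}_T}.
$$

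\textbf{Step 2 (crude endpoint).} By Bernstein,
$$
\|w_{N_1}v_{N_2}\|_{L^2_TL^2_x}\le \|w_{N_1}\|_{L^\infty_TL^2_x}\|v_{N_2}\|_{L^2_TL^\infty_x}\lesssim N_2^{\frac12}N_1^{-r}\|w\|_{L^\infty_TH^r}\|v_{N_2}\|_{X^{0,0}_T}.
$$
The bilinear map $v\mapsto w_{N_1}v_{N_2}$, with $w$ fixed, is linear in $v$. I will therefore interpolate between $X^{0,0}_T$ and $X^{0,\frac12+}_T$ with parameter $\theta_0$ chosen so that $\theta_0(\frac12+)=\frac13-$, i.e.\ $\theta_0=\frac23-$. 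This produces $T^{\frac14\cdot\frac23-}=T^{\frac16-}$, $N_1^{\frac{2-\al}{3}\cdot\frac23-}=N_1^{\frac{4-2\al}{9}-}$ and $N_2^{\frac12\cdot\frac13+}=N_2^{\frac16+}$. The factor $N_1^{-r}\|w\|_{Y^r_T}(1+\|z\|_{Y^0_T})$ is common to both endpoints, so it survives the interpolation, and \eqref{tr3} follows.

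To make the interpolation rigorous, I will work with the extension $\rho_T(v)$ supplied by Lemma \ref{lem32}, or with an extension by the infimum definition of the restriction norm, for both endpoints. Complex interpolation of the $X^{0,b}$ scale in $b$ is standard. The only subtlety is that restriction spaces interpolate correctly; for $b<\frac12$ one can instead use an arbitrary extension, since the right-hand sides are linear in that extension.

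\textbf{Step 3 (\eqref{tr4}).} For $N_1\gg N_2$, I replace the Step 1 endpoint by Lemma \ref{Cwv} combined with $X^{0,\frac12+}_T\hookrightarrow X^{0,\frac12,1}_T$, which gives the bound $N_1^{\frac{5-4\al}{6}}N_1^{-r}\|w\|_{Y^r_T}(1+\|z\|_{Y^0_T})\|v_{N_2}\|_{X^{0,\frac12+}_T}$ with no power of $T$. The same interpolation with $\theta_0=\frac23-$ gives $N_1^{\frac{5-4\al}{9}-}N_2^{\frac16+}$.

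The one point needing care is that Lemma \ref{Cwv} is stated for $N_2\ge1$, while \eqref{tr4} also allows $N_2<1$. In that regime I would rerun the proof of Lemma \ref{Cwv}, using \eqref{prov2}, which does not require $N_2\ge 1$. Alternatively, I would note that the dependence on $v$ enters only through Proposition \ref{prov}, whose statement allows any $N_2>0$.

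The main obstacle is this justification of the interpolation with the restricted norms, and checking that the hypotheses \eqref{eq92} of Proposition \ref{Cw2} and Lemma \ref{Cwv} are in force. Beyond that, the argument is the same as for Lemma \ref{le3-1}.
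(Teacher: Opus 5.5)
Your proposal is correct and is essentially the paper's proof. You take one endpoint from Proposition \ref{Cw2} (resp.\ Lemma \ref{Cwv} for \eqref{tr4}) together with $X^{0,\frac12+}_T\hookrightarrow L^\infty_TL^2_x$ (resp.\ $X^{0,\frac12,1}_T$), the other from Bernstein, and interpolate in the modulation index of $v$. Your extension argument for the restriction norms, and your remark that Lemma \ref{Cwv} still holds for $N_2<1$ via \eqref{prov2}, cover points the paper passes over silently.

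One bookkeeping caveat, which the paper shares: since $\frac{5-4\alpha}{6}<0$ and $\theta_0<\frac23$, the interpolation in \eqref{tr4} actually gives $N_1^{\frac{5-4\alpha}{9}+}$, not $N_1^{\frac{5-4\alpha}{9}-}$. This is harmless, because the later use (e.g.\ in \eqref{tutu1}) only needs the $+$ version, and all the summability conditions there are strict inequalities.
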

\begin{proof}
	In view of Lemma \ref{Cw2} and the continuous embedding $ X^{0,\frac12+}_T \hookrightarrow L^\infty_T L^2_x $ we have
	\begin{align*}
		\|w_{N_1}v_{N_2}\|_{L^2_T L^2_x} \lesssim \|w_{N_1}\|_{L^2_T L^\infty_x} \|v_{N_2}\|_{L^\infty_T L^2_x}
		\lesssim T^{\fr14} N_1^{\fr{2-\al}{3}} N_1^{-r} \|w\|_{Y^r_T} (1+\|z\|_{Y^0_T}) \|v_{N_2}\|_{X_T^{0,\fr12+}} \, .
	\end{align*}
	Interpolating with the crude estimate
	\begin{align}
		\|w_{N_1}v_{N_2}\|_{L^2_T L^2_x} \lesssim \|w_{N_1}\|_{L^\infty_T L^2_x} \|v_{N_2}\|_{L^2_T L^\infty_x}
		\lesssim N_2^{\fr12} N_1^{-r} \|w\|_{Y^r_T} \|v_{N_2}\|_{L^2_T L^2_x}.
	\end{align}
	we obtain \eqref{tr3}. \eqref{tr4} follows in the same way by interpolating between \eqref{zd} and the above crude estimate.
\end{proof}

\vspace*{1em}
\noindent\textbf{Acknowledgements.} 
 This work was conducted during a visit of the second author at Institut Denis Poisson (IDP) of Université de Tours in France. The second author is deeply grateful to IDP for its kind hospitality. The second author also acknowledges support from the China Scholarship Council and the National Natural Science Foundation of China (No. 12201118).

\end{document}